\documentclass[11pt]{amsart}

 
\oddsidemargin=0in
\evensidemargin=0in
 
\addtolength{\textwidth}{1in}
\setlength{\hoffset}{.25truein}

\makeatletter
\renewcommand\l@subsection{\@tocline{2}{0pt}{2pc}{5pc}{}}
\makeatother

\newcommand{\private}[1]{}   

\newcommand{\fn}[1]{\private{\footnote{{\bf Comment:} {#1}}}}
\newcommand{\pl}{{\bf Pascal:}}
\newcommand{\zz}[1]{}
\usepackage{amssymb, amsmath, amscd, amsthm, color, url, epsfig, amsfonts, latexsym, stmaryrd}
\usepackage{esint} 
\usepackage[all]{xy}          
\xyoption{dvips}              


\newcommand{\BR}{{\mathbb R}}
\newcommand{\BC}{{\mathbb C}}
\newcommand{\BZ}{{\mathbb Z}}
\newcommand{\BQ}{{\mathbb Q}}
\newcommand{\BK}{{\mathbb K}}\newcommand{\BN}{{\mathbb N}}

\newcommand{\calD}{{\mathcal{D}}}
\newcommand{\calA}{{\mathcal{A}}}
\newcommand{\calT}{{\mathcal{T}}}
\newcommand{\calH}{{\mathcal{H}}}

\newcommand{\calS}{{\mathcal{S}}}
\newcommand{\calN}{{\mathcal{N}}}
\newcommand{\calC}{{\mathcal{C}}}
\newcommand{\unit}{{\mathbf{1}}}
\newcommand{\id}{\mathrm{id}}
\newcommand{\spt}{\operatorname{spt}}
\newcommand{\ev}{\mathrm{ev}}
\newcommand{\fron}{\operatorname{fr}}
\newcommand{\map}{\operatorname{map}}
\newcommand{\fiber}{\operatorname{fiber}}
\newcommand{\CDGA}{\operatorname{CDGA}}
\newcommand{\Chains}{\operatorname{Ch}}

\newcommand{\SA}{\operatorname{SemiAlg}}
\newcommand{\Top}{\operatorname{Top}}
\newcommand{\sSet}{\operatorname{sSet}}
\newcommand{\mass}{\mathbf{M}}\newcommand{\flatnorm}{{\mathbf{F}}}
\newcommand{\Ch}{\operatorname{C}}\newcommand{\Ho}{\operatorname{H}}
\newcommand{\Coch}{\operatorname{C}}
\newcommand{\Norm}{\operatorname{N}}
\newcommand{\Chs}{\operatorname{C}^{str}}

\newcommand{\Apl}{{\operatorname{A}_{PL}}}

\newcommand{\APA}{{\operatorname{A}_{PA}}}
\newcommand{\Spa}{\operatorname{S}^{PA}}
\newcommand{\Sing}{{\operatorname{S}}^{\textrm{sing}}}

\newcommand{\inter}{\operatorname{int}}
\newcommand{\fib}[1]{\widehat{#1}}
\newcommand{\rank}{\operatorname{rank}}
\newcommand{\im}{\operatorname{im}}
\newcommand{\lbar}{\underline{l}}
\newcommand{\nbar}{\underline{n}}
\newcommand{\sgn}{\operatorname{sgn}}
\newcommand{\proj}{\operatorname{pr}}
\newcommand{\omin}{\Omega_{\min}}
\newcommand{\osmooth}{\Omega_{\calC^{\infty}}}
\newcommand{\ompa}{\Omega_{PA}}

\newcommand{\superset}{{\supset }}

\newcommand{\quism}{\stackrel{\simeq}{\to}}
\newcommand{\iso}{\stackrel{\cong}{\to}}
\newcommand{\toSA}{\stackrel{SA}{\to}}
\newcommand{\bbr}[1]{\llbracket{#1}\rrbracket}
\newcommand{\current}{chain}
\theoremstyle{plain}
\newtheorem{thm}{Theorem}[section]
\newtheorem{prop}[thm]{Proposition}
\newtheorem{lemma}[thm]{Lemma}
\newtheorem{cor}[thm]{Corollary}

\theoremstyle{definition}
\newtheorem{definition}[thm]{Definition}
\newtheorem{example}[thm]{Example}
\newtheorem{rem}[thm]{Remark}

\newcommand{\refS}[1]{Section~\ref{S:#1}}
\newcommand{\refT}[1]{Theorem~\ref{T:#1}}
\newcommand{\refC}[1]{Corollary~\ref{C:#1}}
\newcommand{\refP}[1]{Proposition~\ref{P:#1}}
\newcommand{\refD}[1]{Definition~\ref{D:#1}}
\newcommand{\refL}[1]{Lemma~\ref{L:#1}}
\newcommand{\refE}[1]{equation~$(\ref{E:#1})$}
\newcommand{\refN}[1]{~$(\ref{#1})$}

\begin{document}


\sloppy

\title{Real homotopy theory of semi-algebraic sets}


\author{Robert M. Hardt}
\address{Department of Mathematics, Rice University, Houston, TX 77251}
\email{hardt@math.rice.edu}
\urladdr{http://math.rice.edu/~hardt}

\author{Pascal Lambrechts}
\address{Institut de Recherche en Math\'{e}matique et Physique, 2 Chemin du Cyclotron, B-1348 Louvain-la-Neuve, Belgium}
\email{pascal.lambrechts@uclouvain.be}
\urladdr{http://milnor.math.ucl.ac.be/plwiki}

\author{Victor Turchin}
\address{Department of Mathematics, Kansas State University, Manhattan, KS 66506}
\email{turchin@ksu.edu}
\urladdr{http://www.math.ksu.edu/~turchin}

\author{Ismar Voli\'c}
\address{Wellesley College, Wellesley, MA 02482}
\email{ivolic@wellesley.edu}
\urladdr{http://palmer.wellesley.edu/~ivolic}

\thanks{The first, third, and fourth   authors were supported in part
  by the National Science Foundation grants DMS-0604605, DMS-0504390,
  and DMS-0968046. 
The second author is Ma\^\i tre de Recherche au F.N.R.S.}

\subjclass[2010]{Primary: 14P10, 55P62}
\keywords{Differential forms, DeRham theory, semi-algebraic sets, rational homotopy theory}


\begin{abstract}
We complete the details of a theory outlined by Kontsevich and Soibelman
that associates to a semi-algebraic set a certain graded commutative differential algebra
of ``semi-algebraic differential forms'' in a functorial way.  This algebra encodes the real homotopy type of the semi-algebraic set in the spirit of the DeRham algebra of differential forms on a smooth manifold.  Its development is needed for
 Kontsevich's proof of the formality of the little cubes operad.
\end{abstract}

\maketitle

\zz{
                {\LARGE \textbf{NOT PUBLIC}: }the changes I made on mid january 2011are all commented by
                footnotes like this; it correspond to the comment
                \texttt{backslash zz}. Change the definition of this
                command to get public version.
                } 

{\tableofcontents}

\parskip=5pt
\parindent=0cm


\section{Introduction}


We start with a brief review of rational homotopy theory whose beginnings go back to Sullivan
\cite{Sul:inf} and whose full development can be found in \cite{BoGu:RHT} and \cite{FHT:RHT}.
To any topological space $X$, one can associate in a  functorial way a certain commutative differential graded algebra (or CDGA for short) $\Apl(X)$
with coefficients
in $\BQ$. The main feature of this algebra is that if $X$ is  nilpotent (e.g., simply-connected)
  with finite Betti numbers, then any CDGA $(A,d)$, where $d$ is the differential, that is quasi-isomorphic to $\Apl(X)$ contains all
 the information about the
 rational homotopy type of $X$. For example,
an analog of the DeRham theorem implies that
the rational singular cohomology algebra of $X$ is given by the homology of $(A,d)$, i.e.~$\Ho^*(X;\BQ)\cong \Ho(A,d).$
Another procedure  also recovers $\pi_*(X)\otimes\BQ$, the rational homotopy groups of $X$, from $(A,d)$ \cite{Sul:inf}.

  Sullivan's construction was inspired by DeRham theory which associates to a smooth manifold $M$ a commutative
 differential graded algebra of smooth differential forms $\osmooth^*(M)$ with coefficients in $\BR$.
 In fact, the DeRham and Sullivan constructions are related because $\osmooth^*(M)$ is quasi-isomorphic to
 $\Apl(M;\BR):=\Apl(M)\otimes_\BQ\BR$.
 One loses some information about the rational homotopy type of $M$ after tensoring with $\BR$ but what is left (the ``real homotopy type'') is still valuable.
 For example, one can still recover the real singular cohomology (this is the classical DeRham's theorem) or the dimension of the
 rational homotopy groups. In some applications, the DeRham CDGA of smooth forms is even more manageable than
 $\Apl(M;\BR)$ because it is closer to the geometry of the manifold. For instance, a variation of this approach is used to prove that compact Kahler manifolds are formal in
 \cite{DGMS:rea}.

 The goal of the present paper is to supply the details of a theory sketched by Kontsevich and Soibelman in \cite[Appendix 8]{KoSo:def}
which produces an analog of the DeRham differential algebra for semi-algebraic sets (subsets of $\BR^n$
 defined by finitely many polynomial equations, inequalities, and boolean operations).
 More precisely, following their approach, we will construct a functor
 \begin{align*}
 \ompa^*\colon \SA & \longrightarrow\CDGA \\
                 X & \longmapsto\ompa^*(X)
 \end{align*}
 where $\SA$ is the category of semi-algebraic sets and $\ompa^*(X)$ is the CDGA with real coefficients of 
 so-called \emph{PA forms}
 on the semi-algebraic set $X$.  Here ``PA" stands for ``piecewise semi-algebraic'' which is the terminology coined by Kontsevich and Soibelman for describing a wider class of spaces than we will consider in this paper; nevertheless, we will retain the ``PA" notation for the benefit of ease of translation between this paper and the Kontsevich-Soibelman one.
 Our main result is \refT{main} which we restate here.
 \begin{thm}\label{T:MainTheorem}
 For $X$ a compact semi-algebraic set, $\ompa^*(X)$ and $\Apl(X;\BR)$ are connected by a zigzag of natural
 transformations of CDGAs that induce isomorphisms on homology.
 \end{thm}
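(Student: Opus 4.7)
The plan is to interpolate between $\ompa^*(X)$ and $\Apl(X;\BR)$ via an intermediate CDGA built from semi-algebraic singular simplices carrying PA forms, and to verify that each comparison map is a quasi-isomorphism. Write $\SingPA_\bullet(X)$ for the simplicial set whose $n$-simplices are continuous semi-algebraic maps $\Delta^n\to X$, and define $A^*(X)$ to be the CDGA of compatible families $(\omega_\sigma)$ with $\omega_\sigma\in\ompa^*(\Delta^n)$ for each $\sigma\in\SingPA_n(X)$, compatibility being taken with respect to the face and degeneracy operators. Functoriality of $\ompa^*$ gives a natural CDGA map $\ompa^*(X)\to A^*(X)$, $\omega\mapsto(\sigma^*\omega)_\sigma$. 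Since polynomial forms on $\Delta^n$ are PA forms, there is also a natural CDGA inclusion $\Apl(\SingPA_\bullet(X);\BR)\hookrightarrow A^*(X)$. Finally, the inclusion of simplicial sets $\SingPA_\bullet(X)\hookrightarrow\Sing_\bullet(X)$ induces a CDGA map $\Apl(X;\BR)\to\Apl(\SingPA_\bullet X;\BR)$, producing the zigzag
$$\ompa^*(X)\longrightarrow A^*(X)\longleftarrow \Apl(\SingPA_\bullet X;\BR)\longleftarrow \Apl(X;\BR).$$

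The third arrow is handled by semi-algebraic approximation: any continuous map from $\Delta^n$ to a compact semi-algebraic set $X$ is homotopic (relative to the boundary, if desired) to a semi-algebraic one, so $\SingPA_\bullet(X)\hookrightarrow \Sing_\bullet(X)$ is a weak equivalence of simplicial sets, and $\Apl(-;\BR)$ inverts it.

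The outer two arrows are both instances of a Sullivan-style acyclic-models argument. On a standard simplex $X=\Delta^n$ one verifies directly that $\ompa^*(\Delta^n)$, $A^*(\Delta^n)$ and $\Apl(\SingPA_\bullet\Delta^n;\BR)$ each have cohomology $\BR$ concentrated in degree zero; this rests on a Poincar\'e lemma for PA forms on a simplex (proved by radial integration, using the PA-chain/integration theory already set up in the paper) together with the classical polynomial Poincar\'e lemma on the $\Apl$ side. For a general compact semi-algebraic $X$, a semi-algebraic triangulation (Hardt/\L{}ojasiewicz) reduces to the simplex case by induction on the number of closed simplices, the induction step being a Mayer--Vietoris comparison for closed semi-algebraic decompositions $X=X_1\cup X_2$. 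All three functors $\ompa^*$, $A^*$, and $\Apl\circ\SingPA_\bullet$ send such decompositions to homotopy pullback squares of CDGAs, so the five-lemma propagates the quasi-isomorphism on each simplex to the quasi-isomorphism on $X$.

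The principal obstacle is the PA Poincar\'e lemma and the associated Mayer--Vietoris property for $\ompa^*$: unlike the smooth de Rham complex, one cannot appeal to partitions of unity or to smooth homotopy operators, so the acyclicity of $\ompa^*(\Delta^n)$ and the behavior of PA forms under restriction to and gluing along closed semi-algebraic subsets must be established entirely through the PA-integration machinery developed earlier in the paper. Once these local facts are in place, the reduction via triangulation and acyclic models is formal.
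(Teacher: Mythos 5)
Your zigzag is the same as the paper's: you pass through $\APA(\Spa_\bullet(X))$ (your $A^*(X)$) and $\Apl(\Spa_\bullet(X))$ and use an Eilenberg--Steenrod/Mayer--Vietoris reduction to simplices. The overall architecture is correct. But there is a genuine gap in how you propose to handle the key local input, the Poincar\'e lemma for $\ompa^*(\Delta^n)$.

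You write that the acyclicity of $\ompa^*(\Delta^n)$ ``rests on a Poincar\'e lemma for PA forms on a simplex (proved by radial integration, using the PA-chain/integration theory already set up in the paper).'' This is exactly the step that fails. The radial contraction $h\colon X\times I\to X$ does yield a cochain-level operator $\Theta_h$ on $\Coch^*(X)$ satisfying $\delta\Theta_h\pm\Theta_h\delta=(h(-,1))^*-(h(-,0))^*$, and if $\alpha=\fint_\Phi\mu$ one is tempted to write $\Theta_h(\alpha)=\fint_{\fib{I}\ltimes h^*\Phi}h^*\mu$. The obstruction is that the twisted product $\Gamma\ltimes\Phi$ of two strongly continuous chains is \emph{not} in general a strongly continuous chain, so $\Theta_h(\alpha)$ is not visibly a PA form. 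Section~\ref{S:diffKS} of the paper gives an explicit counterexample (a continuous $0$-chain over a $3$-simplex whose twist with $\fib{I}$ along a dilation homotopy admits no strong trivialization). So ``radial integration'' does not close up inside $\ompa^*$, and the Poincar\'e lemma cannot be dispatched this way.

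What the paper does instead is much more delicate: it shows that $\Theta_h$ \emph{does} preserve PA forms in the special situation where $h$ is a homotopy supported on a closed simplex $A$ over which the given PA form is a trivial fiber integral (Lemma~\ref{L:Hsimple}). It then replaces a single radial contraction by a sequence of elementary simplicial collapses $T_N\searrow T_{N-1}\searrow\cdots\searrow *$, each supported on one closed simplex, and telescopes the resulting homotopy operators. Crucially, to know that such a simplicial collapse exists after refining the triangulation that trivializes $\Phi$, the paper needs the semi-algebraic Hauptvermutung (\refT{Hauptvermutung} and \refC{collapsibility}): it is not automatic that a semi-algebraic triangulation of a collapsible space is simplicially collapsible, and that is precisely what PL uniqueness of triangulation supplies. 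Your proposal does not mention the Hauptvermutung at all, which is a signal that this step is being underestimated.

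A smaller omission: for the middle arrow $\Apl(\Spa_\bullet(X))\to\APA(\Spa_\bullet(X))$ you appeal to acyclicity on simplices, but F\'elix--Halperin--Thomas's Proposition~10.5 also requires the simplicial CDGA $\APA_\bullet=\{\ompa^*(\Delta^p)\}_p$ to be \emph{extendable}, and this must be proved separately (the paper's Lemmas~\ref{L:extX} and~\ref{L:extDelta}). Acyclicity alone does not give the quasi-isomorphism of realizations. Finally, your treatment of $\gamma_X$ via ``semi-algebraic approximation of continuous singular simplices rel boundary'' is a reasonable alternative to the paper's route (which passes to normalized cochains and reuses the Eilenberg--Steenrod criterion), but as stated it needs a relative approximation theorem compatible with faces; the paper's cochain-level argument avoids having to prove such an approximation statement.
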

 This in particular implies that any CDGA quasi-isomorphic to
 $\ompa^*(X)$ contains all the information about the real homotopy type of a compact semi-algebraic set $X$.

 Our main motivation for supplying the details of the proof of this theorem and various results related to it is that it is an essential ingredient in
 Kontsevich's proof of the formality of the little cubes operad in \cite[\S 3]{Kon:OMDQ}.  Details of this formality proof will be given in \cite{LaVo:for}.

\subsection{Outline of the paper}

We build on \cite[Appendix 8]{KoSo:def}, which gives a good first idea of the theory. The roadmap of the present paper is as follows.

\textbullet\ \  In Section \ref{S:Background}, we review semi-algebraic sets and
 their stratifications; some PL topology and the semi-algebraic Hauptvermutung; and the notion of a current
 which is a convenient alternative to a singular chain in $\BR^n$.

\textbullet\ \ 
 In \refS{SAcur},
 we define semi-algebraic chains in terms of currents (this corresponds to what Kontsevich and Soibelman
 call chains in \cite{KoSo:def}). Briefly, a $k$-chain $\gamma\in\Ch_k(X)$ in a semi-algebraic set $X\subset\BR^n$ is represented by a linear combination of bounded semi-algebraic oriented smooth $k$-dimensional
 submanifolds $V_i$ in $\BR^n$
 whose closure is contained in $X$. By a remarkable property of semi-algebraic sets, the $k$-volume of the  $V_i$ is
 finite. Any two such linear combinations $\sum n_i\bbr{V_i}$ and $\sum n'_j\bbr{V'_j}$ are declared to be equal if,
 for any smooth form with compact support in $\BR^n$,
 $$\sum n_i\int_{V_i}\omega=\sum n'_j\int_{V'_j}\omega.$$
 In other words, two chains are equal if they are equal as
 currents.
 This defines a chain complex $\Ch_*(X)$ whose boundary $\partial$ is the classical boundary of currents.
 In order to prove that $\partial(\Ch_k(X))\subset\Ch_{k-1}(X)$ and that a semi-algebraic map $f\colon X\to Y$
 induces a chain map, we need to establish a generalization of Stokes' Theorem to semi-algebraic manifolds, which is \refT{Stokes}.
In this section, we also construct a natural cross product $\times\colon\Ch_*(X)\otimes\Ch_*(Y)\to\Ch_*(X\times Y)$.

\textbullet\ \ In \refS{conv}, we develop a suitable notion of convergence for sequences of chains. This will be important for showing
that some cochains are determined by their value on a dense subset, i.e.~at generic points.

\textbullet\ \  In \refS{PAForms}, we build the cochain complex of PA forms.  We begin this process in \refS{SACochains} by defining the cochain complex $\Ch^*(X)$ with coefficients in $\BR$
 as the  dual of $\Ch_*(X)$.
 The CDGA of PA forms that we are after will be a subcomplex $\ompa^*(X)\subset\Ch^*(X)$.
 Along the way, we first define a subcomplex of minimal forms $\omin^*(X)\subset \Ch^*(X)$ in \refS{MinimalForms} and show that it
 is equipped with a graded commutative cross product. The key property necessary for the proof of the existence of that product is the fact that a minimal form is smooth at a generic point, which is the content of \refP{minaesm}.

  The problem with $\omin^*(X)$ is that it does not satisfy the DeRham isomorphism. We thus need to extend
  this algebra in order to have the right homotopical properties.
  For this, we first define in \refS{scc} a notion of strongly continuous family of chains which allows us to  construct  the desired cochain complex of PA forms $\ompa^*(X)$ in Section \ref{SS:PAForms}, where we also exhibit an algebra structure on $\ompa^*(X)$ (the proof turns out to be more involved than for minimal forms).

\textbullet\ \  In \refS{equAplompa}, we prove \refT{MainTheorem}. The proof mainly follows the scheme of the analog result for the DeRham algebra of smooth manifolds.  The main difference is that our proof of the Poincar\'e Lemma is more complicated and it relies on the semi-algebraic Hauptvermutung.

\textbullet\ \  In \refS{MonEquiv}, we show that the weak equivalence established in the previous section
are monoidal, and we prove other   monoidal equivalences.  This is useful in applications to operads, which is our main motivation.\fn{\pl
10 july 2008. changed a bit}

\textbullet\ \ In \refS{SAbundle}, we study  integration along the fiber for semi-algebraic bundles
whose fibers are compact oriented manifolds.\fn{\pl new section}

\textbullet\ \  In \refS{diffKS}, we discuss some differences between this paper and
  \cite[Appendix 8]{KoSo:def} and explain why we were unable to prove some of the claims made there.

\subsection{Acknowledgments} We thank Thierry De Pauw for many hours of discussion about currents, for his invaluable
help in the proof of \refT{Stokes}, and for finding many results we needed in \cite{Fed:GMT}.
 We thank Yves F\'elix, Don Stanley, Maxim Kontsevich, and Yan Soibelman for fruitful conversations.
We warmly thank Greg Arone for his patience and encouragements to finish this paper. We also thank Michel Coste for help with semi-algebraic sets and Lou van den Dries for answering questions and
  pointing out the reference \cite{CLR:nat}.

  
\section{Background}\label{S:Background}



\subsection{Review of semi-algebraic sets and stratifications}


A short summary of semi-algebraic sets with definitions and most of the  properties that we will need can be found in
\cite[Section 1]{Har:loc},
and a complete reference is \cite{BCR:GAR}.
Briefly, a subset $X$ of $\BR^m$ is \emph{semi-algebraic} if it is a finite union of finite intersections of solution sets
of polynomial equations and  inequalities.
A \emph{semi-algebraic map} is a continuous map of semi-algebraic sets $X\subset \BR^m$ and
$Y\subset \BR^n$
whose graph is a semi-algebraic set in $\BR^m\times\BR^n$.  By a \emph{semi-algebraic function} we will
mean a semi-algebraic map with values in $\BR$. Note that in this paper we always assume
that  semi-algebraic maps and functions  are continuous. Semi-algebraic sets are clearly stable under taking 
 finite intersections, finite unions, and complements. Also, given a semi-algebraic set $X$, its closure $\overline X$ and its interior $\inter(X)$ are
 semi-algebraic.  The same is true for the image $f(X)$ or the preimage $f^{-1}(X)$ under a semi-algebraic map $f$ 
 \cite[p. 23]{BCR:GAR}.

\begin{definition}
A \emph{semi-algebraic manifold of dimension $k$} is a semi-algebraic set such that each point has
a semi-algebraic neighborhood semi-algebraically
homeomorphic to $\BR^k$ or to $\BR_{+}\times\BR^{k-1}$.
\end{definition}
Clearly a semi-algebraic manifold is a topological manifold with boundary,
that is, a semi-algebraic manifold of dimension $k-1$. We can therefore talk about the orientability and the orientation of a semi-algebraic manifold as well as the induced orientation on the boundary.

We will also consider smooth submanifolds of $\BR^n$, always without boundary except when stated otherwise.
 If $X$ is a semi-algebraic set in $\BR^n$, by a \emph{semi-algebraic smooth
submanifold in $X$} we mean a smooth submanifold of $\BR^n$ that is a semi-algebraic subset of $X$.

\begin{definition}
A semi-algebraic set $X\subset\BR^N$ is called \emph{bounded} if it is contained in a ball of
finite radius.
\end{definition}

A fundamental tool in the theory of semi-algebraic sets we will need is the existence
of certain type of finite partitions
into smooth submanifolds called \emph{stratifications}. By a \emph{smooth stratification} of a subset $X$ of $\BR^n$, we mean a finite
partition $\calS$ of $X$ such that
each $S\in\calS$ is a connected smooth submanifold and its relative closure
$X\cap\overline S$ is a union of elements of $\calS$,
one being $S$ and the others being of dimension $<\dim S$. Here an element
of $\calS$ is called a \emph{stratum}, and  $\calS$ is called a \emph{semi-algebraic
stratification} if each
of its strata is semi-algebraic.
Semi-algebraic sets and mappings admit smooth semi-algebraic stratifications.
More precisely, we have the following \cite[Proposition 9.1.8]{BCR:GAR}:

\begin{prop}
\label{P:strat}
Given a semi-algebraic set $X\subset\BR^{m}$ and a semi algebraic map
$f\colon X\to\BR^{n}$ there exists a
 stratification $\calS$ of $X$ such that each stratum  $S$ is a semi-algebraic
smooth submanifold,  the restriction $f|_S$ is a smooth map of constant rank,
and $\{f(S):S\in\calS\}$ is a stratification of $f(X)$.
\end{prop}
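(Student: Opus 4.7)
The plan is to proceed by induction on $\dim X$, using at each stage the two key closure properties of semi-algebraic sets: Tarski--Seidenberg (images and preimages are semi-algebraic) and the fact that the singular locus of a semi-algebraic set is a semi-algebraic subset of strictly smaller dimension. The basic strategy is to first stratify the target $f(X)$, then stratify each preimage compatibly, and then iterate to fix any remaining incompatibilities.

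First I would establish, by induction on dimension, the bare statement that any semi-algebraic set admits a smooth semi-algebraic stratification, without yet worrying about the map. The inductive step picks out an open dense smooth semi-algebraic submanifold (the regular locus of top-dimensional points), declares its connected components to be the top strata, and applies induction to the complement, which has strictly smaller dimension. With this in hand, apply it first to $f(X)\subset\BR^n$ (which is semi-algebraic by Tarski--Seidenberg) to get a preliminary smooth stratification $\{T_1,\dots,T_r\}$.

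Next I would stratify each preimage $f^{-1}(T_j)\cap X$. Pick any smooth semi-algebraic stratification of this preimage and restrict $f$ to a smooth semi-algebraic stratum $S$ mapping into the smooth manifold $T_j$. On $S$ the differential $df|_S$ has rank valued in $\{0,1,\dots,\dim S\}$, and the rank function is lower semi-continuous, so the locus of maximal rank is open, dense, and semi-algebraic. Its connected components serve as new top-dimensional strata on which $f|_S$ has constant rank; the complement has strictly smaller dimension and is handled by the inductive hypothesis. This yields a stratification of $X$ into smooth semi-algebraic strata on which $f$ restricts to a constant-rank smooth map, with image contained in a single $T_j$.

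The remaining, and most delicate, step is to arrange that $\{f(S):S\in\calS\}$ is itself a stratification of $f(X)$. After the constant-rank refinement, each $f(S)$ is a smooth semi-algebraic subset of some $T_j$ (by the constant rank theorem applied locally, although one only needs the semi-algebraicity via Tarski--Seidenberg), but distinct strata $S,S'$ mapping into the same $T_j$ may have images that overlap nontrivially or fail the frontier condition. The fix is a refinement loop: enlarge the family $\{T_j\}$ by intersecting with the sets $f(S)$ and their relative closures, re-stratify the target, pull the refinement back via $f^{-1}$, and re-apply the constant-rank step. The main obstacle here is ensuring this loop terminates; termination follows because at each pass either the maximal dimension of a stratum whose image is not yet a single target stratum strictly drops, or the number of strata at that top dimension strictly decreases, giving a well-founded induction on the lexicographic pair $(\dim X, \#\text{top strata})$.
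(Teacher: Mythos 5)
The paper does not prove \refP{strat}; it quotes it as Proposition 9.1.8 of Bochnak--Coste--Roy \cite{BCR:GAR}, so there is no in-paper argument to compare against. Evaluated on its own, your outline has the right shape (stratify the target, stratify fibres with constant rank, iterate a refinement), but two steps have genuine gaps. In the base step, declaring the regular locus to be the top strata and recursing on the lower-dimensional complement produces a partition, not automatically a stratification: you must also ensure that each $\overline{S}\cap X\setminus S$ is a union of lower strata. The standard fix is to strengthen the induction to ``every finite family $\calA$ of semi-algebraic subsets of $X$ admits a refining smooth semi-algebraic stratification,'' feeding the relative frontiers of new top strata back into $\calA$ at each pass; as written, your induction proves something weaker.

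The more serious gap is the termination argument for the refinement loop, which is the heart of the matter. After refining the target by $\{f(S)\}$ and their closures, pulling back, and re-stratifying for smoothness and constant rank, a bad stratum $S$ of dimension $d$ may break into several new strata that are still of dimension $d$, so ``the number of strata at that top dimension strictly decreases'' need not hold. Worse, $f\bigl(S\cap f^{-1}(T'_j)\bigr)$ need not equal the target stratum $T'_j$: the re-stratification for smoothness/constant rank can shrink the generic piece of $S\cap f^{-1}(T'_j)$, so its image may again be a proper subset of $T'_j$, forcing yet another refinement of the target. Your lexicographic measure is therefore not shown to decrease, and termination is not established. This is precisely the delicate point that BCR handle by an entirely different organisation (cylindrical/cell decompositions and a more structured double induction); the result is not a routine corollary of Tarski--Seidenberg plus a naive refinement loop.
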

We will call $\calS$ of \refP{strat} a \emph{smooth semi-algebraic stratification of $X$ with respect to
$f$}.
Given a finite family $\calA$ of semi-algebraic subsets of a semi-algebraic set
$X$ (for example $\calA$ may itself be a stratification),
we say that the stratification $\calS$ is a refinement of $\calA$ if each
element of $\calA$ is a union of strata in $\calS$.
The above smooth stratification of $X$ relative to $f$ can be chosen to
be a refinement of any given finite family $\calA$ of semi-algebraic subsets of
$X$.

There is a larger class  of \emph{subanalytical sets} introduced by Hironaka
in \cite{Hir:tri} (see also \cite{Har:str}) which he proved are also
smoothly stratifiable. 

If $T\subset\BR^N$ is smoothly stratifiable, the \emph{local dimension of $T$
at $x\in T$}, denoted by $\dim_x(T)$,
is the highest dimension of a smooth submanifold $M\subset T$ such that
$x\in\overline M$. The \emph{dimension of
$T$} is $\dim(T)=\sup_{x\in T}\dim_{x}T$.  If $T\subset X$ is an inclusion
of smoothly stratifiable sets in $\BR^N$, we say that $T$ is of
\emph{codimension (at least)  $1$} in $X$ if, for each $x\in T$, $\dim_{x}(T)<\dim_{x}(X)$.  For a semi-algebraic set $X$ and a map $f\colon
X\to Y$,
 $\dim(\overline X\setminus X)<\dim X$ \cite[Theorem 2.8.12]{BCR:GAR} and
$\dim f(X)\leq \dim X$ \cite[\S 2]{BCR:GAR}.

For $k\ge 0$, any subset $Y$ of $\BR^{n}$ has a $k$-dimensional Hausdorff
(outer) measure, $\calH^k(Y)\in[0,\infty]$, defined for example in
\cite[2.10.2]{Fed:GMT}.  In particular, the  $k$-dimensional Hausdorff measure
of a $k$-dimensional  smooth submanifold of $\BR^n$ is its usual
$k$-dimensional volume with respect to the Riemannian metric induced from
$\BR^{n}$.
For a smoothly stratifiable set $X\subset \BR^N$,  $\dim(X)<k$ if and only if $\calH^k(X)=0$.
 For us, the key property will be that bounded semi-algebraic sets have finite volume.
\begin{thm}
\label{T:finiteHausdorff}
If $X$ is a bounded semi-algebraic set of dimension $\leq k$ then its
 Hausdorff $k$-measure $\calH^k(X)$ is finite.
\end{thm}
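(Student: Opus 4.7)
The plan is to reduce via stratification to the case of a smooth semi-algebraic submanifold of exact dimension $k$, and then bound its $k$-volume by exhibiting it (up to further semi-algebraic decomposition) as a finite union of graphs of smooth semi-algebraic maps with controlled Jacobians.

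First, using \refP{strat} applied to the identity map on $X$, I obtain a finite smooth semi-algebraic stratification $\calS=\{S_1,\ldots,S_m\}$ of $X$. Since $\dim X\le k$, every stratum $S_i$ is a connected smooth semi-algebraic submanifold of $\BR^N$ of some dimension $d_i\le k$. Strata with $d_i<k$ contribute $\calH^k(S_i)=0$, and by countable subadditivity of Hausdorff measure it suffices to bound $\calH^k(S_i)$ for the $k$-dimensional strata. Thus I am reduced to the case where $S\subset\BR^N$ is a bounded smooth semi-algebraic submanifold of dimension exactly $k$.

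For such $S$, the key observation is that for each $k$-element subset $I\subset\{1,\ldots,N\}$ the angle between the tangent plane $T_xS$ and the coordinate $k$-plane indexed by $I$ is a semi-algebraic function of $x$. Applying \refP{strat} (or a cylindrical algebraic decomposition) once more, I can further partition $S$ into finitely many smooth semi-algebraic pieces $S^{(j)}$, each equipped with a choice of coordinate projection $\pi_j\colon\BR^N\to\BR^k$ such that $\pi_j|_{S^{(j)}}$ is a smooth immersion with Jacobian bounded below by a positive constant (away from a lower-dimensional subset of $S^{(j)}$, which contributes zero). Each $S^{(j)}$ is then, after permuting coordinates, the graph of a smooth semi-algebraic map $f_j\colon V_j\to\BR^{N-k}$ over a bounded semi-algebraic open set $V_j\subset\BR^k$, and the derivative bound translates into a uniform bound on $\|Df_j\|$ on $V_j$. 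The area formula then yields
$$\calH^k(S^{(j)})=\int_{V_j}\sqrt{\det(I+(Df_j)^T Df_j)}\,d\calL^k < \infty,$$
since the integrand is bounded and $V_j$ has finite Lebesgue measure.

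The main obstacle is the control on the Jacobian in step two: a priori the derivatives of a semi-algebraic smooth function on a bounded domain may blow up at the boundary, so simply projecting $S$ to some coordinate plane need not give a finite volume estimate. The remedy is to exploit the semi-algebraicity of the tangent-plane angles and refine the stratification until, on each piece, one of the finitely many candidate projections is uniformly non-degenerate. As a cleaner alternative I would cite (or re-derive) the Cauchy–Crofton formula: for a bounded $k$-rectifiable semi-algebraic set $S\subset\BR^N$, $\calH^k(S)$ equals, up to a universal constant, the integral $\int_{\mathrm{AffGr}(N-k,N)}\#(S\cap P)\,d\mu(P)$; the fiber cardinality is uniformly bounded by a polynomial-degree argument (Bézout), and only $(N-k)$-planes meeting the bounded set $S$ contribute, so the integral is finite. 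Either route ultimately appeals to the same phenomenon and gives the theorem.
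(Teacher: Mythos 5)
The paper does not give a proof of this theorem; it quotes it from the literature, citing Lelong for the complex-analytic case, Federer's \emph{Geometric Measure Theory} 3.4.8(13) for the real case, and van den Dries for a modern o-minimal treatment. Your sketch reconstructs the classical argument behind those references -- stratify to reduce to a bounded smooth $k$-dimensional semi-algebraic submanifold $S\subset\BR^N$, decompose $S$ into pieces over which some coordinate projection is well behaved, and apply the area formula -- and that strategy is sound.

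Two steps need more care. The mechanism behind ``on each piece one coordinate projection is uniformly non-degenerate'' is the Cauchy--Binet identity: the squared $k$-Jacobians of the $\binom{N}{k}$ coordinate projections $\pi_I$ sum to $1$ on every $k$-dimensional tangent plane, so the semi-algebraic sets $\{x\in S:|J\pi_I(x)|\geq\binom{N}{k}^{-1/2}\}$ cover $S$; naming this makes the decomposition concrete rather than aspirational. More seriously, a lower bound on the Jacobian makes $\pi_j|_{S^{(j)}}$ a local diffeomorphism onto its image, not an injection, so $S^{(j)}$ need not be the graph of a single-valued $f_j$ as you assert. You must either refine further so that $\pi_j$ is one-to-one on each piece (a cylindrical decomposition, or the semi-algebraic trivialization theorem, does this), or run the area formula with multiplicities,
\[
\int_{\BR^k}\#\bigl(\pi_j^{-1}(y)\cap S^{(j)}\bigr)\,dy\;=\;\int_{S^{(j)}}J\pi_j\,d\calH^k\;\geq\;c\cdot\calH^k(S^{(j)}),
\]
and bound the left-hand side using the uniform finiteness of fibers of semi-algebraic maps together with boundedness of $\pi_j(S^{(j)})$. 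Your Cauchy--Crofton variant hits the same point: the fiber bound there is again a consequence of semi-algebraic uniform finiteness, not B\'ezout (which concerns algebraic varieties), and one should invoke the integral-geometric formula in a form that does not already presuppose $\calH^k(S)<\infty$.
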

This fact was discovered in the complex case
by Lelong \cite{Lel:int} and is proved in the real case in \cite[3.4.8 (13)]{Fed:GMT}.
See also  \cite[Proposition 4.1, page 178]{vdD:lim} for a more modern reference.

Compact semi-algebraic sets admit semi-algebraic triangulations. More precisely,
if $X$ is a compact semi-algebraic set,
there exists a finite simplicial subcomplex $K$ of $\BR^{q}$ for some $q$ and a semi-algebraic homeomorphism
$\phi\colon K\iso X$. Moreover, if $\calS$ is a given stratification of $X$, we can choose $K$ and $\phi$ such that
 each stratum  of $\calS$ is the union of the images  of some open simplices of $K$ under $\phi$ \cite[Th\'eor\`eme 9.2.1]{BCR:GAR}.
The image under $\phi$ of the standard combinatorial stratification of $K$ into open simplices gives a new stratification
that refines $\calS$; this is called a \emph{triangulation of $X$ compatible with $\calS$}. Further, the polyhedron $K$
is unique up to PL homeomorphism (see \refT{Hauptvermutung}). Any $n$-simplex can be naturally decomposed as a union of
$n+1$ $n$-dimensional cubes, i.e.~subspaces homeomorphic to $[0,1]^n$, whose vertices are the barycenters of the various
faces of the simplex.  These cubes meet along codimension $1$ faces. Therefore any
compact  semi-algebraic set is semi-algebraically homeomorphic
 to a finite union of cubes; we call this a semi-algebraic \emph{cubification} of the semi-algebraic set.

There is also a weaker notion of triangulation in the noncompact case. Indeed, since $\BR^n$ is semi-algebraically homeomorphic
to $(0,1)^n$, any semi-algebraic set is homeomorphic to a bounded one, so there is no loss of generality in assuming
that a  semi-algebraic set  $X$ is  bounded.
Then its closure $\overline X$ is compact and  admits  a semi-algebraic
triangulation that refines the stratification $\{X,\overline X\setminus X\}$.
 This means that $X$ is semi-algebraically homeomorphic to
 a finite union of open simplices and the collection of these simplices is a stratification.
 The closure in $X$ of each stratum consists of that
 open simplex and of all of its faces that belong to $X$.
We call such an open simplex with some of its faces \emph{semi-open} and this stratification a \emph{semi-open triangulation} of $X$, or by abuse of terminology, a triangulation of $X$.  Any finite stratification of $X$ can be refined by such a semi-open triangulation.

By the main result of \cite{Har:loc} (see also \cite[Proposition 9.3.1.]{BCR:GAR}),
given a semi-algebraic map $f\colon X\to Y$ and a stratification of $X$
there exists a refinement of that stratification such that, for  each stratum $S$ of this refinement,
the restriction $f|_S\colon S\to f(S)$ is a trivial bundle.
In other words,
there exists a semi-algebraic homeomorphism  $S\cong f(S)\times F$ in which $f|_S$ corresponds to the
projection on the first factor. We say that the refined stratification \emph{trivializes} $f$.

A stratification  particularly useful to us is the following
\begin{lemma}
\label{L:smoothstrat}
Let $X\subset \BR^{N}$ be a semi-algebraic set of dimension $k$ and let $f\colon X\to\BR^{n}$ be a semi-algebraic map.
Any stratification of $X$ can be refined to a stratification $\calS$ with each of its strata $S$ a smooth submanifold
(without boundary) of $\BR^{N}$, and with 
the restriction $f|_S$ a smooth map that satisfies:
\begin{itemize}
\item $f|_S$ is of rank $k$,
 $f(S)$ is a submanifold of dimension $k$ in $\BR^{n}$, and $f|_S$ is a diffeomorphism
between $S$ and $f(S)$, or
\item  $f|_S$ is of rank $<k$ and $\dim(f(S))<k$.
\end{itemize}
This dichotomy is preserved by passing to a smooth  refinement of $\calS$.
Stratification $\calS$ can furthermore be refined so that for each pair of strata  $S$ and $S'$ on which the restriction of $f$
is of rank $k$, either $f(S)=f(S')$ or $\overline{f(S)}\cap f(S')=\emptyset$.
 \end{lemma}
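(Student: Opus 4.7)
The plan is to build the required refinement in three stages, applying Proposition \ref{P:strat} and the local triviality result of \cite{Har:loc} in turn, and then verifying that the two bullet conditions follow from the resulting structure.

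First, I would apply Proposition \ref{P:strat} to the map $f\colon X\to\BR^n$, taking as input the given initial stratification of $X$ enriched as a finite family so that the output is a refinement of it. This yields a smooth semi-algebraic stratification $\calS_0$ such that each stratum $S$ is a smooth submanifold of $\BR^N$ (without boundary), $f|_S$ has constant rank $r_S$, and $\{f(S):S\in\calS_0\}$ is a stratification of $f(X)$. On a stratum with $r_S<k$ the image satisfies $\dim f(S)=r_S<k$, so the second bullet already holds; the only trouble is with strata where $r_S=k$ (forcing $\dim S=k$), on which $f|_S$ is merely a local diffeomorphism between $k$-manifolds rather than a global one.

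To upgrade local diffeomorphisms to global ones, I would apply the trivialization theorem from \cite{Har:loc} to $f$ and $\calS_0$, obtaining a further refinement on which each restriction $f|_S\colon S\to f(S)$ is a trivial semi-algebraic bundle, i.e., there is a semi-algebraic homeomorphism $S\cong f(S)\times F$ under which $f|_S$ is the first projection. For a stratum of rank $k$ one has $\dim F=\dim S-\dim f(S)=0$, so $F$ is a finite set of points; connectedness of $S$ (required of strata) then forces $F$ to be a single point, and $f|_S$ is a semi-algebraic homeomorphism. Combined with the fact that it is a local diffeomorphism of rank $k$, this makes $f|_S$ a diffeomorphism onto $f(S)$.

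The trivialization is only homeomorphic, so I would apply Proposition \ref{P:strat} once more to restore smoothness of the strata, constancy of rank of $f|_S$, and the property that $\{f(S)\}$ forms a stratification of $f(X)$. To see the dichotomy is preserved, observe that any smooth substratum $S'$ of a previous rank-$k$ stratum $S$ is either $k$-dimensional, in which case $S'$ is open in $S$ and $f|_{S'}$ is the restriction of a diffeomorphism and hence a diffeomorphism onto $f(S')$, or it has dimension $<k$, in which case $\text{rank}(f|_{S'})<k$ and $\dim f(S')<k$. A smooth substratum of a previous rank-$<k$ stratum is automatically in the second case. The same analysis shows the dichotomy persists under any further smooth refinement of $\calS$.

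Finally, the last assertion about closures is automatic. If $S$ and $S'$ are two rank-$k$ strata of $\calS$ with $f(S)\neq f(S')$, then $f(S)$ and $f(S')$ are distinct $k$-dimensional strata of the stratification of $f(X)$, so the relative closure $\overline{f(S)}\cap f(X)$ is a union of $f(S)$ and strata of dimension strictly less than $k$; hence it cannot contain the $k$-dimensional stratum $f(S')$, giving $\overline{f(S)}\cap f(S')=\emptyset$. The main obstacle in this plan is the interplay between the smooth refinement coming from Proposition \ref{P:strat} and the merely homeomorphic trivialization coming from \cite{Har:loc}: one must verify that the final re-application of Proposition \ref{P:strat} neither destroys the diffeomorphism property on rank-$k$ strata nor introduces new rank-$k$ strata that violate the dichotomy, which is exactly what the open-in-$S$ argument above ensures.
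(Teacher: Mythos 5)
Your proof is correct and reaches the same conclusion, but it differs from the paper's in one substantive respect: the handling of the final condition about closures of images of rank-$k$ strata.

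The paper's own argument proceeds by first stratifying $f(X)$ into smooth submanifolds, pulling back under $f$, applying Proposition~\ref{P:strat}, and then trivializing via \cite{Har:loc}. The fact that $f(S)$ is a $k$-submanifold for a rank-$k$ stratum $S$ is deduced because $f(S)$ lands inside one of the chosen $k$-dimensional strata of $f(X)$, so $f|_S$ is an injective immersion into a $k$-manifold of the same dimension. For the final closure condition the paper does not re-apply Proposition~\ref{P:strat}; instead it constructs an explicit partition $\calA$ of the union of rank-$k$ strata by intersecting with preimages $f^{-1}(f(S_j))$, passes to interiors to get $\calA'$, verifies the closure condition for pairs in $\calA'$ by hand, and re-stratifies the complement $X'$. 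Your route is cleaner at this last step: you apply Proposition~\ref{P:strat} once more after the trivialization, so that $\{f(S):S\in\calS\}$ becomes a stratification of $f(X)$ again, and then the condition $\overline{f(S)}\cap f(S')=\emptyset$ (when $f(S)\ne f(S')$) falls out of the frontier property together with the fact that a stratification is a partition, with no further construction needed. The price, as you correctly identify and handle, is that one must check this third application neither destroys the diffeomorphism property on rank-$k$ strata nor breaks the dichotomy; your argument that a new rank-$k$ stratum is necessarily $k$-dimensional, hence open in a rank-$k$ parent of the trivialized stratification, and therefore inherits the diffeomorphism, is exactly what is needed. A minor stylistic difference: you deduce that $f(S)$ is a $k$-submanifold from the embedding criterion (injective smooth immersion that is a homeomorphism onto its image), rather than from containment in a pre-chosen $k$-stratum of $f(X)$; both are valid. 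In short, your approach trades the paper's bespoke partition-and-interior construction for one more appeal to Proposition~\ref{P:strat}, which most readers would likely find more transparent.
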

\begin{proof}
Start with a stratification of $f(X)$ into smooth submanifolds
and refine its preimage under $f$ into a smooth semi-algebraic stratification  of  $X$ with respect to $f$ as in \refP{strat}.
Take a refinement that trivializes
$f$ and such that each stratum is connected. We still have that on each
stratum $S$ of this refinement, $f|_S$ is either of constant rank $k$ or everywhere of rank $<k$.
Let $S$ be a stratum such that $f|_S$ is of constant rank $k$.
Since $f|_S\colon S\to f(S)$ is a trivial bundle  and its domain and codomain are both of dimension
$k$, its fiber is discrete. Since $S$ is connected, we deduce that $f|_S$ is a bijection.
Moreover $f(S)$ is contained in a submanifold of dimension $k$, i.e~$f|_S$ is a smooth injective immersion
into a submanifold of the same dimension.  Thus $f(S)$ is a submanifold of dimension $k$ and
 $f|_S$ is a diffeomorphism onto its image. On the other hand, if $S$ is a stratum such
that $\rank(f|_S)<k$ then $\dim f(S)<k$.

It is clear that this dichotomy is preserved by passing to a refinement.

For the last part, let $S_{1},\cdots,S_{l}$ be the strata on which $f$ is of rank $k$.
Let $\calA$ be the smallest partition of $\cup_{i=1}^{l}S_{i}$ such that each set $S_{i}\cap\cap_{j\in J}f^{-1}(f(S_{j}))$ is a union
of elements of $\calA$
for $1\leq i\leq l$ and $J\subset\{1,\cdots,l\}$. Consider the family $\calA'=\{\inter_{X} (A):A\in\calA\}$, where
$\inter_{X} (A)$ is the interior of $A$ in $X$.
Then for $A_{1},A_{2}\in\calA'$ we have that $f(A_{1})=f(A_{2})$ or $\overline{f(A_{1})}\cap f(A_{2})=\emptyset$.
Moreover $\cup\calA'$ is dense in $\cup_{i=1}^{l}S_{i}$. Now let $X'=X\setminus\cup\calA'$ (this is also semi-algebraic) and take a smooth semi-algebraic
stratification $\calS'$ of $X'$ with respect to $f|_{X'}$ and that refines $\{\overline{A'}\cap X':A'\in\calA'\}$
It is straighforward to check that $\calA'\cup\calS'$ is a stratification of $X$ with
the desired properties.
\end{proof}

A \emph{maximal stratum} is a stratum that is not a subset of the closure of any other stratum. The closure of a maximal stratum is disjoint from
any other maximal stratum. The union of the maximal strata of a smooth stratification 
is a dense smooth submanifold (not necessarily of the same dimension on each connected component).

By partitioning each stratum into its connected components, we can refine any stratification into a stratification whose strata are connected.
 All the properties of stratifications considered above (smoothness, trivializations, properties from \refL{smoothstrat})
are preserved by taking such connected refinements.


\subsection{Review of PL topology}
\label{S:PLtop}


We will at several points throughout the paper  need various results from PL topology. In particular we will need the \emph{Hauptvermutung} (uniqueness of triangulations)
for semi-algebraic sets and the notion of \emph{collapsing polyhedra} (strong form of contractibility).
The basic reference from which we extract the following review of these concepts is \cite[Chapters I-II]{Hud:PLT}  (most of the results are also in \cite{RoSa:PLT}).

A \emph{closed  simplex}
 in $\BR^n$ is the convex hull of at most $n+1$ points in general position. An \emph{open 
simplex} is the interior of a closed  simplex in the affine subspace of the same dimension and containing the closed simplex
(a point
is both a closed and an open $0$-simplex). A closed  simplex can be partitioned into a finite family of open simplices called its \emph{faces}.
In this paper we define a \emph{polyhedron} as a subspace of $\BR^n$ that is the union of \emph{finitely} many closed
 simplices (therefore in this paper a polyhedron is always compact).
A \emph{PL map} between two polyhedra is a continuous map whose graph is a polyhedron
and it is  a  \emph{PL homeomorphism} if it is bijective.

A \emph{simplicial complex} is a finite partition of a polyhedron into open simplices, also called a \emph{triangulation} of the
polyhedron.
A \emph{subdivided triangulation} of a given triangulation is another triangulation which is a refinement (as partition) of the former.
Any two triangulations of the same polyhedron admit a common subdivided triangulation. We will often abuse notation by identifying a simplicial complex (which is a partition) with its geometric realization  
(which is a polyhedron obtained by taking the union of the elements of the partition.)

A \emph{PL ball} is a polyhedron PL homeomorphic to a closed  simplex.
A subspace $F\subset B$ of a PL ball $B$ is called a \emph{face} if there is
a PL homeomorphism from $B$ to some simplex that sends $F$ onto a face of that simplex.
If $P_0\subset P_1$ is an inclusion of polyhedra one says that $P_1$ \emph{elementary collapses} to $P_0$ if
$\overline{P_1\setminus P_0}$ is a ball and if $\overline{P_1\setminus P_0}\cap P_0$ is a face of that ball. More generally, a polyhedron $P$ \emph{collapses} to a subpolyhedron $P_0$ if there is a decreasing sequence of polyhedra,
$P=P_r\superset P_{r-1}\superset\dots\superset P_0$, such that $P_{i}$ elementary collapses to $P_{i-1}$, $1\leq i\leq r$.
In this situation we write $P\searrow P_0$. One says that a polyhedron $P$ is \emph{collapsible} if $P\searrow *$, where $*$ is a one-point space.
A collapsible polyhedron is clearly contractible but the converse is not true (a counterexample is given by the ``dunce hat'' of Zeeman \cite{Zeeman}).

We say that a simplicial complex $K$
\emph{simplicially collapses} to a subcomplex $K_0$, and we write $K\searrow^sK_0$,
if there exists a sequence of
simplicial subcomplexes $K=K_r\superset K_{r-1}\superset\cdots\superset K_0$ such that
 $K_i=K_{i-1}\cup\tau_i$ where $\tau_i$ is a simplex
with an open codimension $1$ face which is disjoint form $K_{i-1}$
\cite[Definition in II.1 and remark following it, page 43]{Hud:PLT}.

It is clear that if a simplicial complex simplicially collapses to a subcomplex, then
there is a collapse of the underlying polyhedra. Conversely, suppose that a polyhedron $P$ collapses to a subpolyhedron $P_0$ and that $K$
is a triangulation of $P$ such that some subcomplex $K_0$ of $K$ is a triangulation of $P_0$.  Then
 there exists a subdivision $K'$ of $K$ that simplicially collapses to $K_0'$ \cite[Theorem 2.4]{Hud:PLT}.
In particular, any triangulation of a PL ball admits a subdivided triangulation that simplicially collapses to a point.

We have already reviewed  the fact that a compact semi-algebraic set admits a semi-algebraic triangulation, i.e.~is semi-algebraically
homeomorphic to a compact polyhedron. A very useful fact is the following result about the uniqueness of this triangulation, proved by  Shiota and Yokoi \cite[Corollary 4.3]{ShYo:tri}.
\begin{thm}[Hauptvermutung for compact semi-algebraic sets]
\label{T:Hauptvermutung}
If a compact semi-algebraic set is semi-algebraically homeomorphic to two polyhedra,
then those polyhedra are PL-homeomorphic.
\end{thm}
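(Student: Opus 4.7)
The plan is to reduce the statement to the purely combinatorial assertion that the two given simplicial complexes admit subdivisions that are \emph{abstractly isomorphic}. Once such subdivisions exist, the simplexwise-linear realization of that combinatorial isomorphism is a PL homeomorphism between the underlying polyhedra. Writing $f\colon |K_{1}|\to |K_{2}|$ for the semi-algebraic homeomorphism between the two compact polyhedra, the goal is therefore to construct subdivisions $T_{1}$ of $K_{1}$ and $T_{2}$ of $K_{2}$, together with an inclusion-preserving bijection between their open simplices, such that $f$ carries each open simplex of $T_{1}$ onto the matched open simplex of $T_{2}$. Note that we never need to show that $f$ itself is PL; only the \emph{existence} of some PL homeomorphism between $|K_{1}|$ and $|K_{2}|$ is at issue.

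First I would apply \refP{strat}, refined by \refL{smoothstrat}, to $f$ in order to produce a finite smooth semi-algebraic stratification $\calS_{1}$ of $|K_{1}|$, refining the simplicial structure of $K_{1}$, such that the image family $\calS_{2}:=\{f(S):S\in\calS_{1}\}$ is a stratification of $|K_{2}|$ refining $K_{2}$, with each restriction $f|_{S}\colon S\to f(S)$ a semi-algebraic diffeomorphism. Because $f$ is a homeomorphism, the correspondence $S\mapsto f(S)$ is a bijection of strata preserving closure relations. The semi-algebraic triangulation theorem (compatibility with a finite family of semi-algebraic subsets) then yields a triangulation $T_{1}$ of $|K_{1}|$ compatible with $\calS_{1}$, and what one wants is a matching triangulation $T_{2}$ of $|K_{2}|$ compatible with $\calS_{2}$ into which $f$ sends the open simplices of $T_{1}$ bijectively.

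The main obstacle is exactly this matching: two independent applications of the triangulation theorem on the two sides will not in general produce simplex-to-simplex images under $f$. Following Shiota and Yokoi, the way around this is to build both triangulations simultaneously by induction on the dimension $k$ of the strata, exploiting the conical local structure of semi-algebraic sets. At each point of a $k$-dimensional stratum, $|K_{1}|$ is semi-algebraically homeomorphic to a cone on its link, and $f$ transports this cone structure onto a comparable one in $|K_{2}|$. Having already built matching triangulations of the $(k-1)$-skeleta of $\calS_{1}$ and $\calS_{2}$, one extends to dimension $k$ by triangulating each $k$-stratum of $\calS_{1}$ into cones over the already-chosen triangulation of its frontier and transporting the result via $f$ to the corresponding stratum of $\calS_{2}$; a barycentric-style subdivision then converts the cones into genuine simplices, producing matched $T_{1}$ and $T_{2}$. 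The simplexwise-linear map $|T_{1}|\to |T_{2}|$ induced by the resulting combinatorial isomorphism is then the desired PL homeomorphism between $|K_{1}|$ and $|K_{2}|$.
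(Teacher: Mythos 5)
The paper does not prove this statement; it is imported directly from Shiota and Yokoi \cite[Corollary 4.3]{ShYo:tri}, so there is no internal argument to compare yours against.

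Your sketch mirrors the broad strategy of \cite{ShYo:tri} (stratify $f$, build matching triangulations inductively on stratum dimension, exploit local cone structure), but it slides over exactly the step where the theorem's real content lies. The semi-algebraic triangulation theorem produces a semi-algebraic homeomorphism $\phi\colon |L|\to |K_1|$ with the images $\phi(\sigma)$ of open simplices of $L$ refining those of $K_1$; these images are \emph{curvilinear}, not linear. So the ``triangulation $T_1$ of $|K_1|$ compatible with $\calS_1$'' you obtain is not a linear subdivision of $K_1$, and a combinatorial isomorphism between such a $T_1$ and a similarly curvilinear $T_2$ in $|K_2|$ yields only another semi-algebraic homeomorphism $|K_1|\to|K_2|$, not a PL one. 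Proving that a semi-algebraic triangulation of a polyhedron is, after subdivision, PL equivalent to the given linear one is precisely the content of the Hauptvermutung, so as written the reduction is circular.

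The inductive step also has an independent gap: the closure $\overline S$ of a $k$-dimensional stratum is not in general a cone over its frontier (an open annulus already fails this), so one cannot ``triangulate each $k$-stratum into cones over the already-chosen triangulation of its frontier.'' Local conical structure at individual points is a theorem, but promoting it to a global cone decomposition compatible with a prescribed frontier triangulation requires the collaring and regular-neighborhood machinery of PL topology used in tandem with semi-algebraicity --- and this is what a ``barycentric-style subdivision'' does not deliver.
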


 The following is an immediate consequence.
\begin{cor}\label{C:collapsibility}
If a compact semi-algebraic set is semi-algebraically
homeomorphic to a collapsible polyhedron, then any semi-algebraic triangulation of it admits a subdivided
triangulation that simplicialy collapses to a vertex.
\end{cor}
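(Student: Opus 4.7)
The plan is to transfer collapsibility, via the Hauptvermutung, to the polyhedron underlying the given triangulation, and then promote the resulting polyhedral collapse to a simplicial one by Hudson's refinement theorem that was already cited in the paragraph preceding \refT{Hauptvermutung}.

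Let $X$ be the compact semi-algebraic set, $P$ the collapsible polyhedron, $\alpha\colon X\iso P$ a semi-algebraic homeomorphism, and $\psi\colon|L|\iso X$ an arbitrary semi-algebraic triangulation. First I would form the composite $\alpha\circ\psi\colon|L|\iso P$, which is a semi-algebraic homeomorphism between two compact polyhedra, and apply \refT{Hauptvermutung} to obtain a PL homeomorphism $|L|\cong P$. Since collapsibility is defined purely in terms of the PL structure, it is a PL invariant, so the polyhedron $|L|$ is itself collapsible. By definition this gives a polyhedral collapse $|L|\searrow\{v\}$ for some point $v\in|L|$.

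To promote this to a simplicial statement, I would first pass to a subdivision $L_1$ of $L$ having $v$ among its vertices (for example by starring at $v$ within the unique open simplex of $L$ that contains it). Then $\{v\}$ is a subcomplex of $L_1$ triangulating the subpolyhedron $\{v\}\subset|L|$, so the theorem of Hudson quoted from \cite[Theorem 2.4]{Hud:PLT} produces a subdivision $L'$ of $L_1$ that simplicially collapses to the subcomplex $\{v\}$. Since $L'$ is automatically a subdivision of the original $L$, this is the required subdivided triangulation.

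There is no substantial obstacle: the only mild technical point is the preliminary subdivision used to make $v$ a vertex of the triangulation, which is handled by a single stellar operation. The rest is a direct combination of \refT{Hauptvermutung} and the Hudson refinement result already available to us.
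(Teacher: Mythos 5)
Your argument is correct and is precisely the "immediate consequence" the paper has in mind: compose the given semi-algebraic triangulation with the homeomorphism to the collapsible polyhedron, apply \refT{Hauptvermutung} to transfer collapsibility to $|L|$ via a PL homeomorphism, and then invoke the Hudson refinement theorem already quoted in the text preceding \refT{Hauptvermutung}. The one wrinkle you explicitly address — a preliminary stellar subdivision making the collapse target $v$ a vertex so that $\{v\}$ is a subcomplex, as required by Hudson's Theorem 2.4 — is exactly the right technical fix and completes the argument.
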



\subsection{Review of currents}
\label{S:currents}


To define semi-algebraic chains, we will find the language of currents very convenient, so we review this notion here. A basic reference is \cite{Fed:GMT}.

Denote by
$\calD^k(\BR^n)$  the vector space of smooth differential $k$-forms on
$\BR^n$ with compact support.  This space can be endowed with the
with the usual Frechet topology (see \cite[4.1.1]{Fed:GMT})
but we will not use it\zz{changed following Bob'suggestion}.
The exterior differential $d\colon \calD^k(\BR^n)\to\calD^{k+1}(\BR^n)$ is linear and continuous.

A \emph{$k$-current} on $\BR^n$ is a linear continuous form $T\colon \calD^k(\BR^n)\to\BR$. We denote
by $\calD_k(\BR^n)$ the vector space of $k$-currents. In other words, $\calD_k(\BR^n)$ is the topological
dual of $\calD^k(\BR^n)$.
The adjoint of the exterior differential
$d\colon\calD^k(\BR^n)\to\calD^{k+1}(\BR^n)$ defines a boundary operator
$\partial\colon \calD_{k+1}(\BR^n)\to\calD_{k}(\BR^n)$ by $\partial T(\omega)=T(d\omega)$.

For $U$ an open subset of $\BR^{n}$, we define $\calD^{k}(U)$ as the subspace of smooth forms in $\BR^n$ whose support lies in
$U$.
The \emph{support of a current $T$} in $\BR^{n}$, $\spt(T)$, is the smallest closed subset of $\BR^{n}$
such that
$\omega\in\calD^{k}(\BR^{n}\setminus \spt(T))\Longrightarrow T(\omega)=0$.
Clearly $\spt(\partial T)\subset  \spt(T)$.
For $X\subset\BR^n$ we set $$\calD_k(X)=\{T\in\calD_k(\BR^n):\spt(T)\subset X\}$$ and thus  get a
chain complex $$\calD_*(X):=\left(\bigoplus_{k\geq 0} \calD_k(X),\partial\right).$$

The most important example of a current for us is the following
\begin{example}
\label{X:bbr} Denote by $\calH^{k}$ the $k$-dimensional Hausdorff measure on $\BR^n$.
Let $V$ be a smooth $k$-dimensional oriented submanifold of $\BR^n$ such that $\calH^{k}(V)<\infty$. One
 defines a $k$-current $\bbr{V}\in\calD_k(\BR^n)$ by $$\bbr{V}(\omega)=\int_V\omega, \ \ \ \ \omega\in\calD^k(\BR^n).$$
\end{example}

Currents can be equipped with various (semi-)norms and we review some of them from
 \cite[\S 1.8.1 page 38, \S  4.1.7 page 358, and \S 4.1.12  pages 367-368]{Fed:GMT}
\begin{definition}
\label{D:flatchain}
\ \ 
\begin{itemize}
\item
The \emph{comass}
of a differential form $\omega\in\calD^k(\BR^n)$ is
\[\mass(\omega)=\sup\{\omega(x)(v_1\wedge\cdots\wedge v_k):x\in\BR^{n},v_i\in\BR^n,\|v_i\|\leq1\}\in\BR_+\]
\item
The \emph{mass} of a $k$-current $T\in\calD_k(\BR^n)$  is
$$\mass(T)=\sup\{T(\omega):\omega\in\calD^k(\BR^n),\mass(\omega)\leq1\}\in\BR_+\cup\{\infty\}.$$
\item
A current $T\in\calD_{k}(\BR^{n})$ is called \emph{normal} if $\spt(T)$ is compact and
$\mass(T)$ and $\mass(\partial T)$ are finite.
\item
The \emph{flat seminorm of $T\in\calD_{k}(\BR^{n})$ relative to a compact set $K\subset\BR^{n}$} is defined by
$$\flatnorm_{K}(T)=\inf\{\mass(T-\partial S)+\mass(S): S\in\calD_{k+1}(\BR^{n}),\spt(S)\subset K\}.$$

\item
A current is called a \emph{flat chain} if it is the limit of a sequence of normal currents  for the flat seminorm relative to some compact set.
\end{itemize}
\end{definition}
We will write $\mass_k(T)$ for the mass when we want to emphasize the dimension of the current.  Also, when there is no ambiguity about the compact set $K$ in the definition of the flat seminorm, we will drop it from the notation.

A normal current is clearly a flat chain. Also, the boundary of a flat chain is a flat chain \cite[p. 368]{Fed:GMT}.
There are two other properties of flat chains we will use:
\begin{enumerate}
\item  \emph{Absolute continuity}, which states that if $T$
is a $k$-dimensional flat chain and if $\mathcal H^k (\spt(T )) = 0$, then
$T = 0$ \cite[Theorem 4.1.20.]{Fed:GMT};  and
\item \emph{Constancy theorem}, 
 which states that if $U$ is an open
set in $\BR^n$,  $T$ is a flat $k$-chain in $\BR^n$ with $U\cap\spt(T)$
a connected $k$-dimensional oriented smooth submanifold $V$ and $U\cap \spt( \partial T)=\emptyset$,
 then, in $U$, $T$ is just a constant multiple
of the $k$-current $\bbr{V}$ given by integration along  $V$  \cite[\S 4.1.31 (2)]{Fed:GMT}. In case $T$ is an
integral flat chain (as will be the case for the semi-algebraic {\current}s
 considered later), this multiple is an integer.
\end{enumerate}

The norms of \refD{flatchain} can be used to study convergence of sequences of currents. It is clear that convergence in mass implies
convergence in the flat seminorm.
We can also equip  $\calD_k(\BR^n)$
with its weak topology characterized by the fact that a sequence of currents
$(T_{n})_{n\geq1}$ converges weakly to $T$ in $\calD_k(\BR^n)$, which we denote by $T_{n}\rightharpoonup T$,
 if, for each $\omega\in \calD^k(\BR^n)$, $(T_{n}(\omega))_{n\geq1}$ converges to $T(\omega)$ in $\BR$.
An elementary computation shows that convergence in flat norm implies weak convergence.

If $V$ is a smooth oriented submanifold of dimension $k$ in $\BR^{n}$ with finite Hausdorff $k$-measure,
then the current $\bbr{V}\colon\omega\mapsto\int_{V}\omega$ is of mass
 $\mass(\bbr{V})=\calH^{k}(V)$.


\section{Semi-algebraic {\current}s}
\label{S:SAcur}


The goal of this section is to functorially associate to each semi-algebraic set $X$ a chain subcomplex
$(\Ch_{*}(X),\partial)\subset (\calD_{*}(X),\partial)$ consisting
\zz{
     Following the suggestion of Bob, I changed semi-algebraic
     currents by semi-algebraic chains (or, shortly, chains).
     To do so I introduced a command \texttt{backslash current}
     and used that command at each place where I thought that
     ``chain'' was better than ``current'', essentially each time we
     talk about semi-algebraic currents.
     }
 of ``semi-algebraic \current s''.

Let $M$ be an oriented compact semi-algebraic manifold of dimension $k$
 and let $f\colon M\to\BR^{n}$
be a semi-algebraic map. Consider a stratification of $M$ with connected strata, having all the properties of \refL{smoothstrat}.
Let $S_{1},\cdots,S_{l}$ be the strata such that $f|_{S_{i}}$ is of rank $k$. Each of the submanifolds $S_{i}$ is
oriented by the orientation of $M$.
Consider the family $\calN=\{f(S_{i}):1\leq i\leq l\}$ of image submanifolds,
which could be of cardinality $<l$ because $f(S_{i})=f(S_{j})$ for $i\not=j$ is a possibility. Fix an arbitrary orientation on each $N\in\calN$ and
set $n_{N}=\sum_{i,f(S_i)=N}\epsilon_{i}$ where $\epsilon_{i}=\pm 1$, the sign depending on whether the diffeomorphism
$f|_{S_{i}}\colon S_{i}\iso N$ preserves or reverses the orientation. Since $M$ is compact, the semi-algebraic set $N\subset f(M)$
is bounded and by \refT{finiteHausdorff} it is  of finite $k$-volume. Therefore we can define a current
 $f_{*}\bbr{M}\in\calD_{k}(\BR^{n})$
by the formula
\begin{equation}\label{E:define-fM}
f_{*}\bbr{M}(\omega)=\sum_{N\in\calN}n_{N}\int_{N}\omega
\end{equation}
for $\omega\in\calD^{k}(\BR^{n})$.  This current is of finite mass, i.e.~$\mass(f_{*}\bbr{M})=\sum_{N\in\calN}|n_{N}|\cdot\calH^{k}(N)<\infty$.
Notice also that
$$f_{*}\bbr{M}(\omega)=\sum_{i=1}^{l}\int_{{S_{i}}}f^{*}\omega.$$
It is easy to check that the right side of the last equation does not depend on the choice of the stratification, because
it is clearly unchanged under refinement, and any two stratifications
satisfying the properties of \refL{smoothstrat} have a common refinement with the same properties.
It is also clear that if $h\colon M'\iso M$ is a semi-algebraic homeomorphism preserving orientation, then currents
$f_*(\bbr{M})$ and $(f\circ h)_*(\bbr{M'})$ are equal.
Note that if $f$ is also smooth or even Lipschitz, then $f_*(\bbr{M})$ coincides with the usual
pushforward operation for currents \cite[\S 4]{Fed:GMT}. However, a semi-algebraic map in general is not Lipschitz
even though it is piecewise smooth.

\begin{definition}
\label{D:C}
A \emph{semi-algebraic $k$-{\current} in $\BR^{n}$} is a current of the form $f_{*}\bbr{M}\in\calD_{k}(\BR^{n})$
 as constructed above
for some oriented compact semi-algebraic $k$-dimensional manifold $M$ and some semi-algebraic map $f\colon M\to\BR^{n}$.
The set of semi-algebraic $k$-{\current}s in $\BR^n$ is denoted by $\Ch_{k}(\BR^{n})$.
If $X$ is a semi-algebraic set in $\BR^{n}$, we set
$$\Ch_{k}(X)=\{\gamma\in \Ch_{k}(\BR^{n}):\spt(\gamma)\subset X\}.$$
The elements of  $\Ch_{k}(X)$ are called\zz{see the previous footnote
  on semi algebraic currents vs. chain; changed again vs. version of
  18 january} the \emph{semi-algebraic
  $k$-\current s in $X$}, or simply \emph{$k$-chains}.
\end{definition}

\begin{lemma} $\Ch_k(X)$ is a subgroup of $\calD_k(X)$.
\end{lemma}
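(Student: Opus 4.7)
The plan is to reduce the statement to showing that $\Ch_k(\BR^n)$ itself is a subgroup of $\calD_k(\BR^n)$, since $\spt(\gamma_1+\gamma_2)\subset \spt(\gamma_1)\cup \spt(\gamma_2)$ and $\spt(-\gamma)=\spt(\gamma)$ mean that the support condition carrying $\Ch_k(\BR^n)$ into $\calD_k(X)$ is automatically preserved by sums and negatives. Thus I need to produce, for any two semi-algebraic $k$-currents $\gamma_1,\gamma_2\in\Ch_k(\BR^n)$, oriented compact semi-algebraic $k$-manifolds together with semi-algebraic maps to $\BR^n$ that realize $-\gamma_1$ and $\gamma_1+\gamma_2$ via the construction of \refE{define-fM}.

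For closure under negation, write $\gamma=f_*\bbr{M}$ and let $M^{\mathrm{op}}$ denote $M$ with the reversed orientation. Reversing the orientation negates each sign $\epsilon_i$ in the formula defining $f_*\bbr{M}$, hence negates every coefficient $n_N$, so $f_*\bbr{M^{\mathrm{op}}}=-\gamma$. For the zero current I can simply use $\gamma+(-\gamma)=0$ once addition is handled.

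For closure under addition, given $\gamma_i=(f_i)_*\bbr{M_i}$ with $M_i\subset\BR^{N_i}$, I form the disjoint union $M=M_1\sqcup M_2$ by embedding it in $\BR^{N_1+N_2+1}$ using two disjoint parallel affine slices (sending $M_1$ to the slice $x_{N_1+N_2+1}=0$ and $M_2$ to $x_{N_1+N_2+1}=1$, padding zero coordinates as needed). This $M$ is a compact semi-algebraic $k$-manifold with the orientation inherited from the $M_i$, and the map $f\colon M\to\BR^n$ defined piecewise by the $f_i$ is semi-algebraic (its graph is a finite union of the images of the graphs of $f_1$ and $f_2$, which is semi-algebraic). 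Choosing a smooth semi-algebraic stratification of $M$ subordinate to the decomposition $M=M_1\sqcup M_2$ and satisfying \refL{smoothstrat}, the alternative formula
\[
f_*\bbr{M}(\omega)=\sum_{i=1}^l\int_{S_i}f^*\omega
\]
splits as the sum of the analogous expressions for $f_1$ and $f_2$, giving $f_*\bbr{M}=\gamma_1+\gamma_2$.

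The only subtlety worth pausing on is checking that the disjoint union construction is genuinely a semi-algebraic object of the ambient $\BR^N$ and that its pushforward really equals the sum of the individual pushforwards; both of these are immediate from the explicit stratified formula above together with the observation (already recorded in the paragraph preceding \refD{C}) that the value of $f_*\bbr{M}$ is independent of the chosen stratification and of semi-algebraic orientation-preserving homeomorphisms of the source. No deeper input is required.
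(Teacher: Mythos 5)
Your proof is correct and follows essentially the same route as the paper: negation is handled by reversing orientation, and addition by realizing $\gamma_1+\gamma_2$ as $(f_1\amalg f_2)_*\bbr{M_1\amalg M_2}$. You merely spell out the embedding of the disjoint union into a common ambient Euclidean space, which the paper dispatches with a "without loss of generality."
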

\begin{proof} The chain $f_{1*}\bbr{M_{1}}+f_{2*}\bbr{M_{2}}$ can be represented by
$(f_1\amalg f_2)_{*}\bbr{M_1\amalg M_2}$ where, without loss of generality, we have supposed that the manifolds $M_{1}$ and $M_{2}$ are disjoint subsets of the same $\BR^{m}$. Also 
$-f_{*}\bbr{M}=f_{*}\bbr{-M}$ where $-M$ is $M$ with the opposite orientation.
\end{proof}

The proof of the following is straightforward.
\begin{prop}
\label{P:dimC}
 If $i>\dim(X)$, then $\Ch_i(X)=0$.
\end{prop}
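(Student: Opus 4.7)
The plan is to exploit the explicit decomposition $f_{*}\bbr{M} = \sum_{N \in \calN} n_{N}\bbr{N}$ given by formula \refE{define-fM}, together with the separation clause at the end of \refL{smoothstrat} and the Hausdorff measure criterion for dimension recalled just before \refT{finiteHausdorff}. The strategy is to show that every nonzero summand $n_{N}\bbr{N}$ forces its underlying $i$-dimensional submanifold $N$ to sit inside $X$, while $X$ is too thin to contain such a submanifold.

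First I would take an arbitrary $\gamma \in \Ch_{i}(X)$ and write $\gamma = f_{*}\bbr{M}$ with $\dim M = i$, choosing the defining stratification of $M$ so that all conclusions of \refL{smoothstrat} hold. In particular, for distinct $N, N' \in \calN$ one has $\overline{N} \cap N' = \emptyset$, and by swapping the roles of $N$ and $N'$ also $N \cap \overline{N'} = \emptyset$.

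Next I would prove that every $N \in \calN$ with $n_{N} \neq 0$ satisfies $N \subset \spt(\gamma)$. Fix $x \in N$. Since $\calN$ is finite, the separation property produces an open neighborhood $U$ of $x$ in $\BR^{n}$ such that $U \cap \overline{N'} = \emptyset$ for every $N' \in \calN \setminus \{N\}$. For any test form $\omega \in \calD^{i}(\BR^{n})$ with support in $U$, formula \refE{define-fM} then reduces to $\gamma(\omega) = n_{N} \int_{N \cap U} \omega$, since all the other terms vanish. Picking $\omega$ to be a bump $i$-form supported in $U$ and integrating nontrivially over the nonempty $i$-submanifold $N \cap U$, we obtain $\gamma(\omega) \neq 0$, so $x \in \spt(\gamma) \subset X$.

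Finally I would conclude by a dimension count: since $i > \dim X$ we have $\calH^{i}(X) = 0$, hence $\calH^{i}(N) = 0$, contradicting the fact that $N$ is a nonempty $i$-dimensional smooth submanifold and thus has positive $i$-volume. Therefore $n_{N} = 0$ for every $N \in \calN$, and $\gamma = 0$. The step requiring the most care is the local identification of $\gamma$ with $n_{N}\bbr{N \cap U}$ on $U$: one must rule out any hidden cancellation between distinct pieces of $\calN$, and the separation clause of \refL{smoothstrat} is precisely what makes this impossible.
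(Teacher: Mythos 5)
Your proof is correct, and it is self-contained. The paper itself dismisses this proposition as "straightforward" and gives no argument, so there is nothing to compare against line by line; but it is worth pointing out that your route is actually the \emph{cleaner} of the two natural ones at this point in the paper's logical order. The shorter argument one might reach for — $\gamma$ is a flat $i$-chain, $\spt(\gamma)\subset X$ with $\calH^i(X)=0$, so $\gamma=0$ by absolute continuity (\cite[Theorem 4.1.20]{Fed:GMT}) — requires first knowing that $f_*\bbr{M}$ is a flat chain, and the paper only establishes that fact inside the proof of \refT{Stokes}, which appears \emph{after} \refP{dimC}. Your argument sidesteps this by working directly with the decomposition \refE{define-fM} and the separation clause of \refL{smoothstrat}, which are available at that stage. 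The key step you flag — that $N\subset\spt(\gamma)$ for every $N\in\calN$ with $n_N\neq 0$, so that no hidden cancellation among the $\bbr{N}$ can hide a high-dimensional piece — is exactly the point that needs care, and your use of $\overline{N}\cap N'=\emptyset$ (together with finiteness of $\calN$) to localize the current near a point of $N$ settles it correctly. The dimension count via $\calH^i(X)=0$ (using the Hausdorff-measure criterion for dimension recalled just before \refT{finiteHausdorff}) then finishes the proof.
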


It is clear that any semi-algebraic \current{} in $\Ch_k(X)$ can be represented by a semi-algebraic map $g\colon M\to X$
where $M$ is an oriented compact semi-algebraic manifold of dimension $k$. However, here is  another useful representation of
semi-algebraic \current s.

\begin{prop}[{\bf Alternative description of semi-algebraic {\current}s}]
\label{P:altSAcur}
Let $V_1,\cdots,V_r$ be disjoint smooth semi-algebraic oriented $k$-dimensional submanifolds of $\BR^N$
 such that each $\overline{V_i}$ is a compact subset
of the semi-algebraic set $X\subset \BR^N$. Let $n_1,\cdots,n_r$ be integers. Then
$\sum_{i=1}^r n_i\bbr{V_i}$ represents an element of $\Ch_k(X)$.
Conversely, any element of  $\Ch_k(X)$ admits such a representation.
\end{prop}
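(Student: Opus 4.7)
My plan is to prove the two assertions separately. The converse direction is essentially immediate from \refL{smoothstrat}, while the forward direction needs an explicit construction via triangulation. The only mild subtlety I foresee is justifying that a closed $k$-simplex (which has corners as a subset of Euclidean space) qualifies as a semi-algebraic manifold with boundary in the sense of this paper; this is handled by observing that radial scaling from the barycenter produces a semi-algebraic homeomorphism between a closed $k$-simplex and the closed $k$-disk, the latter being manifestly a semi-algebraic manifold with boundary.

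For the converse, I would start with a representation $\gamma=f_*\bbr{M}$ for some compact oriented semi-algebraic $k$-manifold $M$ and semi-algebraic $f\colon M\to X$, and apply \refL{smoothstrat} to $f$, invoking its full strength, including the last clause about disjoint or equal images on rank-$k$ strata. Let $V_1,\ldots,V_r$ be the distinct image submanifolds $f(S)$ for strata $S$ on which $f|_S$ has rank $k$. The final clause of the lemma forces $\overline{V_i}\cap V_j=\emptyset$ for $i\ne j$, so the $V_i$ are pairwise disjoint smooth semi-algebraic $k$-dimensional submanifolds of $\BR^N$. Since $M$ is compact and $f$ continuous, $f(M)$ is compact, so each $\overline{V_i}\subset f(M)\subset X$ is compact. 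Choose arbitrary orientations on the $V_i$ and set $n_i=\sum_{S:f(S)=V_i}\epsilon_S$ as in the construction \refE{define-fM}; the same formula then gives $\gamma=\sum_{i=1}^r n_i\bbr{V_i}$ directly.

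For the forward direction, since $\Ch_k(X)$ is closed under sums and sign changes, it suffices to show each $n_i\bbr{V_i}$ lies in $\Ch_k(X)$. The compact semi-algebraic set $\overline{V_i}$ has dimension $k$ with $\dim(\overline{V_i}\setminus V_i)<k$. Choose a semi-algebraic triangulation of $\overline{V_i}$ refining the stratification $\{V_i,\overline{V_i}\setminus V_i\}$, and let $\sigma_1,\ldots,\sigma_s$ be its closed $k$-simplices; by construction each $\inter(\sigma_j)$ lies in $V_i$. Each $\sigma_j$ is a compact semi-algebraic $k$-manifold with boundary (by the remark above), which I orient by pullback of the orientation of $V_i$ through $\inter(\sigma_j)\hookrightarrow V_i$. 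Let $M_i$ be the disjoint union of $|n_i|$ copies of the $\sigma_j$, with all orientations reversed if $n_i<0$, and let $f_i\colon M_i\to X$ be the inclusion of each copy. Since $\calH^k$ vanishes on each $\partial\sigma_j$ and on $\overline{V_i}\setminus V_i$, formula \refE{define-fM} yields
\[
f_{i,*}\bbr{M_i}(\omega)\;=\;n_i\sum_{j=1}^{s}\int_{\inter(\sigma_j)}\omega\;=\;n_i\int_{V_i}\omega\;=\;n_i\bbr{V_i}(\omega),
\]
and the support is contained in $\overline{V_i}\subset X$, so $n_i\bbr{V_i}\in\Ch_k(X)$. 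Taking the disjoint union of the $M_i$ and of the $f_i$ realizes the full sum $\sum_i n_i\bbr{V_i}$ as a single semi-algebraic chain in $X$.
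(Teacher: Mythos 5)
Your proof is correct and mirrors the paper's: the forward direction triangulates each $\overline{V_i}$ and pushes forward the closed $k$-simplices (which is exactly how the paper handles $\bbr{V}$ for a single semi-algebraic $V$), and the converse reads off the representation directly from the construction of $f_*\bbr{M}$ via \refL{smoothstrat}. You spell out a couple of points the paper treats as implicit — that a closed $k$-simplex is a semi-algebraic manifold with boundary, and that the last clause of \refL{smoothstrat} forces the distinct images to be pairwise disjoint — but the structure of the argument is the same.
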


\begin{proof}
Let $X$ be a semi-algebraic set and let  $V\subset X$ be
an oriented $k$-dimensional semi-algebraic manifold, not necessarily compact but
with a compact closure $\overline V$ in  $X$. We do not suppose that  $\overline V$ is a manifold.
We can stratify $V$ into a finite number
of disjoint smooth semi-algebraic submanifolds $V_{i}$ of dimension $k$ and a subset $Z$ of dimension $<k$.
By \refT{finiteHausdorff}, the $V_{i}$ have finite $k$-volume  and
we can define the \emph{pushforward current} $\bbr{V}\in \calD_{k}(X)$ by
$$\bbr{V}(\omega)=\sum_{i}\int_{V_{i}}\omega.$$
This formula is independent of the choice of stratification.
We want to show that $\bbr{V}\in \Ch_{k}(X)$.
Notice first that by \cite[Proposition 2.8.12]{BCR:GAR}, $\dim(\overline V\setminus V)<k$
and $\dim(\overline V_{i}\setminus V_{i})<k$.
The compact space $\overline V$ admits a semi-algebraic triangulation $\phi\colon K\iso \overline V$
where $K$ is a finite simplicial complex and we can assume that the triangulation  refines the family
 $\{V_{i}\}\cup\{Z\}$. Each closed $k$-dimensional simplex $\sigma$ of $K$
is mapped into the closure of some $V_{i}$ and the orientation of $V_{i}$ induces an
 orientation of $\sigma$. We thus get a semi-algebraic $k$-{\current} $\phi_{*}\bbr{\sigma}$ and it
is clear that $\bbr{V}=\sum_{\sigma}\phi_{*}\bbr{\sigma}\in \Ch_{k}(X)$, where the sum
is taken over all $k$-simplices of $K$.

The converse is an immediate consequence of formula \refN{E:define-fM}.
\end{proof}

We now turn  to functorial properties of semi-algebraic {\current}s. Let $g\colon X\to Y$ be a semi-algebraic map.
If $f_{*}\bbr{M}$ is a semi-algebraic {\current} in $X$ then $(g\circ f)_{*}\bbr{M}=g_{*}(f_{*}\bbr{M})$ is a semi-algebraic {\current} in $Y$. It is easy to check that this gives a well-defined homomorphism
$$g_{*}\colon \Ch_{k}(X)\longrightarrow \Ch_{k}(Y)$$
and that this association is functorial.  In other words, $(h\circ g)_{*}=h_{*}\circ g_{*}$ and $\id_{*}=\id$.

In order to prove that $g_*$ defines a chain map in \refC{chaincomplex}, we need the
following.
\begin{thm}
\label{T:Stokes}
Let  $M$ be  a compact semi-algebraic
oriented manifold and let $f\colon M\to \BR^n$ be  a semi-algebraic map.
Then
$\partial(f_{*}(\bbr{M}))=f_{*}(\bbr{\partial M})$.
\end{thm}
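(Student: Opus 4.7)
The plan is to reduce the identity $\partial(f_*\bbr{M}) = f_*\bbr{\partial M}$ to a local identification of flat currents on each codimension-one stratum of the image $f(M)$, using Lemma \ref{L:smoothstrat} together with Federer's constancy and absolute continuity theorems, and proceeding by induction on $k = \dim M$. First I would fix a smooth semi-algebraic stratification $\calS$ of $M$ with connected strata, compatible with $\{\inter M, \partial M\}$ and adapted to $f$ as in Lemma \ref{L:smoothstrat}. Let $S_1, \ldots, S_l$ be the interior $k$-strata with $\rank(f|_{S_i}) = k$ (so $f|_{S_i}$ is a diffeomorphism onto a smooth semi-algebraic $k$-submanifold $f(S_i) \subset \BR^n$), and $T_1, \ldots, T_m$ the $(k-1)$-strata in $\partial M$ on which $f$ has rank $k-1$; by formula \refN{E:define-fM}, all other strata contribute nothing to either pushforward. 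By Theorem \ref{T:finiteHausdorff}, all the $f(S_i)$ and $f(T_j)$ have finite Hausdorff measure, so both $f_*\bbr{M}$ and $f_*\bbr{\partial M}$ are normal currents and the difference $\Theta := \partial(f_*\bbr{M}) - f_*\bbr{\partial M}$ is a flat $(k-1)$-chain supported in $f(M)$.

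The inductive hypothesis (trivial at $k=0$), applied to the closed manifold $\partial M$, gives $\partial f_*\bbr{\partial M} = f_*(\partial\bbr{\partial M}) = 0$, hence $\partial \Theta = 0$. Next, set $\Sigma := f(\partial M) \cup \bigcup_i\bigl(\overline{f(S_i)} \setminus f(S_i)\bigr) \cup \bigcup_{\dim f(S) < k} f(S)$ and choose a smooth stratification of this codimension-one semi-algebraic set. On the complement of $\Sigma$ in $\bigcup_i f(S_i)$, both summands of $\Theta$ vanish: $\partial(f_*\bbr{M})$ by classical Stokes applied locally on each smooth open manifold $f(S_i)$, and $f_*\bbr{\partial M}$ trivially for dimension reasons. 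Hence $\spt(\Theta) \subset \Sigma$. On each $(k-1)$-dimensional stratum $N$ of $\Sigma$, the constancy theorem (using $\partial\Theta = 0$) implies that $\Theta$ agrees near each point of $N$ with an integer multiple of $\bbr{N}$.

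The remaining step, and the main obstacle, is to show this local multiplicity is zero. Near a generic point $y \in N$ one refines the stratification further so that each sheet of $f^{-1}$ over a small ball around $y$ is parametrized by a semi-algebraic, piecewise-Lipschitz map from a standard half-space; this uses local triviality of semi-algebraic maps combined with a controlled Lipschitz refinement of the closures of the strata $S_i$ and $T_j$ that approach $N$. Piecewise classical Stokes on these Lipschitz pieces shows that the contributions to $\partial(f_*\bbr{M})$ along $N$ coming from interior strata $S_i$ that approach $N$ from both sides cancel in pairs, while those approaching $N$ from only one side correspond bijectively, with matching orientation signs, to contributions of $f_*\bbr{\partial M}$ along $N$; hence the multiplicity vanishes. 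The obstruction is precisely that a semi-algebraic map need not be Lipschitz on the closure of a single stratum, so the local Lipschitz refinement and the bookkeeping of orientations near $N$ constitute the technical heart of the argument. Granting this, $\Theta$ vanishes on an open dense subset of its support, whence $\spt(\Theta)$ has $\mathcal{H}^{k-1}$-measure zero and absolute continuity of flat chains (Federer 4.1.20) forces $\Theta = 0$, i.e., $\partial(f_*\bbr{M}) = f_*\bbr{\partial M}$.
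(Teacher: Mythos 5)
Your proposal and the paper's proof share the general strategy (induction on dimension, constancy theorem, absolute continuity to kill a flat chain supported in measure zero), but they diverge precisely at the point you flag as the "technical heart," and that divergence is where your argument has a real gap.

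First, a smaller but genuine issue: you claim $f_*\bbr{M}$ is a normal current because $\mass(f_*\bbr{M})<\infty$. But normality also requires $\mass(\partial f_*\bbr{M})<\infty$, and establishing that is essentially equivalent to the theorem you are trying to prove. The paper avoids this circularity: it shows $f_*\bbr{M}$ is merely a \emph{flat chain} by exhausting $\inter(M)$ with smooth compact manifolds $M_n$ (so $f_*\bbr{M_n}$ are normal by classical Stokes), converging in mass and hence in flat norm. You need this weaker conclusion, and your route to it does not work as stated.

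The larger gap is the multiplicity computation near a generic point of a codimension-one stratum $N\subset\Sigma$. You describe a plan — a local Lipschitz refinement so that classical Stokes applies sheet by sheet, followed by cancellation and orientation bookkeeping — but you also acknowledge that a semi-algebraic map need not be Lipschitz on the closure of a stratum, so there is no reason the refinement you posit exists. This is precisely the difficulty the paper engineers around, and it does so by a completely different device: it first proves the theorem under the extra hypothesis that $f$ is \emph{injective} and of maximal rank on every stratum, in which case (i) the coefficients $c_i$ in $\partial f_*\bbr{M}=\sum c_i f_*\bbr{S_i}$ from the constancy theorem are determined for $i\geq 1$ by classical Stokes, because after a barycentric subdivision one may assume $f$ is actually smooth on $M\setminus\overline{S_0}$, and (ii) the remaining coefficient $c_0$ is pinned down by applying $\partial$ to the identity and invoking the inductive hypothesis on the $(k-2)$-strata $B_i$. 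No sheet counting or Lipschitz refinement is needed because injectivity eliminates all cancellation. The general (non-injective) case is then recovered by the graph-perturbation $f_\epsilon(x)=(f(x),\epsilon\,i(x))$, which is injective and of maximal rank for $\epsilon>0$, together with the weak continuity of $\partial$ and a direct estimate showing $f_{\epsilon *}\bbr{M}\rightharpoonup f_{0*}\bbr{M}$. This perturbation trick is the key idea missing from your proposal. Without it, the orientation-and-cancellation analysis near $N$ is exactly the kind of delicate argument that the non-Lipschitz behavior of semi-algebraic maps makes unreliable, so your proof as written does not close.

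Additionally, note that the paper first reduces to the case where $M$ is a single closed simplex (by additivity over a triangulation); your global stratification approach is not wrong in principle, but the simplex reduction is what makes the barycentric-subdivision smoothness observation and the explicit enumeration of codimension-one strata $S_0,\dots,S_k$ tractable.
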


\begin{rem}
When $f$ is the identity map, this theorem can be thought of as a generalization of Stokes' Theorem which states that
 $\partial \bbr{M}=\bbr{\partial M}$, and in the general case it is a combination of  Stokes' Theorem and naturality
of the boundary operator with respect to $f_*$.
\end{rem}

\begin{proof}
Suppose that $M\subset\BR^{m}$ and set $k=\dim M$.
Take a triangulation of $M$
such that each stratum $S$ is a smooth submanifold of $\BR^{m}$ and $f|_S$ is smooth.
Since both sides of the equation we want to establish are clearly additive,  it is enough to prove that the equation holds on each closed
simplex of maximal dimension in that triangulation. Thus we can assume that  $M$ is stratified as the standard
combinatorial stratification
of the $k$-simplex $\Delta^{k}$. In particular, $M$ has the unique
maximal stratum $V=\operatorname{int(M)}=M\setminus\partial M$ of dimension $k$, and $k+1$
codimension $1$ connected strata $S_{0},\cdots,S_{k}$ whose union is dense in $\partial M$.
Since $f$ is smooth on the interior of the simplex~\cite[Remark 9.2.3]{BCR:GAR},
 by replacing the original
triangulation by its barycentric subdivision we can furthermore assume that
$f|_{M\setminus \overline{S_{0}}}$ is smooth.

The proof now proceeds by induction on $k=\dim(M)$.
If $k=0$, there is nothing to show. If $k=1$, there exists a semi-algebraic homeomorphism $\phi\colon[0,1]\iso M$ that is smooth on
$(0,1]$. For $0<\epsilon<1$, set $M_{\epsilon}=\phi([\epsilon,1])$. Since $\mass(f_{*}\bbr{M})<\infty$, Lebesgue's Bounded
Convergence Theorem and smooth Stokes' Theorem imply that for each smooth $0$-form
$\omega\in\calD^{0}(\BR^{n})$ we have
\begin{eqnarray*}
(\partial f_{*}\bbr{M})(\omega)&=&f_{*}\bbr{M}(d\omega)=\int_{\operatorname{int}(M)}f^{*}(d\omega)=
\lim_{\epsilon\to0}\int_{M_{\epsilon}}f^{*}(d\omega)
=\lim_{\epsilon\to0}\int_{\partial M_{\epsilon}}f^{*}(\omega)=\\
&=&\lim_{\epsilon\to0}\omega(f(\phi(1)))-\omega(f(\phi(\epsilon)))
=\omega(f(\phi(1)))-\omega(f(\phi(0)))=(f_{*}[\partial M])(\omega).
\end{eqnarray*}
This proves the result for $k=1$.

Suppose that the theorem has been proved in dimension $<k$ for some $k\geq2$.
We first prove it in dimension $k$ with the assumption that
$f$ is injective on $M$ and of maximal rank on each stratum.

To do this, we first observe that $f_*\bbr{S_i}$  and $\partial f_*\bbr{M}$  are flat chains  as 
 in \refD{flatchain}.
 Notice that $f_*\bbr{S_i}=f_*\bbr{\overline{S_i}}$ and, by induction hypothesis,
 $\partial f_*\bbr{\overline{S_i}}= f_*\bbr{\partial\overline{S_i}}$. Since these semi-algebraic
 {\current}s
 are of finite mass we deduce that $ f_*\bbr{{S_i}}$
 is a normal current, hence a flat chain. For $\bbr{M}$,
consider an increasing
sequence $M_{1}\subset M_{2}\subset \cdots$ of $k$-dimensional compact  smooth manifolds
whose union is $V=\operatorname{int}(M)$. By the smooth Stokes' Theorem,
 $\mass(\partial f_{*}[M_{n}])=\mass(f_{*}[\partial  M_{n}])<\infty$,
and $\mass(f_{*}[M_{n}])\leq \mass(f_{*}\bbr{M})<\infty$.
 Thus each current $f_{*}[M_{n}]$ is normal and, by Lebesgue's convergence theorem,
their sequence converges in mass, and hence in flat semi-norm, to $f_{*}\bbr{M}$. This implies that $f_{*}\bbr{M}$ is a flat chain, so the same is true
for $\partial f_{*}\bbr{M}$ by \cite[p. 368]{Fed:GMT}.

Using the fact that $f$ is  smooth on $M\setminus\partial M$, and injective and continuous on $M$,  it is easy to check
with the smooth Stokes' Theorem that
$\spt(\partial f_{*}(\bbr{M}))\subset f(\partial M)$.

To continue, we will use the constancy theorem from \refS{currents}.
 Since $f$ is a homeomorphism on its image and is smooth on each stratum,
it is clear that $\cup_{i=0}^ k f(S_i)$ is a smooth submanifold and is open in the closed subset $f(\partial M)$.
Set $Z=f(\partial M)\setminus\cup_{i=0}^ k f(S_i)$.  This is a closed subset of dimension $<k-1$.
Since each $f(S_i)$ is a  connected component of $\cup_{i=0}^ k f(S_i)$, by \cite[\S 4.1.31 (2)]{Fed:GMT} with $C=f(S_i)$ and $r=c_i$, there exist $c_i\in\BR$ such that
\[\spt(\partial f_{*}(\bbr{M})-c_if_*\bbr{S_i})\cap f(S_i)=\emptyset.\]
Now set $T=\partial f_{*}(\bbr{M})-\sum_{i=0}^{k}c_{i}f_{*}[S_{i}]$. We deduce that
$\spt(T)\subset Z$. Since $T$ is a flat $(k-1)$-chain and $\dim(Z)<k-1$,
 Theorem 4.1.20 of
\cite{Fed:GMT} implies that $T=0$.  In other words,
\begin{equation}\label{E:sumci}
\partial f_{*}(\bbr{M})=\sum_{i=0}^{k}c_{i}f_{*}\bbr{S_{i}}.
\end{equation}

Clearly $f_{*}[\partial M]=\sum_{i=0}^{k}f_{*}\bbr{S_{i}}$, so it remains only to prove
that all the $c_{i}$'s are $1$. For $1\leq i\leq k$, this is an immediate consequence of the classical
Stokes' Theorem since $f$ is smooth on $M\setminus \overline{ S_{0}}$.
Applying the boundary operator to both sides of \refE{sumci} and using the induction hypothesis we get that
$$c_{0}f_{*}[\partial   \overline{S_{0}}]+
\sum_{i=1}^{k}f_{*}[\partial \overline{ S_{i}}]=0.$$
For $1\leq i\leq k$, let $B_{i}$ be the $(k-2)$-dimensional stratum of $M$ whose closure is
 $\partial  \overline {S_{0}}\cap\partial  \overline {S_{i}}$. After canceling terms, the last equation becomes
$\sum_{i=1}^{k}(c_{0}-1)f_{*}[B_{i}]=0$.
Clearly  $f(B_{1})\subset\spt(f_{*}[B_{1}])$ and, for $2\leq i\leq k$,
 $f(B_{1})\cap\spt(f_{*}[B_{i}])=\emptyset$. Since $k\geq2$,  $f(B_{1})\not=\emptyset$ and we deduce that $c_{0}=1$.
 This proves the theorem in dimension $k$ under the extra assumption that $f$ is injective and of maximal
 rank on each stratum.


For the general case, consider the semi-algebraic inclusion
$i\colon M\hookrightarrow \BR^m$
and, for $\epsilon\in[0,1]$, define the map
\begin{align*}
f_{\epsilon}\colon M & \longrightarrow\BR^{n}\times\BR^{m} \\
                   x & \longmapsto(f(x), \epsilon\cdot i(x)).
\end{align*}
When $\epsilon>0$, $f_{\epsilon}$  is injective and smooth of maximal
 rank on each stratum, and therefore by the beginning of the proof we have that
 $\partial f_{\epsilon*}(\bbr{M})=f_{\epsilon*}(\bbr{\partial M})$.
Since the boundary operator $\partial$ is continuous with respect to weak convergence,
in order to finish the proof it is enough to show that  $f_{\epsilon*}(\bbr{M})$ and
 $f_{\epsilon*}(\bbr{\partial M})$ converge weakly to $f_{0*}(\bbr{M})$ and
$f_{0*}(\bbr{\partial M})$ as $\epsilon\to 0$, respectively.

We prove
 that  $f_{\epsilon*}(\bbr{M})\rightharpoonup f_{0*}(\bbr{M})$, the case
of $\partial M$ being completely analogous.
Let $\delta>0$. Consider a 
differential form $\alpha\in\calD^k(\BR^n\times\BR^m)$.
 %
Since $f$ and $i$ are smooth on the complement of a codimension $1$ semi-algebraic subset of $M$,
there exists a  compact semi-algebraic  codimension $0$ smooth submanifold with boundary $M'\subset M$ on which $f$ and $i$ are smooth and such that
$\mass(f_{1*}(\bbr{M\setminus M'}))\leq{\delta}/({4\cdot\mass(\alpha)})$.
Moreover, notice that $f_{\epsilon}=q_{\epsilon}\circ f_1$, where $q_{\epsilon}$
is the map
\begin{align*}
q_{\epsilon}\colon \BR^n\times\BR^m & \longrightarrow \BR^n\times\BR^m \\
(u,v) & \longmapsto(u,\epsilon\cdot v)
\end{align*}
and which admits Lipschitz constant $1$. Therefore for all $\epsilon\in[0,1]$ we also have
\[\mass(f_{\epsilon*}(\bbr{M\setminus M'}))\leq\frac{\delta}{4\cdot\mass(\alpha)}.\]
Hence
\begin{align}
&\notag\left|
\langle f_{\epsilon*}(\bbr{M})- f_{0*}(\bbr{M})\ ,\ \alpha\rangle
\right|\leq\\\quad\leq
&\notag\left|\langle f_{\epsilon*}(\bbr{M\setminus M'})- f_{0*}(\bbr{M\setminus M'})\ ,\ \alpha\rangle
\right|\ +\ %
\left|\langle f_{\epsilon*}(\bbr{ M'})- f_{0*}(\bbr{ M'})\ ,\ \alpha\rangle
\right|\leq\\
\label{E:fe0-1}&
\leq\delta/2\ +\left|\langle \bbr{ M'}\ ,\ \left((f_{\epsilon}|_{M'})^* -\ (f_{0}|_{M'})^*\right)(\alpha)\rangle
\right|\ .
\end{align}
It is clear that $f_\epsilon|_{M'}$ converges in the $\calC^1$-norm to $f_0|_{M'}$,
and therefore for $\epsilon$ small enough the second summand in  \refN{E:fe0-1} is also less than $\delta/2$.
Therefore 
$f_{\epsilon*}(\bbr{M})$ converges weakly to $f_{0*}(\bbr{M})$.

The proof that $f_{\epsilon*}(\bbr{\partial M})\rightharpoonup f_{0*}(\bbr{\partial M})$ is exactly the same and we will omit it.
\end{proof}


The following is now immediate.

\begin{cor}
\label{C:chaincomplex}
\ \ 
\begin{itemize}
\item If  $X$ is a semi-algebraic set, then
 $\partial(\Ch_k(X))\subset \Ch_{k-1}(X)$ and $\Ch_{*}(X):=(\oplus_{{k\geq0}}\Ch_{k}(X),\partial)$ is a chain complex. 
\item If $g\colon X\to Y$ is a semi-algebraic map
then $g_{*}\colon \Ch_{*}(X)\to \Ch_{*}(Y)$ is a chain map, that is $g_*\partial=\partial g_*$.
\end{itemize}
\end{cor}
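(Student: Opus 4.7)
The plan is to derive both bullet points as immediate consequences of the semi-algebraic Stokes' theorem \refT{Stokes}; the heavy analytic work is already contained there, so what remains is bookkeeping.

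For the first bullet, I would start with an arbitrary $\gamma\in\Ch_k(X)$ and, using \refD{C}, write $\gamma=f_*\bbr{M}$ for some compact oriented semi-algebraic $k$-manifold $M$ and some semi-algebraic map $f\colon M\to\BR^n$. Then \refT{Stokes} gives $\partial\gamma=f_*\bbr{\partial M}$. Since $\partial M$ is itself a compact oriented semi-algebraic manifold of dimension $k-1$ (with its induced orientation), the right-hand side is a semi-algebraic $(k-1)$-current by \refD{C}, and its support sits inside $\spt(\gamma)\subset X$ by the general inclusion $\spt(\partial T)\subset\spt(T)$ recalled in \refS{currents}. Thus $\partial\gamma\in\Ch_{k-1}(X)$. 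The identity $\partial\circ\partial=0$ on $\Ch_*(X)$ is then inherited from the ambient complex $\calD_*(\BR^n)$, where it follows by adjunction from $d\circ d=0$.

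For the second bullet, I would combine functoriality of pushforward with two applications of \refT{Stokes}. Given a semi-algebraic map $g\colon X\to Y$ with $Y\subset\BR^p$, and $\gamma=f_*\bbr{M}\in\Ch_k(X)$, functoriality of pushforward gives $g_*\gamma=(g\circ f)_*\bbr{M}$, and applying \refT{Stokes} to the semi-algebraic map $g\circ f\colon M\to\BR^p$ yields
$$\partial(g_*\gamma)=(g\circ f)_*\bbr{\partial M}.$$
On the other hand, \refT{Stokes} applied to $f$ together with functoriality gives
$$g_*(\partial\gamma)=g_*(f_*\bbr{\partial M})=(g\circ f)_*\bbr{\partial M},$$
so the two expressions agree.

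The main obstacle is genuinely behind us once \refT{Stokes} is available; what little remains is to notice that although \refT{Stokes} is formally stated for semi-algebraic maps into $\BR^n$, it applies verbatim to the composite $g\circ f\colon M\to\BR^p$ since its source is still a compact oriented semi-algebraic manifold and its target is still Euclidean. In the same spirit, \refD{C} is ambient-space agnostic, so $(g\circ f)_*\bbr{M}$ and $(g\circ f)_*\bbr{\partial M}$ land in $\Ch_*(Y)$ rather than merely in $\calD_*(Y)$, which is what makes $g_*$ a morphism of complexes of semi-algebraic chains.
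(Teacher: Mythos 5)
Your proof is correct and follows exactly the route the paper takes: the paper introduces \refC{chaincomplex} with the single line ``The following is now immediate,'' and your argument is precisely the spelled-out version of that immediacy, combining \refT{Stokes} with the functoriality $(g\circ f)_* = g_*\circ f_*$ and the support inclusion $\spt(\partial T)\subset\spt(T)$.
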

We end this section by introducing a cross product on chains by
the following easy result.
\begin{prop}
\label{P:chaincross}
Let $X_1$ and $X_2$ be  semi-algebraic sets.
There exists a degree-preserving linear map
\[\times\colon\Ch_*(X_1)\otimes \Ch_*(X_2)\longrightarrow\Ch_*(X_1\times X_2)\]
characterized by the formula
\[f_{1*}(\bbr{M_1})\times f_{2*}(\bbr{M_2})=(f_1\times f_2)_*(\bbr{M_1\times M_2})\]
for compact oriented semi-algebraic manifolds $M_i$ and semi-algebraic maps $f_i\colon M_i\to X_i$, $i=1,2$.\\
This product satisfies the Leibniz formula
\[\partial(\gamma_1\times\gamma_2)=\partial(\gamma_1)\times\gamma_2+
(-1)^{\deg(\gamma_1)}\gamma_1\times \partial(\gamma_2).\]
Let $T\colon X_1\times X_2\to X_2\times X_1$ given by $(x_1,x_2)\mapsto (x_2,x_1)$ be the twisting map. Then
\[T_*(\gamma_1\times\gamma_2)=(-1)^{\deg(\gamma_1)\deg(\gamma_2)}\gamma_2\times\gamma_1.\]
\end{prop}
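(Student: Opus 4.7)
The plan is to construct the product at the level of representing manifolds and then verify well-definedness by identifying the result with the classical Cartesian product of currents. Concretely, given representations $\gamma_i=f_{i*}\bbr{M_i}$ with $M_i$ compact oriented semi-algebraic manifolds and $f_i\colon M_i\to X_i$ semi-algebraic, one sets
\[
\gamma_1\times\gamma_2 \;:=\;(f_1\times f_2)_*\bbr{M_1\times M_2},
\]
where $M_1\times M_2$ carries the product orientation and $f_1\times f_2$ is semi-algebraic. By Definition \ref{D:C} this lies in $\Ch_{*}(X_1\times X_2)$ and has the correct degree.

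The main obstacle is well-definedness: a chain admits many different representations $(M_i,f_i)$, so the formula must be shown to depend only on $\gamma_1,\gamma_2$. For this I would recognize the right-hand side as the classical Cartesian product of currents, as defined in \cite[\S 4.1.8]{Fed:GMT}. Since $\gamma_1$ and $\gamma_2$ are normal currents with compact support, their Cartesian product is a well-defined normal current uniquely characterized on product forms by
\[
(\gamma_1\times\gamma_2)(\pi_1^*\omega_1\wedge\pi_2^*\omega_2)\;=\;\gamma_1(\omega_1)\cdot\gamma_2(\omega_2),
\]
with extension to all of $\calD^*(\BR^{n_1+n_2})$ by linearity and continuity (such product forms being dense). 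Using the alternative description of semi-algebraic chains from \refP{altSAcur}, an application of Fubini's theorem stratum by stratum shows that $(f_1\times f_2)_*\bbr{M_1\times M_2}$ satisfies precisely the above defining property. Hence the definition depends only on the chains themselves, and bilinearity is inherited from that of the classical product.

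For the Leibniz rule I would apply Theorem \ref{T:Stokes} to the compact semi-algebraic manifold with boundary $M_1\times M_2$, whose boundary decomposes (with signs determined by the product orientation convention) as
\[
\partial(M_1\times M_2)\;=\;\partial M_1\times M_2\;\cup\;(-1)^{\dim M_1}\,M_1\times\partial M_2,
\]
yielding
\[
\partial\bigl((f_1\times f_2)_*\bbr{M_1\times M_2}\bigr)
=(f_1\times f_2)_*\bbr{\partial M_1\times M_2}+(-1)^{\dim M_1}(f_1\times f_2)_*\bbr{M_1\times\partial M_2},
\]
which is exactly the stated Leibniz formula after unraveling the definition of $\times$. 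For the twist identity, I would observe that $T\circ(f_1\times f_2)=(f_2\times f_1)\circ\tau$, where $\tau\colon M_1\times M_2\to M_2\times M_1$ swaps factors; as a diffeomorphism, $\tau$ alters the product orientation by the Koszul sign $(-1)^{\dim M_1\,\dim M_2}$, and functoriality of $(-)_*$ (already established) gives $T_*(\gamma_1\times\gamma_2)=(-1)^{\deg\gamma_1\,\deg\gamma_2}\gamma_2\times\gamma_1$. The well-definedness step is the only nontrivial one; everything else reduces either to standard facts on currents or to the semi-algebraic Stokes theorem proved above.
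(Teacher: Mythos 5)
The paper states this proposition as an ``easy result'' and does not give a proof, so there is nothing to compare against line by line; I can only assess your argument on its own merits, which are good. Your three-part strategy is the natural one: define the product on representatives, identify it with Federer's Cartesian product of currents for well-definedness, and then deduce Leibniz from \refT{Stokes} and the twist identity from functoriality together with the orientation change under the swap of factors.

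Two points that deserve slightly more care. First, the phrase ``such product forms being dense'' is informal: forms of the shape $\pi_1^*\omega_1\wedge\pi_2^*\omega_2$ are not topologically dense in $\calD^*(\BR^{n_1+n_2})$; what is actually true, and what Federer's construction uses, is that a normal current of compact support is determined by its action on finite linear combinations of forms $\phi(x,y)\,dx_I\wedge dy_J$, and for these the value of the Cartesian product is pinned down by the slice/Fubini formula. Stated that way, the well-definedness argument is airtight: since semi-algebraic currents are normal (compact support, finite mass by \refT{finiteHausdorff}, finite boundary mass by \refT{Stokes}), Federer's Cartesian product $\gamma_1\times\gamma_2$ is an intrinsically defined normal current, and a stratum-by-stratum Fubini computation (using \refP{altSAcur}, noting that $\int_{V\times W}p_1^*\alpha\wedge p_2^*\beta=0$ unless $\deg\alpha=\dim V$ and $\deg\beta=\dim W$) shows $(f_1\times f_2)_*\bbr{M_1\times M_2}$ agrees with it. Second, a more elementary alternative to invoking Federer's construction is to argue directly from \refP{altSAcur}: if $\sum_i n_i\bbr{V_i}=0$ in $\calD_*(\BR^{N_1})$, then for any bounded semi-algebraic $W$ and any test form $\omega$ on $\BR^{N_1+N_2}$ the fiber integral $\int_W\omega$ is a smooth compactly supported form on $\BR^{N_1}$, whence $\sum_i n_i\int_{V_i\times W}\omega=\sum_i n_i\int_{V_i}\bigl(\int_W\omega\bigr)=0$; this proves well-definedness without appealing to \cite[4.1.8]{Fed:GMT}. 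Either route works and yields the same proof of the Leibniz rule (with the sign $(-1)^{\dim M_1}$ on the boundary term $\bbr{M_1\times\partial M_2}$, which you have correctly) and the twist identity $(-1)^{\dim M_1\dim M_2}$.
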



\section{Convergence in $\Ch_{k}(X)$}
\label{S:conv}


The goal of this section is to introduce a suitable notion of convergence of sequences of semi-algebraic chains.
There are three classical notions of convergence for currents, which are, from  the strongest to the weakest (see \refD{flatchain}):
\begin{itemize}
\item Convergence in mass (i.e.~for the mass norm $\mass$);
\item Convergence in flat norm (i.e.~for the norm $\flatnorm$ relative to some compact subspace); and
\item Weak convergence, i.e.~$T_n\rightharpoonup T$ in $\calD_k(\BR^N)$ if, for each $\omega\in \calD^k(\BR^N)$, $T_n(\omega)\to T(\omega)$ in $\BR$.
\end{itemize}

However, these definitions of convergence cannot be adapted well to semi-algebraic chains because they are not
preserved by  semi-algebraic maps (because those are not locally Lipschitzian).
As a simple example,
consider the semi-algebraic map $f\colon \BR\to\BR,x\mapsto \sqrt{|x|}$. For $n\geq 1 $
set $\gamma_{n}=n{\cdot}\bbr{[0,1/n^{2}]}$, which is a sequence in $\Ch_{1}(\BR)$. Then $\mass(\gamma_{n})=1/n$ which converges to zero but
$f_{*}(\gamma_{n})=n{\cdot}\bbr{[0,1/n]}$ does not converge  to $0$ even weakly.

We thus introduce the following more suitable notion of convergence in $\Ch_k(X)$.
\begin{definition}
\label{D:SAconv}
A  sequence $\left(\gamma_{n}\right)_{n\geq1}$ in $\Ch_{k}(X)$
\emph{ converges semi-algebraically to  $\gamma\in \Ch_{k}(X)$},
denoted by $\gamma_{n}\toSA\gamma$,
if there exists a semi-algebraic map $h\colon M\times [0,1]\to X$, where $M$ is a compact semi-algebraic
 oriented manifold, and a sequence $(\epsilon_{n})_{n\geq 1}$ in $[0,1]$ converging to zero, such that
 $\gamma_{n}=h_{*}(\bbr{M\times \{\epsilon_{n}\}})$ and
  $\gamma=h_{*}(\bbr{M\times \{0\}})$.
\end{definition}
The following is immediate from the definition.
\begin{prop}
\label{P:preserveconvSA}
Let $f\colon X\to Y$ be a semi-algebraic map. If   $\gamma_{n}\toSA\gamma$ in $\Ch_k(X)$
then $f_*(\gamma_{n})\toSA f_*(\gamma)$ in $\Ch_k(Y)$.
\end{prop}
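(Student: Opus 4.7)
The plan is to unfold the definition of semi-algebraic convergence for $\gamma_n \toSA \gamma$ and transport the defining data directly through $f$ by composition, using functoriality of the pushforward.

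More precisely, by \refD{SAconv}, there exist a compact semi-algebraic oriented manifold $M$, a semi-algebraic map $h\colon M\times[0,1]\to X$, and a sequence $(\epsilon_n)_{n\geq 1}$ in $[0,1]$ converging to $0$, such that $\gamma_n = h_*(\bbr{M\times\{\epsilon_n\}})$ and $\gamma = h_*(\bbr{M\times\{0\}})$. I would set $h' = f\circ h\colon M\times[0,1]\to Y$. Since the composition of semi-algebraic maps is semi-algebraic (the graph being semi-algebraic by the stability of semi-algebraic sets under projection, cf.\ \cite{BCR:GAR}), $h'$ is a valid candidate map for witnessing semi-algebraic convergence in $Y$.

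Next I would invoke the functoriality $(f\circ h)_* = f_*\circ h_*$ of the pushforward, which was established immediately before \refT{Stokes}. Applied to the witnessing currents, this yields
\[
f_*(\gamma_n) = f_*h_*(\bbr{M\times\{\epsilon_n\}}) = h'_*(\bbr{M\times\{\epsilon_n\}}),
\qquad
f_*(\gamma) = f_*h_*(\bbr{M\times\{0\}}) = h'_*(\bbr{M\times\{0\}}).
\]
Thus $h'$ together with the same sequence $(\epsilon_n)$ witnesses $f_*(\gamma_n)\toSA f_*(\gamma)$ in $\Ch_k(Y)$, exactly as required by \refD{SAconv}.

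There is no significant obstacle: the entire argument is a direct application of the definition combined with functoriality. The only point worth double-checking is that $f\circ h$ lands in $Y$ (which is automatic since $h$ lands in $X$ and $f\colon X\to Y$) and that the pushforward formulas truly match the defining form of semi-algebraic convergence, which they do verbatim. Hence the proof reduces to a one-line observation once the definitions are expanded.
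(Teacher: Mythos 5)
Your proof is correct and matches the paper's intent exactly: the paper simply states that \refP{preserveconvSA} is ``immediate from the definition,'' and your argument—composing the witnessing homotopy $h$ with $f$ and invoking functoriality of the pushforward—is precisely the one-line verification being alluded to.
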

The following says that if a semi-algebraic subset $X_0$ is dense in $X$, then $\Ch_k(X_0)$ is ``dense'' in $\Ch_k(X)$
 in terms of  SA convergence.
\begin{prop}
\label{P:densitySA}
Let $X$ be a semi-algebraic set and let $X_0\subset X$ be a dense semi-algebraic subset.
Then for each $\gamma\in \Ch_k(X)$, there exists a sequence $(\gamma_n)_{n\geq1}$ in $\Ch_k(X_0)$
such that $\gamma_{n}\toSA\gamma$.
\end{prop}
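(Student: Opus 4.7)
I would represent $\gamma=f_*(\bbr{M})$ for a compact oriented semi-algebraic $k$-manifold $M$ and semi-algebraic $f\colon M\to X$, and aim to build a semi-algebraic map $h\colon M'\times[0,1]\to X$ and a sequence $\epsilon_n\downarrow 0$ in $[0,1]$ with $h_*(\bbr{M'\times\{0\}})=\gamma$ and $h(M'\times\{\epsilon_n\})\subset X_0$. Setting $Z:=X\setminus X_0$, I would apply \refL{smoothstrat} to $f$ and then invoke the semi-algebraic triangulation of $M$, refining it to be compatible with $f^{-1}(Z)$ and the resulting smooth stratification. This presents $M$ as a union of closed $k$-simplices $\sigma_1,\ldots,\sigma_r$ (together with lower-dimensional faces) on which the rank dichotomy of \refL{smoothstrat} holds and such that each $\inter(\sigma_i)$ is mapped either entirely into $X_0$ or entirely into $Z$. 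Replacing $M$ by the oriented coproduct $M':=\coprod_i\sigma_i$ with the map $f'\colon M'\to X$ assembled from the $f|_{\sigma_i}$'s leaves the current unchanged, $f'_*(\bbr{M'})=f_*(\bbr{M})=\gamma$, by additivity. It therefore suffices to construct, on each $\sigma_i$, a semi-algebraic homotopy $h_i\colon\sigma_i\times[0,1]\to X$ with $h_i(\cdot,0)=f|_{\sigma_i}$ and $h_i(\sigma_i\times\{t\})\subset X_0$ for all $t$ in some interval $(0,t_i]$; the disjoint union of the $h_i$'s will be the required $h$, with $\epsilon_n$ chosen below $\min_i t_i$.

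Simplices on which $f$ has rank $<k$ on the interior contribute zero to the current by formula \refE{define-fM} and may be discarded, leaving two cases. In \emph{Type I}, where $f|_{\inter(\sigma_i)}$ is a diffeomorphism onto a $k$-submanifold $W_i\subset X_0$, I use radial contraction toward the barycenter $b_i\in\inter(\sigma_i)$:
\[
h_i(x,t)=f\bigl((1-t)\,x+t\,b_i\bigr).
\]
Since $b_i$ is in the open simplex, the strict convex combination $(1-t)x+tb_i$ lies in $\inter(\sigma_i)$ for every $x\in\sigma_i$ and every $t\in(0,1]$, and so $h_i(\sigma_i\times\{t\})\subset W_i\subset X_0$ for $t>0$, while $h_i(\cdot,0)=f|_{\sigma_i}$.

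The main obstacle is \emph{Type II}, where $f|_{\inter(\sigma_i)}$ is a diffeomorphism onto a $k$-submanifold $W_i\subset Z$ and the image $f(\sigma_i)$, a compact connected semi-algebraic subset of $Z$, has to be actually moved off $Z$. Density of $X_0$ puts every point of $f(\sigma_i)$ in $\overline{X_0}$, and contractibility of $\sigma_i$ rules out any topological obstruction to landing in a single connected component of $X_0$. My plan is to construct a semi-algebraic continuous map $c_i\colon f(\sigma_i)\times[0,t_i]\to X$ with $c_i(p,0)=p$ and $c_i(p,t)\in X_0$ for $t\in(0,t_i]$, and then set $h_i(x,t):=c_i(f(x),t)$. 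The existence of $c_i$ is a parametric semi-algebraic selection problem: for each fixed $p$ it is furnished by the semi-algebraic Curve Selection Lemma, and a semi-algebraic continuous dependence on $p$ can be obtained by feeding the semi-algebraic incidence correspondence of bounded-degree polynomial arcs from $p$ into $X_0$ through Hardt's semi-algebraic triviality theorem. This parametric selection is the technical heart of the argument and the step I expect to require the most care. Once $h$ has been assembled from the $h_i$'s across the coproduct, any sequence $\epsilon_n\downarrow 0$ with $\epsilon_n<\min_i t_i$ yields $\gamma_n:=h_*(\bbr{M'\times\{\epsilon_n\}})\in\Ch_k(X_0)$ with $\gamma_n\toSA\gamma$, finishing the proof.
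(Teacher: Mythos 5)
Your plan takes a genuinely different route from the paper, and the route you chose leads into a gap that the paper's approach avoids by design. The paper does not triangulate the domain $M$ of a representation $f_*(\bbr{M})$; it triangulates the \emph{target} $X$, refining the partition $\{X_0, X\setminus X_0\}$ and simultaneously arranging (via \refP{altSAcur}) that $\gamma=\sum_\sigma n_\sigma\bbr{\overline\sigma}$ is a linear combination of closed $k$-simplices of that triangulation. Once that is done, the density of $X_0$ is used at the level of the simplicial complex: every open $k$-simplex $\sigma$ supporting $\gamma$ lies in $\overline{X_0}$ and, since $X_0$ is a union of open simplices, $\sigma$ must be a face of some open simplex $\tau\subset X_0$ (possibly $\tau=\sigma$). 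The required homotopy $h_\sigma\colon\Delta^k\times[0,1]\to\overline\tau$ is then the easy ``push a face of a simplex into its interior'' map, which is explicit and semi-algebraic. There is no ``Type II'' to contend with: the triangulation of $X$ hands you the adjacent simplex $\tau\subset X_0$ for free.

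The genuine gap in your proposal is the parametric selection in Type~II. The semi-algebraic Curve Selection Lemma is a pointwise statement: for a fixed $p\in\overline{X_0}$ it produces an arc into $X_0$, but it gives no control over how the arc varies with $p$, nor any uniform bound that would let you parametrize arcs by their coefficients. Hardt's triviality theorem trivializes a semi-algebraic map over a \emph{stratification} of the base, so at best it yields local trivializations over the open strata of $f(\sigma_i)$; these pieces do not automatically glue to a continuous map $c_i$ on all of $f(\sigma_i)\times[0,t_i]$, and gluing them is exactly as hard as the original selection problem. The observation that $\sigma_i$ is contractible removes a homotopy-theoretic obstruction but does nothing toward producing a \emph{semi-algebraic, continuous} selection. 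In short, you have correctly identified where the difficulty sits, but the sketch you give does not discharge it, and as written the proof does not close. The fix is to adopt the paper's step of first triangulating $X$ compatibly with $\{X_0,X\setminus X_0\}$ and re-representing $\gamma$ as a simplicial chain of $X$; your Type~I construction then suffices for all simplices, because the target simplex to push into always exists inside $X_0$.

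Two smaller remarks. First, your Type~I formula $h_i(x,t)=f((1-t)x+tb_i)$ is fine, but it is a homotopy living entirely inside the domain simplex $\sigma_i$; this works only because you assumed $f(\inter\sigma_i)\subset X_0$, which is exactly the configuration the paper's target-side triangulation guarantees for all relevant simplices. Second, discarding the rank-$<k$ simplices is legitimate for the equality of currents by \refE{define-fM}, and you correctly note this, so that part of the reduction is sound.
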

\begin{proof}
Choose a triangulation of $X$ such that $X_0$ and $X\setminus X_0$ are unions of simplices and
such that $\gamma=\sum_\sigma n_\sigma\cdot\bbr{\overline\sigma}$ where the sum runs over
some $k$-simplices of the triangulation and the orientations of those simplices are chosen so that the
integers  $n_\sigma$ are nonnegative. For each such simplex, since $\sigma\subset\overline{X_0}$ and since
$X_0$ is a union of simplices, there exists an open simplex $\tau$ (which could be $\sigma$) such that $\tau\subset X_0$
and $\sigma$ is a face of $\tau$. It is easy to build a semi-algebraic map
\[h_\sigma\colon\Delta^k\times[0,1]\longrightarrow\overline\tau\]
such that  $h_\sigma(\Delta^k\times(0,1])\subset\tau$
and $h_\sigma(-,0)\colon\Delta^k\iso\overline\sigma$ is a homeomorphism preserving the orientation.
Set $M=\amalg_\sigma\amalg_{i=1}^{n_\sigma}\Delta^k$ and consider the map
\[h=\amalg_\sigma\amalg_{i=1}^{n_\sigma} h_\sigma
\,\colon\,M\times[0,1]\longrightarrow X.\]
Then $\gamma_n=h_*(\bbr{M\times\{1/n\}})$ is the desired sequence.
\end{proof}

Our last important result is that SA (semi-algebraic) convergence implies weak convergence as currents.
\begin{prop}
\label{P:SAcont}
 If $\gamma_{n}\toSA\gamma$ in $\Ch_k(X)$ then $\gamma_{n}\rightharpoonup \gamma$.
  \end{prop}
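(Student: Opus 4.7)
My plan is to estimate $|\gamma_n(\omega)-\gamma(\omega)|$ for a test form $\omega\in\calD^k(\BR^N)$ by means of a Stokes argument on the intermediate chain $\beta_n:=h_*\bbr{M\times[0,\epsilon_n]}$, and then to show that $\mass(\beta_n)$ (along with a boundary correction) vanishes as $\epsilon_n\to0$. Unpacking \refD{SAconv}, I have the semi-algebraic map $h\colon M\times[0,1]\to X$ and the sequence $\epsilon_n\to 0$ with $\gamma_n=h_*\bbr{M\times\{\epsilon_n\}}$ and $\gamma=h_*\bbr{M\times\{0\}}$. The chain $\beta_n$ is defined via \refP{altSAcur} applied to the smooth oriented $(k+1)$-submanifold $\inter(M)\times(0,\epsilon_n)$ of $\BR^{m+1}$, whose closure lies in $X$ through~$h$.

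The first step is to establish the Stokes-type identity
\[
\partial\beta_n \;=\; (-1)^{k}(\gamma_n-\gamma) \;+\; h_*\bbr{\partial M\times[0,\epsilon_n]}.
\]
Setting $\alpha_n:=h_*\bbr{\partial M\times[0,\epsilon_n]}$ and pairing both sides with $\omega$, so that $\partial\beta_n(\omega)=\beta_n(d\omega)$, I obtain
\[
|\gamma_n(\omega)-\gamma(\omega)| \;\le\; \mass(\beta_n)\cdot\mass(d\omega) \;+\; \mass(\alpha_n)\cdot\mass(\omega).
\]
Hence it suffices to prove that $\mass(\beta_n)\to 0$ and $\mass(\alpha_n)\to 0$.

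For $\mass(\beta_n)$, I would apply \refL{smoothstrat} to $h\colon M\times[0,1]\to\BR^N$ to obtain a smooth semi-algebraic stratification of $M\times[0,1]$. Only the top-dimensional strata $S_1,\dots,S_l$ on which $h$ has rank $k+1$ contribute to the mass, and on each such $S_i$ the restriction $h|_{S_i}\colon S_i\to h(S_i)$ is a diffeomorphism. Subadditivity of mass and the area formula then give
\[
\mass(\beta_n) \;\le\; \sum_{i=1}^{l}\calH^{k+1}\!\left(h(S_i\cap(M\times[0,\epsilon_n]))\right) \;=\; \sum_{i=1}^{l}\int_{S_i\cap(M\times[0,\epsilon_n])}Jh|_{S_i}\,d\calH^{k+1}.
\]
Each Jacobian $Jh|_{S_i}$ lies in $L^1(S_i)$, because its total integral equals $\calH^{k+1}(h(S_i))$, which is finite by \refT{finiteHausdorff}. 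As $\epsilon_n\to 0$ the domain $S_i\cap(M\times[0,\epsilon_n])$ decreases to $S_i\cap(M\times\{0\})\subset M\times\{0\}$, a $k$-dimensional set of $\calH^{k+1}$-measure zero. Dominated convergence then forces every summand to $0$. The identical argument, with $\partial M$ replacing $M$ and all dimensions shifted down by one, gives $\mass(\alpha_n)\to 0$.

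The main technical obstacle is the Stokes identity above: when $\partial M\ne\emptyset$, $M\times[0,\epsilon_n]$ is a manifold with corners, not a manifold, and so \refT{Stokes} does not apply directly. I plan to handle this by triangulating $M\times[0,\epsilon_n]$ into $(k+1)$-simplices compatibly with the decomposition of its topological boundary into the three pieces $M\times\{0\}$, $M\times\{\epsilon_n\}$ and $\partial M\times[0,\epsilon_n]$, pushing each simplex forward by $h$, applying \refT{Stokes} simplex-by-simplex, and summing; the pushforwards of matching interior $k$-faces cancel through their opposite induced orientations, leaving exactly the stated identity. Once this is in place, the mass estimates from the previous paragraph, combined with the inequality from the second paragraph, yield $\gamma_n(\omega)\to\gamma(\omega)$, i.e.\ $\gamma_n\rightharpoonup\gamma$.
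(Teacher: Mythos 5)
Your proof is correct and follows the same Stokes-plus-mass-estimate strategy as the paper's, with two differences worth noting. First, for $\mass(\beta_n)\to 0$ and $\mass(\alpha_n)\to 0$ the paper uses a slicker device: it sets $\tilde h(u,t)=(h(u,t),u)\in X\times M$, which is injective, so $\tilde h_*\bbr{M\times[0,\epsilon_n]}$ has mass exactly $\calH^{k+1}\bigl(\tilde h(M\times[0,\epsilon_n])\bigr)$; since $h=\proj_1\circ\tilde h$ with $\proj_1$ a $1$-Lipschitz projection, this dominates $\mass(\beta_n)$, and \refT{finiteHausdorff} together with Lebesgue's theorem finish the estimate. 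Your stratification-plus-area-formula argument via \refL{smoothstrat} is equally valid, just heavier machinery for the same conclusion. Second, and more substantively, your concern that $M\times[0,\epsilon_n]$ is only a ``manifold with corners'' is misplaced in the semi-algebraic category: by the paper's definition a semi-algebraic $(k+1)$-manifold only requires local semi-algebraic charts homeomorphic to $\BR^{k+1}$ or $\BR_+\times\BR^{k}$, and a corner chart $\BR_+^2\times\BR^{k-1}$ \emph{is} semi-algebraically homeomorphic to $\BR_+\times\BR^{k}$ (e.g.\ via $(x,y)\mapsto(x^2-y^2,\,2xy)$ on the first two coordinates). Thus $M\times[0,\epsilon_n]$ is already a compact semi-algebraic oriented manifold with boundary, and \refT{Stokes} applies to it directly with no triangulation needed, which is exactly what the paper does in the chain of equalities leading to its flat-norm bound. (The paper then bounds $\flatnorm(\gamma_n-\gamma)$ rather than $|\gamma_n(\omega)-\gamma(\omega)|$ directly, but the two routes are equivalent for deducing weak convergence.)
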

\begin{proof}
Take $h$, $M$, and $\epsilon_n$ as in \refD{SAconv}.
Then\zz{
               referee's typo (11). 
               Referee suggest to put $\pm$ everywhere but it  is quite useless. The
               point of $\pm$ in the previous version was that I did not want to be bothered with the
               Koszul sign $(-1)^{\dim M}$ in the Leibniz formula. So
               the previous version was indeed correct
               To please the referee, the following then is maybe more
               meaningful.
               }
\begin{eqnarray*}
\gamma_n-\gamma&=&
 h_*(\bbr{M\times\partial[0,\epsilon_n]})\\
&=&\pm(h_*(\bbr{\partial(M\times[0,\epsilon_n])})-h_*(\bbr{(\partial M)\times[0,\epsilon_n]}))\\
&=& \pm(\partial h_*(\bbr{M\times[0,\epsilon_n]})-h_*(\bbr{(\partial M)\times[0,\epsilon_n]}))
\end{eqnarray*}
which implies that
\[
\flatnorm (\gamma_n-\gamma)\leq\mass_{k+1}(h_*(\bbr{M\times[0,\epsilon_n]}))+
\mass_k(h_*(\bbr{(\partial M)\times[0,\epsilon_n]}))\]
where the flat norm is taken with respect to the compact space $h(M\times[0,1])$.

Define the map
\begin{align*}
\tilde h\colon M\times[0,1] & \longrightarrow X\times M  \\
                      (u,t) & \longmapsto(h(u,t),u).
\end{align*}
Hence $\tilde h$ is a homeomorphism on its image and
$h=\proj_1 \tilde h$ where $\proj_1\colon X\times M\to X$ is the projection
which is $1$-Lipschitzian. Therefore
\begin{eqnarray*}
\mass_{k+1}(h_*(\bbr{M\times[0,\epsilon_n]}))&\leq&
\mass_{k+1}(\tilde h_*(\bbr{M\times[0,\epsilon_n]}))\\
&=&\mass_{k+1}(\bbr{\tilde h(M\times[0,\epsilon_n])})\\
&=&\calH^{k+1}(\tilde h(M\times[0,\epsilon_n])).
\end{eqnarray*}
By \refT{finiteHausdorff}, $\calH^{k+1}(\tilde h(M\times[0,1]))<\infty$.  The Lebesgue Bounded Convergence Theorem implies that
\[\calH^{k+1}(\tilde h(M\times[0,\epsilon_n])\,\longrightarrow\,\calH^{k+1}(\tilde h(M\times\{0\}))\,\textrm{ as }n\to\infty\]
and $\calH^{k+1}(\tilde h(M\times\{0\}))=0$ because $\dim \tilde h(M\times\{0\})\leq k$.
This implies that
\[\lim_{n\to\infty}\mass_{k+1}(h_*(\bbr{M\times[0,\epsilon_n]}))\,=\,0.\]
A completely analoguous argument shows that
\[\lim_{n\to\infty}\mass_k(h_*(\bbr{(\partial M)\times[0,\epsilon_n]}))\,=\,0.\]
Thus $\lim_{n\to\infty}\flatnorm(\gamma_n-\gamma)=0$ which  implies the weak convergence.
\end{proof}

\begin{rem} Actually, 
                   \zz{
                     see referee math comment (2). Please check
                     english. I am not sure we should state in the
                     proposition that indeed we get flat convergence,
                     since when we refer to the statement we are only
                     interesteed by weak convergence. I suggest to
                     only this remark. OK?
                   }
the above proof shows that SA-convergence
implies convergence in the flat norm, which is stronger than weak
convergence, but we will not use this fact.
\end{rem}


\section{PA forms}\label{S:PAForms}


The
aim of this section is to construct the contravariant functor of PA forms,
\[\ompa^*\colon \SA\to\CDGA,
\]
(see \refT{omPAfunctor}), which we will show in \refS{Equivalence}
to be weakly equivalent to $\Apl(-;\BR)$. The construction is in two stages. We will first build a functor $\omin^*$ of \emph{minimal forms} in Section \ref{S:MinimalForms}.
Unfortunately this functor is not weakly equivalent to  $\Apl(-;\BR)$ because it does not satisfy the Poincar\'e Lemma. Building
on $\omin^*$, we will then define $\ompa^*$ in Section \ref{SS:PAForms} which will resolve this issue. These PA forms are defined by integration  of minimal forms along the fiber,
but to define this integration correctly,  we will need the notion of strongly continuous family of chains which is given in \refS{scc}.


\subsection{Semi-algebraic cochains and smooth forms}\label{S:SACochains}


Let $X$ be a semi-algebraic set.
We consider the vector space of semi-algebraic cochains with values in $\BR$ as
the linear dual of the chains, i.e.~we let
\[\Coch^k(X)=\hom(\Ch_k(X),\BR).\]
This defines a cochain complex of real vector spaces $\Coch^*(X):=\oplus_{k\geq0}\Coch^k(X)$
whose coboundary $\delta\colon\Coch^k(X)\to\Coch^{k+1}(X)$ is defined as the adjoint
of the boundary $\partial\colon\Ch_{k+1}(X)\to\Ch_k(X)$.

The real number giving the value of a cochain $\lambda\in\Coch^k(X)$ on a chain $\gamma\in\Ch_k(X)$ is denoted
by $\langle\lambda ,\gamma\rangle$. By convention this value is $0$ if  $\lambda$ and $\gamma$
have different degree.

One has a contravariant functor $\Coch^*\colon\SA\to\Chains^*(\BR)$ with values in cochain
complexes over $\BR$. In particular a semi-algebraic map $f\colon X\to Y$ induces a map of cochains
$f^*\colon\Coch^*(Y)\to\Coch^*(X)$ defined by $\langle f^*(\lambda),\gamma\rangle=
\langle \lambda,f_*(\gamma)\rangle$. When $f\colon X\hookrightarrow Y$ is an inclusion
we write $\lambda|_X:=f^*(\lambda)$.

\begin{lemma}
\label{L:smcoch}
Let $W\subset \BR^N$ be a semi-algebraic smooth submanifold
 of dimension $m$  and let $\omega\in\osmooth^k(W)$ be a smooth differential form.
There is a well defined linear map
\begin{align*}
\langle\omega,-\rangle\colon\Ch_k(W) & \longrightarrow \BR \\
                                \gamma & \longmapsto \langle \omega,\gamma\rangle=\sum n_i\int_{V_i}\omega
 \end{align*}
 when $V_i$ are bounded semi-algebraic
 smooth oriented submanifolds in $W$ and $n_i$ are integers
such that $\gamma=\sum n_i\bbr{V_i}$.
Moreover, the map
\begin{align}
\langle-,-\rangle\colon \osmooth^*(W) & \hookrightarrow\Coch^*(W) \label{E:osmoothinclcoch}  \\
\omega & \mapsto\langle\omega,-\rangle \notag
\end{align}
 is an inclusion.
\end{lemma}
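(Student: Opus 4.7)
The crux of the first assertion is well-definedness of $\langle\omega,\gamma\rangle$: two representations $\gamma=\sum_i n_i\bbr{V_i}=\sum_j n'_j\bbr{V'_j}$ of the same chain must yield the same real number. My plan is to reduce this to the evaluation of $\gamma$, viewed as a current in $\calD_k(\BR^N)$, on a compactly supported smooth extension of $\omega$ to all of $\BR^N$. Set $K=\bigcup_i\overline{V_i}\cup\bigcup_j\overline{V'_j}$, which is compact and contained in $W$. Using a tubular neighborhood $U$ of $W$ near $K$ with smooth retraction $r\colon U\to W$, together with a bump function $\chi$ on $\BR^N$ equal to $1$ on a neighborhood of $K$ and with $\spt(\chi)$ compact and contained in $U$, I obtain $\tilde\omega:=\chi\cdot r^*\omega\in\calD^k(\BR^N)$ whose restriction to $W$ coincides with $\omega$ on a neighborhood of $K$. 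Since each $V_i$ and each $V'_j$ has closure inside $K\subset W$, this gives $\int_{V_i}\tilde\omega=\int_{V_i}\omega$ and similarly for $V'_j$, so both sums equal $\gamma(\tilde\omega)$, which depends only on $\gamma$. Linearity is then immediate, and using \refP{altSAcur} the domain of $\langle\omega,-\rangle$ is all of $\Ch_k(W)$.

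For the injectivity of $\omega\mapsto\langle\omega,-\rangle$, I argue contrapositively. Suppose $\omega\neq 0$ and pick $x\in W$ with $\omega(x)\neq 0$. Since $W$ is a semi-algebraic smooth submanifold of $\BR^N$, near $x$ it is locally the graph (over a suitable coordinate $m$-plane) of a smooth semi-algebraic map, and this provides a smooth semi-algebraic chart $\phi\colon U\iso U'\subset\BR^m$ on an open $U\subset W$ containing $x$. Writing $(\phi^{-1})^*\omega=\sum_I f_I\,dx_I$, some $f_{I_0}$ is nonzero at $\phi(x)$, and after relabeling I may assume $I_0=\{1,\ldots,k\}$. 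Choose a small closed semi-algebraic $k$-box $B\subset U'$ in the affine $k$-plane $\{x_{k+1}=\phi(x)_{k+1},\ldots,x_m=\phi(x)_m\}$ through $\phi(x)$, small enough that $f_{I_0}$ does not vanish on $B$. Then $V:=\phi^{-1}(\inter B)$ is a bounded semi-algebraic smooth oriented $k$-submanifold in $W$ with $\overline V\subset U$ compact; the other summands of $(\phi^{-1})^*\omega$ pull back to zero on $\inter B$, so $\langle\omega,\bbr V\rangle=\int_{\inter B} f_{I_0}\,dx_1\cdots dx_k\neq 0$, as required.

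The main technical obstacle I anticipate is the extension $\tilde\omega$ in the first step: since $W$ need not be closed in $\BR^N$, a global tubular neighborhood may fail to exist, so in practice I would build $\tilde\omega$ by taking a finite open cover of the compact set $K$ by coordinate neighborhoods in $\BR^N$ on each of which $W$ appears as the graph of a smooth map (hence admits a local smooth retraction), and then gluing the local pieces of $r^*\omega$ via a smooth partition of unity before multiplying by a cutoff supported in a compact neighborhood of $K$. Once this extension is in place, the independence-of-decomposition verification and the injectivity reduce to the routine computations outlined above.
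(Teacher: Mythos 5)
Your proof is correct and follows essentially the same strategy as the paper: you probe $\omega$ at a point via small semi-algebraic $k$-chains obtained from a local chart of $W$, which is the same idea as the paper's cubes $\pi^{-1}(\xi[\epsilon])$ built from the orthogonal projection to the tangent plane. The only differences are cosmetic (you exhibit one chain $\bbr V$ with $\langle\omega,\bbr V\rangle\neq 0$ rather than recovering $\omega(\xi)$ as an $\epsilon\to 0$ limit), and you spell out the extension of $\omega$ to a compactly supported form on $\BR^N$ for well-definedness, a step the paper declares ``clear'' without elaboration.
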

\begin{proof}
By \refP{altSAcur},
a chain $\gamma\in\Ch_k(W)$ can be represented by a linear combination of smooth
submanifolds $V_i\subset W$ of finite volume. Further, the smooth form $\omega$
is bounded on the compact support of $\gamma$. Therefore each integral $\int_{V_i}\omega$
converges and it is clear that the value of the linear combination depends only on the chain $\gamma$.

For the injectivity of the map \refN{E:osmoothinclcoch}
 we need to show that the value of $\omega\in\osmooth^k(W)$ at any point $x\in W$ is completely
characterized by the values of $\langle\omega,-\rangle$ on semi-algebraic chains in $W$.
Indeed consider the $m$-dimensional affine subspace
$T_xW\subset\BR^N$ tangent to $W$ at $x$. The orthogonal projection $\pi$
 of a neighborhod of $x$ in $W$  onto $T_xW$ is a semi-algebraic diffeomorphism onto its image.
For an orthonormal $k$-multivector ${\xi}$
in $T_xW$ and a small $\epsilon>0$, let  $\xi[\epsilon]$ be a $k$-dimensional cube in $T_xW$ based at $x$
with edges of length $\epsilon$ in the directions of $\xi$. Then
$\bbr{{\pi^{-1}(\xi[\epsilon])}}\in\Ch_k(W)$ and the value of $\omega(\xi)$ is given by
\[\lim_{\epsilon\to 0}\epsilon^{-k}
\langle \omega\,,\,\bbr{{\pi^{-1}(\xi[\epsilon])}}\rangle.\]
\end{proof}

\begin{definition}
\label{D:smcoch}
Let $W$ be a semi-algebraic smooth submanifold of $\BR^N$. A  cochain in $\Coch^*(W)$
which is in the image of $\osmooth^*(W)$ under the map  \refN{E:osmoothinclcoch} is called \emph{smooth}.
\end{definition}

\begin{lemma}[Strong K\"unneth formula for smooth  cochains.]
\label{L:strKu-sm}
Let $W_1$ and $W_2$ be two semi-algebraic smooth submanifolds in $\BR^{n_1}$ and  $\BR^{n_2}$
 and let $\omega\in\osmooth^*{(W_1\times W_2)}$.
 If, for all $\gamma_1\in\Ch_*(W_1)$ and $\gamma_2\in\Ch_*(W_2)$,
 $\langle\omega,\gamma_1\times\gamma_2\rangle=0$, then $\omega=0$.
\end{lemma}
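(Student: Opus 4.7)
The plan is to reduce the problem to a pointwise vanishing statement and then exploit the fact that a smooth alternating $k$-form on a direct sum $V_1 \oplus V_2$ is determined by its values on ``split'' multivectors $\xi_1 \wedge \xi_2$ with $\xi_i \in \Lambda^{*} V_i$. First, by the injectivity part of \refL{smcoch} applied to the manifold $W_1 \times W_2 \subset \BR^{n_1+n_2}$, it is enough to show that the pointwise $k$-form $\omega(x_1,x_2)$ vanishes for every $(x_1,x_2) \in W_1 \times W_2$. At such a point, the tangent space decomposes canonically as $T_{(x_1,x_2)}(W_1 \times W_2) = T_{x_1}W_1 \oplus T_{x_2}W_2$, and choosing orthonormal bases in each factor and wedging produces a basis of $\Lambda^k T_{(x_1,x_2)}(W_1 \times W_2)$ consisting of decomposable multivectors of the form $\xi_1 \wedge \xi_2$ with orthonormal $\xi_1 \in \Lambda^p T_{x_1}W_1$, $\xi_2 \in \Lambda^{k-p} T_{x_2}W_2$, $p+q=k$. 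So it suffices to show $\omega(x_1,x_2)(\xi_1 \wedge \xi_2) = 0$ for every such split multivector.

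To do so, I would mimic the evaluation trick used in \refL{smcoch}, but applied factorwise. Let $\pi_i$ be the orthogonal projection of a neighborhood of $x_i$ in $W_i$ onto $T_{x_i}W_i$, which is a semi-algebraic diffeomorphism onto its image, and for small $\epsilon>0$ let $\xi_i[\epsilon] \subset T_{x_i}W_i$ be the $(\deg \xi_i)$-dimensional cube based at $x_i$ with edges of length $\epsilon$ in the directions of $\xi_i$. Set
\[
\gamma_i(\epsilon) \;=\; \bbr{\pi_i^{-1}(\xi_i[\epsilon])} \;\in\; \Ch_{\deg \xi_i}(W_i), \qquad i=1,2.
\]
By \refP{chaincross}, the cross product $\gamma_1(\epsilon) \times \gamma_2(\epsilon)$ is the semi-algebraic $k$-chain in $W_1 \times W_2$ obtained by integration over $\pi_1^{-1}(\xi_1[\epsilon]) \times \pi_2^{-1}(\xi_2[\epsilon])$. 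By hypothesis, $\langle \omega, \gamma_1(\epsilon) \times \gamma_2(\epsilon)\rangle = 0$ for every $\epsilon > 0$.

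On the other hand, the product region $\pi_1^{-1}(\xi_1[\epsilon]) \times \pi_2^{-1}(\xi_2[\epsilon])$ has $(p+q)$-volume $\epsilon^{k}(1 + O(\epsilon))$, and its tangent $k$-multivector at $(x_1,x_2)$ is precisely $\xi_1 \wedge \xi_2$. Hence by smoothness of $\omega$ and the same Taylor expansion argument as in \refL{smcoch},
\[
\omega(x_1,x_2)(\xi_1 \wedge \xi_2) \;=\; \lim_{\epsilon \to 0} \epsilon^{-k}\,\langle \omega,\, \gamma_1(\epsilon) \times \gamma_2(\epsilon) \rangle \;=\; 0.
\]
Since this holds for every split multivector $\xi_1 \wedge \xi_2$, step two gives $\omega(x_1,x_2) = 0$, and since $(x_1,x_2)$ is arbitrary, $\omega = 0$.

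I expect the main technical obstacle to be the justification of the limit formula in the last step: one must verify that the product projection $\pi_1 \times \pi_2$ identifies the tangent plane of $W_1 \times W_2$ at $(x_1,x_2)$ with $T_{x_1}W_1 \oplus T_{x_2}W_2$ to first order, so that the classical argument evaluating $\omega$ on small parallelepipeds carries over unchanged to the product setting. Everything else is linear algebra and an appeal to the injectivity already established in \refL{smcoch}.
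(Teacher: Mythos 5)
Your argument is correct and is exactly the ``analogous to the injectivity proof of \refL{smcoch}'' argument that the paper has in mind but leaves to the reader: evaluate $\omega$ on product cubes $\pi_1^{-1}(\xi_1[\epsilon])\times\pi_2^{-1}(\xi_2[\epsilon])$, take the $\epsilon\to 0$ limit to recover $\omega(x_1,x_2)(\xi_1\wedge\xi_2)$, and note that split multivectors span $\Lambda^k\bigl(T_{x_1}W_1\oplus T_{x_2}W_2\bigr)$. The only cosmetic redundancy is the appeal to the injectivity \emph{statement} of \refL{smcoch} at the start, since ``$\omega$ vanishes pointwise'' is already the definition of $\omega=0$ as a smooth form; what you actually reuse is the limit-over-small-cubes \emph{technique} from that proof, which is precisely what the paper intends.
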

\begin{proof}
The proof is analogous to the proof of  the injectivity of the map \refN{E:osmoothinclcoch}
in \refL{smcoch}.
\end{proof}


\subsection{Minimal forms}\label{S:MinimalForms}


In this section we give the first version of ``semi-algebraic differential forms'' following  \cite[Section 8.3]{KoSo:def}.

Let $X\subset\BR^N$ be a semi-algebraic set and let $f_0,f_1,\cdots,f_k\colon X\to\BR$ be
semi-algebraic functions.
We will define a cochain
\[\lambda(f_0;f_1,\cdots,f_k)\in\Coch^k(X)\]
which, in the case of $X$ and the $f_i$'s smooth, is just the smooth cochain
$f_0\,df_1\wedge\cdots\wedge df_k\in\osmooth^k(X)$.
To define  $\lambda(f_0;f_1,\cdots,f_k)$ in the general case, set 
$$f=(f_0,f_1,\cdots,f_k)\colon X\longrightarrow\BR^{k+1}$$ 
which is a semi-algebraic map.
Recall that an element of $\Ch_{k}(\BR^{k+1})$ is a $k$-current in $\BR^{k+1}$, and hence can be evaluated on
smooth $k$-forms with compact support in $\BR^{k+1}$.
For a semi-algebraic $k$-{\current} $\gamma\in \Ch_k(X)$, define
\[\langle \lambda(f_0;f_1,\cdots,f_k)\,,\,\gamma\rangle=
f_*(\gamma)(\rho \cdot x_0\,dx_1\wedge\cdots\wedge dx_k),
\]
where $x_{0},\cdots,x_{k}$ are the coordinates in $\BR^{k+1}$ and $\rho\colon\BR^{k+1}\to\BR$ is a smooth bump function
with compact support
 that takes the value $1$ on $\spt(f_*(\gamma))$. Clearly the result is independent of the choice of $\rho$, and  abusing notation we will simply
write $f_*(\gamma)(x_0\,dx_1\wedge\cdots\wedge dx_k)$.

\begin{definition}
We denote by $\omin^k(X)$ the subgroup of $\Coch^k(X)$ generated by the cochains\\
$\lambda(f_0;f_1,\cdots,f_k)$. Its elements are called the \emph{minimal forms}.
\end{definition}

The pullback of a minimal form along a semi-algebraic map is again a minimal form.
More precisely, let $g\colon X\to Y$ be a semi-algebraic map and let $f_i\colon Y\to\BR$ be semi-algebraic functions
for $0\leq i\leq k$. It is immediate to check that
\[g^*(\lambda(f_0;f_1,\cdots,f_k))=\lambda(f_0\circ g;f_1\circ g,\cdots,f_k\circ g).\]

The following formula implies that the coboundary of a minimal form is also a minimal form.

\begin{prop}
$\delta(\lambda(f_0;f_1,\cdots,f_k))\,=\,\lambda(1;f_0,f_1,\cdots,f_k)$.
\end{prop}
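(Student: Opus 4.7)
The plan is to verify the identity by unwinding definitions, invoking \refC{chaincomplex} ($f_{*}\partial = \partial f_{*}$), and performing a small bump-function calculation.

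Fix $\gamma \in \Ch_{k+1}(X)$ and write $f = (f_0,\ldots,f_k) \colon X \to \BR^{k+1}$. Since a semi-algebraic chain has compact support, I may choose a compactly supported smooth bump $\rho \colon \BR^{k+1} \to \BR$ with $\rho \equiv 1$ on an open neighborhood $U$ of $f(\spt\gamma)$; in particular $U$ contains both $\spt(f_{*}\gamma)$ and $\spt(f_{*}\partial\gamma)$. Applying successively the adjoint definition of $\delta$, the definition of $\lambda(f_0;f_1,\ldots,f_k)$, \refC{chaincomplex}, and the fact that $\partial$ on currents is dual to $d$ on forms, I obtain
\[
\langle \delta\lambda(f_0;f_1,\ldots,f_k),\gamma\rangle
\;=\;(\partial f_{*}\gamma)\bigl(\rho x_0\,dx_1\wedge\cdots\wedge dx_k\bigr)
\;=\;f_{*}(\gamma)\bigl(d(\rho x_0\,dx_1\wedge\cdots\wedge dx_k)\bigr).
\]
Expanding the exterior derivative yields $\rho\,dx_0\wedge dx_1\wedge\cdots\wedge dx_k + x_0\,d\rho\wedge dx_1\wedge\cdots\wedge dx_k$. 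The second summand vanishes identically on $U$ (where $d\rho=0$), so its support is disjoint from $\spt(f_{*}\gamma)$ and the current $f_{*}(\gamma)$ annihilates it by the definition of support. Hence the coboundary reduces to $f_{*}(\gamma)\bigl(\rho\,dx_0\wedge\cdots\wedge dx_k\bigr)$.

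For the right-hand side, set $\tilde f = (1,f_0,\ldots,f_k) \colon X \to \BR^{k+2}$ and factor $\tilde f = \iota\circ f$, where $\iota \colon \BR^{k+1}\hookrightarrow\BR^{k+2}$ is the smooth affine embedding $x\mapsto (1,x)$. Because $\iota$ is smooth (in fact Lipschitz), the remark following \refD{C} tells us that its semi-algebraic pushforward agrees with the usual one on currents, so $\tilde f_{*}\gamma = \iota_{*}(f_{*}\gamma)$ and for any smooth compactly supported form $\omega$ on $\BR^{k+2}$ one has $\iota_{*}(f_{*}\gamma)(\omega) = f_{*}\gamma(\iota^{*}\omega)$. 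Taking $\omega = \tilde\rho\, y_0\,dy_1\wedge\cdots\wedge dy_{k+1}$ with a bump $\tilde\rho$ on $\BR^{k+2}$ equal to $1$ on $\spt(\tilde f_{*}\gamma)$, and noting $\iota^{*}(y_0)=1$ and $\iota^{*}(dy_j)=dx_{j-1}$, I get $\iota^{*}\omega = (\tilde\rho\circ\iota)\,dx_0\wedge\cdots\wedge dx_k$. Since $\tilde\rho\circ\iota$ is a valid choice of cutoff $\rho$ in the preceding calculation (and the value is independent of this choice), both sides agree.

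I do not expect any real obstacle: once Stokes for semi-algebraic chains is in hand, the computation is mechanical. The only delicate point is the cutoff step in which the $x_0\,d\rho$ summand is discarded; this requires $\rho \equiv 1$ on an open neighborhood of $\spt(f_{*}\gamma)$ rather than merely on the support itself, which is harmless given the freedom in the choice of $\rho$.
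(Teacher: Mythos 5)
Your proof is correct and follows essentially the same route as the paper's: both reduce $\delta\lambda$ to the pushed-forward current evaluated on $d(\rho\, x_0\,dx_1\wedge\cdots\wedge dx_k)$ via the adjointness of $\partial$ and $d$ together with $f_*\partial=\partial f_*$ from \refC{chaincomplex}, and then identify the result with $\lambda(1;f_0,\ldots,f_k)$ by pushing forward along $(1,f_0,\ldots,f_k)$. The only difference is that you make explicit the bump-function cancellation of the $x_0\,d\rho$ term and the factorization through $\iota\colon x\mapsto(1,x)$, which the paper handles implicitly by its announced abuse of notation after introducing $\rho$.
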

\begin{proof}
Let $\gamma\in \Ch_{k+1}(X)$. Recall that the boundary $\partial$ on $\Ch_{k+1}(X)\subset\calD_{k+1}(X)$ is defined
as the adjoint of $d$ on $\calD^{k}(\BR^N)$. Thus using the definitions and \refC{chaincomplex}, we have
\begin{eqnarray*}
\langle \delta(\lambda(f_0;f_1,\cdots,f_k)\,,\,\gamma\rangle&=&
\langle \lambda(f_0;f_1,\cdots,f_k\,,\,\partial\gamma\rangle\\&=&
\left((f_0,f_1,\cdots,f_k)_*(\partial\gamma)\right)(x_0dx_1\cdots dx_k)\\&=&
\left( \partial\left((f_0,f_1,\cdots,f_k)_*(\gamma)\right)\right)(x_0dx_1\cdots dx_k)\\&=&
\left( (f_0,f_1,\cdots,f_k)_*(\gamma)\right)(dx_0dx_1\cdots dx_k)\\&=&
\left( (1,f_0,f_1,\cdots,f_k)_*(\gamma)\right)(tdx_0dx_1\cdots dx_k)\\&=&
 \langle\lambda(1;f_0,f_1,\cdots,f_k)\,,\,\gamma\rangle.
\end{eqnarray*}
\end{proof}

The above implies that $\omin^*(X)$ is a cochain complex. We will later define an algebra structure on it using 
 a certain cross product.  However, in order to prove that this product is well defined, we first need to show in the three following propositions that
minimal forms are well approximated by smooth forms.

First we have continuity of $\langle\mu,-\rangle$.
\begin{prop}
\label{P:contSAmin}
Let $X$ be a semi-algebraic set and let $\mu\in\omin^k(X)$.
 If $\gamma_n\toSA\gamma$ in $\Ch_k(X)$, then $\lim_{n\to\infty}\langle\mu,\gamma_n\rangle=\langle\mu,\gamma\rangle$.
\end{prop}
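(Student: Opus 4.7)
The plan is to reduce to a single generating cochain and then combine the two key functorial and continuity properties of semi-algebraic convergence established earlier.

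By linearity of $\langle -,\gamma\rangle$ in its first argument, I can assume $\mu=\lambda(f_0;f_1,\dots,f_k)$ for some semi-algebraic functions $f_i\colon X\to\BR$. Set $f=(f_0,f_1,\dots,f_k)\colon X\to\BR^{k+1}$, so that by definition
\[
\langle\mu,\gamma\rangle \;=\; f_{*}(\gamma)\bigl(\rho\cdot x_0\,dx_1\wedge\cdots\wedge dx_k\bigr),
\]
where $\rho\in\calD^0(\BR^{k+1})$ is any bump function equal to $1$ on $\spt(f_*(\gamma))$.

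Next I would unpack the hypothesis $\gamma_n\toSA\gamma$ using \refD{SAconv}: there is a semi-algebraic map $h\colon M\times[0,1]\to X$ with $\gamma_n=h_*(\bbr{M\times\{\epsilon_n\}})$ and $\gamma=h_*(\bbr{M\times\{0\}})$. The crucial observation is uniform support control: every $\gamma_n$ (and $\gamma$) is supported in the compact set $h(M\times[0,1])$, hence every $f_*(\gamma_n)$ and $f_*(\gamma)$ is supported in the compact semi-algebraic set $K:=f(h(M\times[0,1]))\subset\BR^{k+1}$. So I can fix \emph{one} bump function $\rho\in\calD^0(\BR^{k+1})$ with $\rho\equiv 1$ on a neighborhood of $K$, and then simultaneously
\[
\langle\mu,\gamma_n\rangle = f_{*}(\gamma_n)(\omega),\qquad
\langle\mu,\gamma\rangle = f_{*}(\gamma)(\omega),
\]
with the \emph{same} test form $\omega=\rho\cdot x_0\,dx_1\wedge\cdots\wedge dx_k\in\calD^k(\BR^{k+1})$.

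Now I apply the two preservation results from \refS{conv}: by \refP{preserveconvSA}, $f_*(\gamma_n)\toSA f_*(\gamma)$ in $\Ch_k(\BR^{k+1})$; by \refP{SAcont}, this in turn implies weak convergence $f_*(\gamma_n)\rightharpoonup f_*(\gamma)$. Evaluating on the fixed compactly supported smooth form $\omega$ above yields $f_*(\gamma_n)(\omega)\to f_*(\gamma)(\omega)$, i.e.\ $\langle\mu,\gamma_n\rangle\to\langle\mu,\gamma\rangle$.

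The only real obstacle is making sure the argument passes through the definition of $\lambda(f_0;f_1,\dots,f_k)$, which involves an \emph{unbounded} form $x_0\,dx_1\wedge\cdots\wedge dx_k$, whereas weak convergence of currents is tested against \emph{compactly supported} smooth forms. This is precisely handled by the uniform support bound coming from the homotopy $h$ in \refD{SAconv}, which lets one choose a single cutoff $\rho$ good for the entire sequence.
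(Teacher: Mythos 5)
Your proof is correct and follows exactly the same route as the paper: reduce by linearity to a single generating form $\lambda(f_0;\dots,f_k)$, push forward along $f$, invoke \refP{preserveconvSA} and \refP{SAcont} to get weak convergence, and observe that a single bump function works for the whole sequence thanks to the uniform compact support coming from $h(M\times[0,1])$. You simply spell out the bump-function point more explicitly than the paper does.
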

\begin{proof}
By linearity, it is enough to prove this when $\mu=\lambda(f_0;f_1,\cdots,f_k)$ for some semi-algebraic functions $f_j\colon X\to\BR$.
Set $f=(f_0,f_1,\cdots,f_k)\colon X\to\BR^{k+1}$.
 By Propositions \ref{P:preserveconvSA} and \ref{P:SAcont},  $f_*(\gamma_n)\rightharpoonup    f_*(\gamma)$, and so
\begin{eqnarray*}
\lim_{n\to\infty}\langle\mu,\gamma_n\rangle&=&
\lim_{n\to\infty} \left(f_*(\gamma_n)(x_0dx_1\dots dx_k)\right)\\
&=&\left(f_*(\gamma)(x_0dx_1\dots dx_k)\right)\\
&=&\langle\mu,\gamma\rangle
\end{eqnarray*}
Notice that we have implicitly  used the fact that the bump function $\rho$ in front of $x_0dx_1\dots dx_k$ can be chosen to be the same
for all $\gamma_n$ because, by the SA convergence, $\cup_{n=1}^\infty\spt(\gamma_n)$ is included in some compact set.
 \end{proof}
We next show   that a minimal form is smooth at a ``generic point'' and that any minimal form
is determined by its values at such points.
\begin{prop}
\label{P:minaesm}
Let $X$ be a semi-algebraic set and let $\mu\in\omin^k(X)$.
 There exists a semi-algebraic smooth  submanifold $W$ that is open and dense in $X$ and
such that $\mu|_W$ is smooth.
\end{prop}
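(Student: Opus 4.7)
By linearity, write $\mu = \sum_{\alpha=1}^r c_\alpha\lambda(f_0^\alpha;f_1^\alpha,\dots,f_k^\alpha)$ for finitely many semi-algebraic functions $f_j^\alpha\colon X\to\BR$. Assemble all these functions into a single semi-algebraic map $F\colon X\to\BR^{r(k+1)}$ and invoke \refP{strat} (or \refL{smoothstrat}) to produce a smooth semi-algebraic stratification $\calS$ of $X$ with respect to $F$, so that each stratum is a smooth submanifold of $\BR^N$ and $F|_S$ is smooth for every $S\in\calS$.

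Let $W$ be the union of the maximal strata of $\calS$. By the frontier condition, each maximal stratum is open in $X$, so $W$ is open; and since every non-maximal stratum lies in the closure of a maximal one, $W$ is dense. Thus $W$ is an open dense semi-algebraic smooth submanifold of $X$ (of possibly varying dimension on different components) on which each $f_j^\alpha$ restricts to a smooth function. Define
\[
\omega\ =\ \sum_{\alpha=1}^r c_\alpha\,(f_0^\alpha|_W)\,d(f_1^\alpha|_W)\wedge\cdots\wedge d(f_k^\alpha|_W)\ \in\ \osmooth^k(W).
\]
The claim will be that $\mu|_W$ equals the smooth cochain $\langle\omega,-\rangle\in\Coch^k(W)$, which by \refD{smcoch} exactly says $\mu|_W$ is smooth.

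To verify the equality, fix $\gamma\in\Ch_k(W)$ and use \refP{altSAcur} to write $\gamma=\sum_i n_i\bbr{V_i}$ with $V_i\subset W$ pairwise disjoint smooth semi-algebraic oriented submanifolds whose closures are compact in $W$. For one summand $\lambda(f_0^\alpha;\dots;f_k^\alpha)$, setting $f^\alpha=(f_0^\alpha,\dots,f_k^\alpha)\colon X\to\BR^{k+1}$, the defining formula of a minimal form together with the formula immediately following \refN{E:define-fM} gives
\[
\langle\lambda(f_0^\alpha;\dots,f_k^\alpha),\bbr{V_i}\rangle
\ =\ f^\alpha_{*}\bbr{V_i}(x_0\,dx_1\wedge\cdots\wedge dx_k)
\ =\ \int_{V_i}(f^\alpha|_W)^{*}(x_0\,dx_1\wedge\cdots\wedge dx_k),
\]
because $f^\alpha|_W$ is smooth on $V_i$, so the semi-algebraic pushforward of \refN{E:define-fM} coincides with the usual smooth pullback-and-integrate. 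The last integrand is precisely $f_0^\alpha\,df_1^\alpha\wedge\cdots\wedge df_k^\alpha$ on $V_i$, so summing over $\alpha$ and $i$ yields $\langle\mu|_W,\gamma\rangle=\langle\omega,\gamma\rangle$.

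The main obstacle is purely bookkeeping: one must ensure that all the semi-algebraic functions involved in the linear combination become simultaneously smooth on the same dense open smooth submanifold (handled by stratifying with respect to their joint map $F$), and that the abstract definition of $\lambda(\dots)$ via pushforward of currents reduces, on strata where $F$ is smooth, to the naive smooth expression $f_0\,df_1\wedge\cdots\wedge df_k$ (handled by the alternative representation of chains in \refP{altSAcur} and the formula after \refN{E:define-fM}). No deeper input is needed.
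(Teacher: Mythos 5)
Your proof is correct and takes essentially the same approach as the paper: stratify $X$ so that the semi-algebraic functions defining $\mu$ become smooth on each stratum, take $W$ to be the union of the maximal strata, and observe that $\mu|_W$ agrees with the smooth form $\sum f_0\,df_1\wedge\cdots\wedge df_k$. You supply somewhat more detail than the paper (in particular the explicit chain-level verification that the current-pushforward definition of $\lambda(\cdots)$ reduces to the naive smooth expression on $W$), but the underlying argument is identical.
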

\begin{proof}
Suppose that $\mu=\sum_{i=1}^p\lambda(f_0^{(i)};f_1^{(i)},\cdots,f_k^{(i)})$ for some semi-algebraic functions $f_j^{(i)}\colon X\to\BR$.
Take a stratification of $X$ into smooth submanifolds on which the $f_j^{(i)}$ are smooth.
Let $W$ be the union of the maximal strata.  This is open and dense in $X$ and
 $\mu|_W=\sum_i f_0^{(i)}\,df_1^{(i)}\dots df_k^{(i)}$ is a smooth form.
 \end{proof}
\begin{prop}
\label{P:mindense}
Let $X$ be a semi-algebraic set and let $\mu\in\omin^k(X)$. 
 If $X_0\subset X$ is a dense semi-algebraic subset and if
$\mu|_{X_0}=0$ then $\mu=0$.
\end{prop}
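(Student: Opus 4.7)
The plan is to combine the two preceding results of this section: the density statement in Proposition \ref{P:densitySA} and the SA-continuity of minimal forms in Proposition \ref{P:contSAmin}. Since $\omin^k(X) \subset \Coch^k(X) = \hom(\Ch_k(X),\BR)$, showing $\mu = 0$ amounts to verifying $\langle \mu, \gamma \rangle = 0$ for every $\gamma \in \Ch_k(X)$.

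Fixing an arbitrary $\gamma \in \Ch_k(X)$, I would first invoke Proposition \ref{P:densitySA} applied to the dense semi-algebraic subset $X_0 \subset X$ to produce a sequence $(\gamma_n)_{n\geq 1}$ in $\Ch_k(X_0)$ with $\gamma_n \toSA \gamma$ in $\Ch_k(X)$. Each $\gamma_n$ has $\spt(\gamma_n) \subset X_0$, so writing $\gamma_n = i_* \gamma_n'$ for the inclusion $i \colon X_0 \hookrightarrow X$, the hypothesis $\mu|_{X_0} = i^* \mu = 0$ gives
\[
\langle \mu, \gamma_n \rangle \;=\; \langle i^*\mu, \gamma_n' \rangle \;=\; 0
\]
for every $n$.

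Finally, applying Proposition \ref{P:contSAmin} to the minimal form $\mu$ and the SA-convergent sequence $\gamma_n \toSA \gamma$ yields
\[
\langle \mu, \gamma \rangle \;=\; \lim_{n \to \infty} \langle \mu, \gamma_n \rangle \;=\; 0.
\]
Since $\gamma$ was arbitrary, $\mu = 0$. I do not foresee any real obstacle here: this proposition is essentially a formal corollary of the two preparatory results, serving as the clean statement that a minimal form is determined by its restriction to a dense semi-algebraic subset, in the same spirit as the injectivity of \refN{E:osmoothinclcoch} in \refL{smcoch} for smooth forms.
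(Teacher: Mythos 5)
Your proof is correct and follows essentially the same route as the paper: apply Proposition~\ref{P:densitySA} to approximate an arbitrary chain $\gamma$ by chains in $\Ch_k(X_0)$, use $\mu|_{X_0}=0$ to kill the approximants, and conclude by the SA-continuity of Proposition~\ref{P:contSAmin}. The only addition is the explicit note that $\gamma_n = i_*\gamma_n'$ justifies $\langle\mu,\gamma_n\rangle = \langle i^*\mu,\gamma_n'\rangle = 0$, which the paper leaves implicit.
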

\begin{proof}
 Let $\gamma\in \Ch_k(X)$.
Since $X_0$ is dense in $X$, by \refP{densitySA} there exists a sequence $(\gamma_n)_{n\geq1}$ in
$\Ch_k(X_0)$ converging semi-algebraically to $\gamma$. By hypothesis, $\langle\mu,\gamma_n\rangle=0$,
and we conclude by \refP{contSAmin} that $\mu=0$ since it is 0 when evaluated at an arbitrary chain $\gamma$.
\end{proof}

\begin{prop}
\label{P:strKu-min}
Let $X_1$ and $X_2$ be  semi-algebraic sets and let $\mu\in\omin^k(X_1\times X_2)$.
If $\langle\mu,\gamma_1\times\gamma_2\rangle=0$ for all $\gamma_1\in\Ch_*(X_1)$ and
$\gamma_2\in\Ch_*(X_2)$  then $\mu=0.$
\end{prop}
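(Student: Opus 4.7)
My approach is to reduce to the smooth strong K\"unneth formula (\refL{strKu-sm}) by working on local product neighborhoods provided by a stratification that simultaneously refines a product stratification and witnesses the smoothness of $\mu$.

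Write $\mu=\sum_{i=1}^{p}\lambda(f_{0}^{(i)};f_{1}^{(i)},\ldots,f_{k}^{(i)})$ for semi-algebraic functions $f_{l}^{(i)}\colon X_{1}\times X_{2}\to\BR$. Choose smooth semi-algebraic stratifications $\calS_{j}$ of $X_{j}$ for $j=1,2$. Applying \refP{strat} to the semi-algebraic map $F=(f_{l}^{(i)})_{i,l}$ on $X_{1}\times X_{2}$, together with the refinement statement that follows it, I select a smooth semi-algebraic stratification $\calS$ of $X_{1}\times X_{2}$ that refines the product stratification $\{S\times S':S\in\calS_{1},\,S'\in\calS_{2}\}$ and on each of whose strata every $f_{l}^{(i)}$ is smooth.

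Fix a maximal stratum $T$ of $\calS$ and let $S_{1}\times S_{2}$ be the product stratum containing $T$. I claim $S_{j}$ is maximal in $\calS_{j}$ for $j=1,2$: if, say, $S_{1}\subset\overline{S_{1}'}$ with $S_{1}'\in\calS_{1}$ and $\dim S_{1}'>\dim S_{1}$, then $S_{1}\times S_{2}\subset\overline{S_{1}'\times S_{2}}$; since $\calS$ refines the product stratification, $S_{1}'\times S_{2}$ is partitioned by $\calS$-strata of dimension up to $\dim(S_{1}'\times S_{2})>\dim T$, and one of their closures must contain $T$, contradicting maximality of $T$. Hence $\dim T=\dim(S_{1}\times S_{2})$, so $T$ is open in the smooth semi-algebraic submanifold $S_{1}\times S_{2}\subset\BR^{n_{1}}\times\BR^{n_{2}}$. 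For each $(x_{1},x_{2})\in T$, pick $\epsilon>0$ small enough that the semi-algebraic smooth submanifolds $U_{j}=\{y\in S_{j}:\|y-x_{j}\|<\epsilon\}$ satisfy $U_{1}\times U_{2}\subset T$. On $U_{1}\times U_{2}$ every $f_{l}^{(i)}$ is smooth, so via the inclusion of \refL{smcoch} the restriction $\mu|_{U_{1}\times U_{2}}$ coincides with the smooth form $\omega=\sum_{i}f_{0}^{(i)}\,df_{1}^{(i)}\wedge\cdots\wedge df_{k}^{(i)}\in\osmooth^{k}(U_{1}\times U_{2})$. Since $\Ch_{*}(U_{j})\subset\Ch_{*}(X_{j})$, the hypothesis gives $\langle\omega,\gamma_{1}\times\gamma_{2}\rangle=\langle\mu,\gamma_{1}\times\gamma_{2}\rangle=0$ for all $\gamma_{j}\in\Ch_{*}(U_{j})$, and \refL{strKu-sm} applied to $U_{1}$ and $U_{2}$ yields $\omega=0$; thus $\mu$ vanishes on $U_{1}\times U_{2}$. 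Letting $(x_{1},x_{2})$ range over $T$ shows $\mu$ vanishes on $T$, and since this holds for every maximal stratum of $\calS$, $\mu$ vanishes on a dense semi-algebraic open subset of $X_{1}\times X_{2}$. An appeal to \refP{mindense} then gives $\mu=0$.

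The principal subtlety is the stratification step: one must arrange the smoothness-witnessing stratification $\calS$ to refine a product stratification, for otherwise a maximal smoothness stratum need not contain any genuine rectangle $U_{1}\times U_{2}$ and \refL{strKu-sm} cannot be applied locally. Once this compatibility is in place the argument is a straightforward packaging of \refL{strKu-sm} with \refP{mindense}.
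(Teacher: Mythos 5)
Your proof is correct and follows essentially the same strategy as the paper's: reduce locally to the smooth strong K\"unneth lemma (\refL{strKu-sm}) on product neighborhoods $U_1\times U_2$, conclude vanishing on a dense semi-algebraic set, and finish with \refP{mindense}. The difference is in how the product neighborhoods are produced. The paper invokes \refP{minaesm} to get an open dense smooth submanifold $W\subset X_1\times X_2$ on which $\mu$ is smooth and then asserts the existence of smooth $U_i\subset X_i$ with $U_1\times U_2\subset W$; strictly speaking this last step needs one to first intersect $W$ with a product $W_1\times W_2$ of dense open smooth parts of the factors, a point the paper leaves implicit. You instead build the smoothness-witnessing stratification $\calS$ to refine a product stratification from the start and argue that a maximal $\calS$-stratum $T$ is open in a product $S_1\times S_2$ of maximal strata, which gives the rectangles directly. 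Both routes are sound and of comparable length; your version makes the product-compatibility explicit, at the cost of a slightly longer stratification argument (which, incidentally, implicitly uses that strata can be taken connected so that a maximal stratum has full dimension in the containing product stratum -- worth stating).
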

\begin{proof}
By \refP{minaesm} there exists a semi-algebraic smooth submanifold $W$, open and dense in $X_1\times X_2$,
such that $\omega:=\mu|_W$ is smooth. We show that $\omega=0$. Let $(x_1,x_2)\in W$. There exist
smooth neighborhhoods $U_i\subset X_i$ of $x_i$ for $i=1,2$  such that $U_1\times U_2\subset W$.
By \refL{strKu-sm}, $\omega|_{U_1\times U_2}=0$. This implies that $\omega=0$.
Since $W$ is dense in $X$,  we deduce the desired result by \refP{mindense}.
\end{proof}
\begin{prop}
\label{P:omincross}
Let $X_1$ and $X_2$ be  semi-algebraic sets.
There is a degree-preserving linear map
\[\times\colon\omin^*(X_1)\otimes \omin^*(X_2)\longrightarrow\omin^*(X_1\times X_2)\]
given by 
\fn{Victor noticed that the sign in the next formula is probably not needed, even if it does not
hurt. I will check this in LLN{\bf Pascal:} I checked that in Bredon ``Geometry and topology'' Proposition VI.3.1. page 323 and page 315 he uses
the same Koszul sign as mine. So I prefer to keep it. \pl 10july 2008. Actually Victor was right (as always!). The Koszul sign in \refN{E:omincross}
was WRONG (look to the proof). Indeed formula \refN{E:defomincross} implies that there is no sign. Similarly the sign in \refN{E:omPAcross} is removed.}
\begin{equation}
\label{E:omincross}
\langle\mu_1\times\mu_2,\gamma_1\times\gamma_2\rangle=
 \langle\mu_1,\gamma_1 \rangle\cdot \langle\mu_2,\gamma_2 \rangle
\end{equation}
for $\mu_i\in\omin^*(X_i)$ and $\gamma_i\in\Ch_*(X_i)$.  This formula satisfies the Leibniz rule
\[\delta(\mu_1\times\mu_2)=\delta(\mu_1)\times\mu_2+(-1)^{\deg(\mu_1)}\mu_1\times \delta(\mu_2).\]
Further, let $T\colon X_1\times X_2\to X_2\times X_1$ be the twisting map, given by $(x_1,x_2)\mapsto (x_2,x_1)$. Then
\[T^*(\mu_2\times\mu_1)=(-1)^{\deg(\mu_1)\deg(\mu_2)}\mu_1\times\mu_2.\]
\end{prop}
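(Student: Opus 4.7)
The plan is to define $\mu_1\times\mu_2$ first on generators by an explicit formula modeled on the smooth case, next verify the pairing identity \refE{omincross} on product chains by reducing to the smooth situation via generic smoothness, and finally deduce well-definedness of the bilinear extension together with the Leibniz rule and graded commutativity from the pairing identity using the K\"unneth-type injectivity of \refP{strKu-min}. Let $\pi_i\colon X_1\times X_2\to X_i$ denote the projections. For generators $\mu_1=\lambda(f_0;f_1,\ldots,f_k)\in\omin^k(X_1)$ and $\mu_2=\lambda(g_0;g_1,\ldots,g_l)\in\omin^l(X_2)$, I will define
\[
\mu_1\times\mu_2 := \lambda\bigl((f_0{\circ}\pi_1)(g_0{\circ}\pi_2);\,f_1{\circ}\pi_1,\ldots,f_k{\circ}\pi_1,\,g_1{\circ}\pi_2,\ldots,g_l{\circ}\pi_2\bigr)\in\omin^{k+l}(X_1\times X_2),
\]
and extend bilinearly. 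This formula models the smooth identity $(f_0\,df_1\wedge\cdots\wedge df_k)\wedge(g_0\,dg_1\wedge\cdots\wedge dg_l)=f_0g_0\,df_1\wedge\cdots\wedge df_k\wedge dg_1\wedge\cdots\wedge dg_l$.

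The core step, and main obstacle, is establishing the pairing identity \refE{omincross} for generators. I will apply \refP{minaesm} to obtain open dense semi-algebraic smooth submanifolds $W_1\subset X_1$ and $W_2\subset X_2$ on which all of the $f_i$ and $g_j$ are smooth; then $(\mu_1\times\mu_2)|_{W_1\times W_2}$ equals the smooth form $\pi_1^*(f_0\,df_1\wedge\cdots\wedge df_k)\wedge\pi_2^*(g_0\,dg_1\wedge\cdots\wedge dg_l)$, and $\mu_i|_{W_i}$ are similarly smooth. For chains $\gamma_i'\in\Ch_*(W_i)$ represented as $\gamma_i'=h_{i*}\bbr{M_i}$ by semi-algebraic maps $h_i\colon M_i\to W_i$ from compact oriented semi-algebraic manifolds, \refP{chaincross} gives $\gamma_1'\times\gamma_2'=(h_1\times h_2)_*\bbr{M_1\times M_2}$; after refining to a common smooth semi-algebraic stratification of $M_1\times M_2$ by product strata on which both $h_i$ are smooth, classical Fubini yields $\langle\mu_1\times\mu_2,\gamma_1'\times\gamma_2'\rangle=\langle\mu_1,\gamma_1'\rangle\cdot\langle\mu_2,\gamma_2'\rangle$. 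For arbitrary $\gamma_i\in\Ch_*(X_i)$, \refP{densitySA} supplies sequences $\gamma_i^{(n)}\in\Ch_*(W_i)$ with $\gamma_i^{(n)}\toSA\gamma_i$, realized by parameterizations $h_i\colon M_i\times[0,1]\to X_i$ and a common sequence $\epsilon_n\downarrow 0$. The combined map $H(u_1,u_2,t):=(h_1(u_1,t),h_2(u_2,t))$ together with \refP{chaincross} witnesses $\gamma_1^{(n)}\times\gamma_2^{(n)}\toSA\gamma_1\times\gamma_2$, and \refP{contSAmin} then passes the identity to the limit.

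Once \refE{omincross} is established on generators, \refP{strKu-min} makes the remaining claims formal. Well-definedness of the bilinear extension is immediate: two expressions of $\mu_1$ as a linear combination of generators yield cross products pairing identically with every $\gamma_1\times\gamma_2$ and hence coinciding by \refP{strKu-min}. For the Leibniz rule, the chain-level rule from \refP{chaincross} together with \refE{omincross} shows that both sides of the proposed identity $\delta(\mu_1\times\mu_2)=\delta\mu_1\times\mu_2+(-1)^{\deg\mu_1}\mu_1\times\delta\mu_2$ pair identically with every $\gamma_1\times\gamma_2$ (the sign $(-1)^{\deg\gamma_1}$ coming from $\partial(\gamma_1\times\gamma_2)$ collapses to $(-1)^{\deg\mu_1}$ on the nonvanishing term by degree matching); a second invocation of \refP{strKu-min} concludes. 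Graded commutativity follows analogously from the twist identity $T_*(\gamma_1\times\gamma_2)=(-1)^{\deg\gamma_1\deg\gamma_2}\gamma_2\times\gamma_1$ of \refP{chaincross}. In sum, the only genuinely non-formal step is the pairing identity itself, whose proof bridges the smooth model on $W_1\times W_2$ to arbitrary semi-algebraic chains by combining the density \refP{densitySA} with the continuity \refP{contSAmin}.
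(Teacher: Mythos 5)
Your argument is correct and follows the paper's overall strategy: define $\mu_1\times\mu_2$ on generators by the same explicit formula, invoke the strong K\"unneth injectivity \refP{strKu-min} for well-definedness, and deduce the Leibniz and twisting identities from the corresponding chain-level formulas in \refP{chaincross} together with another appeal to \refP{strKu-min}. The one place you diverge is the verification of \refE{omincross}, which the paper asserts without detail is ``straightforward to check.'' You prove it first for chains supported in the dense smooth open sets $W_i$ supplied by \refP{minaesm}, then extend to arbitrary chains via density (\refP{densitySA}) and SA-continuity (\refP{contSAmin}). That works, but the machinery is more than needed, and it is almost certainly not what the authors intend. Given $\gamma_i=h_{i*}\bbr{M_i}$ with $M_i$ a compact $k_i$-manifold, one can directly stratify each $M_i$ so that every pullback $f_j^{(i)}\circ h_i$ is smooth on each stratum; then $\gamma_1\times\gamma_2=(h_1\times h_2)_*\bbr{M_1\times M_2}$ by \refP{chaincross}, the product stratification of $M_1\times M_2$ is again a smooth stratification on which the composite map defining $\langle\mu_1\times\mu_2,\gamma_1\times\gamma_2\rangle$ is smooth, and the identity reduces to the ordinary Fubini theorem for the wedge of a top-degree form pulled back from $M_1$ with a top-degree form pulled back from $M_2$, summed over the top-dimensional product strata. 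The point to notice is that the pairing is always evaluated by pulling back to the source manifolds $M_i$ and stratifying there; smoothness of the form on the target $X_i$ never enters, so your excursion through $W_i$ and the SA-convergence machinery is superfluous, though not incorrect.
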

\begin{proof}
Let $k_i$  be nonnegative integers, let $f_0^{(i)},\cdots,f_{k_i}^{(i)}\colon X_i\to\BR$
be semi-algebraic functions for $i=1,2$,
and set $\mu_i=\lambda(f_0^{(i)};f_1^{(i)},\dots,f_{k_i}^{(i)})\in\omin^{k_i}(X_i)$.
Consider also the projections $\proj_i \colon X_1\times X_2\to X_i$.
Set
\begin{equation}
\label{E:defomincross}
\mu_1\times\mu_2=\lambda\left((f_0^{(1)}\proj_1)\cdot(f_0^{(2)}\proj_2)\,;\,f_1^{(1)}\proj_1,\cdots,f_{k_1}^{(1)}\proj_1,
f_1^{(2)}\proj_2,\cdots,f_{k_2}^{(2)}\proj_2\right)
\end{equation}
and extend bilinearly.  It is straightforward to check that this definition satisfies  equation \refN{E:omincross} of the proposition.

The fact that  the minimal form $\mu_1\times \mu_2$
is characterized by equation \refN{E:omincross}, and in particular that the right side
of \refN{E:defomincross} is independent of
the choice of the representatives of the $\mu_i$'s, is a consequence of \refP{strKu-min}.

The Leibniz and twisting  formulas are  consequences of the corresponding formulas
for chains in \refP{chaincross}.
\end{proof}

For $\mu_1,\mu_2\in\omin^*(X)$, we define a multiplication on $\omin^*(X)$ by
\[\mu_1\cdot\mu_2=\Delta^*(\mu_1\times\mu_2)\]
where $\Delta\colon X\to X\times X $ is the diagonal map.
It is immediate from the previous proposition that this multiplication satisfies the Leibniz formula and
is graded commutative.

In conclusion we have
\begin{thm}
\label{T:ominfunctor}
The above construction of minimal forms defines a contravariant functor $\omin^*\colon\SA\to\CDGA$.
\end{thm}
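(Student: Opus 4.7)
The differential, Leibniz rule, and graded commutativity of $\cdot$ have already been recorded in the paragraph preceding the theorem, and pullback of generators $\lambda(f_0;f_1,\dots,f_k)$ along a semi-algebraic map has been defined and shown to commute with $\delta$. So what remains is associativity of $\cdot$, the existence of a two-sided unit, and the fact that pullback is multiplicative.

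\emph{Associativity.} I would first establish associativity of the external cross product $\times$ and then deduce associativity of $\cdot$. Since the chain cross product of Proposition~\ref{P:chaincross} is associative on generators $f_{i*}\bbr{M_i}$, the characterization in Proposition~\ref{P:omincross} shows that both $(\mu_1\times\mu_2)\times\mu_3$ and $\mu_1\times(\mu_2\times\mu_3)$ in $\omin^*(X_1\times X_2\times X_3)$ pair with every $\gamma_1\times\gamma_2\times\gamma_3$ to give $\langle\mu_1,\gamma_1\rangle\langle\mu_2,\gamma_2\rangle\langle\mu_3,\gamma_3\rangle$. To conclude they agree I would upgrade Proposition~\ref{P:strKu-min} to a \emph{triple K\"unneth property}: any $\mu\in\omin^*(X_1\times X_2\times X_3)$ vanishing on all $\gamma_1\times\gamma_2\times\gamma_3$ is zero. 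The argument follows the pattern of Proposition~\ref{P:strKu-min}: use Proposition~\ref{P:minaesm} to restrict $\mu$ to a dense smooth open subset $W$; choose a product neighborhood $U_1\times U_2\times U_3\subset W$ about an arbitrary point; apply the smooth triple K\"unneth (the pointwise proof of Lemma~\ref{L:strKu-sm} extends immediately to three factors since the value of a smooth $k$-form at a point is determined by its values on product $k$-cubes assembled from vectors in the three tangent spaces); then conclude $\mu=0$ via Proposition~\ref{P:mindense}. Associativity of $\cdot$ is then formal: formula~(\ref{E:defomincross}) makes $\times$ natural, i.e.\ $(g_1\times g_2)^*(\nu_1\times\nu_2)=g_1^*\nu_1\times g_2^*\nu_2$, so both $(\mu_1\cdot\mu_2)\cdot\mu_3$ and $\mu_1\cdot(\mu_2\cdot\mu_3)$ reduce to $\Delta_3^*(\mu_1\times\mu_2\times\mu_3)$, where $\Delta_3\colon X\to X^3$ is the triple diagonal.

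\emph{Unit.} Take $\mathbf{1}_X:=\lambda(1)\in\omin^0(X)$, where $1\colon X\to\BR$ is the constant function. Applying formula~(\ref{E:defomincross}) shows that for $\mu\in\omin^*(X)$ one has $\mathbf{1}_X\times\mu=\proj_2^*\mu\in\omin^*(X\times X)$, and pulling back by the diagonal yields $\mathbf{1}_X\cdot\mu=(\proj_2\circ\Delta)^*\mu=\mu$. The symmetric identity is identical.

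\emph{Functoriality into $\CDGA$.} For a semi-algebraic $g\colon X\to Y$, the map $g^*$ commutes with $\delta$ because it is the dual of the chain map $g_*$ of Corollary~\ref{C:chaincomplex}, sends $\mathbf{1}_Y$ to $\mathbf{1}_X$ since $1\circ g=1$, and preserves $\times$ by direct inspection of (\ref{E:defomincross}). The naturality square $\Delta_Y\circ g=(g\times g)\circ\Delta_X$ then gives
\[g^*(\mu_1\cdot\mu_2)=g^*\Delta_Y^*(\mu_1\times\mu_2)=\Delta_X^*(g\times g)^*(\mu_1\times\mu_2)=\Delta_X^*(g^*\mu_1\times g^*\mu_2)=g^*\mu_1\cdot g^*\mu_2,\]
while $(hg)^*=g^*h^*$ and $\id^*=\id$ are obvious from the formula $g^*\lambda(f_0;\dots,f_k)=\lambda(f_0g;\dots,f_kg)$ on generators. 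The only genuinely non-formal ingredient in the whole plan is the triple K\"unneth property, and that is only a routine repetition of the argument for Proposition~\ref{P:strKu-min}.
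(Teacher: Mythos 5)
Correct, and essentially the only route available: the paper states the theorem without a proof environment, treating it as a summary of the preceding paragraph and of Propositions~\ref{P:omincross} and~\ref{T:Stokes}--\ref{P:mindense}, so there is no separate ``paper's proof'' to diverge from. Your write-up supplies the two checks the paper leaves silent---associativity (via the straightforward three-factor extension of Proposition~\ref{P:strKu-min}) and the unit $\lambda(1)$---and these are handled correctly.
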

\begin{rem}
\label{rmk:ominnotdeRham}
Note that we do not have the analog of DeRham theorem for the functor $\omin^*$;
in general $H(\omin^*(X))$ is not isomorphic to $H^*(X;\BR)$.
For example, consider the contractible semi-algebraic set
$X=[1,2]$ and the minimal $1$-form $\mu=\lambda(f_0;f_1)$ with $f_0,f_1\colon X\to\BR$
defined by $f_0(t)=1/t$ and $f_1(t)=t$ for $t\in [1,2]$. In other words, $\mu = dt/t$, and this is a smooth form which is a $\delta$-cocycle but it is not a $\delta$-coboundary in $\omin^*(X)$ because the
map $t\mapsto \log(t)$ is not semi-algebraic.  This is an issue we will get around by  enlarging the cochain complex $\omin^*$ in section \ref{SS:PAForms}.
\end{rem}


\subsection{Strongly continuous families of chains}\label{S:scc}


\begin{definition}
\label{D:scc}
Let $f\colon Y\to X$ be a semi-algebraic map.
A \emph{strongly continuous family of chains} or, shortly, a \emph{(strongly) continuous chain of dimension $l$ over $X$ along $f$} is a map
\[\Phi\colon X\longrightarrow\Ch_l(Y)\]
such that there exist
\begin{enumerate}
\item a finite semi-algebraic stratification $\{S_\alpha\}_{\alpha\in I}$ of $X$, and,  for each $\alpha\in I$,
\item an oriented compact  semi-algebraic manifold $F_\alpha$ of dimension $l$ and
\item a semi algebraic map $g_\alpha \colon\overline{S_\alpha}\times F_\alpha\to Y$, where $\overline{S_\alpha}$
is the closure of $S_\alpha$ in $X$, satisfying
\begin{enumerate}
\item 
the diagram

\[\xymatrix{\overline{S_\alpha}\times F_\alpha\ar[r]^-{g_\alpha}\ar[d]_{\proj}&Y\ar[d]^f\\
\overline{S_\alpha}\ar@{^(->}[r]&X}
\]
commutes, and
\item for each $\alpha\in I$ and $x\in\overline{S_\alpha}$,
$\Phi(x)=g_{\alpha*}(\bbr{\{x\}\times F_\alpha})$.
\end{enumerate}

\end{enumerate}

We say that the family $\{(S_\alpha,F_\alpha,g_\alpha)\}_{\alpha\in I}$ \emph{trivializes} or \emph{represents}
the  continuous chain $\Phi$ and we denote by $\Chs_l(Y\stackrel{f}\to X)$ the set of  strongly continuous $l$-chains.
\end{definition}

\begin{rem}
Note that if  we had asked that the maps $g_\alpha$ be defined only on $S_\alpha\times F_\alpha$ instead of on the closure
$\overline{S_\alpha}\times F_\alpha$ in the above definition, we would obtain a sort of semi-algebraic parametrized chain in $Y$ over $X$,
lacking a continuity condition.  We would therefore not have the Leibniz formula from \refP{ltimes}.  It appears that Kontsevich and Soibelman had something  weaker in mind in their definition of continuous chains
\cite[Definition 22]{KoSo:def} and  we have thus added \emph{strongly} and the adornment \emph{str} in order
to distinguish our definition from theirs. See discussion in \refS{diffKS}.
\end{rem}

It is clear that if $\{S'_\beta\}_{\beta\in J}$ is a stratification of $X$ refining the stratification  $\{S_\alpha\}_{\alpha\in I}$ of the above definition,
that is, if for each $\beta\in J$ there exists $\alpha=\alpha(\beta)\in I$ such that $S'_\beta\subset S_\alpha$, then there is an induced trivialization
$\{(S'_\beta,F_ \beta,g_ \beta)\}_{\beta\in J}$ with $F_ \beta=F_{\alpha(\beta)}$ and
 $g_\beta=g_{\alpha(\beta)}|_{\overline{S'_\beta}\times F_\beta}$.

Consider the set
\[\map(X,\Ch_l(Y)):=\{\Phi\colon X\to\Ch_l(Y)\}\]
of all maps of sets from $X$ to the $l$-chains on $Y$. This set has an abelian group structure induced by that on  $\Ch_l(Y)$.
Moreover, if $\Phi\in\map(X,\Ch_l(Y))$ we  define its boundary $\partial\Phi\in\map(X,\Ch_{l-1}(Y))$ by the formula
\[(\partial\Phi)(x)=\partial(\Phi(x))\,,\,x\in X.\]
Let  $\map(X,\Ch_*(Y))=\oplus_{l\geq 0}\map(X,\Ch_l(Y))$ and let $\Chs_*(Y\to X)=\oplus_{l\geq 0}\Chs_l(Y\to X)$.  It is clear that the former is a chain complex of abelian groups and that the latter is a subset.

\begin{lemma}
\label{L:ChsSubcplx}
$\Chs_*(Y\to X)$ is a chain subcomplex of $\map(X,\Ch_*(Y))$.
\end{lemma}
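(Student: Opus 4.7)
The plan is to check the two defining properties of a chain subcomplex: (a) $\Chs_*(Y\to X)$ is a graded subgroup of $\map(X,\Ch_*(Y))$, and (b) the boundary operator $\partial$ sends $\Chs_l(Y\to X)$ into $\Chs_{l-1}(Y\to X)$.

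For (a), suppose $\Phi,\Psi\in\Chs_l(Y\to X)$ are trivialized by $\{(S_\alpha,F_\alpha,g_\alpha)\}_{\alpha\in I}$ and $\{(S'_\beta,F'_\beta,g'_\beta)\}_{\beta\in J}$ respectively. Take a common semi-algebraic refinement $\{T_\gamma\}_{\gamma\in K}$ of the two stratifications (this exists using \refP{strat} applied to the partition generated by the original strata). By the remark following \refD{scc}, both $\Phi$ and $\Psi$ inherit trivializations over this refinement: on each $T_\gamma$, with $\alpha=\alpha(\gamma)$ and $\beta=\beta(\gamma)$, take $F_\gamma''=F_\alpha\amalg F'_\beta$ (disjoint union) and let $g''_\gamma\colon\overline{T_\gamma}\times F''_\gamma\to Y$ be the map induced by the restrictions of $g_\alpha$ and $g'_\beta$. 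Then $(\Phi+\Psi)(x)=g''_{\gamma*}(\bbr{\{x\}\times F''_\gamma})$ for $x\in\overline{T_\gamma}$, so $\Phi+\Psi\in\Chs_l(Y\to X)$. For negation, replace each $F_\alpha$ by $-F_\alpha$ (the same manifold with opposite orientation), which gives $-\Phi\in\Chs_l(Y\to X)$.

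For (b), let $\Phi\in\Chs_l(Y\to X)$ be trivialized by $\{(S_\alpha,F_\alpha,g_\alpha)\}_{\alpha\in I}$. The boundary of the compact oriented semi-algebraic manifold $\{x\}\times F_\alpha$ is $\{x\}\times\partial F_\alpha$, with the induced boundary orientation. Applying \refT{Stokes} to the restriction of $g_\alpha$ to $\{x\}\times F_\alpha$, we obtain for each $x\in\overline{S_\alpha}$,
\[
(\partial\Phi)(x)\;=\;\partial\bigl(g_{\alpha*}(\bbr{\{x\}\times F_\alpha})\bigr)\;=\;g_{\alpha*}(\bbr{\{x\}\times\partial F_\alpha}).
\]
Since $\partial F_\alpha$ is a compact oriented semi-algebraic manifold of dimension $l-1$ (without boundary, as the boundary of a manifold with boundary is closed), the family $\{(S_\alpha,\partial F_\alpha,g_\alpha|_{\overline{S_\alpha}\times\partial F_\alpha})\}_{\alpha\in I}$ trivializes $\partial\Phi$, so $\partial\Phi\in\Chs_{l-1}(Y\to X)$.

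I do not expect any serious obstacle: closure under addition is the mildly delicate point, since it requires refining two stratifications simultaneously and observing that the disjoint union of the fiber manifolds gives a new trivialization; closure under $\partial$ is a direct fiberwise application of \refT{Stokes} together with the fact that the boundary of a manifold with boundary is a closed manifold of one lower dimension.
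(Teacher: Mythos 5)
Your proof is correct and follows essentially the same route as the paper: close under addition via a common refinement and disjoint union of fibers, under negation by reversing orientations, and under $\partial$ by applying \refT{Stokes} fiberwise to obtain a trivialization of $\partial\Phi$ by $\{(S_\alpha,\partial F_\alpha,g_\alpha|_{\overline{S_\alpha}\times\partial F_\alpha})\}$. The only thing the paper states explicitly that you leave implicit is that the zero map lies in $\Chs_l(Y\to X)$ (trivialized with empty fiber), which is needed for the subgroup test; this is a trivial omission.
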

\begin{proof}
We prove first that each $\Chs_l(Y\to X)$ is a subgroup.

The zero element of $\map(X,\Ch_l(Y))$ is an element of $\Chs_l(Y\to X)$ since it is represented
by $\{(X,\emptyset,X\times\emptyset=\emptyset\hookrightarrow Y)\}$.

Let $\Phi,\Phi'\in \Chs_l(Y\to X)$. By taking a common refinement
 we can suppose that these
 continuous chains are
 represented by $\{(S_\alpha,F_\alpha,g_\alpha)\}_{\alpha\in I}$
and $\{(S_\alpha,F'_\alpha,g'_\alpha)\}_{\alpha\in I}$ respectively. Letting
$g''_\alpha=(g_\alpha,g'_\alpha)\colon
 \overline{S_\alpha}\times(F_\alpha\amalg F'_\alpha)\to Y$, it is clear that
$\Phi+\Phi'$ is represented by $\{(S_\alpha,F_\alpha\amalg F'_\alpha,g''_\alpha)\}_{\alpha\in I}$, so it is also
a continuous chain.

The inverse of a continuous chain can be represented by reversing the orientations of the manifolds $F_\alpha$.

It remains to prove that $\partial(\Chs_l(Y\to X))\subset \Chs_{l-1}(Y\to X)$. Indeed suppose that $\Phi\in \Chs_l(Y\to X)$
is represented by
$\{(S_\alpha,F_\alpha,g_\alpha)\}_{\alpha\in I}$. Then  by \refT{Stokes}, $\partial\Phi$ is represented by
$\{(S_\alpha,\partial F_\alpha,\partial g_\alpha)\}_{\alpha\in I}$ where $\partial g_\alpha$ is the restriction
of $g_\alpha$ to $\overline{S_\alpha}\times\partial F_\alpha$.
\end{proof}

Let $f\colon Y\to X$ be a semi-algebraic map, let $\gamma\in\Ch_k(X)$ and let
$\Phi\in\Chs_l(Y\to X)$. We construct a chain $\gamma\ltimes\Phi\in \Ch_{k+l}(Y)$ as follows.
 Take a trivialization $\{(S_\alpha,F_\alpha,g_\alpha)\}$ of $\Phi$. We can suppose that
  the stratification is fine enough to be adapted to
$\gamma$ in the sense that there exist integers $n_\alpha$ such that
\[\gamma=\sum_\alpha{n_\alpha}\cdot\bbr{\overline{S_\alpha}}\]
where ${\overline{S_\alpha}}$ are compact oriented semi-algebraic manifolds (take for example a 
 stratification whose restriction to $\spt(\gamma)$ is a suitable triangulation.)
Set
\begin{equation}
\label{E:gammaPhi}
\gamma\ltimes\Phi=\sum_\alpha n_\alpha\cdot g_{\alpha*}(\bbr{\overline {S_\alpha}\times F_\alpha}).
\end{equation}

We will prove in \refP{ltimes} that this operation is well-defined and satisfies the Leibniz formula.
In order to do so we need  the following version of Fubini's Theorem.
 \begin{lemma}
 \label{L:Fubini}
 Let $g\colon S\times F\to Y$ and $g'\colon S\times F'\to Y$ be two semi-algebraic maps such that
 $S$, $F$, and $F'$ are compact oriented semi-algebraic  manifolds with $\dim F=\dim F'$. Suppose that for all $x\in S$ we have $g_*(\bbr{\{x\}\times F})=g'_*(\bbr{\{x\}\times F'})$ in $\Ch_*(Y)$.
 Then $g_*(\bbr{S\times F})=g'_*(\bbr{S\times F'})$.
 \end{lemma}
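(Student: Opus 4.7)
The plan is to unwind both currents via Fubini after reducing to smooth maps by stratification, and then to use the fiberwise hypothesis to equate the resulting iterated integrals.

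First, applying \refP{strat} and \refL{smoothstrat} to $g$ and $g'$, I stratify $S$, $F$, and $F'$ so that $g$ and $g'$ become smooth on the top-dimensional strata of $S\times F$ and $S\times F'$. The lower-dimensional pieces contribute nothing to the $(k+l)$-currents by dimension count, so by additivity I may assume $g$ and $g'$ are smooth where they matter. For a smooth test form $\omega\in\calD^{k+l}(\BR^n)$, classical Fubini on the smooth parts yields
\[
g_*(\bbr{S\times F})(\omega)=\int_{S\times F}g^*\omega=\int_S\left(\int_F\omega(g(x,y))\bigl(g_*\partial_x\wedge g_*\partial_y\bigr)\,dy\right)dx,
\]
where $\partial_x=\partial_{x_1}\wedge\cdots\wedge\partial_{x_k}$ and $\partial_y=\partial_{y_1}\wedge\cdots\wedge\partial_{y_l}$ in local coordinates, with the analogous formula for $g'$ on $S\times F'$.

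Next, I rewrite the inner integrand via the identity $\omega(V\wedge W)=(\iota_V\omega)(W)$ as $(\iota_{g_*\partial_x}\omega)(g(x,y))(g_*\partial_y)$ and aim to show that this fiber integral depends on $g$ only through the fiber current $g(x,-)_*\bbr{F}$. The multivector $g_*\partial_x|_{(x,y)}$ decomposes at each point into a tangential part lying in the span of $g_*\partial_{y_1},\ldots,g_*\partial_{y_l}$ (which annihilates upon wedging with $g_*\partial_y$ by antisymmetry) and a transverse part representing the infinitesimal variation of the oriented fiber at the image point $g(x,y)\in Y$ as $x$ moves. The transverse part is an intrinsic geometric quantity determined by the oriented fiber current in $Y$, independent of the chosen parametrization by $F$. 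Consequently the inner integral equals the pairing of $g(x,-)_*\bbr{F}$ against an $l$-form on $Y$ built canonically from $\omega$ and from this intrinsic variation, and the hypothesis equates this pairing with the corresponding one for $g'$. Integration over $S$ then gives the desired equality.

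The main obstacle is making the tangential/transverse decomposition rigorous at non-generic points where $g(x,-)$ fails to be an immersion. I plan to address this by stratifying further so that $g(x,-)$ is a smooth semi-algebraic immersion on a dense open subset of $F$ for generic $x\in S$, carrying out the decomposition there, and extending to all $x$ by the SA-continuity results of \refP{SAcont} and \refP{contSAmin}. As an alternative that sidesteps the geometric decomposition, one could lift everything to $S\times Y$ via the graph maps $\tilde g(x,y)=(x,g(x,y))$ and $\tilde g'(x,y)=(x,g'(x,y))$: the slices of $\tilde g_*\bbr{S\times F}$ and $\tilde g'_*\bbr{S\times F'}$ over each $x\in S$ agree by the hypothesis, Federer's slicing uniqueness for flat chains \cite[\S 4.3]{Fed:GMT} then forces their equality in $\Ch_{k+l}(S\times Y)$, and pushing forward by $\pi_Y$ delivers the lemma.
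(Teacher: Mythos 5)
Your primary plan (the tangential/transverse decomposition of $g_*\partial_x$) does not go through, and you rightly flag it yourself: at points where $g(x,-)$ is not an immersion there is no well-defined transverse complement to the fiber tangent space, and the claim that ``the transverse part is an intrinsic geometric quantity determined by the oriented fiber current'' is exactly the point that needs proof, not an identity one can invoke. Even on the stratum where $g(x,-)$ is an immersion, the hypothesis only tells you the pushforward \emph{currents} $g(x,-)_*\bbr{F}$ and $g'(x,-)_*\bbr{F'}$ agree, which allows very different parametrizations (e.g.\ different covering multiplicities that cancel); it is not clear a priori that the iterated-integral integrands agree pointwise, and the appeals to \refP{SAcont} and \refP{contSAmin} do not address this.

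Your alternative via graph maps and slicing is a genuinely different and promising route, but as written it skips the delicate steps: the semi-algebraic pushforward $\tilde g_*$ is not the Lipschitz pushforward, so one must verify that the Federer slice $\langle\tilde g_*\bbr{S\times F},\pi,x\rangle$ does coincide with $\bbr{\{x\}}\times g(x,-)_*\bbr{F}$ for a.e.\ $x$; and one then needs the coarea-type statement that a finite-mass rectifiable current all of whose slices vanish is itself zero (true here because the slicing Jacobian is positive a.e.\ on the supporting rectifiable set, but this deserves to be said). The paper's argument is more elementary and avoids slicing entirely: set $T=F\amalg(-F')$ and $f=g\amalg g'$, so the hypothesis becomes $f_*(\bbr{\{x\}\times T})=0$ for all $x$ and the goal is $f_*(\bbr{S\times T})=0$. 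One may assume $\dim(Y)=\dim(S\times T)=:n$ (else the pushforward vanishes for dimension reasons). On a dense open $Y_0\subset Y$ avoiding the critical image, $f$ is a local diffeomorphism with finite fibers, and the multiplicity of $f_*(\bbr{S\times T})$ near $y_0\in Y_0$ is $\sum_{p\in f^{-1}(y_0)}\sgn\det df(p)$. Grouping the preimages by their $S$-coordinate, each group is contained in a single fiber $\{s_i\}\times T$, and the inner sum is precisely the multiplicity of $f_*(\bbr{\{s_i\}\times T})$ at $y_0$, which is zero by hypothesis. So all generic multiplicities vanish, giving $f_*(\bbr{S\times T})=0$. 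This multiplicity-counting argument is what your Fubini heuristic is reaching for, but it sidesteps the ill-posed transverse decomposition by working on the image $Y$ rather than on the source.
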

 \begin{proof}
Set $T=F\amalg-F'$ where $-F'$ is $F'$ with the opposite orientation and consider the map
$f=g\amalg g'\colon S\times T\to Y$.
By linearity, for each $x\in S$ we have $f_*(\bbr{\{x\}\times T})=0$ and what we now have to show is that $f_*(\bbr{S\times T})=0$.

Set $n=\dim(S)+\dim(T)$. Without loss of generality we can assume that $Y=f(S\times T)$, and hence $\dim(Y)\leq n$.
If $\dim(Y)<n$ then the conclusion of the lemma is immediate, so we assume $\dim(Y)=n$.

Let $W$ be a semi-algebraic smooth submanifold that is open and dense in $S\times T$ and let  $Z\subset S\times T$ be the closure of
$$
(S\times T\setminus W)\cup\left\{(s,t)\in W \left|
\quad
\parbox{6cm}{$f$ is not smooth at $(s,t)$ or  it is smooth but $df(s,t)$ is of rank $<n$}
\right.\right\}
$$
Then $\dim f(Z)<n$ and there exists  a non-empty smooth $n$-dimensional semi-algebraic submanifold $Y_0\subset Y\setminus f(Z)$
such that $\spt(f_*(\bbr{S\times T}))\subset \overline{Y_0}$.
Set $X_0=f^{-1}(Y_0)$ and let $f_0$ be the restriction of $f$ to $X_0$. Then
$f_0$ is locally a diffeomorphism onto its image and for $y_0\in Y$
we have $f^{-1}(y_0)=f^{-1}_0(y_0)$.  This set is discrete, and hence finite by compactness of $S\times T$.

The multiplicity in $f_*(\bbr{S\times T})$ of the neighborhood of  a point  $y_0\in Y_0$
is given by the formula
\[\operatorname{mult}(y_0)=\sum_{x\in f^{-1}(y_0)}\sgn(\det(df(x))).\]
We have
\[f^{-1}(y_0)=\{(s_1,t_1^1),\cdots,(s_1,t_1^{r_1}),\cdots,(s_p,t_p^1),\cdots,(s_p,t_p^{r_p})\}\]
with the $s_i$ are all distinct in $S$, and 
so the multiplicity can be rewritten as
\[\sum_{i=1}^p\left\{\sum_{j=1}^{r_i}\sgn(\det(df{(s_i,t_i^j)}))\right\}.\]
If this expression is non-zero then one of the terms in the brackets  has to be non-zero, but this
 contradicts the fact that $f_*(\bbr{\{s_i\}\times T})=0$. Therefore multiplicity at $y_0$ is zero.
 \end{proof}

 \begin{prop}
 \label{P:ltimes}
 The formula  \refN{E:gammaPhi} above defines a natural linear map
 \begin{align*}
 \ltimes\colon \Ch_k(X)\otimes\Chs_l(Y\to X) & \longrightarrow\Ch_{k+l}(Y) \\
 \gamma\otimes\Phi & \longmapsto\gamma\ltimes\Phi
 \end{align*}
 which satisfies the Leibniz formula
 $\partial(\gamma\ltimes\Phi)=(\partial\gamma)\ltimes\Phi+(-1)^{\deg(\gamma)}\gamma\ltimes(\partial\Phi).$
  \end{prop}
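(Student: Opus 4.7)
The plan is to establish three claims in sequence: (i) that the right-hand side of (\ref{E:gammaPhi}) is independent of the trivialization of $\Phi$ and of the decomposition of $\gamma$; (ii) that the resulting map is natural and linear; and (iii) the Leibniz formula. The Fubini-type Lemma \ref{L:Fubini} will be the main tool throughout.

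For well-definedness, given two trivializations $\{(S_\alpha, F_\alpha, g_\alpha)\}$ and $\{(S'_\beta, F'_\beta, g'_\beta)\}$ of $\Phi$ together with decompositions of $\gamma$ adapted to each, I would pass to a common semi-algebraic triangulation of $X$ refining both stratifications and compatible with $\spt(\gamma)$. On this common refinement, both trivializations descend by restriction to trivializations indexed by the same family of strata, and $\gamma$ admits a single decomposition $\gamma = \sum n_\alpha \bbr{\overline{S_\alpha}}$ on the top-dimensional strata. For each such stratum we then have two compact oriented manifolds $F_\alpha$ and $F'_\alpha$ with maps $g_\alpha$ and $g'_\alpha$ satisfying $g_{\alpha*}(\bbr{\{x\}\times F_\alpha}) = \Phi(x) = g'_{\alpha*}(\bbr{\{x\}\times F'_\alpha})$ for every $x \in \overline{S_\alpha}$. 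Lemma \ref{L:Fubini} then yields $g_{\alpha*}(\bbr{\overline{S_\alpha}\times F_\alpha}) = g'_{\alpha*}(\bbr{\overline{S_\alpha}\times F'_\alpha})$, so formula (\ref{E:gammaPhi}) produces the same chain. Linearity in each factor and naturality in $f$ follow at once by choosing trivializations and decompositions compatible with the inputs.

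For the Leibniz formula, fix a trivialization $\{(S_\alpha, F_\alpha, g_\alpha)\}$ adapted to $\gamma$ so that $\gamma = \sum n_\alpha \bbr{\overline{S_\alpha}}$ with each $\overline{S_\alpha}$ a compact oriented manifold with boundary. Applying Theorem \ref{T:Stokes} and the Leibniz formula for the chain cross product from Proposition \ref{P:chaincross} gives
\[
\partial(\gamma \ltimes \Phi) \;=\; \sum_\alpha n_\alpha\, g_{\alpha*}\bigl(\partial\bbr{\overline{S_\alpha}} \times \bbr{F_\alpha}\bigr) \;+\; (-1)^{\deg \gamma}\sum_\alpha n_\alpha\, g_{\alpha*}\bigl(\bbr{\overline{S_\alpha}} \times \partial\bbr{F_\alpha}\bigr).
\]
The second sum equals $(-1)^{\deg \gamma}\, \gamma \ltimes \partial\Phi$ via the explicit representative of $\partial\Phi$ furnished in the proof of Lemma \ref{L:ChsSubcplx}. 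For the first sum, I would expand $\partial \bbr{\overline{S_\alpha}}$ as a signed sum over its codimension-one faces and regroup by faces: the contributions from external faces assemble into $\partial \gamma \ltimes \Phi$, while each internal face $\sigma$ shared by two adjacent top strata $S_\alpha, S_{\alpha'}$ contributes with opposite signs but potentially different fibers $F_\alpha, F_{\alpha'}$.

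The main obstacle is the cancellation on these internal faces. It is precisely the strong continuity hypothesis---that each $g_\alpha$ extends to the closure $\overline{S_\alpha}$---that permits Lemma \ref{L:Fubini} to be invoked on $\overline\sigma$: for $x \in \sigma$ one has $g_{\alpha*}(\bbr{\{x\}\times F_\alpha}) = \Phi(x) = g_{\alpha'*}(\bbr{\{x\}\times F_{\alpha'}})$, whence $g_{\alpha*}(\bbr{\overline\sigma \times F_\alpha}) = g_{\alpha'*}(\bbr{\overline\sigma \times F_{\alpha'}})$, so that the internal-face contributions cancel in precisely the same pattern as the cancellations in $\partial \gamma$ itself. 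This is exactly the point emphasized in the remark following Definition \ref{D:scc} as to why the weaker continuous-chain notion of Kontsevich--Soibelman would not suffice.
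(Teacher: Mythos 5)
Your argument is correct and rests on the same two tools the paper uses, namely the Fubini-type \refL{Fubini} for well-definedness and \refT{Stokes} for the Leibniz formula, so the well-definedness portion of your proof is essentially identical to the paper's. For the Leibniz formula, however, the paper is noticeably simpler: having already established that $\ltimes$ is well-defined, it observes that both sides of the Leibniz identity are additive in $\gamma$, and therefore it suffices to verify the formula when $\gamma=\bbr{S}$ for a single closed top stratum $S=\overline{S_\alpha}$ over which $\Phi$ is trivialized by one map $g\colon S\times F\to Y$; a single application of Stokes to $g_*(\bbr{S\times F})$ then gives $\partial(\gamma\ltimes\Phi)=g_*(\bbr{(\partial S)\times F})+(-1)^{\dim S}g_*(\bbr{S\times\partial F})$, which is exactly $(\partial\gamma)\ltimes\Phi+(-1)^{\deg\gamma}\gamma\ltimes(\partial\Phi)$ with no further bookkeeping. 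Your route instead keeps the full sum $\gamma=\sum_\alpha n_\alpha\bbr{\overline{S_\alpha}}$ and matches up internal face contributions by hand via Fubini; this works, but it is more labor than needed, and the sentence ``the contributions from external faces assemble into $\partial\gamma\ltimes\Phi$'' is imprecise as written, since $\partial\gamma$ generally carries internal-face pieces as well (the coefficient of an internal face $\sigma$ in $\partial\gamma$ is $n_\alpha\epsilon_{\alpha,\sigma}+n_{\alpha'}\epsilon_{\alpha',\sigma}$, which need not vanish). Your closing remark does say the right thing --- that the coefficients on both sides agree face by face after the Fubini identification of fibers --- but the linearity reduction renders the whole analysis unnecessary and is worth internalizing as the cleaner move. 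Your observation that strong continuity (the extension of $g_\alpha$ to $\overline{S_\alpha}$) is what makes the Fubini step available on closures is a good one and indeed matches the remark after \refD{scc}.
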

  \begin{proof}
We need to prove that the right side of (\ref{E:gammaPhi}) is independent of the choice of trivialization of $\Phi$.
First, it is clear that if we take a refinement of the stratification and consider the induced trivialization, the right side is unchanged.
Therefore it is enough to prove the invariance for two trivializations  $\{(S_\alpha,F_\alpha,g_\alpha)\}$  and
 $\{(S_\alpha,F'_\alpha,g'_\alpha)\}$ with the same underlying stratification.
 But this is an immediate consequence of  \refL{Fubini}.

By linearity of $\ltimes$ and $\partial$, it is enough to check the  Leibniz  formula  when $\gamma=\bbr{S}$, with $S$ a closed subset of $X$ that is
a compact oriented manifold over which $\Phi$ is trivial in the sense that there is a semi-algebraic
map $g\colon S\times F\to Y$ with $F$ a compact oriented manifold and, for each $x\in S$, $\Phi(x)=g_*(\bbr{\{x\}\times F})$.
Using \refT{Stokes} we then get
\begin{eqnarray*}
\partial(\gamma\ltimes\Phi)&=&\partial(g_*(\bbr{S\times F}))\\
&=&g_*(\bbr{(\partial S)\times F})+(-1)^{\dim(S)}g_*(\bbr{S\times (\partial  F)}))\\
&=&(\partial\gamma)\ltimes\Phi+(-1)^{\deg(\gamma)}\gamma\ltimes(\partial\Phi).
\end{eqnarray*}
 \end{proof}

We now define the \emph{pullback} of continuous chains.
\begin{prop}
\label{P:defpb}
Suppose given a pullback of semi-algebraic sets
\[\xymatrix{
Y'=X'\times_XY\ar[r]^-{\hat h}\ar[d]_{f'}\ar@{}[rd]|{pullback}&Y\ar[d]^f\\
X'\ar[r]_h&X
}\]
and $\Phi\in \Chs_l(Y\to X)$. There exists a unique continuous chain
$\Phi'\in \Chs_l(Y'\to X')$ such that, for each $x'\in X'$, $\hat h_*(\Phi'(x'))=\Phi(h(x'))$.
\end{prop}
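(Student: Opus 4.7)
The plan is to prove uniqueness first, since it will also clarify why the existence construction is well-defined, and then construct $\Phi'$ by pulling back a trivialization of $\Phi$.

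For uniqueness, observe that any $\Phi' \in \Chs_l(Y' \to X')$ satisfies $\spt(\Phi'(x')) \subset (f')^{-1}(x')$ for each $x' \in X'$, by the commuting square in \refD{scc}. Under the fibered product identification, $(f')^{-1}(x') = \{x'\} \times f^{-1}(h(x'))$, and $\hat h$ restricts on this fiber to the semi-algebraic homeomorphism $(x',y) \mapsto y$ onto $f^{-1}(h(x'))$. Consequently $\hat h_*$ is injective on semi-algebraic chains supported in $(f')^{-1}(x')$, so the prescription $\hat h_*(\Phi'(x')) = \Phi(h(x'))$ determines $\Phi'(x')$ uniquely.

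For existence, pick a trivialization $\{(S_\alpha, F_\alpha, g_\alpha)\}_{\alpha \in I}$ of $\Phi$ and, invoking \refP{strat} applied to $h$ together with the semi-algebraic family $\{h^{-1}(S_\alpha)\}_{\alpha \in I}$, refine it to a semi-algebraic stratification $\{S'_\beta\}_{\beta \in J}$ of $X'$; for each $\beta$ fix $\alpha(\beta) \in I$ with $S'_\beta \subset h^{-1}(S_{\alpha(\beta)})$. Since $h$ is continuous, $h(\overline{S'_\beta}) \subset \overline{S_{\alpha(\beta)}}$, so the formula
$$g'_\beta \colon \overline{S'_\beta} \times F_{\alpha(\beta)} \longrightarrow Y', \qquad (x',y) \longmapsto (x', g_{\alpha(\beta)}(h(x'),y))$$
is a well-defined semi-algebraic map; the identity $f \circ g_{\alpha(\beta)} = \proj_1$ guarantees that the image lies in $Y' = X' \times_X Y$ and that $f' \circ g'_\beta = \proj_1$. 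Setting $\Phi'(x') = g'_{\beta *}(\bbr{\{x'\} \times F_{\alpha(\beta)}})$ for $x' \in \overline{S'_\beta}$, the compatibility $\hat h_*(\Phi'(x')) = \Phi(h(x'))$ is a direct computation: $\hat h \circ g'_\beta(x',y) = g_{\alpha(\beta)}(h(x'),y)$, so the pushforward equals $g_{\alpha(\beta) *}(\bbr{\{h(x')\} \times F_{\alpha(\beta)}}) = \Phi(h(x'))$ under the orientation-preserving homeomorphism $\{x'\} \times F_{\alpha(\beta)} \cong \{h(x')\} \times F_{\alpha(\beta)}$.

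Well-definedness of $\Phi'(x')$ when $x'$ lies in $\overline{S'_\beta} \cap \overline{S'_{\beta'}}$ for two indices then comes for free from the uniqueness argument: both candidate values push forward under $\hat h_*$ to the same chain $\Phi(h(x'))$ and are supported in the same fiber $(f')^{-1}(x')$, so they coincide. The main obstacle is thus essentially bookkeeping: one must check that $h$ transports closures of strata into closures so that $g_{\alpha(\beta)}$ can be evaluated on $(h \times \id)(\overline{S'_\beta} \times F_{\alpha(\beta)})$, and that the refinement step produces a genuine semi-algebraic stratification of $X'$; both points are standard applications of the results recalled in \refS{Background}.
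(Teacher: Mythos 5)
Your proof is correct and follows essentially the same route as the paper's: uniqueness from the fact that $\hat h$ restricts to a homeomorphism of fibers (so $\hat h_*$ is injective on chains supported in a fiber), and existence by pulling back a trivialization of $\Phi$. The paper dismisses the latter as being done "in a straightforward way"; your write-up simply supplies the details — the refinement of $\{h^{-1}(S_\alpha)\}$ via \refP{strat}, the explicit maps $g'_\beta(x',y)=(x',g_{\alpha(\beta)}(h(x'),y))$, and the observation that well-definedness on overlapping closures follows from the uniqueness argument — all of which are sound.
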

\begin{proof}
Set $x=h(x')$.
Since $Y'$ is the pullback, the restriction of $\hat h$ to fibers over $x'$ and $x$ induces an isomorphism
$\hat h_{x'}\colon f'^{-1}(x')\iso f'^{-1}(x)$.
Since the support of $\Phi(x)$ is included in the fiber over $x$ we deduce that the value
$\Phi'(x')$ is completely determined as $(h_{x'}^{-1})_*(\Phi(x))$.
This implies the uniqueness.

The fact that $\Phi'$ is indeed a  continuous chain comes from existence of a trivialization
obtained by taking the pullback of a trivialization of $\Phi$ in a straightforward way.
\end{proof}
\begin{definition}
\label{D:pb}
We call the continuous chain $\Phi'$ from the previous proposition the \emph{pullback of $\Phi$ along $h$}
and we denote it by  $h^*(\Phi)$.
In case of an inclusion $h \colon X'\hookrightarrow X$, we also write $\Phi|_{X'}=h^*(\Phi)$.
\end{definition}
It is easy to check that the pullback operation
\[h^*\colon\Chs_*(Y\to X)\longrightarrow\Chs_*(Y'\to X')\]
is a morphism of chain complexes.
Also, given $\Phi_1\in \Chs_{l_1}(Y_1\to X_1)$ and $\Phi_2\in \Chs_{l_2}(Y_2\to X_2)$, we can construct in the obvious
way a continuous chain $\Phi_1\times\Phi_2\in  \Chs_{l_1+l_2}(Y_1\times Y_2 \to X_1\times X_2)$
characterized by
\[(\Phi_1\times\Phi_2)(x_1,x_2)=\Phi_1(x_1)\times\Phi_2(x_2).\]
The Leibniz formula
\[\partial(\Phi_1\times\Phi_2)=\partial(\Phi_1)\times\Phi_2+(-1)^{\deg(\Phi_1)}\Phi_1\times \partial(\Phi_2)
\]
clearly holds.

Now let $F$ be a compact oriented semi-algebraic manifold of dimension $l$ and let $X$ be a semi-algebraic set.
We define the \emph{constant  continuous chain}
 $\fib{F}\in\Chs(X\times F\stackrel{\proj_1}\to X)$ by $\fib{F}(x)=\bbr{\{x\}\times F}$.
\fn{\pl 10july 2008. The end of this section has been moved to a new section below on SA bundle and integration along the fiber}


\subsection{The cochain complex of PA forms}\label{SS:PAForms}


As we explained in Remark \ref{rmk:ominnotdeRham}, $\omin^*(\Delta^k)$ is not acyclic. In this section we will enlarge the
cochain complex $\omin^*(X)$ into a cochain complex $\ompa^*(X)$ which will satisfy  the Poincar\'e lemma.

Let $f\colon Y\to X$ be a semi-algebraic map, let $\Phi\in\Chs_l(Y\stackrel{f}\to X)$ be
 a continuous $l$-chain and let $\mu\in\omin^{k+l}(Y)$.
  We  define a cochain $\fint_\Phi\mu\in\Coch^k(X)$
 by
 \begin{equation}
 \label{E:Lambda}
\langle\fint_\Phi\mu,\gamma\rangle=\langle\mu,\gamma\ltimes\Phi\rangle,\quad\textrm{ for } \gamma\in \Ch_k(X).
  \end{equation}

\begin{definition}
\label{D:ompa}
A \emph{PA form of degree $k$ in $X$} is a cochain in $X$  of the form $\fint_\Phi\mu$ as defined in equation \refN{E:Lambda}
for some semi-algebraic map $f\colon Y\to X$, some continuous chain
 $\Phi\in\Chs_l(Y\stackrel{f}\to X)$, and some minimal form
 $\mu\in\omin^{k+l}(Y)$.
We denote by $\ompa^k(X)\subset\Coch^k(X)$ the subset of all PA forms in $X$.
\end{definition}
\begin{definition}
\label{D:tfi}
Given a closed semi-algebraic subset $A\subset X$,
we say that a PA form $\alpha\in\ompa^k(X)$
is \emph{a trivial fiber integral over $A$} if there exists a continuous chain
$\Phi\in\Chs_l(Y\to X)$ and a minimal form $\mu\in\omin^{k+l}$ such that
$\alpha= \fint_\Phi\mu$ and
$A$ is  the relative closure of a stratum of some trivialization of  $\Phi$.
\end{definition}

Notice that by definition of a PA form there always exists a (semi-open) triangulation of the space such that the PA form
is a trivial fiber integral over the relative closure of each simplex.
 When $\alpha\in\ompa^k(X)$ is a trivial fiber integral over $A$
  then by definition $\alpha|_A=\fint_{\fib{F}}\mu$ for some compact oriented semi-algebraic
 manifold $F$ and some minimal  form $\mu\in\omin^{k+\dim(F)}(A\times F)$, where
 $\fib{F}\in\Chs_{\dim(F)}(A\times F\stackrel{\proj}\to A)$ is the constant continuous chain introduced before
 \refD{PAfib}.

 \begin{prop}
 \label{P:ompasbgrp}
 $\ompa^k(X)$ is a subgroup of $\Coch^k(X)$ .
 \end{prop}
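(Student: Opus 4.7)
The claim requires three facts: $0 \in \ompa^k(X)$, closure under negation, and closure under addition. The first two are immediate. For any continuous chain $\Phi$, the zero minimal form gives $\fint_\Phi 0 = 0$, so $0 \in \ompa^k(X)$; and if $\alpha = \fint_\Phi \mu$ then $-\alpha = \fint_\Phi(-\mu)$, with $-\mu \in \omin^*$. The substance lies in closure under addition.

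Let $\alpha_i = \fint_{\Phi_i}\mu_i$ with $\Phi_i \in \Chs_{l_i}(Y_i \stackrel{f_i}\to X)$ and $\mu_i \in \omin^{k+l_i}(Y_i)$ for $i=1,2$. The obstacle is that a single PA form expression $\fint_\Phi\mu$ requires a continuous chain $\Phi$ of a fixed fiber dimension, whereas $l_1$ and $l_2$ may differ. Assuming WLOG $l_1 \leq l_2$, the plan is first to rewrite $\alpha_1$ so that its continuous chain has fiber dimension $l_2$, and then to combine the two representations via a disjoint union of targets.

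For the boost, choose a compact oriented semi-algebraic manifold $F$ of dimension $m = l_2 - l_1$ together with a minimal form $\omega \in \omin^m(F)$ satisfying $\langle\omega,\bbr{F}\rangle = 1$ (for instance, a closed $m$-ball with a suitable scalar multiple of $\lambda(1; x_1, \ldots, x_m)$, or iteratively from $F = [0,1]$ with $\omega = \lambda(1; t)$). Set $Y_1' = Y_1 \times F$ with projection $f_1 \circ \pi_{Y_1}$ to $X$. Define $\Phi_1' \in \Chs_{l_2}(Y_1' \to X)$ by $\Phi_1'(x) = \Phi_1(x) \times \bbr{F}$, trivialized by replacing each triple $(S_\alpha, F_\alpha, g_\alpha)$ from a trivialization of $\Phi_1$ with $(S_\alpha, F_\alpha \times F, g_\alpha \times \id_F)$. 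Let $\mu_1' = \mu_1 \times \omega \in \omin^{k+l_2}(Y_1')$ via \refP{omincross}. Combining the defining formula \refN{E:gammaPhi} of $\ltimes$ with \refP{chaincross} yields $\gamma \ltimes \Phi_1' = (\gamma \ltimes \Phi_1) \times \bbr{F}$, and then the cross-product identity \refN{E:omincross} gives
\[
\langle \fint_{\Phi_1'}\mu_1', \gamma\rangle = \langle \mu_1, \gamma \ltimes \Phi_1\rangle \cdot \langle \omega, \bbr{F}\rangle = \langle \alpha_1, \gamma\rangle,
\]
so $\alpha_1 = \fint_{\Phi_1'}\mu_1'$.

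With both representations now of fiber dimension $l_2$, embed $Y_1'$ and $Y_2$ as disjoint semi-algebraic subsets of a common ambient Euclidean space, write $Y = Y_1' \sqcup Y_2$, and pass to a common refinement of the base stratifications of $\Phi_1'$ and $\Phi_2$. Concatenating trivializations fiberwise produces $\Phi \in \Chs_{l_2}(Y \to X)$ with $\Phi(x) = \Phi_1'(x) + \Phi_2(x)$ under the decomposition $\Ch_{l_2}(Y) = \Ch_{l_2}(Y_1') \oplus \Ch_{l_2}(Y_2)$. Extend each $\mu_i$ by zero to the opposite component: for each generator $\lambda(f_0; f_1, \ldots, f_{k+l_2})$, extend $f_0$ by $0$ outside its original component (with the other $f_j$ extended by any semi-algebraic choice); the resulting cochain vanishes on the opposite component because $\lambda(0; \dots) = 0$ directly from the definition. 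Summing these extensions yields $\mu \in \omin^{k+l_2}(Y)$ with $\fint_\Phi \mu = \alpha_1 + \alpha_2$ by linearity. The only nontrivial step is the boost identity of the preceding paragraph; once that is in hand, the disjoint-union step is purely formal.
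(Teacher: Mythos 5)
Your proof is correct and follows essentially the same strategy as the paper: first normalize the two representations to a common fiber dimension by crossing with an auxiliary compact manifold carrying a minimal form of total integral $1$, then combine them over a disjoint union. The paper makes the specific choice $F=[0,1]^r$ with $\omega=dt_1\cdots dt_r$ and phrases the boost via the constant continuous chain $\bbr{[0,1]^r}\in\Chs_r([0,1]^r\to *)$ and the product $\Phi_1\times\bbr{[0,1]^r}$ over $X\times *\cong X$, whereas you allow any $(F,\omega)$ with $\langle\omega,\bbr{F}\rangle=1$ and define $\Phi_1'$ directly; these are the same thing once unwound. You are also slightly more explicit than the paper about why $\mu_1+\mu_2$ makes sense as a minimal form on $Y_1'\sqcup Y_2$ (extension of the generating functions by zero / arbitrary choice across components), a detail the paper dispatches as ``obvious,'' but the substance is identical.
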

 \begin{proof}
Let $\alpha_i=\fint_{\Phi_i}\mu_i$ be two PA forms in $\ompa^k(X)$ with
$\Phi_i\in\Chs_{l_i}(Y_i\stackrel{f_i}\to X)$ and $\mu_i\in\omin^{k+l_i}(Y_i)$, for $i=1,2$.

We first show that we can assume that $l_1=l_2$. For concreteness,   
suppose that $l_1\leq l_2$ and set $r=l_2-l_1\geq0$.
There is a natural isomorphism $\Chs_r([0,1]^r\to*)\cong\Ch_r([0,1]^r)$ given by $\Phi\mapsto\Phi(*)$, and hence
we can realize $\bbr{[0,1]^r}$ as an element of  $\Chs_r([0,1]^r\to*)$.
Consider also  the smooth
minimal form $dt_1\cdots dt_r=\lambda(1;t_1,\dots,t_n)\in\omin^r([0,1]^r)$ where $t_1,\dots,t_r$ are the coordinates in $[0,1]^r$.
Then
\[\alpha_1=\fint_{\Phi_1\times\bbr{[0,1]^r}}\mu_1\times dt_1\cdots dt_r ,\]
where $\Phi_1\times\bbr{[0,1]^r}\in \Chs_{l_2}(Y_1\times[0,1]^r\to X\times *)$
so we  can assume that $l_1=l_2$.

Now consider the obvious continuous chain $\Phi_1+\Phi_2\in\Chs_{l_1}(Y_1\amalg Y_2\to X)$
and the minimal form $\mu_1+\mu_2\in\omin^{k+l_1}(Y_1\amalg Y_2)$. Then
\[\fint_{\Phi_1}\mu_1+\fint_{\Phi_2}\mu_2=\int_{\Phi_1+\Phi_2}\mu_1+\mu_2,
\]
and so the sum of two PA forms is still a PA form. It is obvious that $0$ is a PA form as well as is the inverse of a PA form.
\end{proof}
The coboundary of a PA form is again a PA form because of the following.\fn{\pl 10 july 2008.
There was a wrong sign in the previous formula of this lemma. This is lucky because otherwise all my signs
in the formality of disks paper would have been wrong!}
\begin{lemma}
\label{L:deltaompa}
$\delta(\fint_\Phi\mu)=\fint_\Phi \delta \mu+(-1)^{\deg(\mu)-deg(\Phi)}\fint_{\partial\Phi} \mu $.
\end{lemma}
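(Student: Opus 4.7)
The plan is to unwind both sides on an arbitrary test chain $\gamma \in \Ch_k(X)$ (where the degree of $\fint_\Phi\mu$ forces $k = \deg(\mu) - \deg(\Phi)$, so $\deg(\gamma) = k+1$ when we take $\delta$), and then reduce the identity to the Leibniz formula for $\ltimes$ already established in \refP{ltimes}. By definition of the coboundary and of $\fint$,
\[
\langle \delta(\fint_\Phi\mu),\gamma\rangle = \langle \fint_\Phi\mu,\partial\gamma\rangle = \langle \mu,(\partial\gamma)\ltimes\Phi\rangle.
\]

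Next I would apply \refP{ltimes}, which gives
\[
(\partial\gamma)\ltimes\Phi = \partial(\gamma\ltimes\Phi) - (-1)^{\deg(\gamma)}\,\gamma\ltimes(\partial\Phi).
\]
Plugging this in and using that $\delta$ on $\omin^*$ is the adjoint of $\partial$ on chains (as was verified already in the proof that $\delta\lambda(f_0;\dots,f_k)=\lambda(1;f_0,\dots,f_k)$), we get
\[
\langle \mu,\partial(\gamma\ltimes\Phi)\rangle = \langle \delta\mu,\gamma\ltimes\Phi\rangle = \langle \fint_\Phi \delta\mu,\gamma\rangle,
\]
while the remaining term is, by definition of $\fint_{\partial\Phi}$, equal to $\langle \fint_{\partial\Phi}\mu,\gamma\rangle$. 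Combining,
\[
\langle \delta(\fint_\Phi\mu),\gamma\rangle = \langle \fint_\Phi\delta\mu,\gamma\rangle - (-1)^{\deg(\gamma)}\langle \fint_{\partial\Phi}\mu,\gamma\rangle.
\]

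The last thing to check is the sign. With $\deg(\gamma) = k+1$ and $\deg(\mu)-\deg(\Phi) = (k+l)-l = k$, we have $-(-1)^{k+1} = (-1)^k = (-1)^{\deg(\mu)-\deg(\Phi)}$, so the identity matches the one in the statement. Since this holds for every $\gamma \in \Ch_{k+1}(X)$, the cochain identity follows. There is no real obstacle: the whole argument is a formal manipulation once the Leibniz formula \refP{ltimes} and the adjointness of $\delta$ and $\partial$ are in place; the only point worth being careful about is the bookkeeping of the Koszul sign $(-1)^{\deg(\gamma)}$, which reindexes to $(-1)^{\deg(\mu)-\deg(\Phi)}$ as above.
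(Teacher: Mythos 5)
Your proof is correct and follows exactly the route the paper indicates: evaluate both sides on an arbitrary $\gamma$, substitute the Leibniz formula from \refP{ltimes}, and use the adjointness of $\delta$ and $\partial$ plus the definition of $\fint$. The sign bookkeeping $-(-1)^{\deg(\gamma)} = (-1)^{\deg(\mu)-\deg(\Phi)}$ is carried out correctly, so there is nothing to add.
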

\begin{proof}
Evaluate both side on a chain $\gamma$ using the Leibniz formula for the boundary of $\gamma\ltimes\Phi$
and unravelling definitions.
\end{proof}
It is also clear that $\omin^k(X)\subset\ompa^k(X)$ because a minimal form $\mu$ on $X$ can be
written as $\fint_{\fib{*}} \mu$ where ${\fib{*}}\in\Chs_0(X\stackrel{=}\to X)$ is the
constant continuous $0$-chain defined by ${\fib{*}}(x)=\bbr{\{x\}}$.

If $g\colon X'\to X$ is a semi-algebraic map then $g^*(\ompa^*(X))\subset\ompa^*(X')$ because of the formula
$g^*(\fint_\Phi\mu)=\int_{g^*\Phi}g^*\mu$, which is straightforward to check.

The following\fn{\pl 11july2008 Added} is a direct consequence of \refP{dimC}.
\begin{prop}\label{P:PAdegdim0}
Let $\alpha\in\ompa(X)$. If $\deg(\alpha)>\dim(X)$ then $\alpha=0$.
\end{prop}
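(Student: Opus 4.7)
The plan is essentially to unwind the definitions and invoke \refP{dimC}. Recall that by \refD{ompa} we have the inclusion $\ompa^k(X)\subset\Coch^k(X)$, and by the very definition of semi-algebraic cochains in \refS{SACochains}, $\Coch^k(X)=\hom(\Ch_k(X),\BR)$.

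So I would argue as follows: let $k=\deg(\alpha)>\dim(X)$. By \refP{dimC}, $\Ch_k(X)=0$. Therefore $\Coch^k(X)=\hom(0,\BR)=0$, and in particular $\ompa^k(X)=0$, so $\alpha=0$.

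There is no real obstacle here; the statement is essentially formal once one remembers that PA forms are, by construction, linear functionals on semi-algebraic chains, and that the top-dimensional chain group vanishes above $\dim(X)$ (this in turn rests on \refT{finiteHausdorff} and the definition of $\Ch_k$ via semi-algebraic smooth submanifolds of dimension $k$, which cannot exist inside $X$ once $k>\dim X$). The only thing worth double-checking in writing the proof is that the inclusion $\ompa^k(X)\subset\Coch^k(X)$ is valid in the edge case where the source chain group is trivial, but this is immediate from \refD{ompa}.
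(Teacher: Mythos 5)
Your proof is correct and is exactly the argument the paper intends: the paper states that the proposition is a direct consequence of \refP{dimC}, which you have simply unwound via the inclusions $\ompa^k(X)\subset\Coch^k(X)=\hom(\Ch_k(X),\BR)$ and the vanishing $\Ch_k(X)=0$ for $k>\dim X$.
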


In order to finally prove that $\ompa^*(X)$ is an algebra we need the following extension
of \refP{strKu-min}
to PA forms.

\begin{prop}
\label{P:strKu-PA}
Let $X_1$ and $X_2$ be  semi-algebraic sets and let $\alpha\in\ompa^k(X_1\times X_2)$.
If $\langle \alpha,\gamma_1\times\gamma_2\rangle=0$ for all $\gamma_1\in\Ch_*(X_1)$ and
$\gamma_2\in\Ch_*(X_2)$  then $\alpha =0.$
\end{prop}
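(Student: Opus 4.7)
The plan is to mimic the strategy used for minimal forms in \refP{strKu-min}, namely: show that a PA form is smooth on a suitable dense open submanifold of $X_1\times X_2$ of product form, invoke the smooth K\"unneth \refL{strKu-sm} there, and then use density together with continuity to extend the vanishing to all of $X_1\times X_2$.

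First I would write $\alpha=\fint_\Phi \mu$ for some $\Phi\in\Chs_l(Y\to X_1\times X_2)$ and $\mu\in\omin^{k+l}(Y)$, and fix a trivialization $\{(S_\alpha,F_\alpha,g_\alpha)\}_{\alpha\in I}$ of $\Phi$. Applying \refP{minaesm} to $\mu$ and then refining using \refL{smoothstrat} (applied to the projection $\overline{S_\alpha}\times F_\alpha\to\overline{S_\alpha}$ and to $g_\alpha$), I would arrange that, for each $\alpha$ of top dimension in $X_1\times X_2$, the map $g_\alpha$ is smooth, $g_\alpha^*\mu$ is smooth on $S_\alpha\times F_\alpha$ (where the $F_\alpha$ may be cut down and reglued so that the bad locus of $g_\alpha^*\mu$ projects to a codimension $\geq 1$ subset of $S_\alpha$), and the usual fibre-integration formula for smooth forms then shows that $\alpha$ restricts to a smooth form on the open dense smooth submanifold $U:=\bigsqcup_\alpha \operatorname{int} S_\alpha$ of $X_1\times X_2$.

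Next I would choose dense open semi-algebraic smooth submanifolds $W_i\subset X_i$ (for instance the top-dimensional strata of a smooth stratification of $X_i$). Then $W_1\times W_2$ is a dense open smooth semi-algebraic submanifold of $X_1\times X_2$; intersecting with $U$ and refining once more, I can assume $W_1\times W_2\subset U$, so that $\omega:=\alpha|_{W_1\times W_2}$ is a smooth form in the sense of \refD{smcoch}. The hypothesis of the proposition restricts to give $\langle\omega,\gamma_1\times\gamma_2\rangle=0$ for all $\gamma_i\in\Ch_*(W_i)$, and hence by \refL{strKu-sm} we get $\omega=0$. In particular $\alpha$ vanishes on every chain supported in $W_1\times W_2$, i.e.\ on every chain in $\Ch_k(W_1\times W_2)\subset\Ch_k(W_1)\times_{-}\Ch_k(W_2)$-type expressions, and more generally on every element of $\Ch_k(W_1\times W_2)$.

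Finally, to pass from vanishing on $\Ch_k(W_1\times W_2)$ to vanishing on $\Ch_k(X_1\times X_2)$, I would establish and use SA-continuity of PA forms: if $\gamma_n\toSA \gamma$ in $\Ch_k(X_1\times X_2)$, then $\langle\alpha,\gamma_n\rangle\to\langle\alpha,\gamma\rangle$. This reduces, via $\langle\alpha,\gamma\rangle=\langle\mu,\gamma\ltimes\Phi\rangle$ and \refP{contSAmin}, to checking that $\gamma_n\ltimes\Phi\toSA\gamma\ltimes\Phi$; this in turn is obtained by pulling back the trivialization of $\Phi$ along the map $h\colon M\times[0,1]\to X_1\times X_2$ witnessing $\gamma_n\toSA\gamma$ and assembling a witnessing map for SA convergence in $\Ch_{k+l}(Y)$ stratum by stratum (using \refL{smoothstrat} together with \refL{Fubini} to check independence of the chosen trivialization). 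Granting this continuity, \refP{densitySA} applied to the dense semi-algebraic subset $W_1\times W_2\subset X_1\times X_2$ produces, for any $\gamma\in\Ch_k(X_1\times X_2)$, a sequence $\gamma_n\in\Ch_k(W_1\times W_2)$ with $\gamma_n\toSA\gamma$, whence $\langle\alpha,\gamma\rangle=\lim\langle\alpha,\gamma_n\rangle=0$ and $\alpha=0$. The main obstacles I expect are the careful refinement in the first step (ensuring the bad set of $g_\alpha^*\mu$ projects to a codimension $\geq 1$ subset of each maximal $S_\alpha$ so that the fibre integral is genuinely smooth, not just defined almost everywhere) and verifying SA-continuity of PA forms, which is the technical lemma that must be packaged before the density argument goes through.
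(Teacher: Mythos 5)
Your plan has the right broad shape (smoothness on a dense set, smooth K\"unneth there, then a density/continuity argument), but it runs into exactly the two obstacles that force the paper to take a much more involved route, and both of your proposed workarounds for them contain genuine gaps.

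The first and deeper gap is in your claim that after refinement ``$\alpha$ restricts to a smooth form on the open dense smooth [semi-algebraic] submanifold $U$.'' The paper explicitly flags this as the sticking point: ``it is unclear whether, for a PA form $\alpha$ on $X$, there exists a dense \emph{semi-algebraic} smooth manifold on which $\alpha$ is smooth.'' The problem is not where $g_\alpha^*\mu$ is smooth in $S_\alpha\times F_\alpha$ --- you can arrange a semi-algebraic bad set there --- but what happens after you integrate over $F_\alpha$. Fibre integration of a semi-algebraic function in general produces a function that is neither semi-algebraic nor smooth outside a semi-algebraic set; the paper has to invoke the Comte--Lion--Rolin theorem \cite{CLR:nat} (the fibre integral is a polynomial in finitely many global subanalytic functions and their logarithms) to get smoothness of the coefficients $G_K$, and only on the complement of a \emph{subanalytic} (not necessarily semi-algebraic) codimension-one set $T$. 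That is the content of \refL{repintPA}, and it is why the paper cannot apply \refD{smcoch} / \refL{strKu-sm} directly: $\alpha|_{X_0}$ is not a ``smooth cochain'' on a semi-algebraic submanifold, it is merely a form with an integral representation on a possibly non-semi-algebraic dense open set. The paper consequently argues directly from the formula $\sum_K\int_V G_K\,dx_K$ with $V$ a product of axis-parallel pieces, rather than going through the smooth K\"unneth lemma.

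The second gap is the SA-continuity of general PA forms that you rely on to pass from $W_1\times W_2$ to $X_1\times X_2$. You want $\gamma_n\toSA\gamma$ to imply $\gamma_n\ltimes\Phi\toSA\gamma\ltimes\Phi$ ``by pulling back the trivialization of $\Phi$ along the witnessing map $h$,'' but this requires the mixed product of two strongly continuous chains to again be strongly continuous, and the paper gives an explicit counterexample to precisely that in \refS{diffKS} (the $\fib{I}\ltimes H^*(\Phi)$ example). The argument of equation \refN{E:limPASA} in the paper only works because, after the cubification/triangulation reductions, $\Phi$ has been replaced by a \emph{constant} continuous chain $\fib{F}$ over the relevant cube, so $\gamma_n\ltimes\Phi=\gamma_n\times\bbr{F}$ and \refP{contSAmin} for minimal forms applies. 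The paper then replaces your proposed density-in-$W_1\times W_2$ step by the ad hoc ``continuity across a codimension-one smoothly stratifiable $T$'' argument of \refL{contcodimPA}, which uses a polar-coordinate selection of a sequence of parallel slices missing $T$ together with dominated convergence, rather than abstract SA-continuity of the PA form. Without something like \refL{contcodimPA}, your final extension step does not go through.

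In short: both ``smooth on a dense semi-algebraic open set'' and ``SA-continuity of PA forms'' are precisely the assertions the paper could not prove, and each is circumvented in the actual proof by the integral representation \refL{repintPA} (requiring Comte--Lion--Rolin and yielding only a subanalytic exceptional set) and the continuity lemma \refL{contcodimPA} (which works only after reducing to a constant $\Phi$ on a cube). Your proposal would need to supply proofs of both claims, which are open or false as stated.
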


The proof of this proposition is more complicated than for the case of minimal forms,
in particular because it is unclear whether,
for a PA form $\alpha$ on $X$, there exists a dense \emph{semi-algebraic} smooth manifold on
which $\alpha$ is smooth.

To prove \refP{strKu-PA}, first we establish two lemmas.
The first is an integral representation of PA forms. To state it, we introduce the following notation.
Suppose that $X=(0,1)^n$ is an open $n$-cube and let $x_1,\dots,x_n$ be the standard coordinates.
For a subset $K=\{i_1,\dots,i_k\}\subset \nbar=\{1,\dots,n\}$ with $1\leq i_1<\dots<i_k\leq n$
we set
\[dx_K=dx_{i_1}\wedge\dots\wedge dx_{i_k}.\]
\begin{lemma}
\label{L:repintPA}
Let $X=(0,1)^n$  and $\alpha\in\ompa^k(X)$. Assume that $\alpha$ is a trivial fiber integral over $X$.
There exists a subset $X_0\subset X$ such that the complement $T= X\setminus X_0$ is a closed smoothly
stratifiable set
of codimension $1$ and there exists, for each subset
$K\subset \nbar$ of cardinality $k$, a smooth function
$G_K\colon X_0\to \BR$ satisfying the following:  If $V$ is a  semi-algebraic smooth oriented submanifold of dimension $k$ in $\BR^n$ such that
$\overline V\subset X_0$, then
\begin{equation}
\label{E:repintPA}
\langle\alpha,\bbr{V}\rangle=\sum_K\int_V G_K\,dx_K,
\end{equation}
where the sum runs over all subsets $K\subset\nbar$ of cardinality $k$.
\end{lemma}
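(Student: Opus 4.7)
Plan: Since $\alpha$ is a trivial fibre integral over $X$, the observation after \refD{tfi} gives $\alpha=\fint_{\fib{F}}\mu$ for some compact oriented semi-algebraic $l$-manifold $F$ and some $\mu\in\omin^{k+l}(X\times F)$. Writing $\mu=\sum_i\lambda(f_0^{(i)};f_1^{(i)},\ldots,f_{k+l}^{(i)})$, I apply \refP{strat} combined with the trivialization result recalled just before \refL{smoothstrat} to $\pi\colon X\times F\to X$ and the functions $f_j^{(i)}$, obtaining a semi-algebraic smooth stratification $\calS$ of $X\times F$ such that every $f_j^{(i)}$ is smooth on each stratum $S\in\calS$, $\pi|_S$ has constant rank and is a trivial bundle, and $\{\pi(S):S\in\calS\}$ stratifies $X$. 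On each top-dimensional ($\dim S=n+l$) stratum $S\in\calS$, $\mu|_S$ is the smooth form $\sum_i f_0^{(i)}\,df_1^{(i)}\wedge\cdots\wedge df_{k+l}^{(i)}$, while the union $Z$ of the remaining strata is closed semi-algebraic of dimension at most $n+l-1$.

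Let $T$ be the union of the closures in $X$ of those $\pi(S)$ with $\dim\pi(S)<n$, a closed semi-algebraic subset of dimension $<n$, and set $X_0=X\setminus T$. For $x\in X_0$, the fibre $\{x\}\times F$ meets $Z$ only in strata $S$ with $\dim\pi(S)=n$ and $\dim S\leq n+l-1$; by the constant-rank theorem applied to the trivialized bundle, $\dim(S\cap\pi^{-1}(x))=\dim S-n\leq l-1$, so $Z$ cuts $\{x\}\times F$ in a set of $\mathcal H^l$-measure zero. The identical estimate applied to $V\subset X_0$ of dimension $k$ gives $\dim((V\times F)\cap Z)\leq k+l-1$, so $(V\times F)\cap Z$ is $\mathcal H^{k+l}$-negligible. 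For each top stratum $S\cong\pi(S)\times F_S$ (via trivialization, with $F_S$ a compact oriented semi-algebraic $l$-manifold), I expand $\mu|_S=\sum_{K,J}h^S_{K,J}(x,y)\,dx_K\wedge dy_J$ in trivializing coordinates and define
\[
G_K^S(x)=\int_{F_S}h_{K,\lbar}^S(x,y)\,dy_1\wedge\cdots\wedge dy_l.
\]
Smoothness of $G_K^S$ on $\pi(S)\subset X_0$ follows from standard differentiation under the integral sign for smooth forms on a trivial bundle with compact fibre, and setting $G_K(x)=\sum_S G_K^S(x)$ over the finitely many top strata $S$ with $x\in\pi(S)$ yields smooth functions on $X_0$.

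For $V$ as in the statement, \refP{ltimes} gives $\bbr V\ltimes\fib F=\bbr{V\times F}$, whence $\langle\alpha,\bbr V\rangle=\langle\mu,\bbr{V\times F}\rangle$. Using that $\mu|_S$ is a genuine smooth form on each top stratum $S$ (in the sense of \refL{smcoch}) and that $(V\times F)\cap Z$ is $\mathcal H^{k+l}$-negligible, this pairing reduces to $\sum_S\int_{(V\times F)\cap S}\mu|_S$, and Fubini on each trivialization converts it to $\sum_S\sum_{|K|=k}\int_{V\cap\pi(S)}G^S_K\,dx_K=\sum_{|K|=k}\int_V G_K\,dx_K$, establishing \refE{repintPA}. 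The main technical obstacle will be justifying the reduction of the minimal-form pairing $\langle\mu,\bbr{V\times F}\rangle$ to the honest smooth-form integral $\sum_S\int_{(V\times F)\cap S}\mu|_S$: this requires combining \refL{smcoch} (which identifies smooth minimal forms with actual differential forms stratum-by-stratum) with the absolute-continuity of flat chains recalled in \refS{currents}, used to discard the measure-zero contribution of $(V\times F)\cap Z$.
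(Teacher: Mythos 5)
Your overall architecture is the same as the paper's (stratify $X\times F$ so that the $f_j^{(i)}$ become smooth, isolate a bad codimension-$1$ set, expand $\mu$ in $dx_K\wedge dy_L$, and integrate the $dy_{\underline l}$-coefficients over the fibre), but there is a genuine gap at the single place where the lemma is actually hard.

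You assert that each top stratum $S$ trivializes as $\pi(S)\times F_S$ with $F_S$ \emph{compact}, and conclude that $G_K^S(x)=\int_{F_S}h^S_{K,\underline l}(x,y)\,dy$ is smooth by "standard differentiation under the integral sign for smooth forms on a trivial bundle with compact fibre." Both parts of this are incorrect in the setting at hand. The stratum $S$ is open and $F_S=S\cap\pi^{-1}(x)$ is an open (typically non-compact) subset of $\{x\}\times F$; more to the point, the coefficient $h^S_{K,\underline l}$ involves first derivatives of the semi-algebraic $f_j^{(i)}$'s and so blows up near $\partial S$ (near the bad set $Z$), so the integrand is neither bounded nor defined up to the closure. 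Two consequences: first, the fibre integral may simply fail to converge for some $x$, and showing that the set of such $x$ has codimension $\geq1$ is a nontrivial step the paper handles via the semi-algebraicity of the hypographs, \cite[Theorem 3]{CLR:nat}, and a Fubini argument with the auxiliary top-degree minimal form $\mu_K$; your proof never addresses this. Second, even where it converges, $G_K$ is \emph{not} smooth on all of $X$ minus a semi-algebraic codimension-$1$ set. The volume-of-slices function is in general only \emph{log-analytic} (a polynomial in global subanalytic functions and their logarithms, \cite[Theorem 1']{CLR:nat}), not semi-algebraic, which is exactly why the statement allows $T$ to be merely smoothly stratifiable and why the proof must invoke Hironaka's stratification of subanalytic sets \cite{Hir:tri,Har:str}. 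Differentiation under the integral sign does not apply: the domain and magnitude of the singular locus of the integrand both vary with $x$, and no dominating bound on derivatives in $x$ is available. Your negligibility arguments for discarding $(V\times F)\cap Z$ and the Fubini reduction to \eqref{E:repintPA} are fine, but the smoothness of $G_K$ — the heart of the lemma — is not established.
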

\begin{rem}
Here by $\int_V G_K\,dx_K$ we mean $\int_Vi^*(G_K\,dx_K)$, where $i\colon V\hookrightarrow X$ is the inclusion
and $i^*$ is the pullback of differential forms.
\end{rem}
\begin{rem} Since $X_0$ may not be semi-algebraic, we cannot quite speak
of the restriction $\alpha|_{X_0}$,  but we will nevertheless abuse notation and say that $\alpha|_{X_0}$ is given
by the integral represention \refN{E:repintPA}.
If we just ask for the integral representation  without smoothness assumption on the
 $G_K$'s then  we can assume that  $X_0$ is semi-algebraic (see formula \refN{E:repintproof} below which gives an integral representation
 on the semi-algebraic dense subset $X'$).
\end{rem}
\begin{proof}
Since $\alpha$ is a trivial fiber integral over $X$
there exists a compact oriented semi-algebraic manifold $Y$ of
dimension $l$ and a minimal form $\mu\in\omin^{k+l}(X\times Y)$ such that
$\alpha=\fint_{Y}\mu$. By taking a finite cubification of $Y$ it is easy
to see that we can suppose that $Y=[0,1]^l$ (by replacing $\mu$ by the sum of the restrictions
of $\mu$  over a finite partition ``up to codimension 1 faces'' of 
$Y$ into cubes $[0,1]^l$).

Now, $\mu=\sum_{j=1}^s\lambda(f^{j}_0;f^{j}_1,\dots,f^{j}_{k+l})$ for some
semi-algebraic functions $f^{j}_i\colon X\times Y\to \BR$.
There exists a codimension $1$ semi-algebraic subset $Z\subset X\times Y$ on the complement of
which the  $f^{j}_i$ are smooth, and hence
\[\mu|_{(X\times Y)\setminus Z}= \sum_{j=1}^s f^{j}_0\,df^{j}_1\dots df^{j}_{k+l}.\]
Denote by $x_1,\cdots, x_n$ and $y_1,\cdots,y_l$ the coordinates on $X$ and $Y$.
Then  on $(X\times Y)\setminus Z$,
\begin{equation}
\label{E:gKL}
\sum_{j=1}^s f^{j}_0\,df^{j}_1\dots df^{j}_{k+l}=\sum_{K,L}g_{K,L}\,dy_L\,dx_K
\end{equation}
where the sum runs over subsets $K\subset\nbar$ and $L\subset \lbar$ with $|K|+|L|=k+l$
and  $g_{K,L}$ are semi-algebraic smooth functions on $(X\times Y)\setminus Z$
 (
$g_{K,L}$ is a signed sum of $(k+l)!$ products of $f^{j}_0$ with first order partial derivatives of $f^{j}_i$ for $1\leq i\leq k+l$).
When $K\subset\nbar$ is of cardinality $k$ then $L=\lbar$ and we set $g_K=g_{K,\lbar}$.

For $x\in X$, consider the slice
\begin{equation}
\label{E:slice}
Z(x)=\{y\in Y:(x,y)\in Z\}.
\end{equation}
Now set
$T'=\{x\in X:\dim Z(x)=l\}$.  This is a semi-algebraic subset of $X$ as can be seen from the semi-algebraic local triviality theorem \cite[Proposition 9.3.1]{BCR:GAR} applied
to $Z\stackrel{\proj}\to X$.  Further, $\dim T'<n$ because
$\dim Z<n+l$.

Fix $K\subset\nbar$ of cardinality $k$.
 For $x\in X\setminus T'$, the function
\begin{align*}
g_K(x,-)\colon Y & \longrightarrow\BR \\ 
y & \longmapsto g_K(x,y)=g_{K,\lbar}(x,y)
\end{align*}
is almost everywhere defined and, by \cite[Proposition 2.9.1 and remark that follows it]{BCR:GAR}, it is semi-algebraic on its domain.
Set $g_K^+=\max(g_K,0)$ and $g^-_K=g^+_K-g_K$.
Consider the hypographs of $g_K^\pm$,
\[U^\pm_K:=\{(x,y,t)\in ((X\times Y)\setminus Z)\times\BR: 0\leq g^\pm_K(x,y)\leq t\}.\]
With analogous notation for the slice as in \refE{slice}, we thus have, for $x\in X\setminus T'$,
\[\calH^{l+1}(U^\pm_K(x))=\int_{Y} g^\pm_K(x,y)\,dy_{\lbar}\,\,,\]
and $g_K(x,-)$ is integrable if and only if $U^+_K(x)$ and $U^-_K(x)$ are
of finite $(l+1)$-volume.
By \cite[Theorem 3]{CLR:nat}, the set
\[T''_K:=\{x\in X\setminus T':g_K(x,-)\textrm{ is not integrable over }Y\}\]
is semi-algebraic.
\vspace{3mm}

We now claim that $\dim(T''_K)<n$.

Let $\{i_{k+1},\dots,i_n\}$ be the complement of the set $K$ in $\nbar$ and
define the top degree minimal form
\[\mu_K=\sum_{j=1}^s\lambda(f^{j}_0;f^{j}_1,\dots,f^{j}_{k+l},x_{i_{k+1}},\dots,x_{i_{n}})
\in\omin^{n+l}(X\times Y)\]
which is also smooth on $(X\times Y)\setminus Z$. Using (\ref{E:gKL}), we have that
\[\pm\sum_{j=1}^s f^{j}_0\,df^{j}_1\dots df^{j}_{k+l}\,dx_{i_{k+1}}\cdots dx_{i_{n}}\,=\,
g_{K}\,dy_{\lbar}\,dx_{\nbar}
\]
where the sign comes from the signature of the permutation of the shuffle $(K,\nbar\setminus K)$.
For concreteness, we will assume that this signature is $+1$.
Set $$A^\pm_K=\{(x,y)\in X\times Y\setminus Z: g^\pm_K(x,y)>0\}.$$
To avoid issues arising from the noncompactness of the open cube $X$,
fix $\epsilon>0$ and let $X_{\epsilon}=[\epsilon,1-\epsilon]^n$ and
$A^\pm_{K,\epsilon}=A^\pm_K\cap (X_{\epsilon}\times Y)$. Consider the chains $\bbr{A^\pm_{K,\epsilon}} \in\Ch_{n+l}(X\times Y)$
 obtained by taking the $(n+l)$-dimensional strata of $A^\pm_{K,\epsilon}$ equipped with the orientation of $X\times Y$.
Then
\[\int_{X_{\epsilon}\times Y}|g_K(x,y)|\,dy_{\lbar}\, dx_{\nbar}=
\langle\mu_K,\bbr{A^+_{K,\epsilon}}\rangle-\langle\mu_K,\bbr{A^-_{K,\epsilon}}\rangle<\infty,\]
and hence $g_K$ is integrable over $X_{\epsilon}\times Y$.
By Fubini's Theorem (and since $\epsilon>0$ is arbitrary), we deduce that
 for almost every $x\in X\setminus T'$, $g_K(x,-)$ is integrable,
which establishes the claim.

Now set $X'=X\setminus\overline{T'\cup\cup_{K\subset\nbar,|K|=k}T''_K}$.  This is an open
dense semi-algebraic subset of $X$. For each $x\in X'$, $g^\pm_K(x,-)$ is integrable on $Y$ and
we set
\begin{align*}
G^\pm_K\colon X' & \longrightarrow\BR  \\
x & \longmapsto\int_Yg^\pm_K(x,y)\,dy_{\lbar}
\end{align*}
and $G_K=G^+_K-G^-_K$. If $V$ is a semi-algebraic $k$-dimensional
oriented smooth submanifold with closure in $X'$, then since $V\cap T'=\emptyset$ we have that $Z\cap(V\times Y)$
is negligible in $V\times Y$ and
using \eqref{E:gKL} we have
\[\langle\alpha,\bbr{V}\rangle=\int_{(V\times Y)\setminus Z}\sum_{K,L}g_{K,L}\,dy_L\,dx_K.\]
If $|L|<l$ then $|K|>k$ and since $V$ is $k$-dimensional in the direction of the $x_i$'s and $F$ is in the direction
of the $y_i$'s, we have that the restriction of the differential form $g_{K,L}\,dy_L\,dx_K$ to the tangent space
of $(V\times Y)\setminus Z$ is zero. Therefore the only contributions come from the $g_K=g_{K,\lbar}$, and
by Fubini's Theorem we have
\begin{equation}
\label{E:repintproof}
\langle\alpha,\bbr{V}\rangle=\sum_K\int_V G_K(x)\,dx_K.
\end{equation}

It remains to prove that $G_K$ is smooth on the complement of a codimension $1$ smoothly stratified
closed subspace $T$ of $X$. The value of the integral $G_K(x)$, for $x\in X'$, can be obtained as
the difference of volumes of the slices at $x$ of the hypographs of $g_K^+$ and $g_K^-$.
These hypographs  are semi-algebraic sets, hence they are global subanalytic sets
(see \cite{CLR:nat} for a quick definition and a list of references on global subanalytic sets and functions).
By Theorem 1' of \cite{CLR:nat}  there exist (global) subanalytic functions
$A_1,\dots,A_s\colon X'\to\BR$ (not necessarily continuous) and a polynomial
$P\in\BR[a_1,\dots,a_s,u_1,\dots,u_s]$
such that
\[G_K=P(A_1,\dots,A_s,\log(A_1),\dots,\log(A_s)).\]
By \cite{Hir:tri} (see also \cite{Har:str}),
graphs of subanalytic functions are stratifiable, and therefore there exists a  finite stratification of $X'$ into  subanalytic smooth  submanifolds
 such that $G_K$ is smooth on each stratum. Let $X_0$ be the union of the maximal strata.
Then $G_K$ is smooth on $X_0$ and   its complement $T=X'\setminus X_0$
is a closed smoothly stratifiable subset of codimension $1$.
\end{proof}

We next establish  a continuity principle for PA forms: A PA form is determined by its values
 on the complement of any closed codimension $1$ smoothly stratifiable subset.
 We have an analogous statement for minimal forms
in \refP{mindense} but the proof there was easier because we assumed that the codimension $1$
subset was semi-algebraic.
\begin{lemma}
\label{L:contcodimPA}
Let $X$ be a semi-algebraic set, let $T\subset X$ be a closed codimension 1  smoothly stratifiable
 subset,  and
let $\alpha\in\ompa^k(X)$ be a PA form. If
$
\forall\gamma\in\Ch_k(X)$,
\[ \spt(\gamma)\cap T=\emptyset\,\Longrightarrow
\,\langle\alpha,\gamma\rangle=0,\]
then $\alpha=0$.
\end{lemma}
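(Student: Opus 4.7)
The plan is to combine the integral representation of Lemma \ref{L:repintPA} with semi-algebraic approximation (Propositions \ref{P:densitySA} and \ref{P:contSAmin}). First, write $\alpha=\fint_\Phi\mu$ with $\Phi\in\Chs_l(Y\stackrel{f}\to X)$ and $\mu\in\omin^{k+l}(Y)$. Using a semi-open triangulation of $X$ compatible with the trivialization of $\Phi$ and decomposing chains via Proposition \ref{P:altSAcur}, the hypothesis localizes to each closed simplex (a chain in $\overline\sigma$ whose support misses $T\cap\overline\sigma$ is, as a chain in $X$, a chain missing $T$), so one may reduce to the case $X=(0,1)^n$ with $\alpha$ a trivial fiber integral over $X$.

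By Lemma \ref{L:repintPA} and its proof, there are a semi-algebraic open dense $X'\subset X$, a further dense subset $X_0\subset X'$ with $T_\alpha:=X\setminus X_0$ closed of codimension $1$ and smoothly stratifiable, and semi-algebraic functions $G_K\colon X'\to\BR$, smooth on $X_0$, satisfying
\[
\langle\alpha,\bbr{V}\rangle=\sum_K\int_V G_K\,dx_K
\]
for every smooth semi-algebraic $k$-submanifold $V$ with $\overline V\subset X'$ (with finite integrals). The closed set $T\cup T_\alpha$ is again of codimension $1$ and smoothly stratifiable, so for any $x\in X_0\setminus T$ and any $K\subset\nbar$ of cardinality $k$ one can find a small axis-parallel $k$-cube $V_{x,K,\epsilon}$ in the $x_K$-directions with $\overline{V_{x,K,\epsilon}}\subset X_0\setminus T$. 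The hypothesis gives $\langle\alpha,\bbr{V_{x,K,\epsilon}}\rangle=0$, and only the $K$-th summand in the representation survives (because the restriction of $dx_{K'}$ to $V_{x,K,\epsilon}$ vanishes for $K'\neq K$). Dividing by $\epsilon^k$, letting $\epsilon\to 0$, and using smoothness of $G_K$ at $x$ yields $G_K(x)=0$; continuity of $G_K$ on $X_0$ and density of $X_0\setminus T$ in $X_0$ then give $G_K\equiv 0$ on $X_0$.

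Next, for any smooth $V$ with $\overline V\subset X'$, a generic small translate $V+v\subset X'$ is transverse to the finitely many strata of $T_\alpha$; hence $(V+v)\cap T_\alpha$ has $k$-dimensional Hausdorff measure zero in $V+v$, and combining this with $L^1$-integrability of $G_K$ on $V+v$ (from the proof of Lemma \ref{L:repintPA}) and the vanishing $G_K\equiv 0$ on $X_0$ gives $\int_{V+v}G_K\,dx_K=0$. Letting $v\to 0$, one has $\bbr{V+v}\toSA\bbr V$, and the SA continuity of $\alpha$ established below yields $\langle\alpha,\bbr V\rangle=0$; by additivity, $\langle\alpha,\gamma\rangle=0$ for every $\gamma\in\Ch_k(X')$. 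For arbitrary $\gamma\in\Ch_k(X)$, Proposition \ref{P:densitySA} produces $\gamma_n\in\Ch_k(X')$ with $\gamma_n\toSA\gamma$, and SA continuity again yields $\langle\alpha,\gamma\rangle=\lim_n\langle\alpha,\gamma_n\rangle=0$.

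Finally, to establish SA continuity of $\alpha=\fint_\Phi\mu$: given $\eta_n\toSA\eta$ witnessed by a semi-algebraic homotopy $h\colon M\times[0,1]\to X$, pulling back $\Phi$ along $h$ and assembling the trivializing data of $h^*\Phi$ into a common semi-algebraic parametrized family $H\colon N\times[0,1]\to Y$ exhibits $\eta_n\ltimes\Phi\toSA\eta\ltimes\Phi$ in $\Ch_{k+l}(Y)$, so Proposition \ref{P:contSAmin} delivers $\langle\alpha,\eta_n\rangle=\langle\mu,\eta_n\ltimes\Phi\rangle\to\langle\mu,\eta\ltimes\Phi\rangle=\langle\alpha,\eta\rangle$. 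The main obstacle is precisely this SA continuity of $\alpha$: producing a common semi-algebraic trivialization of $\Phi$ compatible with the SA homotopy requires a careful refinement of stratifications, while the rest of the argument reduces to the integral representation of Lemma \ref{L:repintPA} together with measure-theoretic manipulations and standard transversality.
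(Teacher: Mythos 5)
Your proposal takes a genuinely different route from the paper's. The paper's proof of Lemma~\ref{L:contcodimPA} does not invoke Lemma~\ref{L:repintPA} at all. After reducing (via a cubification) to $\gamma=\bbr{\overline\kappa}$ for a $k$-cube $\overline\kappa=[0,1]^k\times\{0\}^{n-k}$ inside $\overline\tau=[0,1]^n$ over which $\alpha$ is a trivial fiber integral $\fint_{\fib{F}}\mu$, the paper uses a Fubini argument in polar coordinates to produce a direction $v_0\in S^{n-k-1}_+$ and radii $r_n\downarrow 0$ with $\calH^k(T\cap([0,1]^k\times\{r_nv_0\}))=0$, sends $\gamma_n=\bbr{[0,1]^k\times\{r_nv_0\}}\toSA\gamma$, and then finishes by excising small balls around $\spt(\gamma)\cap T$ and invoking Lebesgue dominated convergence for the minimal form $\mu$. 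Your version — extracting the densities $G_K$ from the integral representation of Lemma~\ref{L:repintPA}, proving $G_K\equiv 0$ pointwise on $X_0\setminus T$ via shrinking axis-parallel cubes, and then passing to general chains by transversal translates — reaches the same measure-zero-intersection reduction by a different mechanism, and has the conceptual appeal of making explicit that the pointwise density of $\alpha$ vanishes.

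However, the obstacle you flag at the end is not a real gap: once the reduction to a single closed cube where $\alpha$ is a trivial fiber integral is made, the continuous chain is the \emph{constant} chain $\fib{F}$, not a general $\Phi$. Then, if $\eta_n\toSA\eta$ is witnessed by a homotopy $h\colon M\times[0,1]\to X$, the map $h\times\id_F\colon(M\times F)\times[0,1]\to X\times F$ witnesses $\eta_n\times\bbr{F}\toSA\eta\times\bbr{F}$ directly, and Proposition~\ref{P:contSAmin} applied to the \emph{minimal} form $\mu$ gives $\langle\alpha,\eta_n\rangle=\langle\mu,\eta_n\times\bbr{F}\rangle\to\langle\mu,\eta\times\bbr{F}\rangle=\langle\alpha,\eta\rangle$. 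No common semi-algebraic trivialization of a general $\Phi$ along the homotopy is needed; this is exactly the device used in \refE{limPASA}. The step you should scrutinize instead is your transversality passage: you need the representation \refE{repintproof} to apply to the translates $V+v$, which requires $\overline{V+v}\subset X'$ (fine for small $v$ since $\overline V$ is compact in the open $X'$), and, more delicately, you need each $\int_{V+v}G_K\,dx_K$ to be separately finite. The Fubini argument in the proof of Lemma~\ref{L:repintPA} yields $L^1$-integrability of $G_K$ over full-dimensional compact subsets of $X$, not a priori over an arbitrary $k$-dimensional submanifold $V+v$, so an additional slicing argument (showing that a generic translate of $V$ inherits integrability of $G_K$) is required to make that step rigorous.
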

\begin{rem}This could be summarized by ``$\alpha|_{X\setminus T}=0\Longrightarrow\alpha=0$'' but
this does not quite make sense because $X\setminus T$ may be not semi-algebraic.
\end{rem}
\begin{proof}
Let $\gamma\in \Ch_k(X)$. We will prove that  $\langle\alpha,\gamma\rangle=0$.

Consider a cubification
of $X$ such that $\alpha$ is a trivial fiber integral over each closed cube and such that $\gamma$ is
a linear combination of cubes. By linearity it is enough to prove the statement when
$\gamma=\bbr{\overline\kappa}$, where $\kappa$ is a single cube. Let $\tau$ be an open
cube of maximal dimension such that $\kappa$ is a face of it.
There exists some semi-algebraic homeomorphism which lets us assume that $\overline\tau=[0,1]^n$
and that $\overline\kappa=[0,1]^k\times\{0\}^{n-k}$. The image of $T$ under this semi-algebraic homeomorphism is still of codimension $1$.

Since $\alpha$ is a trivial fiber integral over $[0,1]^n$, there exists a compact oriented manifold $F$
of dimension $l$ and a minimal form
\[\mu=\sum_{i=1}^r\lambda(f^i_0;f^i_1,\dots,f^i_{k+l})
\in\omin^{k+l}([0,1]^n\times F)\]
such that $\alpha|_{\overline\tau}=\fint_{\fib{F}}\mu$.
There exists $Z\subset [0,1]^n\times F$ of codimension $1$ such that the $f^i_j$ are smooth
on the complement of $Z$. As seen in the proof of \refL{repintPA} (the part following equation (\ref{E:slice})),
the set $T':=\{x\in[0,1]^n:\dim(Z\cap\{x\}\times F)=l\}$ is semi-algebraic
of codimension $1$ and the same is true for its closure. Since we can always enlarge $T$, we can assume
that $\overline {T'}\subset T$. This implies that if $V$ is a smooth submanifold in $X$
 then the $f^i_j$'s are smooth almost everywhere on $(V\setminus T)\times F$.

We now claim that there exists a sequence $\{\gamma_n\}$ in $C^k(\overline\tau)$ such that
$\gamma_n\toSA\gamma$ and $\calH^k(\spt(\gamma_n)\cap T)=0$.

First recall that $\gamma=\bbr{[0,1]^k\times\{0\}^{n-k}}$.
To prove the claim, consider the positive orthant of the $(n-k-1)$-dimensional  sphere
\[S^{n-k-1}_+=\{v\in [0,1]^{n-k}:\|v\|=1\}\]
and of the $(n-k)$-disk
\[D^{n-k}_+=\{v\in [0,1]^{n-k}:\|v\|\leq 1\}.\]
Since $T$ is of codimension $1$ in $[0,1]^n$, we have that
$\calH^n(T\cap ([0,1]^k\times D^{n-k}_+))=0$.
Passing to polar coordinates we have
\[
\calH^n(T\cap ([0,1]^k\times D^{n-k}_+))=C\cdot\int_0^1
\int_{S^{n-k-1}_+}\calH^k(T\cap [0,1]^k\times\{rv\})\,r^{n-k-1}dv\,dr
\]
where $C$ is a positive constant, $dv$ is the standard $(n-k-1)$-area measure on
$S^{n-k-1}_+$ and $dr$ is the Lebesgue measure on the interval $[0,1]$.
Hence the double integral is zero and since the integrand is non-negative,
there exists $v_0\in {S^{n-k-1}_+}$ such
that $\calH^k(T\cap [0,1]^k\times\{rv_0\})=0$  for almost every $r\in[0,1]$.
Therefore there exists a sequence $\{r_n\}$ in $[0,1]$, decreasing to $0$, such that
if we set
\[\gamma_n\, =\,\bbr{[0,1]^k\times\{r_n\cdot v_0\}}\,\in\,\Ch_k([0,1]^n)\]
then $\calH^k(T\cap\spt(\gamma_n))=0$ and $\gamma_n\toSA \gamma$. This proves the claim.

Clearly $\gamma_n\times\bbr{F}\toSA \gamma\times\bbr{F}$ and from \refP{contSAmin}
we have
\begin{equation}
\label{E:limPASA}
\langle\alpha,\gamma\rangle=
\langle\mu,\gamma\times\bbr{F}\rangle=
\lim_{n\to\infty}
\langle\mu,\gamma_n\times\bbr{F}\rangle=
\lim_{n\to\infty}
\langle\alpha,\gamma_n\rangle.
\end{equation}
Therefore it is now enough to prove that $\langle\alpha,\gamma\rangle=0$ when
$\calH^k(T\cap\spt(\gamma))=0$, and from now on we suppose that $\spt(\gamma)\cap T$
has zero $k$-volume. We can moreover suppose that $\gamma=\bbr{V}$, where $V$ is a
semi-algebraic oriented smooth submanifold of dimension $k$.

Let $m\geq1$. Since $T$ is closed,  $\spt(\gamma)\cap T$ is compact and it is covered by
 finitely many open balls $B(x_i^{(m)},1/m)$, centered at $x_i^{(m)}\in T$ and of radius $1/m$, for $1\leq i\leq N_m$.
Set $V_m=V\setminus \cup_{i=1}^{N_m}B[x_i^{(m)},2/m]$.  This is a
 semi-algebraic oriented smooth
submanifold. Since $\overline{V_m}\cap T=\emptyset$, the hypothesis of the lemma implies that
$\langle\alpha,\bbr{V_m}\rangle=0$. Using that the $f^i_j$ are smooth almost everywhere on $(V\setminus T)\times F$, that
 $\calH^k(T\cap V)=0$,  and that $\cup_{m\geq1}V_m= V\setminus T$, as well as the 
  Lebesgue Bounded Convergence Theorem, we get
\begin{eqnarray*}
\langle\alpha,\bbr{V}\rangle
&=&
\langle\mu,\bbr{V\times{F}}\rangle\\
&=&
\sum_{i=1}^r\int_{(V\setminus T)\times F}f^i_0\,df^i_1\dots df^i_{k+l}\\
&=&
\lim_{m\to\infty}\sum_{i=1}^r\int_{V_m\times F}f^i_0\,df^i_1\dots df^i_{k+l}\\
&=&
\lim_{m\to\infty}\langle\alpha,\bbr{V_m}\rangle \\
&=&
0.
\end{eqnarray*}
\end{proof}

We are now finally ready for
\begin{proof}[Proof of \refP{strKu-PA}]
Let $\gamma\in\Ch_k(X_1\times X_2)$. We will prove that $\langle\alpha,\gamma\rangle=0$.

Since $\spt(\gamma)$ is compact, we can restrict to the product of compact sets
$\proj_1(\spt(\gamma))\times\proj_2(\spt(\gamma))\subset X_1\times X_2$, so we can suppose that
$X_1$ and $X_2$ are compact. Take cubifications $\{\kappa^{1}_i\}$ and $\{\kappa^{2}_j\}$
of $X_1$ and $X_2$. By linearity it is enough to check that
$\alpha|_{\overline\kappa^{1}_i\times \overline\kappa^{2}_j}=0$
for all pairs of closed cubes in $X_1$ and $X_2$. Hence we can suppose that $X_1=[0,1]^{n_1}$
and  $X_2=[0,1]^{n_2}$.

Now take a triangulation of $X_1\times X_2$ such that $\alpha$ is a trivial fiber integral over the closure of each simplex
and such that $\gamma$ is a linear combination of simplices.
By linearity, without loss of generality we can assume that $\gamma=\bbr{\overline\sigma}$
for a single simplex $\sigma$. Let $\tau$ be a maximal dimensional open simplex such that $\sigma$ is a face
of $\overline\tau$. By the same argument as in the proof of \refP{densitySA}, we can construct
a sequence $\{\gamma_n\}\subset\Ch_k(\tau)$ such that $\gamma_n\toSA\gamma$.
By the same argument as in equation (\ref{E:limPASA}), it is enough to show
that $\langle\alpha,\gamma_n\rangle=0$. In other words, there is no loss of generality
in supposing that $\spt(\gamma)$ is contained in the simplex $\tau$ which is an
open set in $X_1\times X_2$
(because it is of maximal dimension).

Since $\spt(\gamma)$ is compact, it is covered by finitely many open sets of the form $O_1\times O_2$
contained in $\tau$ and such that $O_i$ is an open set in $X_i$ semi-algebraically homeomorphic to
$(0,1)^{n_i}$ for $i=1,2$. To conclude the proof, we will show that $\alpha|_{O_1\times O_2}=0$.

Set $X=O_1\times O_2\cong(0,1)^n$ with $n=n_1+n_2$ and notice  that $\alpha$ is a trivial fiber integral over $X$
since $X\subset\tau$. By \refL{repintPA}, there exists a closed codimension $1$ smoothly stratified $T\subset X$
and  smooth functions
$G_K\colon X\setminus T\to\BR$ for $K\subset\nbar$ of cardinality $k$,
giving the integral representation \refN{E:repintPA} for $\alpha|_{X\setminus T}$.
We prove that $G_K=0$. For concreteness, suppose that $K=\{1,\dots,k_1,n_1+1,\dots,n_1+k_2\}$
with $k_1+k_2=k$ and $0\leq k_i\leq n_i$. If $G_K$ was not zero  there would exist an $x\in X\setminus T$ such that, say,
$G_K(x)>0$. By continuity, there exists $\epsilon>0$ and non empty subsets $U_1\subset O_1$
and $U_2\subset O_2$ such that $G_K|_{U_1\times U_2}>\epsilon$.
Take  $V_1\subset U_1$ a non empty $k_1$-dimensional semi-algebraic submanifold parallel
 to the linear space $[0,1]^ {k_1}\times\{0\}^{n-k_1}$ (i.e.~a codimension $0$ manifold contained in some translate of $[0,1]^ {k_1}\times\{0\}^{n-k_1}$)
 and similarly a non-empty $k_2$-dimensional manifold  $V_2\subset U_2$ parallel to $\{0\}^{n_1}\times [0,1]^{k_2}\times \{0\}^{n_2-k_2}$.
By Fubini's Theorem,
\[\langle\alpha,\bbr{V_1}\times\bbr{V_2}\rangle=
\int_{V_1\times V_2}G_K\,dx_K\geq\epsilon\cdot\calH^k(V_1\times V_2)>0\]
which contradicts our main hypothesis. Therefore $G_K=0$.
This implies that $\alpha|_{(O_1\times O_2)\setminus T}=0$. By \refL{contcodimPA} we
deduce that $\alpha|_{O_1\times O_2}=0$ and the proposition is proved.
\end{proof}
\begin{prop}
\label{P:omPAcross}
Let $X_1$ and $X_2$ be  semi-algebraic sets.
There is a degree-preserving linear map
\[\times\colon\ompa^*(X_1)\otimes \ompa^*(X_2)\longrightarrow\ompa^*(X_1\times X_2)\]
characterized \zz{
                             in his math comment (3) the referee says
                             that he does not believe that the below
                             formula characterizes the cross product
                             on PA-forms. Actually in the statement
                             there was a typo that I corrected below:
                             $\alpha_i$ should belong to $\ompa$ and
                             not $\omin$. With this correction I do
                             believe that this formula characterizes
                             that cross product, because of  by
                             \refP{strKu-PA}. Actually the
                             \refP{strKu-PA} is not trivial (see its
                             proof) and is needed exactly for
                             \refP{omPAcross}, which then ensure
                               naturality of the product; otherwise to
                               prove naturality would be
                               complicated. I add a phrase at end of
                               proof to explain why it is cgharacterizing. Or do I oversee something
                               that that the referee means?
                               }
by the formula\fn{\pl 10 july 2008. I removed the Koszul sign in \refN{E:omPAcross} as for \refN{E:omincross}}
\begin{equation}
\label{E:omPAcross}
\langle\alpha_1\times\alpha_2,\gamma_1\times\gamma_2\rangle=
\langle\alpha_1,\gamma_1 \rangle\cdot \langle\alpha_2,\gamma_2 \rangle
\end{equation}
for $\alpha_i\in\ompa^*(X_i)$ and $\gamma_i\in\Ch_*(X_i)$, $i=1,2$.  The Leibniz formula
\[\delta(\alpha_1\times\alpha_2)=\delta(\alpha_1)\times\alpha_2+(-1)^{\deg(\alpha_1)}\alpha_1\times \delta(\alpha_2)\] also holds.
Further, let $T\colon X_2\times X_1\to X_1\times X_2$ be the twisting map given by $(x_2,x_1)\mapsto (x_1,x_2)$. Then
\[\alpha_2\times\alpha_1=(-1)^{\deg(\alpha_1)\deg(\alpha_2)}T^*(\alpha_1\times\alpha_2).\]
\end{prop}
\begin{proof}
We can write $\alpha_i=\fint_{\Phi_i}\mu_i$ for some
$\mu_i\in\omin^*(Y_i)$ and \zz{typo corrected} $\Phi_i\in\Chs(Y_i\to X_i)$,   $i=1,2$.
Set
\begin{equation}\label{E:defomPAcross}
\alpha_1\times\alpha_2=\fint_{\Phi_1\times\Phi_2}\mu_1\times\mu_2.
\end{equation}
It is straightfoward to check that this PA form has the desired characterizing property.
The Leibniz formula is a consequence of \refL{deltaompa}  and of the Leibniz formulas for minimal forms and continuous chains
(Propositions \ref{P:omincross} and \ref{P:ltimes}.) Similarly for the  twisting formula.  The right side of \refE{defomPAcross} is independent of
the choice of the representatives of $\alpha_i$ by \refP{strKu-PA},
which\zz{added to answer referee math comment (3)} %
 also implies that
\refE{omPAcross} characterizes this cross product.
\end{proof}

We define a multiplication on $\ompa^*(X)$ by
\[\alpha_1\cdot\alpha_2=\Delta^*(\alpha_1\times\alpha_2)\]
where $\Delta\colon X\to X\times X $ is the diagonal map.
It is immediate from the previous proposition that this multiplication satisfies the Leibniz formula and
is graded commutative.

In conclusion, we have
\begin{thm}
\label{T:omPAfunctor}
The above construction of PA forms defines a contravariant functor $\ompa^*\colon\SA\to\CDGA$.
\end{thm}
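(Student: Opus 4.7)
The plan is to verify the three pieces of structure in turn: that $\ompa^*(X)$ is a CDGA, that $g^*$ is a CDGA morphism, and that the assignment is functorial. Most of the hard analytic work has already been done in Propositions~\ref{P:ompasbgrp}, \ref{P:omPAcross}, \refL{deltaompa}, and \refP{strKu-PA}; what remains is a bookkeeping argument using the strong K\"unneth property \refP{strKu-PA} to propagate identities from chains to PA forms.

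First I would verify that $(\ompa^*(X),\delta,\cdot)$ is a CDGA. That $\ompa^*(X)$ is a graded subgroup of $\Coch^*(X)$ closed under $\delta$ follows from \refP{ompasbgrp} and \refL{deltaompa}, and $\delta^2=0$ is inherited from $\Coch^*(X)$. The product $\alpha_1\cdot\alpha_2=\Delta^*(\alpha_1\times\alpha_2)$ lives in $\ompa^*(X)$ because cross product and pullback preserve PA forms. The Leibniz rule and graded commutativity are immediate from the corresponding statements in \refP{omPAcross} together with the fact that $\Delta^*\circ T^*=\Delta^*$. For associativity, I would show that on the level of cross products the two possible triple products $(\alpha_1\times\alpha_2)\times\alpha_3$ and $\alpha_1\times(\alpha_2\times\alpha_3)$ agree in $\ompa^*(X_1\times X_2\times X_3)$. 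Using \refE{defomPAcross} this reduces to the obvious associativity of the cross product on minimal forms and on continuous chains. Alternatively and more robustly, both triple cross products evaluate the same way on any chain $\gamma_1\times\gamma_2\times\gamma_3$ by the characterization \refN{E:omPAcross}, and then the iterated strong K\"unneth principle \refP{strKu-PA} forces them to be equal; applying $\Delta^*$ twice gives associativity of $\cdot$. The unit is the constant form $1\in\omin^0(X)\subset\ompa^0(X)$, which is $\fint_{\fib{*}}\lambda(1;)$; that $1\cdot\alpha=\alpha$ follows again from evaluation on chains and \refP{strKu-PA}.

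Next I would verify that a semi-algebraic map $g\colon X'\to X$ induces a CDGA morphism $g^*\colon\ompa^*(X)\to\ompa^*(X')$. That $g^*$ sends PA forms to PA forms via $g^*(\fint_\Phi\mu)=\fint_{g^*\Phi}g^*\mu$ was already noted in the paragraph before \refP{PAdegdim0}; commutation with $\delta$ is inherited from $\Coch^*(X)$. To show $g^*$ respects products, it suffices by the diagonal formula to show $(g\times g)^*(\alpha_1\times\alpha_2)=g^*\alpha_1\times g^*\alpha_2$ for the external cross product. Both sides are PA forms on $X'\times X'$, and on a cross product chain $\gamma'_1\times\gamma'_2\in\Ch_*(X'\times X')$ both evaluate to $\langle\alpha_1,g_*\gamma'_1\rangle\cdot\langle\alpha_2,g_*\gamma'_2\rangle$ by naturality of $g_*$ on chains together with \refN{E:omPAcross}; then \refP{strKu-PA} concludes. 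Preservation of the unit is trivial since $g^*(1)=1$.

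Finally, functoriality $(h\circ g)^*=g^*\circ h^*$ and $\id^*=\id$ is inherited from the corresponding statement for $\Coch^*$, which itself comes from functoriality of $g_*\colon\Ch_*(X)\to\Ch_*(Y)$ established in \refC{chaincomplex}. The only place where substantive work is really needed is the appeal to \refP{strKu-PA} when checking that the various cross-product identities characterize the PA form uniquely; this is precisely the role of the strong K\"unneth principle and is the main obstacle, but it was already dispatched en route to \refP{omPAcross}. Consequently the theorem follows.
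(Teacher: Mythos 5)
Your proof is correct and takes essentially the same route as the paper, which gives no explicit argument for this theorem beyond citing the preceding propositions; you have simply spelled out the bookkeeping (associativity, unit, compatibility of $g^*$ with products, functoriality) that the paper declares ``immediate from the previous proposition.'' One small caution: when you appeal to the ``iterated strong K\"unneth principle'' for associativity, note that \refP{strKu-PA} as stated concerns vanishing on chains of the form $\gamma_1\times\beta$ with $\beta$ an \emph{arbitrary} chain in $X_2\times X_3$, whereas your hypothesis only gives vanishing on the triple products $\gamma_1\times\gamma_2\times\gamma_3$, so one would need a three-factor version of \refP{strKu-PA} (provable by the same slicing argument, but not literally what is stated) --- this is why your first argument via the explicit formula \refE{defomPAcross} together with associativity of $\times$ on minimal forms and on continuous chains is actually the cleaner of your two routes, despite your labeling the other ``more robust.''
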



\section{Equivalence between $\Apl$ and $\ompa^*$}\label{S:Equivalence}


\label{S:equAplompa}
Recall that $\SA$ is the category of semi-algebraic sets. There is an obvious forgetful functor
$u\colon\SA\to\Top$. Consider the contravariant functors
\begin{align*}
\ompa^*&\colon\SA\longrightarrow\CDGA\\
\Apl(u(-);\BR)&\colon\SA\longrightarrow\CDGA
\end{align*}
where $\CDGA$ the category of commutative differential graded algebras and $\Apl$ is the Sullivan functor of piecewise polynomial forms as defined in \cite{BoGu:RHT} (see also \cite{FHT:RHT}).
Abusing notation we will simply write $X$ for $u(X)$.
The aim of this section is to prove our main theorem:
\begin{thm}
\label{T:main}
There is a zigzag of natural transformations
\[\ompa^*(X)\longrightarrow\cdots \longleftarrow\Apl(X;\BR)\]
which is a weak equivalence when $X$ is a compact semi-algebraic set.
\end{thm}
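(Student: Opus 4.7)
My plan is to follow the classical Sullivan--de~Rham scheme that compares the smooth de~Rham algebra with $\Apl$ on a manifold, using an intermediate simplicial CDGA, and to isolate one place where the semi-algebraic setting forces a substantial modification, namely the Poincaré lemma.

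First, I would construct the zigzag through an intermediate simplicial CDGA. Let $\SingPA(X)$ be the simplicial set whose $n$-simplices are PA (semi-algebraic) maps $\Delta^n \to X$. Define
\[
\ompa^*(\SingPA(X)) \;=\; \lim_{\sigma\colon \Delta^n\to X}\,\ompa^*(\Delta^n),
\]
the CDGA of compatible systems of PA forms on PA simplices. Pullback along the tautological simplices gives a natural CDGA map $\Psi\colon \ompa^*(X)\to \ompa^*(\SingPA(X))$, while the inclusion of polynomial forms into PA forms on each $\Delta^n$, combined with the natural map $\SingPA(X)\hookrightarrow \Sing(X)$, gives $\Theta\colon \Apl(X;\BR)\to \ompa^*(\SingPA(X))$. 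The theorem then reduces to proving that both $\Psi$ and $\Theta$ are quasi-isomorphisms for compact semi-algebraic $X$.

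Second, I would handle both quasi-isomorphisms by the standard sheaf-theoretic / acyclic-models pattern. Both $\ompa^*$ and $\Apl(-;\BR)$ are contravariant functors that send a Mayer--Vietoris square of compact semi-algebraic sets to a (homotopy) pullback of CDGAs; for $\Apl$ this is classical, and for $\ompa^*$ it follows from the explicit description of PA forms via chains together with the subdivision / triangulation tools from Section~\ref{S:Background}. Given Mayer--Vietoris, a finite induction over the simplices of a semi-algebraic triangulation of the compact set $X$ (available by the results quoted after \refT{Hauptvermutung}) reduces the comparison of the homologies of the three CDGAs to the case where $X$ is a single closed simplex, i.e.\ semi-algebraically homeomorphic to a collapsible polyhedron.

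Third, and this is the crux, I would prove a Poincaré lemma: if $X$ is compact semi-algebraic and semi-algebraically homeomorphic to a collapsible polyhedron (e.g.\ a closed simplex), then the augmentation $\BR\to \ompa^*(X)$ is a quasi-isomorphism, and analogously for the two other CDGAs. In the smooth setting this is produced by integrating along the fibres of a smooth contraction; in our setting such a contraction need not be semi-algebraic. Here I would invoke \refC{collapsibility}: any semi-algebraic triangulation of $X$ admits a subdivision $K'$ that \emph{simplicially} collapses to a vertex $\ast$. The sequence of elementary simplicial collapses produces, cell by cell, an explicit strongly continuous chain $H\in \Chs_1(X\to X)$ satisfying $\partial H=\id_* - c_\ast$ on $\Ch_*(X)$ (where $c_\ast$ is the constant chain at $\ast$), and pairing with $H$ via the fibre-integration formula $\fint_H$ of Section~\ref{SS:PAForms} gives a cochain homotopy on $\ompa^*(X)$. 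The same collapse yields the analogous homotopies for the simplicial intermediate and (via affine barycentric coordinates) for $\Apl(X;\BR)$.

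The main obstacle will be this Poincaré lemma, or more precisely, extracting a genuine PA chain homotopy from a sequence of simplicial collapses: one must verify that the cells glue into a single strongly continuous family in the sense of \refD{scc}, that the resulting $\fint_H$ lands in $\ompa^{*-1}$ and not merely in $\Coch^{*-1}$, and that $\delta\fint_H + \fint_H\delta = \id - \text{aug}$ via \refL{deltaompa}. The Hauptvermutung is used precisely to reduce the wildly non-canonical notion of semi-algebraic contractibility to the rigid combinatorial data of a simplicial collapse, which is what makes this construction possible; once in hand, the acyclic-models / Mayer--Vietoris induction of the previous step finishes the proof.
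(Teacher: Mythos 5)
Your zigzag is the right one (your $\Theta$ is just the composite $\beta_X\circ\gamma_X$ of \refE{mainzz}), and the overall Eilenberg--Steenrod architecture (Mayer--Vietoris plus a Poincar\'e lemma, with the Hauptvermutung reducing semi-algebraic collapsibility to a simplicial collapse) matches the paper. But your third step contains a genuine gap that the paper explicitly identifies and works around in \refS{diffKS}.

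You propose to assemble the elementary collapses into a single strongly continuous chain $H\in\Chs_1(X\to X)$ with $\partial H=\id-c_*$ and then apply a single homotopy operator $\fint_H$ to all of $\ompa^*(X)$. The trouble is that a general PA form $\alpha$ is $\fint_\Phi\mu$ with $\mu$ merely a \emph{minimal} form on some auxiliary $Y$, and ``$\fint_H\alpha$'' would have to be $\fint_{H\ltimes\Phi}\mu$; but $H\ltimes\Phi$ is in general \emph{not} a strongly continuous chain, so the result is a priori only a cochain in $\Coch^{*-1}(X)$, not a PA form. This is not a technicality you can ``verify'': it is precisely the failure mode documented in the discussion after \refE{ExtendProduct}, with an explicit counterexample showing that the twisted product of strongly continuous chains need not be strongly continuous. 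The same obstruction is why the paper does not have a pushforward $\ompa^*(E)\to\ompa^{*-l}(B)$ for general oriented SA bundles, only one defined on minimal forms. The actual proof of \refL{Poincare} therefore does \emph{not} build a single $H$. It runs the simplicial collapse one elementary collapse $T_p\searrow T_{p+1}$ at a time, using a semi-algebraic homotopy $h_p$ \emph{supported on the single simplex} $\tau_p$, and the key technical input is \refL{Hsimple}: for a homotopy supported on a closed subset $A$ and a PA form $\alpha$ that is a trivial fiber integral over $A$ (which one can always arrange after refining), $\Theta_{h_p}(\alpha)$ is again a PA form. The form of the primitive $\beta$ in that proof is then a telescoping sum of pullbacks of the $\Theta_{h_p}$'s, not a single fibre integral against a global $H$.

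A smaller issue: when you say ``a finite induction over the simplices of a semi-algebraic triangulation,'' note that a pair of closed simplices sharing a face is \emph{not} an excisive semi-algebraic pair in the sense of \refD{exc} (you need open overlap and a semi-algebraic partition of unity). The paper gets around this by inducting not over simplices but over their \emph{second derived neighborhoods} $N(\sigma)$, which are collapsible, cover $X$, and satisfy $N(\sigma)\cap N(\tau)=N(\sigma\cap\tau)$, so that each pairwise union is excisive; that is the content of \refP{critqi}. Your sketch needs that replacement for the Mayer--Vietoris step to actually fire.
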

The explicit chain of quasi-isomorphisms will be given in section \ref{S:weOmApl}, \refE{mainzz} before \refP{critqi}.
The  proof of this theorem will mimic the proof of \cite[\S 11(d)]{FHT:RHT} which gives a weak equivalence between $\Apl(-;\BR)$ and
the DeRham functor $\osmooth^*$ of smooth differential forms on a smooth manifold.
From now on we will also drop the field of coefficients $\BR$ from the notation of $\Apl$.

\begin{rem} The theorem is likely to be true without the compactness hypothesis.  As it stands, the statement is sufficient  for the application we have in mind
(formality of the little cubes operad). 
\zz{changed a bit this remark to answer math comment (4) of the
  referee}
Our proof would work also in the noncompact case if Poincar\'e
\refL{Poincare} below could be proved for \emph{noncompact}
collapsible polyhedra.
 In fact, Kontsevich and Soibelman claim that the theorem is true for an even more general class of spaces they call \emph{PA spaces} \cite[Definition 20]{KoSo:def}.
\end{rem}

In order to prove the weak equivalence $\ompa^*\simeq\Apl$, we first need to establish some
homotopy properties of $\ompa^*$, namely
\begin{enumerate}
\item A Poincar\'e lemma stating that $\ompa^*(\Delta^k)$ is acyclic;
\item A sheaf property, i.e.~that $\ompa^*$ satisfies
 a Mayer-Vietoris  sequence; and
\item The fact that the simplicial sets $\ompa^k(\Delta^\bullet)$ are ``extendable''; this is used in establishing the Mayer-Vietoris property for the functor  $\APA$ which is built from the PA forms on the simplex $\Delta^k$ in
the same way as $\Apl$ is built from the polynomial forms on $\Delta^k$.
\end{enumerate}
We establish these results in the next three sections.


\subsection{Poincar\'e Lemma for $\ompa^*$}


Recall the notion of a collapsible polyhedron from \refS{PLtop}. The version of the ``Poincar\'e lemma'' that we
need is the following.
\begin{lemma}
\label{L:Poincare}
Let $X$ be a compact  semi-algebraic set that is semi-algebraically homeomorphic to
a collapsible polyhedron. Then
$\Ho^0(\ompa^*(X))\cong \BR$ and $\Ho^i(\ompa^*(X))=0$ for $i>0$.
\end{lemma}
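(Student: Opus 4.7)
The plan is to reduce via \refC{collapsibility} to the case of a simplicial collapse to a point, and then to induct on the sequence of elementary collapses using homotopy invariance of $\ompa^*$. By \refC{collapsibility} (which rests on the semi-algebraic Hauptvermutung \refT{Hauptvermutung}), any semi-algebraic triangulation of $X$ admits a subdivision $K$ with a simplicial collapse $K=K_r\searrow K_{r-1}\searrow\cdots\searrow K_0=\{*\}$, where each $K_i=K_{i-1}\cup\tau_i$ is an elementary simplicial collapse. I would induct on $r$. The base case $r=0$ is immediate: \refP{PAdegdim0} gives $\ompa^k(\{*\})=0$ for $k>0$, and a $0$-form $\fint_\Phi\mu$ on $\{*\}$ is completely determined by its value on the unique generator $\bbr{*}\in\Ch_0(\{*\})$, so $\ompa^0(\{*\})\cong\BR$ with trivial differential.

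For the inductive step at $K_i=K_{i-1}\cup\tau_i$, since $\tau_i$ has a free codimension $1$ face $\sigma$, one has a piecewise linear (hence semi-algebraic) strong deformation retraction $H\colon K_i\times[0,1]\to K_i$ with $H(-,0)=\id$, $H(K_i,1)\subset K_{i-1}$, and $H(x,t)=x$ for all $x\in K_{i-1}$; on $\tau_i$ it is the linear push of $\sigma$ to the opposite face, and it is the identity on $K_{i-1}$. Combined with the homotopy invariance discussed next, this gives $H^*(\ompa^*(K_i))\cong H^*(\ompa^*(K_{i-1}))=\BR$ by induction.

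The key technical step is homotopy invariance: for any semi-algebraic $H\colon X\times[0,1]\to Y$, I would construct a chain homotopy $s\colon\ompa^k(Y)\to\ompa^{k-1}(X)$ satisfying $\delta s+s\delta=H_1^*-H_0^*$, where $H_j=H(-,j)$. Given $\alpha=\fint_\Phi\mu$ with $\Phi\in\Chs_l(Z\to Y)$ and $\mu\in\omin^{k+l}(Z)$, put $Z'=Z\times_Y(X\times[0,1])$ and pull back to obtain $H^*\Phi\in\Chs_l(Z'\to X\times[0,1])$ and $H^*\mu\in\omin^{k+l}(Z')$. I would then assemble a continuous chain $\widetilde{\Phi}\in\Chs_{l+1}(Z'\to X)$ whose fiber over $x$ equals $\bbr{\{x\}\times[0,1]}\ltimes H^*\Phi$, by choosing a stratification of $X\times[0,1]$ that trivializes $H^*\Phi$ and whose strata have the form $S\times J$ for a stratum $S\subset X$ and an interval $J\subset[0,1]$, with fibers $J\times F_\alpha$ carrying the product orientation. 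Setting $s(\alpha)=(-1)^k\fint_{\widetilde{\Phi}}H^*\mu$, the identity $\delta s+s\delta=H_1^*-H_0^*$ follows from \refL{deltaompa} after computing $\partial\widetilde{\Phi}$ stratum-by-stratum via \refT{Stokes}: the boundary contributions $S\times\{0,1\}\times F_\alpha$ produce $H_1^*\alpha-H_0^*\alpha$, while the contributions $S\times J\times\partial F_\alpha$ and $\partial S\times J\times F_\alpha$ package themselves into $s(\delta\alpha)$.

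The main obstacle I anticipate is rigorously formalizing $\widetilde{\Phi}$ as a \emph{strongly} continuous chain in the sense of \refD{scc}, and tracking the signs in the boundary decomposition. One must produce a single stratification of $X\times[0,1]$ that simultaneously trivializes $H^*\Phi$, projects cleanly onto a stratification of $X$, and is a product refinement along the interval factor, so that the fiber manifolds for $\widetilde{\Phi}$ extend continuously to the closures of strata in $X$. Once this bookkeeping is complete and the boundary identity verified with correct signs, the chain homotopy exists, homotopy invariance follows, and the induction closes to give the Poincaré Lemma.
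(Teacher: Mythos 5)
Your overall plan — reduce to a simplicial collapse via \refC{collapsibility} and induct on elementary collapses — matches the paper, but the key technical step you outline is precisely the ``obvious'' approach that the paper explicitly discusses in \refS{diffKS} and demonstrates does \emph{not} work. You propose to assemble, for an arbitrary semi-algebraic homotopy $H\colon X\times I\to Y$, a strongly continuous chain $\widetilde{\Phi}\in\Chs_{l+1}(Z'\to X)$ with fiber $\bbr{\{x\}\times I}\ltimes H^*\Phi$, by picking a stratification of $X\times I$ that simultaneously trivializes $H^*\Phi$ and is a product refinement $S\times J$. This is exactly the operation $\fib{I}\ltimes H^*(\Phi)$ that the paper analyzes, and the second example in \refS{diffKS} gives an explicit $X\subset\BR^3$, a continuous chain $\Phi$, and a dilation homotopy $H$ for which $\fib{I}\ltimes H^*(\Phi)$ is \emph{not} strongly continuous: no finite stratification of $X$ can trivialize it near the contraction point, because the limit sets of the fibers along different curves into that point are incomparable. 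In particular, a product stratification of $X\times I$ trivializing $H^*\Phi$ need not exist — a trivializing stratification of $H^*\Phi$ will in general have ``slanted'' strata that do not refine to products $S\times J$, so the object you flag as the ``main obstacle'' is not a matter of bookkeeping but a genuine obstruction.

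The paper's proof circumvents this by never constructing a homotopy operator for an arbitrary homotopy. Instead, \refL{Hsimple} establishes $\Theta_h(\alpha)\in\ompa^*(X)$ only under two restrictive hypotheses: (i) $h$ is \emph{supported on} a closed semi-algebraic $A\subset X$ (constant on $\overline{X\setminus A}$), and (ii) $\alpha$ is a \emph{trivial fiber integral over} $A$, so $\alpha|_A=\fint_{\fib{F}}\mu$ for a constant continuous chain. Condition (ii) is what makes the construction work: over $A$, the needed twisted continuous chain is $\fib{F\times I}$, which is manifestly strongly continuous because it is constant; and over $\overline{X\setminus A}$, the homotopy is the identity so $\Theta_h$ contributes nothing. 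The two pieces are glued via a pushout. The induction then uses that each elementary simplicial collapse comes with a homotopy $h_p$ supported on the collapsed simplex $\tau_p$, over which $\alpha$ (after refinement) is a trivial fiber integral, so \refL{Hsimple} applies, and a telescope argument finishes the proof. Your proposal omits the supported-homotopy/trivial-fiber-integral mechanism, which is the essential idea; without it the construction fails for the reason the paper documents.
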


As in the classical proof of the Poincar\'e lemma, the strategy is to convert a geometric homotopy into a cochain homotopy
and to deduce that the cochain complex is acyclic.  But this approach is not as straightforward in our case
because it is unclear whether the cochain homotopy operator maps PA
forms to PA forms \zz{the following added in view of referee's math
                                     comment (5). It does not really
                                     answer his request
                                   }
(see \refS{complexityPoincare}).

Set $I=[0,1]$ and let $h\colon X\times I\to X$
be a semi-algebraic homotopy. Define a cochain homotopy operator
\[\Theta_h\colon \Coch^{k}(X)\longrightarrow\Coch^{k-1}(X)\]
by, for $\lambda\in\Coch^{k}(X)$ and $\gamma\in\Coch_{k-1}(X)$,
\[\langle\Theta_h(\lambda),\gamma\rangle=\langle\lambda,h_*(\gamma\times\bbr{I})\rangle.\]
It is easy to check that
\begin{equation}
\label{E:hmtpy}
\delta\Theta_h\pm\Theta_h\delta=(h(-,1))^*-(h(-,0))^*.
\end{equation}

If $\lambda$ is a minimal form then $\Theta_h(\lambda)=\fint_{\fib{I}}h^*(\lambda)$ is a PA form,
but in general it is not a minimal form as can be seen from an example analogous to that of Remark
\ref{rmk:ominnotdeRham}.
We do not know whether $\Theta_h$ maps PA forms to PA forms but we will establish a weaker
result in that direction. To state it we need the following
\begin{definition}
Let $X$ be a semi-algebraic set and let $A\subset X$ be a closed semi-algebraic subset.
We say that a semi-algebraic  homotopy $h\colon X\times I\to X$ is \emph{supported on $A$} if
$h(A\times I)\subset A$  and $h(x,t)=x$ for $x\in\overline{X\setminus A}$ and $t\in I$.\\
\end{definition}

\begin{lemma}
\label{L:Hsimple}
Let $X$ be a semi-algebraic set, let $A\subset X$ be a closed semi-algebraic subset and
let $h\colon X\times I\to X$ be a semi-algebraic homotopy.
If $h$ is supported on $A$ and if $\alpha\in\ompa^*(X)$  is a trivial fiber integral over $A$,
then $\Theta_h(\alpha)\in\ompa^*(X)$.
\end{lemma}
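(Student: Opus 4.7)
The plan is to exhibit $\Theta_h(\alpha)$ directly as a fiber integral $\fint_\Phi \nu$, with $\Phi$ a strongly continuous chain and $\nu$ a minimal form chosen to track exactly what $\Theta_h$ does. By hypothesis we may write $\alpha|_A = \fint_{\fib{F}} \mu$ for some compact oriented semi-algebraic manifold $F$ of dimension $l$ and some $\mu \in \omin^{k+l}(A \times F)$, where $k$ is the degree of $\alpha$. Set $Y := X \times A \times F$ and $f := \proj_1 \colon Y \to X$, and choose a smooth semi-algebraic stratification of $X$ in which $A$ is a union of strata. On each $A$-stratum $S$ take $F_S := I \times F$ and $g_S(x,t,y) := (x, h(x,t), y)$, which lands in $Y$ since $\overline{S} \subset A$ and $h(A \times I) \subset A$; on each stratum $S \subset X \setminus A$ take $F_S := \emptyset$. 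Let $\Phi \in \Chs_{l+1}(Y \to X)$ be the resulting strongly continuous chain and set $\nu := p_{23}^* \mu \in \omin^{k+l}(Y)$, where $p_{23} \colon X \times A \times F \to A \times F$ is the projection onto the last two factors. The claim is $\Theta_h(\alpha) = \fint_\Phi \nu$.

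The verification has two parts. First, one checks that $\Phi$ indeed satisfies \refD{scc}: the only points at which competing trivializations can meet lie in $\partial A := A \cap \overline{X \setminus A}$; at such an $x$, the hypothesis $h(x, t) = x$ forces $g_S(x, t, y) = (x, x, y)$, so the $(l{+}1)$-push $g_{S*}(\bbr{\{x\} \times I \times F})$ vanishes by dimensional collapse, matching the zero coming from the empty fiber on the out-stratum side. Second, one checks $\fint_\Phi \nu = \Theta_h(\alpha)$ on an arbitrary $\gamma \in \Ch_{k-1}(X)$: refine the stratification so that $\gamma = \gamma_A + \gamma_{\mathrm{out}}$ with $\gamma_A$ and $\gamma_{\mathrm{out}}$ supported in the $(k{-}1)$-strata lying in $A$, respectively in $X \setminus A$. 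The out-part contributes zero on both sides: on the $\fint_\Phi \nu$ side because the out-fibers are empty, and on the $\Theta_h(\alpha)$ side because $h|_{\overline{X \setminus A} \times I}$ is projection to $X$, so $h_*(\gamma_{\mathrm{out}} \times \bbr{I}) = 0$ as a $k$-current. On the $A$-part the identity $p_{23} \circ g_S = (h \times \id_F)|_{\overline{S} \times I \times F}$ gives
\[
\langle \fint_\Phi \nu, \gamma_A\rangle = \langle (h \times \id_F)^*\mu,\, \gamma_A \times \bbr{I} \times \bbr{F}\rangle = \langle \mu,\, h_*(\gamma_A \times \bbr{I}) \times \bbr{F}\rangle = \langle \alpha|_A,\, h_*(\gamma_A \times \bbr{I})\rangle,
\]
which equals $\langle \Theta_h(\alpha), \gamma_A\rangle$ since $h_*(\gamma_A \times \bbr{I})$ is supported in $A$.

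The main obstacle is an extension problem that dictates the choice of $Y$. The minimal form $\mu$ lives only over $A \times F$, and semi-algebraic functions on a closed semi-algebraic subset need not extend semi-algebraically to the ambient space, so the naive idea of setting $Y = X \times F$ with $\nu = (h \times \id_F)^*\mu$ does not work. Moreover, $Y = X \times F$ would also violate the commuting condition of \refD{scc}, because the support of $\Phi(x)$ must sweep along the trajectory $t \mapsto h(x, t)$, which generically leaves the fiber $\{x\} \times F$. The choice $Y = X \times A \times F$ resolves both issues simultaneously: the $A$-factor legitimises $p_{23}^* \mu$, the $X$-factor preserves the $f \circ g = \proj_1$ condition needed for \refD{scc}, and the hypothesis that $h$ is stationary on $\overline{X \setminus A}$ is precisely what makes $\Phi$ a strongly continuous chain across $\partial A$.
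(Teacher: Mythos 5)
Your argument is correct, and it takes a genuinely different route from the paper's. The paper retains the global representation $\alpha = \fint_\Phi \mu$ with $\Phi \in \Chs_l(Y\to X)$, replaces $Y$ by a pushout $\tilde Y$ of $A\times F \leftarrow A_0\times F \to Y_0$, lifts the homotopy $h$ to $\tilde h\colon \tilde Y\times I \to \tilde Y$, and exhibits $\Theta_h(\alpha)$ as $\fint_{\tilde\Phi\times\bbr{I}}\tilde h^*(q^*\mu)$ — so the $I$-factor enters as an external factor $\tilde\Phi\times\bbr{I}$ on a continuous chain of unchanged fiber dimension $l$. You instead throw away the original $Y$ entirely, work only from the local data $\alpha|_A = \fint_{\fib F}\mu$ with $\mu\in\omin^{k+l}(A\times F)$, and build a fresh total space $X\times A\times F$ and a continuous chain of fiber dimension $l+1$ whose trivializing maps $(x,t,y)\mapsto (x,h(x,t),y)$ carry the homotopy internally; the consistency across $\partial A$ is then secured by the dimensional collapse forced by $h(x,t)=x$ there, replacing the pushout gluing. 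Both constructions circumvent the same obstruction (non-extendability of $\mu$ beyond $A\times F$), the paper by keeping $\mu$ and modifying the total space so that $q^*\mu$ makes sense globally, you by enlarging the total space to a product where $p_{23}^*\mu$ is automatically defined. Your construction is more economical — it needs only the restriction $\alpha|_A$ rather than the full global representation, avoids the pushout, and makes the formula $\Theta_h(\alpha)=\fint_\Phi p_{23}^*\mu$ completely explicit — at the modest cost of a slightly larger fiber dimension and the extra (easy) verification that $\Theta_h(\alpha)$ indeed depends only on $\alpha|_A$, which you supply via the vanishing on $\gamma_{\mathrm{out}}$. One small point worth making explicit in a final write-up: the decomposition $\gamma=\gamma_A+\gamma_{\mathrm{out}}$ uses $\Ch_*(X)=\Ch_*(A)+\Ch_*(\overline{X\setminus A})$, obtained by triangulating compatibly with $\{A,\overline{X\setminus A},\spt(\gamma)\}$, which is the same step the paper invokes.
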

\begin{proof}
The general idea of the proof is that on the one hand
 $\Theta_h(\alpha|_{\overline{X\setminus A}})=0$ because the homotopy
  is trivial over $\overline{X\setminus A}$, and that
on the other hand  $\alpha|_A=\int_{\fib{F}}\mu$ so that $\Theta_h(\alpha|_A)=\int_{\fib{F\times I}}(h\times\id_F)^*\mu$. So
$\Theta_h(\alpha)$ can be obtained by gluing two PA forms. 

In more detail, set $X_0=\overline{X\setminus A}$ and $A_0=A\cap X_0$, so that $X$ is the pushout of $X_0$ and
$A$ over $A_0$.
We have $\alpha=\int_\Phi\mu$ where
 $\Phi\in\Chs_l(Y\stackrel f\to X)$,   $\mu\in\omin^{k+l}(Y)$,
 and $A$ is the closure of a stratum of a trivialization of $\Phi$. Thus
there exists a map $g\colon A\times F\to Y$ such that $F$ is a compact oriented manifold of
dimension $l$, $fg=\proj_1$, and $g_*(\bbr{\{a\}\times F})=\Phi(a)$ for $a\in A$.
Set $Y_0=f^{-1}(X_0)$. The map $g$
 restricts to a map $g_0\colon A_0\times F\to Y_0$.
Consider the pushout diagram
\begin{equation}
\label{E:poYtilde}
\xymatrix{
A_0\times F
\ar[r]^{g_0}
\ar@{^(->}[d]
\ar@{}[dr]|{\textrm{pushout}}
&
Y_0\ar[d]^{\tilde j}\\
A\times F\ar[r]_{\tilde g}&\tilde Y
}
\end{equation}
and the map $q\colon\tilde Y\to Y$ induced by $g\colon A\times F\to Y$
and $Y_0\hookrightarrow Y$.

It is easy to construct a continuous chain
$\tilde\Phi\in\Chs_l(\tilde Y\stackrel{fq}\to X)$
such that $q_*(\tilde\Phi(x))=\Phi(x)$ for $x\in X$, and hence
$\fint_\Phi\mu=\fint_{\tilde\Phi}q^*(\mu)$.

Now define a homotopy $\tilde h\colon\tilde Y\times I\to \tilde Y$
 using maps $\tilde g$ and  $\tilde j$ from diagram \refN{E:poYtilde} by
\[
\left\{\begin{aligned}
\tilde h(\tilde g(a,f),t)&=\tilde g(h(a,t),f)&&\textrm{for $a\in A$, $f\in F$ and $t\in I$}\\
\tilde h(\tilde j(y),t)&=\tilde j(y)&&\textrm{for $y\in Y_0$ and $t\in I$}
\end{aligned}
\right.
\]
That $\tilde h$ is well-defined is a consequence of the support of $h$ on $A$.

We will show that
\begin{equation}
\label{E:thPA}
\Theta_h(\fint_\Phi\mu)=\fint_{\tilde\Phi\times\bbr{I}}\tilde h^*(q^*\mu)
\end{equation}
which implies the statement of the lemma.

When restricted to $X_0$, the right side of equation \eqref{E:thPA}
is
\[\fint_{(\tilde\Phi|_{X_0})\times\bbr{I}}(\tilde h|_{Y_0\times I}^*q^*)\mu=
\fint_{(\tilde\Phi|_{X_0})\times\bbr{I}}\proj_1^*q^*\mu\]
which is $0$ because the projection $\proj_1$ decreases the  dimension of the chains,
and for the left side we have $\Theta_{h}|_{X_0\times I}=0$
because the homotopy is constant over $X_0$.

On the other hand, for $\gamma\in \Ch_{k-1}(A)$, we have
\begin{eqnarray*}
\langle\Theta_h(\fint_\Phi\mu),\gamma\rangle&=&
\langle\fint_{\Phi|_A}\left(\mu|_{f^{-1}(A)}\right)\,,\,h_*(\gamma\times\bbr{I})\rangle\\
&=&
\langle\fint_{\hat F}g^*(\mu)\,,\,h_*(\gamma\times\bbr{I})\rangle\\
&=&
\langle g^*\mu\,,\,h_*(\gamma\times\bbr{I})\times\bbr{F}\rangle\\
&=&
\langle g^*\mu\,,\,(h\times\id_F)_*(\gamma\times\bbr{I\times F})\rangle\\
&=&
\langle (h\times\id_F)^*(g^*\mu)\,,\,\gamma\times\bbr{I\times F}\rangle\\
&=&
\langle\fint_{\tilde\Phi\times\bbr{I}}\tilde h^*(q^*\mu)\,,\,\gamma\rangle
\end{eqnarray*}
and this proves equation \eqref{E:thPA} when restricted to $A$.

Since $\Ch_*(X)=\Ch_*(A)+\Ch_*(X_0)$, equation \eqref{E:thPA} holds on $\Ch_*(X)$ and
 the lemma is proved.
\end{proof}

We come  to the proof of the Poincar\'e Lemma.
\begin{proof}[Proof of \refL{Poincare}]
Since $X$ is collapsible, it is semi-algebraically path-connected and the only degree $0$ $\delta$-cocycles are the
constant functions. Therefore $\ompa^0(X)\cap\ker\delta\cong\BR$.

Let $\alpha\in\ompa^k(X)\cap\ker\delta$ with $k\geq1$. We will show that
$\alpha\in\delta(\ompa^{k-1}(X))$.
We have $\alpha=\fint_\Phi\mu$ for some $\Phi\in\Chs_l(Y\to X)$ and $\mu\in\omin^{k+l}(Y)$.
Take a triangulation of $X$ trivializing $\Phi$. By the semi-algebraic Hauptvermutung (\refT{Hauptvermutung} and \refC{collapsibility}),
the polyhedron associated to that triangulation is also collapsible, and it thus admits a subdivided  triangulation that simplicially collapses to some vertex $*$.
This means that the triangulation consists of closed simplices $\tau_1, \cdots,\tau_N$
such that if we set $T_p=\cup_{i=p}^N\tau_i$ for $1\leq p\leq N$ and $T_{N+1}=*$,
then there exist semi-algebraic homotopies
$h_{p}\colon T_p\times I\to T_p$ supported on $\tau_p$
such that $h_{p}(-,0)=\id_{T_p}$ and $h_{p}(T_p\times\{1\})\subset T_{p+1}$.
Since $\alpha|_{T_p}$ is a trivial fiber integral over $\tau_p$,
\refL{Hsimple} implies that $\Theta_{h_{p}}(\alpha|_{T_p})\in\ompa^*(T_p)$.
Define $g_p\colon T_p\to T_{p+1}$ by $g_p(x)=h_{p}(x,1)$ and equation \eqref{E:hmtpy} then implies that
$\delta\Theta_{h_{p}}(\alpha|_{T_p})=g^*_p(\alpha|_{T_{p+1}})-\alpha|_{T_p}$
for $1\leq p\leq N$.

Set
\[
\beta=\Theta_{h_{1}}(\alpha|_{T_1})+
g_1^*(\Theta_{h_{2}}(\alpha|_{T_2}))+
g_1^*g_2^*(\Theta_{h_{3}}(\alpha|_{T_3}))+
\cdots+
g_1^*g_2^*\cdots g_{N-1}^*(\Theta_{h_{N}}(\alpha|_{T_{N}})).
\]
This is an elements of $\ompa^{k-1}(X)$.
Most of the terms of $\delta\beta$ cancel in pairs and we are left with
\[\delta\beta=-\alpha+g_1^*g_2^*\cdots g_{N-1}^*g_N^*(\alpha|_{T_{N+1}}).\]
The second term in this sum is $0$ because $T_{N+1}=*$ and $\alpha$ is of positive degree.
This proves that $\alpha=\delta(-\beta)$ is a coboundary in $\ompa^*(X)$.
\end{proof}


\subsection{Sheaf propery of $\ompa^*$}


\begin{definition}
\label{D:exc}
An \emph{excisive  semi-algebraic pair} is a pair $\{X_1,X_2\}$ of semi-algebraic sets in
the same $\BR^m$ such that there exist semi-algebraic functions $\rho_i\colon X_1\cup X_2\to[0,1]$
with $X_i\supset\rho_i^{-1}((0,1])$ for $i=1,2$ and $\rho_1+\rho_2=1$.
\end{definition}
\begin{lemma}
\label{L:MVompa}
Let $\{X_1,X_2\}$ be an excisive semi-algebraic pair. There exists an exact sequence
\begin{equation}
\label{E:MVompa}
0\longrightarrow
\ompa^k(X_1\cup X_2)\stackrel{r}\longrightarrow
\ompa^k(X_1)\oplus\ompa^k(X_2)\stackrel{\Delta}\longrightarrow
\ompa^k(X_1\cap X_2)\longrightarrow
0
\end{equation}
with $r(\alpha)=(\alpha|_{X_1},\alpha|_{X_2})$ and $\Delta(\alpha_1,\alpha_2)=
\alpha_1|_{X_1\cap X_2}-\alpha_2|_{X_1\cap X_2}$.
\end{lemma}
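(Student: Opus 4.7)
The short exact sequence will be established in four pieces: injectivity of $r$, vanishing of $\Delta\circ r$, exactness at the middle, and surjectivity of $\Delta$. The key technical ingredient I would isolate first is an \textbf{extension lemma}: given a semi-algebraic inclusion $A\subset X$, a semi-algebraic function $\rho\colon X\to[0,1]$ that vanishes off $A$ (i.e., $A\supset\rho^{-1}((0,1])$), and a PA form $\beta\in\ompa^k(A)$, the cochain on $X$ that restricts to $\rho\cdot\beta$ on $A$ and to zero on chains whose support lies in $X\setminus A$ is itself a PA form. Once this lemma is available, the rest is routine.

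For injectivity of $r$, given $\gamma\in\Ch_k(X_1\cup X_2)$, I would take a semi-algebraic (semi-open) triangulation of a neighborhood of $\spt(\gamma)$ refining the partition $\{X_1\setminus X_2,\,X_1\cap X_2,\,X_2\setminus X_1\}$. This yields a decomposition $\gamma=\gamma_1+\gamma_2$ with $\gamma_i\in\Ch_k(X_i)$, so if $r(\alpha)=0$ then $\langle\alpha,\gamma\rangle=\langle\alpha|_{X_1},\gamma_1\rangle+\langle\alpha|_{X_2},\gamma_2\rangle=0$. The identity $\Delta\circ r=0$ is immediate. For exactness at the middle, given $(\alpha_1,\alpha_2)$ with $\alpha_1|_{X_1\cap X_2}=\alpha_2|_{X_1\cap X_2}=:\beta$, apply the extension lemma with $A=X_i$, $X=X_1\cup X_2$, $\rho=\rho_i$ to produce $\widetilde{\rho_i\alpha_i}\in\ompa^k(X_1\cup X_2)$, and set $\alpha=\widetilde{\rho_1\alpha_1}+\widetilde{\rho_2\alpha_2}$. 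Using $\rho_i=0$ off $X_i$ and $\rho_1+\rho_2=1$: on $X_1\setminus X_2$ we get $\alpha=1\cdot\alpha_1$, on $X_1\cap X_2$ we get $(\rho_1+\rho_2)\beta=\beta$, so $\alpha|_{X_1}=\alpha_1$, and symmetrically $\alpha|_{X_2}=\alpha_2$. For surjectivity of $\Delta$, given $\beta\in\ompa^k(X_1\cap X_2)$, apply the extension lemma with $A=X_1\cap X_2$, $X=X_i$, $\rho=\rho_{3-i}$ to get $\widetilde{\rho_2\beta}\in\ompa^k(X_1)$ and $\widetilde{\rho_1\beta}\in\ompa^k(X_2)$; then $\Delta(\widetilde{\rho_2\beta},-\widetilde{\rho_1\beta})|_{X_1\cap X_2}=(\rho_2+\rho_1)\beta=\beta$.

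The main obstacle is the extension lemma. My approach would be an explicit fiber-integral construction. Writing $\beta=\fint_{\Phi_\beta}\mu_\beta$ with $\Phi_\beta\in\Chs_l(Y_\beta\stackrel{f_\beta}{\to}A)$ and $\mu_\beta\in\omin^{k+l}(Y_\beta)$, form the semi-algebraic set
\[\tilde Y=\bigl\{(y,t)\in Y_\beta\times[0,1]:0\le t\le\rho(f_\beta(y))\bigr\},\]
equipped with a projection $\tilde f\colon\tilde Y\to X$, sending $(y,t)\mapsto f_\beta(y)$, after identifying the slice $\{t=0\}$ with $A\subset X$ via $f_\beta$. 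Define $\tilde\Phi\colon X\to\Ch_{l+1}(\tilde Y)$ by $\tilde\Phi(x)=\Phi_\beta(x)\times\bbr{[0,\rho(x)]}$ for $x\in A$ and $\tilde\Phi(x)=0$ otherwise. The crucial observation is that at any point $x\in X\setminus A$ (and at limit points in $\overline A\setminus A$) the interval $[0,\rho(x)]$ collapses to a point, so the chain $\tilde\Phi(x)$ vanishes automatically. I would build strong continuity by taking a stratification of $X$ simultaneously refining a trivialization $\{(S_\alpha,F_\alpha,g_\alpha)\}$ of $\Phi_\beta$ on $A$ and making $\rho$ semi-algebraic on each stratum; then the candidate trivializing map
\[\tilde g_\alpha\colon\overline{S_\alpha}\times(F_\alpha\times[0,1])\longrightarrow\tilde Y,\qquad(u,y,t)\longmapsto(g_\alpha(u,y),\,t\cdot\rho(u))\]
extends across the part of $\overline{S_\alpha}$ lying in $X\setminus A$ because on that part $\rho(u)=0$, which makes the second coordinate $0$ and lets us collapse the first coordinate via the quotient identification in $\tilde Y$. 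Finally I would take $\tilde\beta=\fint_{\tilde\Phi}(\tilde q^*\mu_\beta\wedge dt)$, where $\tilde q\colon\tilde Y\to Y_\beta$ is the tautological map on the $t>0$ part. By Fubini (\refL{Fubini}) applied in the fiber, evaluating $\tilde\beta$ on a chain $\gamma\in\Ch_k(A)$ yields $\langle\mu_\beta,(\rho\circ f_\beta)\cdot(\gamma\ltimes\Phi_\beta)\rangle=\langle\rho\beta,\gamma\rangle$, and chains supported off $A$ pair to zero since $\tilde\Phi$ vanishes there.

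The hardest step is making the collapse construction of $\tilde Y$ precise as a semi-algebraic set and verifying the strong continuity of $\tilde\Phi$ across the boundary $\partial A\subset\overline A$; in particular, one must check that the formula $\tilde g_\alpha(u,y,t)=(g_\alpha(u,y),t\rho(u))$ descends to a well-defined semi-algebraic map on $\overline{S_\alpha}\times F_\alpha\times[0,1]$ after the identification $\{t=0\}$, even when $g_\alpha$ has no natural extension to $u\in\overline{S_\alpha}\setminus A$. I expect this to go through cleanly precisely because the degeneracy of the fiber forced by $\rho(u)=0$ is compatible with the quotient defining $\tilde Y$, but writing the details carefully is the main work.
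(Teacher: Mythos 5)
Your overall strategy --- the four-part decomposition, reduction to a cut-off-and-extend lemma via the partition of unity, and a fiber-integral construction with a collapse degenerating the fiber where $\rho=0$ --- is the same strategy the paper uses. But your implementation of the extension lemma differs from the paper's and leaves two gaps that the paper's version is specifically built to avoid. First, the paper does not work with the given $\rho_i$: it shrinks the partition of unity and introduces auxiliary functions $\sigma_i$ with $\rho_i>0\Rightarrow\sigma_i=1$ and, crucially, $\overline{\sigma_i^{-1}((0,1])}\subset X_i$; it then scales the ambient coordinate, setting $\tilde Y_i=\{(p_i(y),\sigma_i(p_i(y))\,y):y\in Y_i\}\subset X\times\BR^N$ (a semi-algebraic set with no quotient to perform) and $\tilde\Phi_i(x)=q_{i*}\Phi_i(x)$, or $0$ on $\{\sigma_i=0\}$. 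The inclusion $\overline{\sigma_i^{-1}((0,1])}\subset X_i$ is exactly what makes $\tilde\Phi_i$ strongly continuous over $X_1\cup X_2$: the closure in $X$ of every stratum on which $\tilde\Phi_i$ can be nonzero stays inside $X_i$, so the trivializing maps $g_\alpha$ are never evaluated outside their domain. Your $\tilde g_\alpha(u,y,t)=(g_\alpha(u,y),\,t\rho(u))$ requires a value of $g_\alpha$ at $u\in\overline{S_\alpha}\setminus A$ where it is undefined; saying that $t\rho(u)=0$ ``lets us collapse the first coordinate'' is not yet a proof. After realizing your quotient (say as $\{(f_\beta(y),\,ty,\,t)\}\subset X\times\BR^N\times[0,1]$), the ambient coordinate becomes $t\rho(u)\,g_\alpha(u,y)$, and continuity at $u\in\overline{S_\alpha}\setminus A$ needs $\rho(u_n)\,g_\alpha(u_n,y)\to 0$, which requires a boundedness of $g_\alpha$ near $\overline{S_\alpha}\setminus A$ that is not available in general.

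Second, and more seriously, $\tilde q^*\mu_\beta\wedge dt$ is not a minimal form on $\tilde Y$, so $\fint_{\tilde\Phi}(\tilde q^*\mu_\beta\wedge dt)$ is not a priori a PA form. You define $\tilde q$ only on the $\{t>0\}$ part; in the embedded model $\tilde q(x,v,t)=v/t$, and if $\mu_\beta=\sum\lambda(h_0;h_1,\dots,h_{k+l})$ then the composites $h_j\circ\tilde q$ admit no continuous semi-algebraic extension to the collapsed slice $\{t=0\}$ (distinct fiber points $y,y'$ are identified there while $h_j(y)\ne h_j(y')$). The paper sidesteps this by constructing a minimal form $\tilde\mu_i$ directly on $\tilde Y_i$ satisfying $q_i^*\tilde\mu_i=(\rho_i\circ p_i)\,\mu_i$: the cut-off $\rho_i$, which vanishes wherever $q_i$ collapses a fiber, kills the coefficient functions there, so one never pulls $\mu_i$ back through the collapse map. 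A smaller, related point: in your injectivity argument a triangulation refining $\{X_1\setminus X_2,\,X_1\cap X_2,\,X_2\setminus X_1\}$ need not give $\gamma=\gamma_1+\gamma_2$ with $\gamma_i\in\Ch_k(X_i)$, since the closure of an open simplex lying in $X_1\cap X_2$ can escape $X_1$; the paper instead uses the closed cover of $X$ by $\rho_1^{-1}([1/4,1])\subset X_1$ and $\rho_2^{-1}([1/4,1])\subset X_2$, from which $\Ch_k(X)=\Ch_k(X_1)+\Ch_k(X_2)$ is immediate.
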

\begin{proof}
Set $X=X_1\cup X_2$.
It is clear that $\Delta r=0$.
It is also not difficult to construct, from the given partition of unity,  another one that we will also denote by  $\{\rho_1,\rho_2\}$
with smaller support and such that there exist semi-algebraic functions $\sigma_i\colon X\to[0,1]$
with $\rho_i(x)>0\Longrightarrow\sigma_i(x)=1$ and $\overline{\sigma_i^{-1}((0,1])}\subset X_i$.
This can be done by taking a semialgebraic function $\phi\colon[0,1]\to[0,1]$ such that $\phi([0,1/4])=\{0\}$  and
$\phi(1-t)=1-\phi(t)$, and composing the given
$\rho_i$ with $\phi$ to get the new partition of unity. This leaves enough room in $X_i\cap\rho_i^{-1}(0)$
to build the desired functions $\sigma_i$. For the rest of the proof we will assume that we are dealing with this better partition of unity.

Notice that $X$ is the union of the closed sets $\rho_i^{-1}([1/4,1])$, $i=1,2$.
Since $\rho_i^{-1}([1/4,1])\subset X_i$ we deduce that
$\Ch_k(X)=\Ch_k(X_1)+\Ch_k(X_2)$.  Hence $r$ is injective.

To prove the surjectivity of $\Delta$, let $\alpha\in\ompa^k(X_1\cap X_2)$
and let $j_i\colon X_1\cap X_2\hookrightarrow X_i$ be the inclusion.
Then $\alpha=\Delta(j_1^*(\rho_2)\alpha,-j_2^*(\rho_1)\alpha))$.

It remains to prove that $\ker\Delta\subset\im r$.
Let $\alpha_i\in\ompa^k(X_i)$ for $i=1,2$ be such that $\Delta(\alpha_1,\alpha_2)=0$.
Let $\Phi_i\colon\Chs_l(Y_i\stackrel{p_i}\to X_i)$ and $\mu_i\in\omin^{k+l}(Y_i)$ be such that
$\alpha_i=\fint_{\Phi_i}\mu_i$ for $i=1,2$. There is no loss of generality in assuming
that $l$ is positive and independent of $i$ as seen in the proof of \refP{ompasbgrp}. We can also assume that
$Y_i\subset\BR^N$.

Consider the following semi-algebraic sets and maps
\begin{align*}
&\tilde Y_i=\{(p_i(y),\sigma_i(p_i(y)) y):y\in Y_i\}\subset X\times \BR^N,\\
&\tilde p_i\colon \tilde Y_i\to X\ \ \text{given by}\ \ (x,v)\mapsto x,\\
&q_i\colon Y_i\to \tilde Y_i\ \ \text{given by}\ \ y\mapsto (p_i(y),\sigma_i(p_i(y)) y).
\end{align*}
Notice that the restriction of $q_i$ to $p_i^{-1}(\sigma_i^{-1}((0,1])$ is a
homeomorphism onto its image and that $q_i$ maps the entire fiber of $p_i$ over a point
$x\in\sigma_i^{-1}(0)$ to a single point $\{(x,0)\}$.

Define $\tilde\Phi_i$ by \zz{typo corrected in equation below}
\[
\tilde\Phi_i(x)=\left\{
\begin{aligned}
&q_{i*}(\Phi_i(x))&&\quad\textrm{if $x\in X_i$}\\
&0&&\quad\textrm{if $\sigma_i(x)=0$}.
\end{aligned}\right.
\]
This defines a continuous chain
$\tilde\Phi_i\in \Chs_l(\tilde Y_i\to X)$.
It is also easy to build minimal forms $\tilde\mu_i\in \omin^{k+l}(\tilde Y_i)$
such that $q^*_i(\tilde\mu_i)=p^*_i(\rho_i)\mu_i$ in $\omin^{k+l}(Y_i)$.

Set\zz{typo corrected, see typo referee (26)}%
  $\tilde Y=\tilde Y_1\amalg\tilde Y_2$,
 $\tilde p=(\tilde p_1,\tilde p_2)\colon \tilde Y\to X$,
 $\tilde\Phi=\tilde\Phi_1+\tilde\Phi_2\in\Chs_l(\tilde Y\to X)$,
 and $\tilde\mu=\tilde\mu_1+\tilde\mu_2\in\omin^{k+l}(\tilde Y)$.
 This determines a PA form $\alpha=\fint_{\tilde\Phi}\tilde\mu\in\ompa^k(X)$
 and one can easily check that $\alpha|_{X_i}=\alpha_i$,
 since\zz{added to answer refree typo (23)} the
 left side of this equation can be identified (with abuse of notation)
 with $(\alpha_1\cdot\rho_1|_{X_i}+\alpha_2\cdot\rho_2|_{X_i})$.
\end{proof}


\subsection{Extendability of the simplicial set $\ompa(\Delta^\bullet)$}


\begin{definition}[\cite{FHT:RHT}, page 118]
\label{D:ext}
A simplicial set $A_\bullet$  is \emph{extendable}
if, for each $n\geq 0$ and for each subset $I\subset\{0,\cdots,n\}$,
given a collection $\beta_i\in A_{n-1}$ for $i\in I$
such that $\partial_i\beta_j=\partial_{j-1}\beta_i$ for each $i<j$ in $I$,
there exists $\beta\in A_n$ such that $\beta_i=\partial_i(\beta)$
for all $i\in I$.
\end{definition}

The goal in this section is to prove the extendability of the simplicial vector spaces
$\ompa^k(\Delta^\bullet)$.

Fix an integer $n$. Consider the ``corner''
$$
X=\{x=(x_1,\dots,x_n)\in\BR^n:x_i\geq0\textrm{ and }\sum_{i=1}^n x_i\leq 1\}
$$
whose codimension $1$ faces containing the origin are given by
$$
\partial_iX=\{x\in X:x_i=0\}\quad\textrm{ for } 1\leq i\leq n.
$$
Notice that $\partial_iX$ consists of all the faces of $X$ except the ``diagonal face''
$\{\sum x_i=1\}$.
The space $X$ is homeomorphic to the simplex $\Delta^n$. We next show that a family of compatible PA forms
given on the non-diagonal faces can be extended into a PA form on $X$.
\begin{lemma}
\label{L:extX}
Let $\alpha_i\in\ompa(\partial_i X)$ for $1\leq i\leq n$. If
$\alpha_i|_{\partial_i X\cap \partial_j X}=\alpha_j|_{\partial_i X\cap \partial_j X}$
for all $1\leq i,j\leq n$ then there exists $\alpha\in\ompa(X)$ such that
$\alpha|_{\partial_i X}=\alpha_i$ for all  $1 \leq i\leq n$.
\end{lemma}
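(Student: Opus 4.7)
The plan is to induct on the number of faces to be matched, using the semi-algebraic linear retractions
\[
r_i\colon X\longrightarrow\partial_i X,\qquad(x_1,\dots,x_n)\longmapsto(x_1,\dots,x_{i-1},0,x_{i+1},\dots,x_n).
\]
Each $r_i$ is well defined because zeroing out a non-negative coordinate preserves the inequality $\sum x_j\leq 1$, and each restricts to the identity on $\partial_i X$. Since pullback along a semi-algebraic map sends PA forms to PA forms (as noted just before \refP{PAdegdim0}), there are induced maps $r_i^*\colon\ompa^*(\partial_i X)\to\ompa^*(X)$.

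First I would construct inductively a sequence $\alpha^{(0)},\dots,\alpha^{(n)}\in\ompa^*(X)$ satisfying $\alpha^{(k)}|_{\partial_i X}=\alpha_i$ for every $i\leq k$, starting with $\alpha^{(0)}=0$ and taking the desired extension to be $\alpha=\alpha^{(n)}$. At the inductive step, I define the ``error on the next face'' by
\[
\beta_{k+1}=\alpha_{k+1}-\alpha^{(k)}|_{\partial_{k+1}X}\,\in\,\ompa^*(\partial_{k+1}X),
\]
transport it to $X$ via the retraction $r_{k+1}$, and set
\[
\alpha^{(k+1)}=\alpha^{(k)}+r_{k+1}^*\bigl(\beta_{k+1}\bigr).
\]
Because $r_{k+1}$ restricts to the identity on $\partial_{k+1}X$, one gets $\alpha^{(k+1)}|_{\partial_{k+1}X}=\alpha^{(k)}|_{\partial_{k+1}X}+\beta_{k+1}=\alpha_{k+1}$ at once.

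The main point to verify is that the correction $r_{k+1}^*(\beta_{k+1})$ does not spoil the faces already matched, i.e.\ that $r_{k+1}^*(\beta_{k+1})|_{\partial_i X}=0$ for each $i\leq k$. The key observation is that $r_{k+1}$ carries $\partial_i X$ into the corner $\partial_i X\cap\partial_{k+1}X$, so for any $\gamma\in\Ch_*(\partial_i X)$ the pushforward $(r_{k+1})_*\gamma$ lies in $\Ch_*(\partial_{k+1}X\cap\partial_i X)$; consequently the pairing $\langle r_{k+1}^*(\beta_{k+1}),\gamma\rangle=\langle\beta_{k+1},(r_{k+1})_*\gamma\rangle$ vanishes provided $\beta_{k+1}|_{\partial_{k+1}X\cap\partial_i X}=0$. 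This last equality follows by combining the hypothesis $\alpha_{k+1}|_{\partial_{k+1}X\cap\partial_i X}=\alpha_i|_{\partial_{k+1}X\cap\partial_i X}$ of the lemma with the inductive assumption $\alpha^{(k)}|_{\partial_i X}=\alpha_i$. There is no genuine obstacle here; the one delicate bit is this bookkeeping of restrictions and pushforwards, which reduces the whole extension problem to exactly the given compatibility condition.
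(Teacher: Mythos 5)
Your proof is correct, and it rests on the same geometric ingredient as the paper's: the coordinate retractions onto the faces (your $r_i$ is the paper's $\proj_{\{i\}}$, and the iterated composites $r_i\circ r_j$ that appear implicitly in your induction coincide with the $\proj_I$). The difference is purely one of presentation: the paper writes down a closed inclusion--exclusion formula
$\alpha=\sum_{\emptyset\neq I\subset\nbar}(-1)^{|I|-1}\proj_I^*\,j_{\{i\}}^{I*}(\alpha_i)$
and verifies directly that its restriction to each $\partial_i X$ collapses to $\alpha_i$, whereas you build the same object one face at a time, correcting the error on each new face by pulling it back along the corresponding retraction and checking that the correction dies on the faces already matched (because $r_{k+1}$ maps $\partial_i X$ into the corner $\partial_i X\cap\partial_{k+1}X$, where the error vanishes by the compatibility hypothesis plus the inductive invariant). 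Unrolling your induction reproduces the paper's signed sum; conversely, your version has the small advantage of not requiring the reader to verify an inclusion--exclusion identity, at the cost of carrying an inductive invariant. Both arguments use only that pullback along a semi-algebraic map preserves PA forms, which is established just before the lemma, so there is no gap in either.
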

\begin{proof}
For $I\subset\nbar$, define $\partial_IX=\cap_{i\in I}\partial_i X$.  In particular $\partial_\emptyset X=X$, and define
$\proj_I\colon X\to\partial_IX$
by $\proj_I(x_1,...,x_n)=(x'_1,...,x'_n)$ with $x'_i=0$ if $i\in I$
and $x'_i=x_i$ otherwise.
For $J\subset I$ consider also the inclusion maps $j_J^I\colon\partial_IX\to\partial_JX$.

Define
\[\alpha=\sum_{\emptyset\not=I\subset\nbar}
(-1)^{|I|-1}\proj^*_Ij^{I*}_{\{i\}}(\alpha_i)\,\,\in\ompa^*(X)
\]
where $i$ is an arbitrary element of $I$. Using the hypothesis,
one can easily check that $\alpha$ is independent of the choices of $i\in I$.
Moreover
\begin{eqnarray*}
\alpha|_{\partial_iX}&=&j_\emptyset^{\{i\}*}\alpha\\
&=&j_\emptyset^{\{i\}*}\proj^{*}_{\{i\}}j_{\{i\}}^{\{i\}*}(\alpha_i)+
\sum_{\emptyset\not=J\subset(\nbar\setminus\{i\})}(-1)^{|J|-1}
\lbrace
j_\emptyset^{\{i\}*}\proj^{*}_{J}j_{\{j\}}^{J*}\alpha_j
\,-\,
j_\emptyset^{\{i\}*}\proj^{*}_{J\cup\{i\}}j_{\{j\}}^{J\cup\{i\}*}\alpha_j
\rbrace
\end{eqnarray*}
where 
$j$ is an arbitrary element in $J$.
Since the  diagram 
\[\xymatrix{
\partial_iX\ar[r]^{j_\emptyset^{\{i\}}}\ar[rd]_{j_\emptyset^{\{i\}}}&
X\ar[r]^{\proj_{J}}&
\partial_JX\ar[r]^{j_{\{i\}}^{J}}&
\partial_j X\\
&
X\ar[r]_-{\proj_{J\cup\{i\}}}&
\partial_{J\cup\{i\}}X\ar[ru]_{j_{\{i\}}^{J\cup\{i\}}}
}
\]
is commutative, it follows that all the terms in the brackets in the above sum vanish.
Therefore $\alpha|_{\partial_iX}=
j_\emptyset^{\{i\}*}\proj^{*}_{\{i\}}j_{\{i\}}^{\{i\}*}(\alpha_i)=\alpha_i$.
\end{proof}

\begin{lemma}
\label{L:extDelta}
The simplicial vector space $\ompa^k(\Delta^\bullet)$ is extendable.
\end{lemma}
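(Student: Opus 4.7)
\emph{Plan.} I would prove the lemma by induction on $n$, combining \refL{extX} with a Mayer--Vietoris ``radial extension across a singularity'' step to treat the top-dimensional case $I = \{0,\dots,n\}$. Fix $n$, assume extendability of $\ompa^k(\Delta^\bullet)$ in simplicial degrees $<n$, and suppose compatible forms $\beta_i \in \ompa^k(\partial_i\Delta^n)$ are given for $i \in I$.

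\textbf{Case $I \subsetneq \{0,\dots,n\}$.} Pick $i_0 \notin I$ and, after relabeling vertices of $\Delta^n$, arrange $i_0 = 0$. Identify $\Delta^n$ with the corner $X = \{x\in\BR^n : x_i\ge 0,\ \sum_i x_i \le 1\}$ so that $\partial_0\Delta^n$ corresponds to the diagonal face and $\partial_i\Delta^n$ to $\partial_iX$ for $1 \leq i \leq n$. \refL{extX} demands inputs on \emph{every} face $\partial_iX$, $i\ge 1$, so I first complete the data by enlarging $I$ one index at a time: each new entry $\beta_j$ must restrict to prescribed, mutually compatible forms on the codimension-$2$ faces $\partial_i\partial_j\Delta^n$, so producing such a $\beta_j$ is an instance of extendability for $\ompa^k(\Delta^{n-1})$, available by the outer inductive hypothesis. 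Once all non-diagonal faces are filled, \refL{extX} returns the desired $\beta$.

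\textbf{Case $I = \{0,\dots,n\}$.} Apply the previous case to $I' = \{1,\dots,n\}$ to obtain $\alpha \in \ompa^k(\Delta^n)$ with $\partial_i\alpha = \beta_i$ for $i \geq 1$. Set $\gamma := \beta_0 - \partial_0\alpha \in \ompa^k(\partial_0\Delta^n)$; the given compatibility conditions together with the simplicial identity $\partial_i\partial_j = \partial_{j-1}\partial_i$ force $\partial_k\gamma = 0$ for $0 \leq k \leq n-1$. It then suffices to construct $\tilde\gamma \in \ompa^k(\Delta^n)$ with $\partial_0\tilde\gamma = \gamma$ and $\partial_i\tilde\gamma = 0$ for $i \geq 1$, since $\beta := \alpha + \tilde\gamma$ will solve the original problem.

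To build $\tilde\gamma$, work in the corner picture and let $D$ denote the diagonal face. The semi-algebraic radial projection
\[
r \colon X \setminus \{0\} \longrightarrow D, \qquad x \longmapsto x/\textstyle\sum_i x_i,
\]
is the identity on $D$, and for $1 \leq i \leq n$ sends $\partial_iX \setminus \{0\}$ into $\partial_iX \cap D$, which under the simplicial identity coincides with $\partial_{i-1}D$ --- a face on which $\gamma$ vanishes. Choose a semi-algebraic bump $\phi \colon X \to [0,1]$ equal to $1$ on a neighborhood of $D$ and to $0$ on a semi-algebraic open ball $U$ around the origin; then $\phi\cdot r^*\gamma \in \ompa^k(X\setminus\{0\})$ agrees with $\gamma$ on $D$ and vanishes on each $\partial_iX$, $i \geq 1$. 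The main obstacle is to extend this expression across the singularity of $r$ at $0$ to a bona fide PA form on all of $X$; I would invoke the sheaf property \refL{MVompa} for the excisive pair $\{X\setminus\{0\},\,U\}$ (excisive via semi-algebraic bumps in $\|x\|$), gluing $\phi\cdot r^*\gamma$ on $X\setminus\{0\}$ with the zero form on $U$ --- they agree on $U\setminus\{0\}$ since $\phi\equiv 0$ on $U$ --- to produce the required $\tilde\gamma$.
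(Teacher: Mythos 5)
The proof is correct but takes a genuinely different route from the paper's for the crucial case $I = \{0,\dots,n\}$. The paper handles it by barycentric averaging: for each $p$, the homeomorphism $\pi_p$ carries $\partial_p\Delta^n$ to the diagonal face of the corner $X$, \refL{extX} produces $\alpha^p\in\ompa^k(X)$ matching the data on the non-diagonal faces, and $\beta := \sum_p \tau_p\,\pi_p^*(\alpha^p)$ works because each barycentric coordinate $\tau_p$ vanishes precisely where $\pi_p^*(\alpha^p)$ may disagree with $\beta_p$. You instead build a partial solution $\alpha$ matching $\beta_1,\dots,\beta_n$, observe that the residual discrepancy $\gamma = \beta_0 - \partial_0\alpha$ vanishes on every face of $\partial_0\Delta^n$, and then extend $\gamma$ inward via the cut-off radial cone $\phi\cdot r^*\gamma$, glued to zero across the singularity of $r$ at the apex using \refL{MVompa}. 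Both arguments rest on \refL{extX}; the paper's averaging is more self-contained (it never confronts the apex singularity and does not invoke the sheaf property), while your radial-cone correction is the more familiar cone-type mechanism and has the side benefit of making the paper's ``elementary induction'' reduction to the full index set explicit in your first case. Your index bookkeeping ($r(\partial_iX\setminus\{0\})\subset\partial_iX\cap D$, a face of $D$ on which $\gamma$ vanishes) and the excisiveness of the pair $\{X\setminus\{0\},U\}$ both check out, so the argument goes through.
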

\begin{proof}
By an elementary induction, it is enough to check the extendability condition of \refD{ext}
for $I=\{0,\cdots,n\}$.

Consider the standard $n$-simplex 
$\Delta^n=\{t=(t_0, \dots, t_n)\in \BR^{n+1}:t_p\geq0,\sum_{p=0}^n t_p=1\}$.
Its faces are   $\partial_p\Delta^n=\{t\in\Delta^n:t_p=0\}$ for $0\leq p\leq n$.
Suppose given $\beta_p\in\ompa^k(\partial_p\Delta^n)$ for $0\leq p\leq n$
such that
$\beta_p\vert_{\partial_p\Delta^n\cap\partial_q\Delta^n}
\,=\,\beta_q\vert_{\partial_p\Delta^n\cap\partial_q\Delta^n}$
for $0\leq p,q\leq n$.
We want to build
$\beta\in\ompa^k(\Delta^n)$ such that $\beta_p=\beta\vert_{\partial_p\Delta^n}$ for $0\leq p\leq n$.

For $0\leq p\leq n$ we have a semi-algebraic homeomorphism
\begin{align*}
\pi_p\colon\Delta^n & \stackrel{\cong}{\longrightarrow} X  \\
(t_0, \dots, t_n) & \longmapsto(x_1=t_0,\dots,x_p=t_{p-1},x_{p+1}=t_{p+1}\dots,x_n=t_n)
\end{align*}
which corresponds to the orthogonal projection of the simplex onto the hyperplane $t_p=0$.
This homeomorphism sends the faces of the simplex to the faces of $X$ with
\[
\pi_p(\partial_i\Delta^n)=\left\{
\begin{aligned}
\partial_{i+1}X&\quad\textrm{if }i<p\\
\partial_{i}X&\quad\textrm{if }i>p
\end{aligned}
\right.
\]
and $\pi_p(\partial_p\Delta^n)$ is the ``diagonal face'' of $X$.
We define PA forms $\alpha_i^p\in\ompa^l(\partial_iX)$ by
\[
\alpha_i^p=
\left\{
\begin{aligned}
(\pi^{-1}_p)^*(\beta_{i-1})&\quad\textrm{if }1\leq i<p\\
(\pi^{-1}_p)^*(\beta_{i})&\quad\textrm{if }p+1\leq i\leq n.
\end{aligned}
\right.
\]
By \refL{extX}, there exists $\alpha^p\in\ompa^l(X)$
such that $\alpha^p\vert\partial_iX=\alpha^p_i$.
We deduce that $\pi_p^*(\alpha^p)\vert\partial_i\Delta^n=\beta_i$ if $i\not=p$.
Define a map $\tau_p\colon\Delta^n\to\BR$ by $\tau_p(t_0,\cdots,t_n)=t_p$
and set
\[\beta=\sum_{p=0}^n\tau_p\cdot\pi^*_p(\alpha^p).\]
Since $\sum\tau_p=1$ and $\tau_p\vert\partial_p\Delta^n=0$
it follows that $\beta\vert\partial_p\Delta^n=\beta_p$.
\end{proof}


\subsection{The weak equivalence $\ompa^*\simeq\Apl$}
\label{S:weOmApl}


We are ready to prove our main theorem by following the scheme
of the proof of $\osmooth\simeq\Apl$ from \cite[\S 11(d)]{FHT:RHT}.

The geometric simplices
define a semi-algebraic cosimplicial space $\{\Delta^p\}_{p\geq 0}$.
Therefore we can consider the simplicial CDGA
$$\APA_\bullet=\{\ompa^*(\Delta^p)\}_{p\geq 0}.$$
This should be compared with the simplicial CDGA $(\Apl)_\bullet$ from \cite[\S 10(c)]{FHT:RHT} where
$\Apl_p$ consists of polynomial forms on the simplicial set $\Delta^p_\bullet$.
More precisely, this is the CDGA
\[\Apl_p=\bigwedge(t_0,\cdots,t_p,dt_0,\cdots,dt_p)
\left/
\left(1-\sum
\nolimits
_{i=0}^p t_i,\,\sum
\nolimits
_{i=0}^p dt_i\right)\right..\]

Since a polynomial is a semi-algebraic function, there is also an obvious inclusion
\[\Apl_\bullet\hookrightarrow\APA_\bullet\]
sending the polynomial $k$-form $\sum_{I=(i_1,\cdots,i_k)\subset[n]=\{0,\cdots,n\}}P_I(t_0,\cdots,t_n)dt_{i_1}\wedge\cdots\wedge dt_{i_k}$
to the minimal form  $\sum_{I}\lambda(P_I(t_0,\cdots,t_n);t_{i_1},\cdots, t_{i_k})$ for
$P_I\in\BR[t_0,\dots,t_n]$.

Recall that, as explained in \cite[\S 10 (b)]{FHT:RHT},
 to any such simplicial CDGA $A_\bullet$ one associates a contravariant functor
\begin{align*}
A(-)\colon\sSet & \longrightarrow\CDGA \\
S_\bullet & \longmapsto A(K_\bullet)=\hom_{\sSet}(K_\bullet,A_\bullet).
\end{align*}
An element of $A^k(K_\bullet)$ can be regarded as a family
$\{\Phi_\sigma\}_{\sigma\in K_\bullet}$
with $\Phi_\sigma \in A^k_{\deg(\sigma)}$, compatible with boundaries and degeneracies.
In particular, we have the contravariant functors $\Apl$ and $\APA$ from simplicial sets to CDGAs.

For a semi-algebraic set $X$, define the simplicial set of semi-algebraic singular simplices in $X$,
$$\Spa_\bullet(X)=\{\Spa_p(X)\}_{p\geq0}$$
where
$$\Spa_p(X)=\{\sigma\colon\Delta^p\to X\ |\ \sigma\textrm{ is a semi-algebraic map}\}.$$
We also have the classical  simplicial set of singular simplices in $X$,
$\Sing_\bullet(X)$,  as in
\cite[\S 10(a) and \S4 (a)]{FHT:RHT} (where it is denoted by $S_*(X)$.)
Since a semi-algebraic map $\sigma\colon\Delta^p\to X$ is continuous, we have a natural inclusion
\[\Spa_\bullet(X)\hookrightarrow \Sing_\bullet(X).\]

Our equivalence between $\ompa^*$ and $\Apl$ goes through the following natural zig-zag
\begin{equation}
\label{E:mainzz}
\xymatrix{
\ompa^*(X)\ar[r]^-{\alpha_X}&
\APA(\Spa_\bullet(X))&
\ar[l]_-{\beta_X}\Apl(\Spa_\bullet(X))&
\ar[l]_-{\gamma_X}\Apl(\Sing_\bullet(X))=:\Apl(X)
}
\end{equation}
where, for $\omega\in\ompa(X)$,
$\alpha_X(\omega)=\{\sigma^*(\omega)\}_{\{\sigma\in \Spa_\bullet(X)\}}$,
$\beta_X$ is induced by the inclusion $\Apl_\bullet\hookrightarrow\APA_\bullet$, and
$\gamma_X$ by the inclusion
$\Spa_\bullet(X)\hookrightarrow \Sing_\bullet(X)$.

The main idea for proving that these natural maps are weak equivalences is to use the following
``Eilenberg-Steenrod criterion''.
\begin{prop}
\label{P:critqi}
Let $A,B\colon\SA\to\Chains^*(\BR)$ be two contravariant functors with values in cochain complexes
and let $\theta\colon A\to B$ be a natural transformation between them.
Suppose that
\begin{enumerate}
\item If $X$ is empty or if  $X$ is semi-algebraically homeomorphic to a collapsible
polyhedron, then $\theta_X$ is a quasi-isomorphism;
\item If $\{X_1,X_2\}$ is an excisive semi-algebraic pair and if $\theta_{X_1}$,
$\theta_{X_2}$, and $\theta_{X_1\cap X_2}$ are quasi-isomorphisms, then  so is
$\theta_{X_1\cup X_2}$.
\end{enumerate}
Then $\theta_X$ is a quasi-isomorphism for every \emph{compact} semi-algebraic set $X$.
\end{prop}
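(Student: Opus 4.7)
The plan is to follow the classical Eilenberg-Steenrod scheme for comparing natural transformations that satisfy a Mayer-Vietoris property, in the spirit of the proof of the analogous rational-homotopy result in \cite[\S 10]{FHT:RHT}. Since every compact semi-algebraic set is semi-algebraically homeomorphic to a finite polyhedron $|K|$ by the triangulation results reviewed in \refS{Background}, and $\theta$ is natural, the problem reduces to showing $\theta_{|K|}$ is a quasi-isomorphism for every finite simplicial complex $K$. I would induct on the lexicographic pair $c(K)=(\dim K, m(K))$, where $m(K)$ is the number of top-dimensional simplices of $K$; the base cases ($K=\emptyset$ or a single vertex) are immediate from hypothesis~(1).

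For the inductive step, pick a top-dimensional simplex $\sigma\in K$ of dimension $d$. Using PL regular-neighbourhood theory, transported to the semi-algebraic category via the semi-algebraic Hauptvermutung (\refT{Hauptvermutung}), I would build an excisive pair $\{X_1,X_2\}$ covering $|K|$. Here $X_1$ is a closed regular neighbourhood of $\overline{\sigma}$ (for instance the closed star of the barycenter $\hat{\sigma}$ in a derived subdivision), $X_2$ is the closure of the complement of a slightly smaller such neighbourhood, and the partition of unity $\rho_1+\rho_2=1$ is built from a semi-algebraic distance function to $\hat{\sigma}$. A regular neighbourhood collapses onto its core, so $X_1$ is semi-algebraically homeomorphic to a collapsible polyhedron and (1) yields that $\theta_{X_1}$ is a quasi-isomorphism. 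PL regular-neighbourhood theory identifies $X_2$ with the subcomplex $|K\setminus\{\sigma\}|$, whose invariant is strictly smaller than $c(K)$, so the inductive hypothesis gives that $\theta_{X_2}$ is a quasi-isomorphism.

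The main obstacle is the intersection $X_1\cap X_2$, a collar semi-algebraically homeomorphic to $\partial\sigma\times I$, whose dimension still equals $d$ and whose underlying space deformation retracts to the sphere $\partial\sigma$ (not collapsible). I would resolve this via a nested sub-induction, covering $\partial\sigma\times I = \bigcup_{j=0}^{d}\overline{\tau_j}\times I$ by prisms over the $d+1$ codimension-$1$ faces $\tau_0,\ldots,\tau_d$ of $\sigma$. Each prism $\overline{\tau_j}\times I$ is PL, hence semi-algebraically, homeomorphic to a closed $d$-simplex (collapsible), so (1) applies. Pairwise intersections $(\overline{\tau_i}\cap\overline{\tau_j})\times I$ have dimension at most $d-1$, strictly less than $\dim K$, and so fall under the inductive hypothesis of the primary induction. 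Iterated application of hypothesis~(2), peeling off one prism at a time, yields that $\theta_{X_1\cap X_2}$ is a quasi-isomorphism; a final invocation of~(2) on $\{X_1,X_2\}$ then completes the induction step. The principal technical subtleties are ensuring that the PL regular neighbourhoods and identifications survive as semi-algebraic constructions (guaranteed by the semi-algebraic Hauptvermutung) and verifying that the nested sub-induction terminates correctly on the lower-dimensional intersections.
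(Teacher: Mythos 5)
There is a genuine gap in the primary induction: the step that replaces $K$ by the claimed smaller complex $X_2$ does not decrease the invariant $c=(\dim,m)$. For the pair $\{X_1,X_2\}$ to be excisive in the sense of \refD{exc}, the two pieces must overlap (a partition of unity cannot separate a simplex from its closed complement across $\partial\sigma$), which forces $X_2$ to contain a full collar of $\partial\sigma$ inside $\sigma$. Consequently $X_2$ is not semi-algebraically (or PL) homeomorphic to $|K\setminus\{\sigma\}|$; regular-neighbourhood theory only gives that $X_2$ \emph{collapses} onto that subcomplex, which is a homotopy statement and does not transport your invariant. In the simplest case — $K$ a single closed $d$-simplex, so $c(K)=(d,1)$ — your $X_2$ is $\partial\sigma\times[0,1]$, which still has dimension $d$ and whose minimal number of top simplices exceeds $1$, so $c(X_2)\geq c(K)$ and the induction stalls on its very first nontrivial case. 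The nested sub-induction on prisms inherits the same defect: after thickening the prisms to obtain excisive pairs, their pairwise intersections become codimension-$0$ neighbourhoods inside the $d$-dimensional collar, so they are again of dimension $d$, not $d-1$, and the dimensional descent you invoke for them fails.

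The paper avoids this bookkeeping entirely by a different decomposition. It takes the second barycentric subdivision and forms the second derived neighbourhoods $N(\sigma)$ of \emph{every} closed simplex $\sigma$ of $K$ (not only top-dimensional ones), using two key facts from PL theory: each $N(\sigma)$ collapses to $\sigma$ and hence to a point, and $N(\sigma)\cap N(\tau)=N(\sigma\cap\tau)$ with $N(\emptyset)=\emptyset$. It then inducts on the \emph{cardinality} of a set $\Sigma$ of simplices, setting $N(\Sigma)=\bigcup_{\sigma\in\Sigma}N(\sigma)$. The Mayer--Vietoris step peels off one simplex $\tau$, and the intersection $N(\Sigma')\cap N(\tau)=\bigcup_{\sigma\in\Sigma'}N(\sigma\cap\tau)=N(\Sigma'')$ is again a union of derived neighbourhoods with $|\Sigma''|<|\Sigma|$, so the induction closes without any dimensional accounting. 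If you want to repair your proposal you should replace the lexicographic $(\dim,m)$ induction by this cardinality induction on a simultaneous good cover whose intersections stay inside the same family — which is precisely what the second derived neighbourhood construction supplies.
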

\begin{proof}
As $X$ is  a compact semi-algebraic set, after triangulating we can assume that it is
a finite simplicial complex (by abuse of notation we will not distinguish
 a simplicial complex from its geometric realization).
Consider its second barycentric subdivision  $X''$. For each closed simplex $\sigma$ of $X$,
we consider its second derived neighborhood, $N(\sigma)$, as defined in \cite[page 50]{Hud:PLT}.
In more detail,  $N(\sigma)$  is constructed as follows: If $v$ is a vertex of $X''$ then its \emph{star} in $X''$ is the
smallest closed simplicial subcomplex of $X''$ that is a topological neighborhood of  $v$ in $X''$. In other words, the
 star of $v$  is obtained as the union of the closure of the simplices of $X''$ whose $v$ is a vertex.
Then $N(\sigma)$ is defined to be  the union of the stars of all the vertices of the
second subdivision of $\sigma$, so it is also the smallest closed subcomplex of $X''$ which is a neighborhood of $\sigma$ in $X$.
It is well known
\cite[Lemma 2.10, page 55]{Hud:PLT}
that $N(\sigma)$ collapses onto $\sigma$ and, since $\sigma$ collapses to a point, the second
derived neighborhood  $N(\sigma)$ is collapsible.

It is easy to see
 that if $\sigma$ and $\tau$ are two closed simplices of $X$ then
\[N(\sigma)\cap N(\tau)=N(\sigma\cap \tau),\]
with the convention that $N(\emptyset)=\emptyset$.

Let $\Sigma$ be a set of simplices of $X$ and set $N(\Sigma)=\cup_{\sigma\in \Sigma}N(\sigma)$.
 We prove by induction on the cardinality of $\Sigma$ that
$\theta_{N(\Sigma)}$ is a quasi-isomorphism.
If $\Sigma$ is empty, then so is $N(\Sigma)$ and by hypothesis $\theta_\emptyset$ is a quasi-isomorphism.
Suppose that  $\theta_{N(\Sigma')}$ is a quasi-isomorphism when $|\Sigma'|<k$ and let $\Sigma$
be of cardinality $k$. We write $\Sigma= \Sigma'\cup\{\tau\}$ with $|\Sigma'|<k$.
Then
\[N(\Sigma')\cap N(\tau)=\cup_{\sigma\in\Sigma'}N(\sigma\cap\tau)=N(\Sigma'')\]
for some $\Sigma''$ of cardinality $<k$. So by induction, $\theta_{N(\Sigma')\cap N(\tau)}$
is a quasi-isomorphism as well as  $\theta_{N(\Sigma')}$.
Also $\theta_{N(\tau)}$ is a quasi-isomorphism because $N(\tau)$ collapses to a point.
Finally it is easy to see that the pair $\{N(\Sigma'),N(\tau)\}$ is excisive.
By the ``Mayer-Vietoris condition'' in the hypotheses we deduce that $\theta_{N(\Sigma)}$
is quasi-isomorphism.

Taking for $\Sigma$ the set of all simplices of $X$, we have $N(\Sigma)=X$, and so $\theta_X$ is a quasi-isomorphism.
\end{proof}

For a simplicial set $S_\bullet$, we consider the cosimplicial vector space
$\hom(S_\bullet,\BR)$ and its normalized cochain complex which we denote by
 $\Norm^*(S_\bullet)$ (in \cite[\S 10 (d)] {FHT:RHT}, this is denoted by $\Coch^*(S_\bullet)$,
but we have already reserved this notation for semi-algebraic cochains).

\begin{lemma}
\label{L:prepqi}
Let $X$ be a semi-algebraic set and let $\{X_1,X_2\}$ be a pair of semi-algebraic subsets of $X$.
\begin{enumerate}
\item [(i)] There is  a short exact sequence
\[
0\to
\APA(\Spa_\bullet(X_1)\cup\Spa_\bullet(X_2))\stackrel{r}\to
\APA(\Spa_\bullet(X_1))\oplus\APA(\Spa_\bullet(X_2))\stackrel{\Delta}\to
\APA(\Spa_\bullet(X_1\cap X_2))\to 0.
\]
\item[(ii)] For any simplicial set $S_\bullet$ there is a zig-zag of natural quasi-isomorphisms  of cochain complexes
 \[\xymatrix{
 \APA(S_\bullet)\ar[r]^-{\simeq}&\cdots&
\ar[l]_-\simeq
\Norm^*(S_\bullet)}\]
\item[(iii)] If $\{X_1,X_2\}$ is semi-algebraically excisive, then
there is a natural quasi-isomorphism  of cochain complexes
\[
\APA(\Spa_\bullet(X_1\cup X_2))\stackrel{\simeq}{\longrightarrow}
\APA(\Spa_\bullet(X_1)\cup\Spa_\bullet(X_2)).
\]
\end{enumerate}
All of these properties remain true if $\APA$ is replaced by $\Apl$
and/or $\Spa_\bullet$ by $\Sing_\bullet$.
\end{lemma}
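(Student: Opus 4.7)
\emph{Plan of the proof.} Each of the three parts rests on the fact that $\APA(-) = \hom_{\sSet}(-, \APA_\bullet)$ converts colimits of simplicial sets into limits of cochain complexes, combined with the two key properties of $\APA_\bullet$ already established: extendability in each degree (\refL{extDelta}) and the acyclicity $\Ho^*(\APA_p) \cong \BR$ concentrated in degree zero, which holds by \refL{Poincare} because $\Delta^p$ is collapsible.

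For (i), the identity $\Spa_\bullet(X_1) \cap \Spa_\bullet(X_2) = \Spa_\bullet(X_1 \cap X_2)$ (a semi-algebraic singular simplex factors through $X_1 \cap X_2$ iff it factors through both $X_i$) makes $\Spa_\bullet(X_1) \cup \Spa_\bullet(X_2)$ the pushout in $\sSet$ of $\Spa_\bullet(X_1) \leftarrow \Spa_\bullet(X_1 \cap X_2) \to \Spa_\bullet(X_2)$. Applying $\hom(-, \APA_\bullet)$ turns this pushout into a pullback, yielding exactness at the first two positions and injectivity of $r$ automatically. The only remaining point, surjectivity of $\Delta$, reduces to surjectivity of each restriction $\APA(\Spa_\bullet(X_i)) \to \APA(\Spa_\bullet(X_1 \cap X_2))$; given a compatible family on the simplicial subset, lift it simplex by simplex in increasing dimension using the extendability of $\APA_\bullet^k$ provided by \refL{extDelta}.

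For (ii), I would reproduce the Sullivan--de Rham integration zig-zag from \cite[\S 10]{FHT:RHT}, systematically replacing $\Apl_\bullet$ by $\APA_\bullet$. The two abstract inputs required by that framework are precisely extendability and acyclicity of the simplicial CDGA, both of which we have. The strategy is to verify the quasi-isomorphism first on $S_\bullet = \Delta^n_\bullet$, where both ends compute $\BR$ by the two properties above, and then extend to arbitrary $S_\bullet$ by an acyclic-models argument that uses extendability to construct natural cochain maps in both directions of the zig-zag
\[\APA(S_\bullet) \longrightarrow \cdots \longleftarrow \Norm^*(S_\bullet).\]
The case of $\Apl$ is classical and is contained verbatim in \cite[\S 10]{FHT:RHT}.

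For (iii), applying (ii) reduces the question to showing that the inclusion $\Spa_\bullet(X_1) \cup \Spa_\bullet(X_2) \hookrightarrow \Spa_\bullet(X_1 \cup X_2)$ induces an isomorphism on the cohomology of $\Norm^*$. The plan is to adapt the classical small-simplices excision argument to the semi-algebraic setting. Given the partition of unity $\{\rho_1, \rho_2\}$ witnessing excisiveness, for any SA singular simplex $\sigma\colon \Delta^p \to X_1 \cup X_2$ the compact $\Delta^p$ is covered by the two open semi-algebraic sets $(\rho_i \circ \sigma)^{-1}((0,1])$; after finitely many iterations of semi-algebraic barycentric subdivision every subdividing simplex maps into $X_1$ or into $X_2$. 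Building an SA subdivision operator chain-homotopic to the identity on $\Norm^*$, whose iterate eventually sends each cochain into cochains on $\Spa_\bullet(X_1) \cup \Spa_\bullet(X_2)$, then yields the desired quasi-isomorphism. The main technical obstacle is checking that the classical subdivision and prism operators stay within semi-algebraic singular simplices; this should cause no difficulty since barycentric subdivision is defined by affine (in fact rational) formulas, hence is a semi-algebraic operation. The $\Sing_\bullet$ version is the classical singular excision, and the $\Apl$ version follows from the $\Apl$ analogue of (ii) combined with the same simplicial-set argument.
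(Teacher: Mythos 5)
Your proposal is correct and follows essentially the same route as the paper: part (i) reduces surjectivity of $\Delta$ to extendability (the paper cites \cite[Proposition 10.4 (ii)]{FHT:RHT} and \refL{extDelta} for exactly this), part (ii) adapts \cite[Theorem 10.9]{FHT:RHT} verbatim with $\APA_\bullet$ in place of $\Apl_\bullet$, and part (iii) combines (ii) with the classical barycentric-subdivision excision argument (the paper cites \cite[Proposition 2.21]{Hat:AT}). Your phrasing of (i) via pushout/pullback, and your explicit remark in (iii) that barycentric subdivision is given by affine formulas and so stays in $\Spa_\bullet$, are clean elaborations of what the paper leaves implicit, but they do not constitute a different proof.
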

\begin{proof}
(i) The map $r$ is defined by
\[r\left(\{\Phi_\sigma\}_{\sigma\in \Spa_\bullet(X_1)\cup\Spa_\bullet(X_2)}\right)\,\,=\,\,
\left(\{\Phi_\sigma\}_{\sigma\in \Spa_\bullet(X_1)},
\{\Phi_\sigma\}_{\sigma\in \Spa_\bullet(X_2)}\right)\]
and $\Delta$ by
\[\Delta\left(\{\Phi_\sigma\}_{\sigma\in \Spa_\bullet(X_1)},
\{\Psi_\sigma\}_{\sigma\in \Spa_\bullet(X_2)}
\right)\,\,=\,\,
\{\Phi_\sigma-\Psi_\sigma\}_{\sigma\in \Spa_\bullet(X_1\cap X_2)}.
\]
The surjectivity of $\Delta$ is a consequence of the extendability of $\APA$ and $\Apl$
(\refL{extDelta} and \cite[Lemma 10.7 (iii)]{FHT:RHT}) and of
\cite[~Proposition 10.4 (ii)]{FHT:RHT}.
The other parts of exactness are clear.

(ii)  Theorem 10.9 of \cite{FHT:RHT} connects $\Apl(S_\bullet)$ and $\Norm^*(S_\bullet)$
(denoted there by $C_{PL}(S_\bullet)$)
by  a zigzag of quasi-isomorphisms. Since $\APA_\bullet$ is extendable, exactly the same proof
gives a natural weak equivalence $\APA(S_\bullet)\simeq\Norm^*(S_\bullet)$.

(iii) When the pair $\{X_1,X_2\}$  is excisive,
 the classical barycentric argument \cite[Proposition 2.21]{Hat:AT} implies that there is a quasi-isomorphism
\[
\Norm^*(\Spa_\bullet(X_1)\cup\Spa_\bullet(X_2))=
\Norm^*(\Spa_\bullet(X_1))+\Norm^*(\Spa_\bullet(X_2))\stackrel{\simeq}{\longrightarrow}
\Norm^*(\Spa_\bullet(X_1\cup X_2))
\]
and similarly for $\Sing_\bullet$.
The map in (iii) is clear and the fact that it is a quasi-isomorphism is a consequence of (ii).
\end{proof}

We finally arrive at the
\begin{proof}[Proof of \refT{main}]
We prove that $\alpha_X$, $\beta_X$, and $\gamma_X$ of equation \refN{E:mainzz} are quasi-isomorphisms
for a compact semi-algebraic set $X$.

Since both the simplicial CDGAs $\Apl_\bullet$ and $\APA_\bullet$ satisfy the Poincar\'e lemma
and are extendable (for $\APA$ this is \refL{Poincare}  and \refL{extDelta}, and
for $\Apl$ it is \cite[Lemma 10.7]{FHT:RHT}),
Proposition 10.5 of  \cite{FHT:RHT} implies that $\beta_X$ is a quasi-isomorphism.

By \refL{prepqi} (ii), to prove that $\gamma_X$ is a quasi-isomorphism, it suffices to prove that the
induced map
\[ \Norm^*(\Sing_\bullet(X))\longrightarrow \Norm^*(\Spa_\bullet(X))\]
is a quasi-isomorphism. This natural transformation satisfies
the hypotheses of \refP{critqi}. Indeed, if $X$ is collapsible then we have a
semi-algebraic homotopy between the
identity map on $X$ and the constant map, which induces an algebraic nullhomotopy on the cochain
complexes $\Norm^*(\Spa_\bullet(X))$ and $\Norm^*(\Sing_\bullet(X))$, and the Mayer-Vietoris condition comes from an analogous argument
as in the proof of \refL{prepqi} (iii). Therefore $\gamma_X$ is a weak equivalence for compact semi-algebraic sets.

For $X$ collapsible, by \refL{Poincare} we have that
$\tilde H^*(\ompa^*(X))=0$ and
the same is true for the reduced homology of $\APA(\Spa(X))$ by \refL{prepqi} (ii).  We deduce easily that
when $X$ is  collapsible or empty then $\alpha_X$ is a quasi-isomorphism.
Suppose that $\{X_1,X_2\}$ is excisive.
We have the short exact sequences of  \refL{MVompa} and \refL{prepqi} (i) that yield 
long exact sequences in  homology. Using  \refL{prepqi} (iii), the five lemma
implies that $\alpha$ satisfies the second hypothesis of \refP{critqi}.
Therefore $\alpha_X$ is a quasi-isomorphism for all compact semi-algebraic sets $X$.
\end{proof}


\section{Monoidal equivalences}\label{S:MonEquiv}


One\fn{\pl 11 july 2008: this section has been extended}
 of the motivations for establishing the results in this paper is Kontsevich's  proof of the formality
of the little cubes operad in \cite[\S3]{Kon:OMDQ}. In that proof, functors such as $\Ch_*(-)$ and $\ompa^*(-)$ 
need to be extended from objects to operads, but this can only be done if 
these functors are symmetric monoidal (a basic reference for symmetric monoidal categories is \cite{Hovey}). 
 In this section we thus study the monoidal properties of various
functors such as $\Ch_*(-)$ and $\ompa^*(-)$.

The categories of semi-algebraic sets and of topological spaces, equipped  with
the cartesian product and the one-point space are symmetric monoidal categories. The forgetful functor
$u\colon\SA\to\Top$ is  a strong symmetric
monoidal functor, where ``strong'' means
that the natural map $u(X)\times u(Y)\iso u(X\times Y)$ is an isomorphism \cite[\S 2.2.1]{GNPR:mon}.

The cross product constructed in \refP{chaincross},
\[\times\colon\Ch_*(X)\otimes\Ch_*(Y)\to\Ch_*(X\times Y)\]
makes the functor $\Ch_*$ of semi-algebraic chains from \refD{C}
symmetric monoidal.
The classical functor of singular chains
\[\Sing_*\colon\Top\longrightarrow\Chains_+(\BZ)\]
defined as the normalized chain complex of the singular simplicial set $\Sing_\bullet(X)$
is also  symmetric monoidal by the Eilenberg-Zilber map, and hence the same is true for
 the composite $\Sing_*u$ defined
on semi-algebraic sets.

Let $\calT$ be a symmetric monoidal category.
For us, a contravariant functor
\[F\colon\calT\longrightarrow\CDGA
\]
is  \emph{symmetric monoidal} if it is equipped with a natural map
\begin{equation}\label{eq:kappaF}
\kappa\colon F(X)\otimes F(Y)\longrightarrow F(X\times Y)
\end{equation}
satisfying the usual axioms and  such that
$F(\unit_\calT)=\BK$.

Thus the functor $\ompa^*$ is  symmetric monoidal through the K\"unneth quasi-isomophism
\begin{align*}
\kappa\colon\ompa^*(X)\otimes\ompa^*(Y) & \stackrel{\simeq}\longrightarrow\ompa^*(X\times Y) \\
\alpha\otimes\beta & \longmapsto\proj_X^*(\alpha)\proj^*_Y(\beta).
\end{align*}
and the same is true for $\Apl$.

A  \emph{symmetric  monoidal natural transformation} is a natural transformation between
two (covariant or contravariant) functors that commutes with all the monoidal and symmetry
structure maps. It is a 
\emph{symmetric  monoidal natural equivalence} if it induces weak equivalences
in an obvious sense (for example quasi-isomorphism if the target category
is chain complexes or CDGA). Two symmetric monoidal functors are \emph{weakly equivalent} if they are
connected by a chain of symmetric  monoidal natural equivalences.

\begin{thm}\label{T:monompaapl}
On \zz{this statement was upgraded from proposition to theorem
            (because of reference from formailty paper and also because it is a
            main statement
          } 
the category of compact semi-algebraic sets, the symmetric monoidal contravariant functors
$\ompa$ and $\Apl(u(-);\BR)$ are weakly equivalent.
\end{thm}
\begin{proof}
It is easy to see that the weak equivalences from equation \refN{E:mainzz} before \refP{critqi}
are all symmetric monoidal.
\end{proof}

\fn{\pl 10july2008 As Victor suggested (and he was  correct, as always :-) )
there is a simpler proof giving a direct symmetric morphism
from $\Sing_*u$ to $\Ch_*$. I  changed this accordingly}
\begin{prop}
\label{P:monCHSing}
Symmetric monoidal functors $\Ch_*$ and $\Sing_*u$ are weakly equivalent.
\end{prop}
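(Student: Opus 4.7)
The plan is to exhibit a zigzag
\[
\Sing_* u(X) \xleftarrow{\iota_X} \Cordsa_*(X) \xrightarrow{\eta_X} \Ch_*(X)
\]
of symmetric monoidal natural transformations of chain complexes, each inducing an isomorphism on homology. Here $\Cordsa_*(X)$ denotes the normalized chain complex of the semi-algebraic singular simplicial set $\Spa_\bullet(X)$, equipped with the Eilenberg--Zilber shuffle product. The map $\iota_X$ is induced by the inclusion $\Spa_\bullet(X) \hookrightarrow \Sing_\bullet u(X)$, and $\eta_X$ sends a semi-algebraic singular simplex $\sigma \colon \Delta^n \to X$ to the semi-algebraic current $\sigma_*\bbr{\Delta^n}$, extended linearly.

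To establish monoidality, $\iota$ is tautologically symmetric monoidal since the Eilenberg--Zilber map is given by the same shuffle formula on $\Spa_\bullet$ and on $\Sing_\bullet$. For $\eta$, we must check that for $\sigma \in \Spa_p(X)$ and $\tau \in \Spa_q(Y)$, the image of the Eilenberg--Zilber product $\sigma \otimes_{EZ} \tau$ equals $\eta(\sigma) \times \eta(\tau) = (\sigma \times \tau)_*\bbr{\Delta^p \times \Delta^q}$ in $\Ch_{p+q}(X\times Y)$, where the right-hand side uses the cross product of \refP{chaincross}. This reduces to the standard triangulation identity
\[
\bbr{\Delta^p \times \Delta^q} \;=\; \sum_{(\mu,\nu)} \sgn(\mu,\nu)\,(\mu,\nu)_*\bbr{\Delta^{p+q}}
\]
in $\Ch_{p+q}(\Delta^p\times\Delta^q)$, summed over $(p,q)$-shuffles, which is an immediate instance of \refP{altSAcur}.

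To see that $\iota_X$ is a quasi-isomorphism, we follow a classical acyclic-models/Eilenberg--Steenrod argument: $\Cordsa_*$ and $\Sing_* u$ satisfy Mayer--Vietoris for excisive semi-algebraic pairs (using barycentric subdivision, exactly as in the classical case, plus the observation that subdivided semi-algebraic simplices remain semi-algebraic), and both have trivial reduced homology on collapsible polyhedra (since one can semi-algebraically cone off any SA-singular cycle on a collapsible set). The same scheme applies to $\eta_X$ via an analog of \refP{critqi} for covariant chain-valued functors. Specifically, for a collapsible compact semi-algebraic set $X$ with a supporting collapse homotopy $h$, the operator $\gamma \mapsto h_*(\gamma \times \bbr{I})$ gives a chain-level contraction of $\Ch_*(X)$ (cf.\ the geometric homotopy used at the start of the proof of \refL{Poincare}), so $\Ch_*(X)$ has the homology of a point. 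For an excisive pair $\{X_1,X_2\}$, the short exact Mayer--Vietoris sequence
\[
0 \to \Ch_*(X_1 \cap X_2) \to \Ch_*(X_1) \oplus \Ch_*(X_2) \to \Ch_*(X_1)+\Ch_*(X_2) \to 0
\]
holds, and the equality $\Ch_*(X_1\cup X_2) = \Ch_*(X_1)+\Ch_*(X_2)$ follows from the partition-of-unity cut-off argument already used in \refL{MVompa} (one pushes the pieces of a chain's support into the respective $X_i$ using the functions $\rho_i,\sigma_i$).

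The main obstacle is the last Mayer--Vietoris identification $\Ch_*(X_1 \cup X_2) = \Ch_*(X_1) + \Ch_*(X_2)$: unlike for cochains where it follows formally from extension by zero, here one must split a given chain $\gamma \in \Ch_*(X_1 \cup X_2)$ into pieces supported in $X_1$ and $X_2$. This is done by refining a triangulation of $\spt(\gamma)$ compatible with the closed cover $\{\rho_1^{-1}([1/4,1]),\rho_2^{-1}([1/4,1])\}$, then separating simplices according to which closed set contains them. Once this is in place, the Eilenberg--Steenrod criterion yields that $\eta_X$ is a quasi-isomorphism for all compact semi-algebraic $X$, completing the proof.
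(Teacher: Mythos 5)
Your proposal follows exactly the paper's route: the same zigzag $\Sing_*u(X)\leftarrow\Spa_*(X)\rightarrow\Ch_*(X)$, the same map $\sigma\mapsto\sigma_*\bbr{\Delta^k}$ to currents, and the same appeal to the Eilenberg--Steenrod criterion \refP{critqi} (adapted to covariant chain-valued functors) via the Poincar\'e lemma for collapsible sets and Mayer--Vietoris for excisive pairs. The paper states all this in three sentences; you have supplied the details it leaves implicit (the shuffle-triangulation identity underlying monoidality of $\eta$, the cone contraction for collapsible $X$, and the splitting $\Ch_*(X_1\cup X_2)=\Ch_*(X_1)+\Ch_*(X_2)$, which the paper already established inside \refL{MVompa}), and these are all correct.
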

\begin{proof}
We have the chain complex $\Spa_*(X)$ of semi-algebraic singular chains
defined as the normalized chain complex associated to the simplicial set $\Spa_\bullet(X)$,
and we have seen in the proof of \refT{main} that there is a natural weak equivalence
\[\Spa_*(X)\quism\Sing_*(X).
\]
It is clear that this equivalence is symmetric monoidal.

We also have a natural map
\begin{align*}
\Spa_*(X)&\longrightarrow\Ch_*(X)\\
(\sigma\colon\Delta^k\to X)&\longmapsto \sigma_*(\bbr{\Delta^k})
\end{align*}
which is also symmetric monoidal.
It is a quasi-isomorphism by another application of \refP{critqi}.
\end{proof}

For a  real vector space $V$, denote its dual by
\[V^\vee:=\hom(V,\BR).\]
Thus $(\ompa(X))^\vee$ is a chain complex. There is an evaluation chain map
\[\ev\colon\Ch_*(X)\otimes\BR\to(\ompa(X))^\vee
\]
defined by, for $\gamma\in\Ch_k(X)$,
\[\ev(\gamma)\colon\ompa^k(X)\longrightarrow\BR\quad,\quad\alpha\longmapsto\langle\alpha,\gamma\rangle.\]

The proof of the following proposition is straightforward.
\begin{prop}\label{P:evalmostmon}
The evaluation map $\ev$ is a natural quasi-isomorphism and the following diagram commutes:
\[
\xymatrix{
\Ch_*(X)\otimes\Ch_*(Y)\otimes\BR
\ar[r]^-{\ev\otimes\ev}_-{\simeq}
\ar[dd]_-{\times}
&
(\ompa(X))^\vee\otimes(\ompa(Y))^\vee
\ar[d]_-{\simeq}
\\
&
\left(\ompa(X)\otimes\ompa(Y)\right)^\vee
\\
\Ch_*(X\times Y)\otimes\BR
\ar[r]^-{\ev}_-{\simeq}
&
\ar[u]^-{\simeq}_-{(\times)^\vee}
(\ompa(X\times Y))^\vee.
}
\]
\end{prop}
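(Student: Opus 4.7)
The plan is in three steps, two of which are routine and one technical. First, the commutativity of the diagram will follow directly from the characterizing formula \refN{E:omPAcross}: for $\gamma_i\in\Ch_*(X_i)$ and $\alpha_i\in\ompa^*(X_i)$ we have $\langle\alpha_1\times\alpha_2,\gamma_1\times\gamma_2\rangle=\langle\alpha_1,\gamma_1\rangle\langle\alpha_2,\gamma_2\rangle$. Unravelling the identifications, both ways of traversing the square send $\gamma_1\otimes\gamma_2\otimes 1$ to the linear functional $\alpha_1\otimes\alpha_2\mapsto\langle\alpha_1,\gamma_1\rangle\langle\alpha_2,\gamma_2\rangle$ on $\ompa^*(X)\otimes\ompa^*(Y)$. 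The same identity, together with the fact that $\delta$ on $\ompa^*$ is the restriction of the adjoint of $\partial$, shows that $\ev$ is a chain map, and the identity $\langle g^*\alpha,\gamma\rangle=\langle\alpha,g_*\gamma\rangle$ gives naturality in $X$.

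To prove that $\ev$ is a quasi-isomorphism for compact semi-algebraic sets, I will apply the Eilenberg-Steenrod-type criterion \refP{critqi}. Because $\BR$ is a field, the linear dual functor is exact, so $H_k((\ompa^*(X))^\vee)\cong H^k(\ompa^*(X))^\vee$ for every $k$. For the collapsible case, \refL{Poincare} yields $H^*(\ompa^*(X))\cong\BR$ concentrated in degree~$0$. On the source side, \refP{monCHSing} gives $H_*(\Ch_*(X)\otimes\BR)\cong H_*(X;\BR)$, which is $\BR$ in degree~$0$ and $0$ elsewhere since a collapsible polyhedron is contractible. The induced map in degree~$0$ sends the class of a point $x_0\in X$ to the functional $f\mapsto f(x_0)$ on $\ompa^0(X)$; since this pairs non-trivially with the cohomology class of the constant function~$1$, the map is an isomorphism. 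The empty case is trivial.

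For the Mayer-Vietoris step, suppose $\{X_1,X_2\}$ is an excisive semi-algebraic pair on which $\ev$ is already known to be a quasi-isomorphism on $X_1$, $X_2$, and $X_1\cap X_2$. Dualizing the short exact sequence of \refL{MVompa} (exactness is preserved since $\BR$ is a field) produces a long exact Mayer-Vietoris sequence for $H_*((\ompa^*(-))^\vee)$. On the source side, I will route through the semi-algebraic singular chains $\Spa_*$: the classical barycentric subdivision argument (as invoked in the proof of \refL{prepqi}(iii)) shows that $\Spa_*(X_1)+\Spa_*(X_2)\hookrightarrow\Spa_*(X_1\cup X_2)$ is a quasi-isomorphism, so the standard short exact Mayer-Vietoris sequence for $\Spa_*$ gives a long exact sequence which, via the natural quasi-isomorphism $\Spa_*\simeq\Ch_*$ of \refP{monCHSing}, transfers to $\Ch_*(-)\otimes\BR$. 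Naturality of $\ev$ makes the two long exact sequences compatible and the five lemma completes the induction.

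The hard part, I expect, is the Mayer-Vietoris comparison itself: $\Ch_*$ does not literally carry a short exact Mayer-Vietoris sequence, because semi-algebraic currents defined via integration cannot in general be written as a sum of currents supported in $X_1$ and $X_2$ in a functorial manner. The detour through $\Spa_*$ is unavoidable, and one must verify that the comparison map $\Spa_*\to\Ch_*$ from \refP{monCHSing} commutes up to natural chain homotopy with the Mayer-Vietoris morphisms, which amounts to checking a diagram of natural transformations indexed by the excisive pair.
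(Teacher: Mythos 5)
The paper provides no proof of this proposition---it is simply labeled ``easy''---so there is no paper argument to compare with; I evaluate your proposal on its own terms. Your overall structure is sound: commutativity of the square follows directly from the characterizing identity \refN{E:omPAcross}, naturality and the chain-map property are formal, and applying \refP{critqi} with the Poincar\'e lemma \refL{Poincare} and exactness of $(-)^\vee$ over the field $\BR$ is the right approach to the quasi-isomorphism.

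However, the ``hard part'' you identify in the last paragraph rests on a misconception. You claim that ``$\Ch_*$ does not literally carry a short exact Mayer-Vietoris sequence, because semi-algebraic currents defined via integration cannot in general be written as a sum of currents supported in $X_1$ and $X_2$.'' This is false, and the paper proves the opposite as a step in the proof of \refL{MVompa}: for an excisive semi-algebraic pair one has $\Ch_k(X_1\cup X_2)=\Ch_k(X_1)+\Ch_k(X_2)$. The reason is precisely what makes semi-algebraic chains pleasant to work with: a chain $\gamma=\sum n_i\bbr{V_i}$ can be cut along the semi-algebraic level set $\rho_1^{-1}(1/4)$ of the partition-of-unity function, the two pieces $V_i\cap\rho_1^{-1}((1/4,1])$ and $V_i\cap\rho_1^{-1}([0,1/4))$ having closures in $X_1$ and $X_2$ respectively. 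Together with $\Ch_*(X_1)\cap\Ch_*(X_2)=\Ch_*(X_1\cap X_2)$ (obvious from supports), one gets a genuine short exact sequence
\[
0\longrightarrow\Ch_*(X_1\cap X_2)\longrightarrow\Ch_*(X_1)\oplus\Ch_*(X_2)\longrightarrow\Ch_*(X_1\cup X_2)\longrightarrow 0
\]
with no barycentric subdivision needed. Consequently your detour through $\Spa_*$ is unnecessary (though not incorrect): you can compare the resulting LES for $\Ch_*(-)\otimes\BR$ directly with the one obtained by dualizing \refL{MVompa}, and the five lemma finishes the Mayer-Vietoris step immediately, with the natural transformation $\ev$ commuting strictly with all the maps involved. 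If you do wish to keep the $\Spa_*$ route, note that the comparison with $\Ch_*$ along the natural map of \refP{monCHSing} commutes on the nose, not merely up to chain homotopy, because both Mayer-Vietoris morphisms (restriction and difference) are induced by functoriality.
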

\section{Oriented semi-algebraic bundles and integration along the fiber}\label{S:SAbundle}

A
\fn{\pl 11 july 2008: this section is new}
natural occurence of a strongly continuous chain $\Phi\in\Chs_k(E\stackrel{p}\to B)$ is when $p$
is a semi-algebraic bundle with fiber a compact  oriented $k$-dimensional semi-algebraic manifold.
The operation $\fint_\Phi$ is then a generalization of the classical integration along the fiber.
In this section we define and study this notion. Many properties here will be useful in the proof 
of the formality of the little cubes operad \cite{LaVo:for}.

First we define  a locally trivial bundle in the category of semi-algebraic sets in an obvious way.
\begin{definition}
\label{D:PAfib}\ \ 
\begin{itemize}
\item An \emph{SA bundle} is a semi-algebraic map $p\colon E\to B$ such that there exists a semi-algebraic set $F$,
a covering of $B$
by a finite family  $\{U_\alpha\}_{\alpha\in I}$ of  semi-algebraic open sets, and semi-algebraic homeomorphisms
$h_\alpha\colon U_\alpha\times F\iso p^{-1}(U_\alpha)$ such that $ph_\alpha=\proj_1$.

\item The space $F$ is called a \emph{generic fiber}.
\item An SA bundle is \emph{oriented}  if $F$ is a compact oriented manifold and each fiber $p^{-1}(b)$
is oriented in such a way that the restriction of $h_\alpha$ to the fiber respects the orientation.
\item The \emph{fiberwise boundary} of an oriented SA bundle is the oriented SA bundle
\[p^\partial\colon E^\partial \longrightarrow B\]
where
\[E^\partial=\cup_{b\in B}\partial(p^{-1}(b))\]
and $p^\partial$ is the restriction of $p$ to $E^\partial$.
\end{itemize}
\end{definition}
Note that because we work in the category of semi-algebraic sets, asking that the covering is finite is not a restriction.

It is clear that the fiberwise boundary of an oriented SA bundle of dimension $k$ is an oriented 
SA bundle of dimension $k-1$.
 
\begin{prop}
\label{P:fibch}
If $p\colon E\to B$ is an oriented SA bundle then the formula
$\Phi(b)=\bbr{p^{-1}(b)}$, for $b\in B$, defines a strongly continuous chain
$\Phi\in \Chs_l(E\to B)$ that we call the \emph{continuous chain associated to the oriented SA bundle}.
Further, $\partial \Phi$
 is the continuous chain  associated to the fiberwise boundary $p^\partial\colon E^\partial \to B$.
\end{prop}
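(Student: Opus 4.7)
The plan is to exhibit explicit trivialization data for $\Phi$ derived from the given bundle atlas, then deduce the boundary statement as a direct application of \refL{ChsSubcplx}. First observe that $\Phi(b) = \bbr{p^{-1}(b)}$ is well defined in $\Ch_l(E)$: each fiber $p^{-1}(b)$ is semi-algebraically homeomorphic to $F$ via the relevant $h_\alpha$, hence is a compact oriented semi-algebraic $l$-manifold whose pushforward class is a semi-algebraic $l$-chain as in \refD{C}.

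The core of the argument is producing, from the cover $\{U_\alpha\}_{\alpha \in I}$ with trivializations $h_\alpha\colon U_\alpha \times F \iso p^{-1}(U_\alpha)$, a semi-algebraic stratification of $B$ each of whose strata has closure contained in some $U_\alpha$. The strategy is a semi-algebraic shrinking of the cover: inductively produce open semi-algebraic sets $U'_\alpha$ with $\overline{U'_\alpha} \subset U_\alpha$ and $\bigcup_\alpha U'_\alpha = B$ by a semi-algebraic Urysohn argument. Concretely, for a closed semi-algebraic subset $C$ of an open semi-algebraic subset $U$ of $B$, the set $V = \{x \in B : d(x, B \setminus U) > d(x, C)\}$ is open and semi-algebraic (distance functions to semi-algebraic subsets of $\BR^N$ are continuous and semi-algebraic), contains $C$, and satisfies $\overline{V} \subset U$. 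Then take a semi-algebraic stratification $\{S_\gamma\}_\gamma$ of $B$ refining the finite family $\{U'_\alpha\}$, so that each $S_\gamma$ is contained in some $U'_{\alpha(\gamma)}$; consequently $\overline{S_\gamma} \subset \overline{U'_{\alpha(\gamma)}} \subset U_{\alpha(\gamma)}$.

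Setting $g_\gamma := h_{\alpha(\gamma)}|_{\overline{S_\gamma} \times F}\colon \overline{S_\gamma} \times F \to E$ then furnishes the data required by \refD{scc}: the identity $p g_\gamma = \proj_1$ is inherited from $h_{\alpha(\gamma)}$, and for each $b \in \overline{S_\gamma}$ the restriction $h_{\alpha(\gamma)}|_{\{b\} \times F}\colon F \iso p^{-1}(b)$ is an orientation-preserving semi-algebraic homeomorphism (by the definition of oriented SA bundle), so $g_{\gamma*}(\bbr{\{b\} \times F}) = \bbr{p^{-1}(b)} = \Phi(b)$. This shows $\Phi \in \Chs_l(E \to B)$.

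For the boundary, the proof of \refL{ChsSubcplx} shows that $\partial \Phi$ is trivialized by $\{(S_\gamma, \partial F, \partial g_\gamma)\}$, where $\partial g_\gamma$ is the restriction of $g_\gamma$ to $\overline{S_\gamma} \times \partial F$. Since the restrictions $h_\alpha|_{U_\alpha \times \partial F}\colon U_\alpha \times \partial F \iso (p^\partial)^{-1}(U_\alpha)$ are precisely the local trivializations of the fibrewise boundary bundle $p^\partial\colon E^\partial \to B$ (with the induced orientation on each $\partial(p^{-1}(b))$), this is exactly the trivialization data one would build for the strongly continuous chain associated to $p^\partial$. Thus $(\partial \Phi)(b) = \bbr{\partial(p^{-1}(b))} = \bbr{(p^\partial)^{-1}(b)}$, as required. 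The main obstacle is the shrinking-and-stratification step; everything else is a direct unwinding of definitions (with \refT{Stokes} entering only implicitly through \refL{ChsSubcplx}).
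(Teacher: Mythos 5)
Your proof is correct and follows essentially the same route as the paper's: take a stratification of $B$ whose strata have closures contained in trivializing opens, restrict the bundle trivializations, and read off the boundary statement from \refL{ChsSubcplx} and \refT{Stokes}. The paper states the existence of such a stratification without proof; your semi-algebraic shrinking via the distance-function Urysohn construction is a reasonable way to justify it.
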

\begin{proof}
Take an open semi-algebraic  cover of $X$ that trivializes the bundle and take a stratification of $B$ such that the closure
of each stratum is contained in an open set of the cover. It is straightforward to construct
 a trivialization of $\Phi$
associated to this stratification from the trivialization of the bundle. The statement about the boundary is also straightforward.
\end{proof}

\begin{definition}\label{D:integrfiber}
Let $p\colon E\to B$ be an oriented SA bundle with fiber of dimension $k$. The \emph{pushforward} or \emph{integration along the fiber}
operation is the   degree $-k$ linear map
\[p_*\colon\omin^{*+k}(E)\longrightarrow\ompa^*(B)
\]
defined by
$p_*(\mu)=\fint_\Phi\mu$, where $\Phi$ is the continuous chain associated to the oriented SA bundle $p$.
\end{definition}

This operation is very important and we now develop some of its properties.

\subsection{Properties of SA bundles}
We first have the obvious
\begin{prop}\label{P:PBSAbundle}
The pullback of an (oriented) SA bundle along a semi-algebraic map
is an (oriented) SA bundle with the same fibers.
\end{prop}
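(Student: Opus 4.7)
The plan is to use the definition of an SA bundle directly, pulling back the trivializing data.

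First I would verify that the pullback $E' := B' \times_B E$ is a semi-algebraic set and that $p'\colon E' \to B'$ is a semi-algebraic map. Since $E'$ is defined as $\{(b',e)\in B'\times E : g(b')=p(e)\}$, it is the preimage under the continuous semi-algebraic map $(g\circ\proj_1, p\circ\proj_2)\colon B'\times E\to B\times B$ of the diagonal of $B$, which is semi-algebraic; the projection $p'$ is obviously semi-algebraic. Let $\hat g\colon E'\to E$ be the other projection.

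Next I would produce a finite semi-algebraic trivializing cover of $B'$. Let $\{U_\alpha\}_{\alpha\in I}$ be a finite covering of $B$ by semi-algebraic open sets with trivializations $h_\alpha\colon U_\alpha\times F\iso p^{-1}(U_\alpha)$ over $p$. Set $U'_\alpha = g^{-1}(U_\alpha)$; this is semi-algebraic and open, and $\{U'_\alpha\}_{\alpha\in I}$ is a finite open semi-algebraic cover of $B'$. Define
\[h'_\alpha\colon U'_\alpha\times F\longrightarrow (p')^{-1}(U'_\alpha),\qquad (b',f)\longmapsto (b',h_\alpha(g(b'),f)).\]
It is immediate that $h'_\alpha$ is a semi-algebraic bijection commuting with projection to $U'_\alpha$, with semi-algebraic inverse $(b',e)\mapsto(b',\proj_2 h_\alpha^{-1}(e))$. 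This shows that $p'$ is an SA bundle with generic fiber $F$.

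For the oriented case, assume $F$ is a compact oriented semi-algebraic manifold and each fiber $p^{-1}(b)$ is oriented so that the restriction of each $h_\alpha$ to the fiber preserves orientation. Since $\hat g$ restricts to a semi-algebraic homeomorphism $(p')^{-1}(b')\cong p^{-1}(g(b'))$, we transport the orientation to get an orientation on $(p')^{-1}(b')$. By construction the restriction of $h'_\alpha$ to $\{b'\}\times F$ is the composite of an orientation-preserving map into $p^{-1}(g(b'))$ with the canonical identification $p^{-1}(g(b'))\cong (p')^{-1}(b')$, so it preserves orientation. There is no serious obstacle here; the only thing to be careful about is ensuring all constructed objects remain inside the semi-algebraic category, which is automatic from the stability properties of semi-algebraic sets under intersections, preimages, and products recalled in \refS{Background}.
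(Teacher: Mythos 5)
Your proof is correct. The paper itself gives no proof of this proposition — it is introduced with the phrase ``We have the obvious'' and no argument follows — and the verification you write out (pulling back the finite trivializing cover along $g$, defining the trivializations $h'_\alpha(b',f)=(b',h_\alpha(g(b'),f))$, checking semi-algebraicity of everything in sight, and transporting fiber orientations through the canonical homeomorphism $(p')^{-1}(b')\cong p^{-1}(g(b'))$) is precisely the standard argument the paper treats as self-evident.
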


Less obvious is 
\begin{prop}\label{P:compositebdl}
The composite of two SA bundles $p\colon E\to B$ and $q\colon B\to X$ is an SA bundle. 
Moreover, if $p$ and $q$ are oriented  then so is $q\circ p$ and
\[\dim(\fiber(q\circ p))=\dim(\fiber(p))+\dim(\fiber(q)).\]
\end{prop}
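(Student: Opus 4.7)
The plan is to establish local semi-algebraic triviality of $q \circ p$ at every point of $X$ with fiber $F := F_p \times F_q$; a finite subcover then gives the SA bundle structure, and the dimension and orientation assertions follow from the explicit form of $F$.

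I would fix $x \in X$, choose $\beta$ with $x \in V_\beta$ for some trivializing open set $V_\beta$ of $q$, and use the trivialization $k_\beta \colon V_\beta \times F_q \iso q^{-1}(V_\beta)$ to identify $q^{-1}(V_\beta)$ with $V_\beta \times F_q$. Under this identification, the finite cover $\{U_\alpha\}_{\alpha \in I}$ of $B$ trivializing $p$ pulls back to a finite semi-algebraic open cover $\{W_\alpha\}_{\alpha \in I}$ of $V_\beta \times F_q$, where $W_\alpha := k_\beta^{-1}(U_\alpha \cap q^{-1}(V_\beta))$, and each $h_\alpha$ gives a semi-algebraic trivialization of $p$ over $W_\alpha$. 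The crux is then to produce a semi-algebraic open neighborhood $W \subset V_\beta$ of $x$ and a semi-algebraic homeomorphism
\[
\phi \colon W \times F_q \times F_p \iso p^{-1}(k_\beta(W \times F_q))
\]
commuting with the projection to $W$.

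To build $\phi$, I would first apply semi-algebraic cell decomposition to $\{x\} \times F_q \cong F_q$ refined by the finite family $\{W_\alpha \cap (\{x\} \times F_q)\}_{\alpha \in I}$, obtaining a finite semi-algebraic partition $F_q = F_q^{(1)} \sqcup \cdots \sqcup F_q^{(N)}$ into locally closed pieces together with an assignment $j \mapsto \alpha(j) \in I$ such that $\{x\} \times F_q^{(j)} \subset W_{\alpha(j)}$ for every $j$. Using the openness of the $W_{\alpha(j)}$'s and a semi-algebraic analogue of the tube lemma (applied successively to the finitely many pieces), I would find a semi-algebraic open $W \ni x$ with $W \times F_q^{(j)} \subset W_{\alpha(j)}$ for every $j$; on each such piece the trivialization $h_{\alpha(j)}$ yields a local product structure $(W \times F_q^{(j)}) \times F_p$. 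These piecewise trivializations are then glued into a single $\phi$ inductively on $j$ by successively modifying $h_{\alpha(j)}$ on $W \times F_q^{(j)}$ via composition with the (automatically semi-algebraic) transition maps $h_{\alpha(j)}^{-1} \circ h_{\alpha(j-1)}$ so that they agree on the common boundary $\overline{F_q^{(j-1)}} \cap \overline{F_q^{(j)}}$. Covering $X$ by finitely many such open sets $W$ gives the trivializing open cover required by \refD{PAfib}, with generic fiber $F_p \times F_q$.

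The dimension identity $\dim(\fibre(q \circ p)) = \dim(\fibre(p)) + \dim(\fibre(q))$ is then immediate. When both $p$ and $q$ are oriented, each $h_\alpha$ and $k_\beta$ is orientation-preserving on fibers, so $\phi$ transports the product orientation on $F_p \times F_q$ to an orientation on the fibers of $q \circ p$, making the composite oriented. The main obstacle is the inductive gluing in the previous paragraph: producing a genuinely semi-algebraic \emph{homeomorphism} $\phi$ from piecewise trivializations that disagree across the boundaries of the $F_q^{(j)}$'s. The finiteness of the partition and the semi-algebraic nature of the transition functions make this feasible, but the careful modification so that $\phi$ is continuous and bijective at the boundaries requires technical work using semi-algebraic isotopies or collar neighborhoods of the $F_q^{(j)}$'s.
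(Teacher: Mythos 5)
The gap you flag at the end is not a technicality but the heart of the proposition. After the cell decomposition and tube-lemma shrinking, you have piecewise trivializations $\phi_j$ of $p$ over the disjoint pieces $W\times F_q^{(j)}$; their union is set-theoretically a bijection $W\times F_q\times F_p\to(q\circ p)^{-1}(W)$ over $W$, but it is not continuous at the boundaries $\overline{F_q^{(j-1)}}\cap\overline{F_q^{(j)}}$ unless you first correct the $\phi_j$'s by $W$-parametrized transition homeomorphisms of $F_p$ that have been \emph{extended} from those boundaries to the interiors of the $F_q^{(j)}$'s. Producing such semi-algebraic extensions is exactly what carries the content: it is, in effect, what the paper proves in \refL{p2pSA}, namely that an SA bundle over $B\times J$ followed by the projection to a cube $J$ is again an SA bundle. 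This cannot be waved through with collars or isotopies without essentially reproving that lemma, and the Hawaiian-ring counterexample the paper gives immediately beforehand shows that, outside the finite (hence numerable) SA setting, the conclusion itself is false.

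The paper's argument achieves the gluing by a partition-of-unity mechanism rather than boundary-matching. It first cubifies $X$ so that $q$ is trivial over each semi-open cube $J_\alpha$, reducing to the model $E\xrightarrow{p}B\times J\xrightarrow{\proj_2}J$. Then \refL{rHusemoller} runs Husemoller's numerable-cover argument to produce a semi-algebraic map $\hat r\colon E\to E$ covering the retraction $r(b,u)=(b,0)$ of the base, and the universal property of the pullback turns this into a global SA isomorphism $E\cong E_0\times J$ over $B\times J$; the composite to $J$ is then a product bundle. The partition of unity does, in one stroke, the extension-over-interiors that your inductive plan would have to carry out piece by piece. Your dimension and orientation conclusions are correct and follow immediately once a trivialization of the required form is actually produced.
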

\begin{rem}Note that it is not true in general that the composite of two bundles is a bundle.
Here is a counterexample for coverings (topological bundles with discrete fibers):
Let $X$ be the \emph{Hawaian rings} which is the union of circles  $\Gamma_n$ in the plane centered at $(0,1/n)$ with radius $1/n$  for $n\geq1$.
For $N\geq 1$, let $q_N\colon E_N\to X\times\{N\}$  be a two-fold covering such that the restriction
of $q_N$ to $\Gamma_N$ is the non trivial covering and its restriction to $\Gamma_k$  is trivial
for $k\not=N$.
Set $q=\amalg_{N\geq1}q_N$, $E=\amalg_{N\geq1}E_N$, and let
\[q\colon E\longrightarrow \amalg_{N\geq1}X\cong X\times\BN_0.\]
This map is clearly a covering. The projection $p\colon X\times\BN_0\to X$ is  another covering.
But the composite $p\circ q$ is not a covering because it is not trivial on any neighborhood of
 the origin.
 \end{rem}

To prove \refP{compositebdl}, we will first need two lemmas.
Let $J=[0,1]^{a}\times[0,1)^b$ for some integers $a,b\geq0$ and let
\[
p\colon E\longrightarrow B\times J\]
be an SA bundle. Our first task will be to prove that the composite of $p$ with the projection on the
second factor $J$ is an SA bundle.

\begin{lemma}\label{L:rHusemoller}
Let \[r\colon B\times J\to B\times J\]
be the map defined by $r(b,u)=(b,0)$. Then there exists a semi-algebraic map
\[\hat r\colon E\to E\]
such that $p\circ\hat{r}=r\circ p$.
\end{lemma}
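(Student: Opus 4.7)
The strategy is to lift the straight-line deformation retraction of $J$ onto $\{0\}$ to $E$ using the local trivializations of $p$. I would proceed by induction on $\dim J=p+q$, the base case $p=q=0$ being immediate with $\hat r=\id_E$.

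For the inductive step, write $J=J_1\times K$ with $K\in\{[0,1],[0,1)\}$ and $\dim J_1=p+q-1$. It suffices to construct a partial lift $\hat r_K\colon E\to E$ handling only the $K$-factor, i.e.\ satisfying $p\hat r_K(e)=(b,u',0)$ whenever $p(e)=(b,u',s)\in B\times J_1\times K$: then the restriction of $p$ to $p^{-1}(B\times J_1\times\{0\})$ is still an SA bundle over $B\times J_1$, and the inductive hypothesis applied to it produces a map $\hat r_{J_1}$ handling the $J_1$-factor, so that $\hat r:=\hat r_{J_1}\circ\hat r_K$ satisfies $p\hat r=r\circ p$.

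The heart of the argument is thus the one-variable case $J=K\in\{[0,1],[0,1)\}$, and $K=[0,1)$ reduces to $K=[0,1]$ by a semi-algebraic reparametrization that maps $[0,1)$ into $[0,1/2]$. For $K=[0,1]$, starting from a finite SA trivializing cover $\{U_\alpha\}_{\alpha\in I}$ of $B\times[0,1]$ with trivializations $h_\alpha\colon U_\alpha\times F\iso p^{-1}(U_\alpha)$, I would invoke Hardt's semi-algebraic trivializing-stratification theorem (the statement in the paragraph just before \refL{smoothstrat}) applied to the projection $B\times[0,1]\to B$ enriched with the data of the cover. This produces a finite semi-algebraic stratification $\{S_j\}$ of $B$ together with, over each closed stratum $\overline{S_j}$, a semi-algebraic partition $0=t_0^{(j)}<\dots<t_{N_j}^{(j)}=1$ and indices $\alpha(i,j)\in I$ such that $\overline{S_j}\times[t_{i-1}^{(j)},t_i^{(j)}]\subset U_{\alpha(i,j)}$ for every $i$. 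On each $\overline{S_j}$ one builds $\hat r$ by iteratively sliding fibres down: if $e=h_{\alpha(i,j)}((b,t),f)$ with $(b,t)\in\overline{S_j}\times[t_{i-1}^{(j)},t_i^{(j)}]$, set $\hat r_i(e):=h_{\alpha(i,j)}((b,t_{i-1}^{(j)}),f)$, and compose $\hat r_1\circ\dots\circ\hat r_{N_j}$ to slide all the way to $t=0$. The resulting map is semi-algebraic on $p^{-1}(\overline{S_j}\times[0,1])$ and intertwines $p$ with $r$ there.

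The main obstacle is continuity of $\hat r$ across the boundaries of the strata of $B$, since the paper's convention requires semi-algebraic maps to be continuous. I would handle this by ordering the strata of $B$ by decreasing dimension and, when defining $\hat r$ on a lower-dimensional stratum, modifying the iterated slide by the transition cocycles $h_\alpha^{-1}h_\beta$ on overlaps $U_\alpha\cap U_\beta$ so as to match the already-constructed values of $\hat r$ on adjacent higher-dimensional strata. This is a semi-algebraic adaptation of the classical Husem\"oller argument showing that bundles over a cylinder $X\times I$ are pulled back from $X\times\{0\}$, and its SA version is the technical crux.
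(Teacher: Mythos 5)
Your route is not the paper's, and it has a gap at the step you yourself flag as the crux. The paper's proof is two lines: a finite semi-algebraic trivializing cover of $B\times J$ is automatically numerable, so one chooses a semi-algebraic partition of unity subordinate to it and then transcribes Husem\"oller's proof of~\cite[Theorem~9.6, Chap.~4]{Hus:FB} verbatim. In that argument $\hat r$ is assembled as a finite composition of lifts, each of which slides the $J$-coordinate by an amount controlled by one partition-of-unity function; each such lift is supported in a single trivializing chart, and continuity of the composite is automatic because the sliding amount is a continuous function of the base point. There is no gluing-across-strata step. In particular, describing your construction as ``a semi-algebraic adaptation of the classical Husem\"oller argument'' is not accurate: Husem\"oller's key device is the partition of unity, and you have replaced it rather than adapted it.

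The gap in your stratification-and-stepwise-slide approach is that the gluing you identify as ``the technical crux'' is asserted rather than carried out, and the proposed fix does not make sense as stated. If $S_l$ lies in the common boundary of two maximal strata $S_j$ and $S_k$, your construction produces $\hat r_j$ on $p^{-1}(\overline{S_j}\times[0,1])$ using one chain of charts $h_{\alpha(\cdot,j)}$ and one $t$-partition, and $\hat r_k$ on $p^{-1}(\overline{S_k}\times[0,1])$ using a different chain and a different $t$-partition. Both restrict, by continuity on the closures, to maps on $p^{-1}(\overline{S_l}\times[0,1])$, and those restrictions will in general disagree. At that point there is no remaining freedom over $S_l$: the values there are already forced, so ``modifying the iterated slide by the transition cocycles $h_\alpha^{-1}h_\beta$'' when you come to $S_l$ cannot reconcile $\hat r_j$ with $\hat r_k$. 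Any genuine repair would have to go back and deform $\hat r_j$ and $\hat r_k$ on collars of their common boundary so that the two slides interpolate as one approaches $\overline{S_l}$, coherently over the whole stratification; that interpolation is exactly the bookkeeping the partition of unity does for free, and it is missing from the proposal.
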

\begin{proof}
Since the bundle is semi-algebraic, there exists a finite, and hence enumerable, covering of $B\times J$ which
trivializes $p$. The lemma is then proved by an argument completely analogous to that of
\cite[Theorem 9.6 in Chapter 4, page 51]{Hus:FB}.
\end{proof}
\begin{lemma}\label{L:p2pSA}
Let $p\colon E\to B\times J$ be an SA bundle.
Then the composite
\[E\stackrel{p}\longrightarrow B\times J\stackrel{\proj_2}\longrightarrow J\]
is also an SA bundle.
\end{lemma}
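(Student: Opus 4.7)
The plan is to establish the stronger statement that $\proj_2\circ p$ is in fact a globally trivial SA bundle with fibre $E_0 := p^{-1}(B \times \{0\})$; local triviality then follows a fortiori. The whole argument will be a semi-algebraic reformulation of the classical theorem (cf.\ \cite[Chapter 4, Theorem 9.6]{Hus:FB}) that any fibre bundle over a product $X \times [0,1]$ is pulled back from its restriction to $X \times \{0\}$, here applied with $X = B$ and the role of $[0,1]$ played by $J$.

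First I would invoke \refL{rHusemoller} to produce a semi-algebraic $\hat{r}\colon E \to E$ covering the retraction $r\colon(b,u) \mapsto (b,0)$, and inspect its inductive proof to arrange in addition that $\hat{r}|_{E_0} = \id_{E_0}$. Then I would define
\[
\Psi \colon E \longrightarrow J \times E_0, \qquad \Psi(e) := \bigl(\proj_2(p(e)),\ \hat{r}(e)\bigr),
\]
which is well-defined since $p(\hat{r}(e)) = r(p(e)) \in B \times \{0\}$, is manifestly semi-algebraic, and satisfies $\proj_2 \circ p = \proj_1 \circ \Psi$. Hence once $\Psi$ is shown to be a semi-algebraic homeomorphism, the composite $\proj_2 \circ p$ is semi-algebraically identified with the first-factor projection of a product, which is trivially an SA bundle with generic fibre $E_0$.

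The main obstacle is proving that $\Psi$ is a semi-algebraic homeomorphism. To this end I would strengthen \refL{rHusemoller} to yield a semi-algebraic homotopy $\hat{h}_s\colon E \to E$ ($s\in[0,1]$) with $\hat{h}_0 = \id_E$ and $\hat{h}_1 = \hat{r}$, covering the straight-line homotopy $h_s(b,u) = (b,(1-s)u)$ from $\id$ to $r$. The same inductive glueing over a finite semi-algebraic trivialising cover of $p$ that yields $\hat{r}$ also yields the whole homotopy $\hat{h}_s$; each intermediate $\hat{h}_s$ is a bundle map from $p$ to $h_s^*p$ and, tracing through the glueing, restricts on each fibre to a semi-algebraic homeomorphism $p^{-1}(b,u) \iso p^{-1}(b,(1-s)u)$. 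Specialising at $s = 1$ makes $\hat{r}$ fibrewise a semi-algebraic homeomorphism onto $E_0$, so $\Psi$ has a semi-algebraic inverse given fibrewise by $\Psi^{-1}(u,e_0) = (\hat{r}|_{p^{-1}(b,u)})^{-1}(e_0)$ for $(b,0) = p(e_0)$. The chief difficulty lies precisely in verifying semi-algebraicity throughout this covering-homotopy argument---essentially the same bookkeeping already underlying \refL{rHusemoller}, but now carried out for the entire homotopy rather than just its endpoint, using the finiteness of the semi-algebraic trivialising cover supplied by \refD{PAfib} and a semi-algebraic partition of unity subordinate to it.
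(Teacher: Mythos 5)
Your proposal follows essentially the same route as the paper's proof: both show that $\proj_2\circ p$ is globally trivial over $J$ by invoking Lemma~\ref{L:rHusemoller} to lift the retraction $r(b,u)=(b,0)$ to $\hat r\colon E\to E$, and then packaging $\hat r$ together with $\proj_2\circ p$ into a bundle isomorphism $E\cong E_0\times J$ over $B\times J$ (the paper produces this map via the universal property of the pullback of $p_0:=p|_{E_0}$ along $\proj_1$; your $\Psi$ is the same map written out explicitly, up to swapping factors). The paper simply asserts that this map is a semi-algebraic homeomorphism, implicitly relying on the same Husemoller-style verification that $\hat r$ is fibrewise a semi-algebraic homeomorphism which you correctly flag as the bookkeeping to be checked.
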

\begin{proof}
Consider the inclusion $r_0\colon B\to B\times J$
defined by $r_0(b)=(b,0)$ and the projection on the first factor
$\proj_1\colon B\times J \to J$. Then $r_0\circ\proj_1$ 
is the map $r$ of \refL{rHusemoller}.
Let $E_0=p^{-1}(B\times \{0\})\subset E$.  This defines a bundle $p_0=p|E_0$ over $B\cong B\times\{0\}$.
The pullback of  $p_0$ along $\proj_1\colon B\times J\to B$
is the bundle $(p_0\times\id)\colon E_0\times J\to B\times J$.
By \refL{rHusemoller} and by universal property of the pullback we get a semi-algebraic
 homeomorphism
\[h\colon E\iso E_0\times J\]
such that $p=(p_0\times \id)\circ h$.
Let $\proj_2\colon B\times J\to J$ be the projection on the second factor.
The composite $\proj_2\circ(p_0\times \id)$ is a trivial SA bundle and,
 since $h$ is a homeomorphism, we deduce that
$pr_2\circ(p_0\times \id)\circ h=\proj_2\circ p$ is also an SA bundle.
\end{proof}
We are now ready for
\begin{proof}[Proof of \refP{compositebdl}]
Take a semi-algebraic covering of $X$ trivializing $q$.
By \cite[Theorem 2]{Har:loc} this open covering can be refined into a 
semi-algebraic triangulation. 
Each $d$-simplex $\sigma$ is the union of exactly $d+1$ $d$-dimensional semi-open cubes
whose vertices are the barycenters of the various faces of $\sigma$.
This gives a finite covering $\{J_\alpha\}$ of $X$. By  triviality of $q$ over $\sigma$,
we have semi-algebraic homeomorphisms
\[g_\alpha\colon J_\alpha\times F\iso q^{-1}(J_\alpha)
\]
such that $q\circ g_\alpha=\proj_1$.
To prove that $q\circ p$ is an SA bundle, it is enough to prove that its restriction
over $J_\alpha$ is one. Using the homeomorphism $g_\alpha$ over $J_\alpha$, this is a consequence of \refL{p2pSA}.
\end{proof}

\subsection{Properties of integration along the fiber}\label{S:propintfiber}
We first study naturality properties of integration along the fiber.
\begin{prop}\label{P:pushPB}
Consider a pullback diagram
\[
\xymatrix{
P
\ar[r]^{\hat f}
\ar[d]_{\hat\pi}
\ar@{}[rd]|-{\text{{pullback}}}
&
E
\ar[d]^{\pi}
\\
X
\ar[r]_{f}
&
B
}
\]
of semi-algebraic sets where $\pi$ is an oriented SA bundle.
For $\mu\in\omin(E)$,
\[f^*(\pi_*(\mu))=\hat\pi_*(\hat f^*(\mu)).\]
\end{prop}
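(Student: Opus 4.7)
The plan is to reduce the desired equality of PA forms to a chain-level naturality statement by evaluating both sides on an arbitrary semi-algebraic chain $\gamma\in\Ch_*(X)$. By \refP{fibch}, the two pushforwards are $\pi_*\mu=\fint_\Phi\mu$ and $\hat\pi_*(\hat f^*\mu)=\fint_{\hat\Phi}\hat f^*\mu$, where $\Phi\in\Chs_*(E\to B)$ and $\hat\Phi\in\Chs_*(P\to X)$ are the continuous chains associated to the bundles $\pi$ and $\hat\pi$ (and $\hat\pi$ is an oriented SA bundle by \refP{PBSAbundle}). Since $P=X\times_B E$ is a pullback and the orientation of each fibre $\hat\pi^{-1}(x)$ is transported from $\pi^{-1}(f(x))$ by the fibrewise homeomorphism $\hat f|_{\hat\pi^{-1}(x)}$, the uniqueness part of \refP{defpb} identifies $\hat\Phi$ with $f^*(\Phi)$. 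Unravelling definitions, one has
\[\langle f^*\pi_*\mu,\gamma\rangle=\langle\mu,\,f_*(\gamma)\ltimes\Phi\rangle,\qquad \langle\hat\pi_*\hat f^*\mu,\gamma\rangle=\langle\mu,\,\hat f_*(\gamma\ltimes f^*\Phi)\rangle,\]
so the proposition reduces to the chain-level identity
\[f_*(\gamma)\ltimes\Phi \;=\; \hat f_*\bigl(\gamma\ltimes f^*\Phi\bigr)\quad\text{in }\Ch_*(E). \qquad (\ast)\]

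To establish $(\ast)$, I would choose a semi-algebraic stratification $\{S_\alpha\}$ of $B$ trivializing $\Phi$ via maps $g_\alpha\colon\overline{S_\alpha}\times F\to E$, pull it back to $X$, and refine the result by \refL{smoothstrat} so that on each stratum $T$ the restriction $f|_T$ is either a diffeomorphism onto some $S_{\alpha(T)}$ of the same dimension or has rank strictly smaller than $\dim T$. Refining further so that $\gamma=\sum n_T\bbr{\overline T}$ is a combination of closures of top-dimensional strata, bilinearity together with the independence-of-trivialization guaranteed by \refL{Fubini} reduces $(\ast)$ to the case $\gamma=\bbr{\overline T}$ for a single such stratum. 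The trivialization of $f^*\Phi$ over $\overline T$ is then $\hat g_T(x,v)=(x,g_{\alpha(T)}(f(x),v))\in P$, and unwinding the pullback description of $P$ yields the key compatibility $\hat f\circ\hat g_T=g_{\alpha(T)}\circ(f|_{\overline T}\times\id_F)$. In the rank-maximal case, both sides of $(\ast)$ evaluate to $\epsilon\cdot g_{\alpha(T)*}(\bbr{\overline{S_{\alpha(T)}}\times F})$, with identical sign $\epsilon=\pm1$ recording whether $f|_T$ preserves or reverses orientation. In the rank-deficient case, formula \refN{E:define-fM} makes both $f_*\bbr{\overline T}$ and $(f|_{\overline T}\times\id_F)_*\bbr{\overline T\times F}$ vanish, so both sides of $(\ast)$ are zero.

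The main technical step is producing the common refinement that simultaneously trivializes $\Phi$ over $B$, realizes the dichotomy of \refL{smoothstrat} for $f$, aligns $\gamma$ with the stratification, and pulls back to a trivialization of $\hat\Phi=f^*\Phi$ over $X$. Once this is in place, $(\ast)$ becomes a straightforward bookkeeping of orientations through the identity $\hat f\circ\hat g_T=g_{\alpha(T)}\circ(f\times\id_F)$, and \refL{Fubini} guarantees that the answer is independent of all the choices made.
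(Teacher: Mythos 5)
Your proposal is correct and follows the same overall approach as the paper, which establishes $\hat\Phi = f^*\Phi$ via \refP{defpb} and then declares the formula ``straightforward.'' The substance of your argument is precisely the content of that ``straightforward'' step: you isolate the chain-level naturality identity $f_*(\gamma)\ltimes\Phi=\hat f_*(\gamma\ltimes f^*\Phi)$ and verify it by passing to a common refinement, using the well-definedness of $\ltimes$ (\refL{Fubini}, \refP{ltimes}) together with the compatibility $\hat f\circ\hat g_T=g_{\alpha(T)}\circ(f|_{\overline T}\times\id_F)$ coming from the pullback description of $P$. The only place to be a little careful — and you gesture at this — is that writing $f_*\bbr{\overline T}\ltimes\Phi$ requires $f_*\bbr{\overline T}$ to be adapted to a trivializing stratification of $B$, which may force an extra simultaneous refinement of both the stratification of $B$ and that of $X$ (the last clause of \refL{smoothstrat}, ensuring either $f(S)=f(S')$ or $\overline{f(S)}\cap f(S')=\emptyset$, is exactly what is needed). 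With that caveat acknowledged, the proof is sound and fills in exactly the details the paper leaves implicit.
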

\begin{proof}
It is clear that if $\Phi$ is the continuous chain associated to the oriented SA bundle $\pi$,
then $f^*(\Phi)$ is the continuous chain associated to $\widehat\pi$ (see \refD{pb}).
The formula is then straighforward.
\end{proof}

The following is an extension of the naturality. 
It establishes that two SA bundles over the same base with fibers defining the same semi-algebraic
{\current} have the same pushforward operation. This could occur for example when the two fibers are
compact manifolds that differ only by some codimension $1$
subspace.
\begin{prop}\label{P:natpush}
Let $\pi\colon E\to B$ be an oriented bundle of dimension $k$ and $\rho\colon E'\to E$
be a semi-algebraic map such that $\pi':=\pi\circ\rho$ is also an oriented bundle of dimension
$k$, as in the following diagram \zz{added the diagram}
\[
\xymatrix{E'\ar[rr]^-{\rho}\ar[rd]_{\pi'}&&E\ar[ld]^-{\pi}\\&B.}
\]
Suppose that there is a sign $\epsilon=\pm1$ such that for each $b\in B$,
\[\rho_*(\bbr{\pi'^{-1}(b)})=\epsilon\cdot\bbr{\pi^{-1}(b)}.\]
Then for any minimal form $\mu\in\omin^*(E)$,
\begin{equation}\label{E:natpush}
\pi'_*(\rho^*(\mu))=\epsilon\cdot\pi_*(\mu).
\end{equation}
\end{prop}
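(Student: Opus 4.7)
The plan is to reduce \refE{natpush} to a chain-level identity by evaluating both sides against an arbitrary $\gamma\in\Ch_*(B)$, and then to verify that identity fibrewise using \refL{Fubini}. Let $\Phi\in\Chs_k(E\to B)$ and $\Phi'\in\Chs_k(E'\to B)$ be the strongly continuous chains associated to $\pi$ and $\pi'$ by \refP{fibch}. For any $\gamma\in\Ch_*(B)$, unwinding \refD{integrfibre} together with the defining formula \refE{Lambda} and the adjointness $\langle \rho^*\mu,\delta\rangle=\langle \mu,\rho_*\delta\rangle$ gives
\[
\langle \pi_*(\mu),\gamma\rangle \,=\, \langle \mu,\gamma\ltimes\Phi\rangle,
\qquad
\langle \pi'_*(\rho^*\mu),\gamma\rangle \,=\, \langle \mu,\rho_*(\gamma\ltimes\Phi')\rangle.
\]
Hence it suffices to establish the chain-level identity $\rho_*(\gamma\ltimes\Phi')=\epsilon\cdot(\gamma\ltimes\Phi)$ in $\Ch_*(E)$ for every $\gamma\in\Ch_*(B)$.

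Next I would choose a common trivialization of the two bundles over a stratification of $B$ adapted to $\gamma$. Intersecting the finite semi-algebraic covers trivializing $\pi$ and $\pi'$ and refining, one obtains a stratification $\{S_\alpha\}$ of $B$ whose strata have closures lying inside trivializing opens of both bundles, and fine enough that $\gamma=\sum_\alpha n_\alpha\bbr{\overline{S_\alpha}}$, as in the construction preceding \refE{gammaPhi}. Denote by $g_\alpha\colon\overline{S_\alpha}\times F\to E$ and $g'_\alpha\colon\overline{S_\alpha}\times F'\to E'$ the resulting orientation-preserving trivializations. By \refE{gammaPhi},
\[
\gamma\ltimes\Phi \,=\, \sum_\alpha n_\alpha\, g_{\alpha*}(\bbr{\overline{S_\alpha}\times F}),
\qquad
\gamma\ltimes\Phi' \,=\, \sum_\alpha n_\alpha\, g'_{\alpha*}(\bbr{\overline{S_\alpha}\times F'}).
\]
For each $x\in\overline{S_\alpha}$, the orientation-preserving identifications $g_\alpha(\{x\}\times F)=\pi^{-1}(x)$ and $g'_\alpha(\{x\}\times F')=\pi'^{-1}(x)$ turn the hypothesis into the pointwise equality $(\rho\circ g'_\alpha)_*(\bbr{\{x\}\times F'})=\epsilon\cdot g_{\alpha*}(\bbr{\{x\}\times F})$. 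Reversing the orientation of $F$ when $\epsilon=-1$ puts this in the exact form required by \refL{Fubini} applied to the pair $(\rho\circ g'_\alpha,\,g_\alpha)$ over the base $\overline{S_\alpha}$, which upgrades the equality to $(\rho\circ g'_\alpha)_*(\bbr{\overline{S_\alpha}\times F'})=\epsilon\cdot g_{\alpha*}(\bbr{\overline{S_\alpha}\times F})$. Summing the resulting identities weighted by $n_\alpha$ produces $\rho_*(\gamma\ltimes\Phi')=\epsilon\cdot(\gamma\ltimes\Phi)$, as needed.

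The only genuine obstacle in this plan will be the bookkeeping around the simultaneous trivialization: producing a single stratification of $B$ over which both $\pi$ and $\pi'$ trivialize compatibly with their orientations, with closures of strata sitting in trivializing opens and with $\gamma$ expressible as an integer combination of the $\bbr{\overline{S_\alpha}}$. Once that is arranged, the remainder of the argument is an essentially formal application of the Fubini-type rigidity \refL{Fubini}, whose very purpose is to promote pointwise equalities of fibre pushforwards to a global equality of semi-algebraic chains over a product.
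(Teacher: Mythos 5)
Your proof is correct and takes essentially the same approach as the paper: reduce to a chain-level identity by evaluating against $\gamma\in\Ch_*(B)$, pass to a triangulation adapted to $\gamma$ that trivializes the bundle(s), and promote the fibrewise hypothesis to a global equality of chains via \refL{Fubini}. The only cosmetic difference is that the paper trivializes $\pi'$ alone and then observes that $\rho$ itself serves (up to sign) as a trivialization of the continuous chain $\Phi$ associated to $\pi$, leaning on the well-definedness of $\ltimes$ from \refP{ltimes}, whereas you trivialize both bundles and invoke \refL{Fubini} directly — the same underlying rigidity lemma in either case.
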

\begin{proof}
We need to prove that for $\gamma\in \Ch_l(B)$,
evaluation for two sides of \refE{natpush} is the same.
Take a triangulation of $B$ that trivializes $\pi'$ and that is compatible with $\gamma$.
By linearity, we can restrict to a single simplex and can suppose that $\gamma=\bbr{B}$
and $\pi':E'=F'\times B\to B$ is the projection.
Then by the hypothesis, 
\[\rho\colon  F'\times B\longrightarrow E\]
is a representative of the continuous chain $\Phi$
 associated to the oriented SA bundle $\pi$ (up to the sign $\epsilon$).
The proposition now follows by unravelling definitions.
\end{proof}

We have \zz{
                    the statement below on additivity and its proof are new; needed for the
                    formality paper
                    }
 an additivity formula:
\begin{prop}
Let $\pi\colon E\to B$ be an oriented SA bundle with $k$-dimensional
fiber.  Suppose that
\[E=\cup_{\lambda\in\Lambda}E_\lambda\]
where $\Lambda$ is a finite set, each $E_\lambda$ is a semi-algebraic
subset of $E$, and the restrictions
\[\pi_\lambda:=\pi|E_\lambda\colon E_\lambda\longrightarrow B
\]
are oriented SA bundles with $k$-dimensional
fiber, the orientation being induced by that of the fibers of $\pi$.

Suppose moreover that for $\lambda_1,\lambda_2$ distinct in $\Lambda$
and $b\in B$,
\[\dim(\pi_{\lambda_1}^{-1}(b)\cap \pi_{\lambda_2}^{-1}(b))\,<\,k.\]
Then for any $\mu\in\omin(E)$,
\[\pi_*(\mu)=\sum_{\lambda\in \Lambda}\pi_{\lambda*}(\mu|E_\lambda).\]
\end{prop}
\begin{proof}
Let 
\[\iota_\lambda\colon E_\lambda\hookrightarrow E\]
be the inclusions and set
\begin{eqnarray*}
E'&:=&\coprod_{\lambda\in\Lambda}E_\lambda\\
\rho&:=&(\iota_\lambda)_{\lambda\in\Lambda}\colon
E'\longrightarrow E\\
\pi'&:=&(\pi_\lambda)_{\lambda\in \Lambda}\colon E'\longrightarrow B.
\end{eqnarray*}
We then have a diagram as in \refP{natpush}.
The hypotheses imply that $\rho$ induces a degree $1$ map between the
fibres. Proposition \ref{P:natpush} implies that 
\[\pi_*(\mu)=\pi'_*(\rho^*(\mu))=\sum_{\lambda\in
  \Lambda}\pi_{\lambda*}(\iota^*_\lambda(\mu)).\]
\end{proof}

We have also the following \emph{fiberwise Stokes formula}.

\begin{prop}\label{P:dpush}
Let $\pi\colon E\to B$ be an oriented SA bundle with $k$-dimensional fiber and let
$\pi^\partial\colon E^\partial \to B$ be its fiberwise boundary.
For $\mu\in\omin(E)$,
\[d(\pi_*(\mu))=\pi_*(d(\mu))+(-1)^{\deg(\mu)-k}\pi_*^{\partial}(\mu|E^\partial).\]
\end{prop}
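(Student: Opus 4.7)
The plan is to reduce the claim directly to Lemma~\ref{L:deltaompa} (the coboundary rule for fibre integrals $\fint_\Phi\mu$), together with Proposition~\ref{P:fibch} (which identifies the boundary of the continuous chain of an oriented SA bundle with the continuous chain of its fibrewise boundary).

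First I would unwind the definitions. By \refD{integrfibre}, $\pi_*(\mu)=\fint_\Phi \mu$, where $\Phi\in\Chs_k(E\to B)$ is the continuous chain with $\Phi(b)=\bbr{\pi^{-1}(b)}$; in particular $\deg(\Phi)=k$. Applying \refL{deltaompa} gives
\[
d(\pi_*\mu)\;=\;\fint_\Phi d\mu\;+\;(-1)^{\deg(\mu)-k}\fint_{\partial \Phi}\mu.
\]
The first summand is $\pi_*(d\mu)$ by definition, so the only thing left is to identify the second summand with $\pi^\partial_*(\mu|_{E^\partial})$.

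For this, let $\Phi^\partial\in\Chs_{k-1}(E^\partial\to B)$ be the continuous chain associated to the fibrewise boundary $\pi^\partial\colon E^\partial\to B$, and let $j\colon E^\partial\hookrightarrow E$ denote the inclusion. For each $b\in B$, applying \refT{Stokes} to the compact oriented semi-algebraic manifold $\pi^{-1}(b)$ yields
\[
(\partial \Phi)(b)\;=\;\partial\bbr{\pi^{-1}(b)}\;=\;\bbr{\partial\pi^{-1}(b)}\;=\;j_*\bbr{(\pi^\partial)^{-1}(b)}\;=\;j_*(\Phi^\partial(b)),
\]
so $\partial\Phi=j_*\Phi^\partial$ as strongly continuous chains (this is essentially the content of the boundary statement in \refP{fibch}). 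Since a trivialization of $\Phi^\partial$ induces a trivialization of $j_*\Phi^\partial$ by postcomposing with $j$, a direct inspection of the defining formula~\refN{E:gammaPhi} gives $\gamma\ltimes(j_*\Phi^\partial)=j_*(\gamma\ltimes\Phi^\partial)$ for every $\gamma\in\Ch_*(B)$. Consequently, for any such $\gamma$,
\[
\langle\,\fint_{\partial\Phi}\mu,\gamma\rangle\;=\;\langle\mu,j_*(\gamma\ltimes\Phi^\partial)\rangle\;=\;\langle j^*\mu,\gamma\ltimes\Phi^\partial\rangle\;=\;\langle\fint_{\Phi^\partial}(\mu|_{E^\partial}),\gamma\rangle\;=\;\langle\pi^\partial_*(\mu|_{E^\partial}),\gamma\rangle,
\]
which is the desired identification and completes the proof.

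The substantive input is Lemma~\ref{L:deltaompa}, which already packages the fibrewise Stokes phenomenon in the language of continuous chains; the main bookkeeping obstacle is simply tracking where $\partial\Phi$ and $\Phi^\partial$ live (one lives in $E$, the other in $E^\partial$) and checking that pushforward along the inclusion $j$ commutes with the $\ltimes$ operation so that the restriction $\mu|_{E^\partial}$ matches up correctly. Once that bookkeeping is done, the sign $(-1)^{\deg(\mu)-k}$ in the statement is inherited verbatim from Lemma~\ref{L:deltaompa}, so no additional sign computation is required.
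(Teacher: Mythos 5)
Your proposal is correct and follows the same route as the paper, which simply states that \refP{dpush} ``is a direct consequence of \refL{deltaompa}.'' You have merely filled in the bookkeeping the paper leaves implicit: using \refP{fibch} to identify $\partial\Phi$ with $j_*\Phi^\partial$ and checking that $j_*$ commutes with $\ltimes$ so that the restriction $\mu|_{E^\partial}$ lands in the right place.
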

\begin{proof}
This is a direct consequence of \refL{deltaompa}.
\end{proof}

The following is a formula for a double pushforward.\fn{\pl We would have like to have
a general formula like for $E\
\stackrel{\pi_1}\to B\stackrel{\pi_2}\to X$ a composite of two oriented bundles, then
$\pi_{2*}\pi_{1*}=(\pi_2\pi_1)_*$ but there is a problem even to state it because
the intermediate form $\pi_{1*}(\mu)$ is not minimal, so we cannot pushforward it further.}

\begin{prop}\label{P:doublepush}
Let $\pi\colon E\rightarrow B$ be an oriented SA bundle. Let $N$ be an oriented compact semi-algebraic manifold,
and consider the projection
\[\proj_1\colon E\times N\longrightarrow E.\]
For $\mu\in\omin(E)$ and $\nu\in\omin(N)$,
\[(\pi\circ\proj_1)_*(\mu\times\nu)=\pi_*(\mu)\cdot\langle\nu,\bbr{N}\rangle.\]
\end{prop}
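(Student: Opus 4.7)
The plan is to reduce the claim to the defining pairing formula of the cross product of minimal forms in \refP{omincross}. The key observation is that if $\Phi_\pi\in\Chs_k(E\to B)$ denotes the continuous chain associated to the oriented SA bundle $\pi$ of fibre dimension $k$, then the oriented SA bundle $\pi\circ\proj_1\colon E\times N\to B$ of fibre dimension $k+\dim(N)$ has associated continuous chain
\[
\Phi_{\pi\circ\proj_1}(b)\;=\;\bbr{\pi^{-1}(b)\times N}\;=\;\Phi_\pi(b)\times\bbr{N}
\]
for every $b\in B$, with the product orientation. This identification is visible by pairing any local trivialization $h_\alpha\colon U_\alpha\times F\iso\pi^{-1}(U_\alpha)$ of $\pi$ with $\id_N$ to obtain a local trivialization $h_\alpha\times\id_N$ of $\pi\circ\proj_1$.

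The next step is to show the compatibility of $\ltimes$ with this fibrewise product: for every $\gamma\in\Ch_l(B)$,
\[
\gamma\ltimes\Phi_{\pi\circ\proj_1}\;=\;(\gamma\ltimes\Phi_\pi)\times\bbr{N}\quad\text{in }\Ch_{l+k+\dim(N)}(E\times N).
\]
This is a direct computation: refine a triangulation of $B$ so that it both trivializes $\Phi_\pi$ and realises $\gamma=\sum n_\alpha\bbr{\overline{S_\alpha}}$, then apply formula \refN{E:gammaPhi}. The resulting identity
\[
\sum_\alpha n_\alpha (h_\alpha\times\id_N)_*(\bbr{\overline{S_\alpha}\times F\times N})\;=\;\Bigl(\sum_\alpha n_\alpha h_{\alpha*}(\bbr{\overline{S_\alpha}\times F})\Bigr)\times\bbr{N}
\]
is exactly what is needed, and follows factor by factor from the characterising formula of \refP{chaincross} for the cross product of chains.

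To conclude, I would pair the two sides of the claimed equality with an arbitrary chain $\gamma\in\Ch(B)$. Using \refD{integrfibre}, the identity just established, and the pairing formula \refN{E:omincross} for the minimal cross product, one computes
\begin{align*}
\langle (\pi\circ\proj_1)_*(\mu\times\nu),\gamma\rangle
&=\langle \mu\times\nu,\gamma\ltimes\Phi_{\pi\circ\proj_1}\rangle\\
&=\langle \mu\times\nu,(\gamma\ltimes\Phi_\pi)\times\bbr{N}\rangle\\
&=\langle\mu,\gamma\ltimes\Phi_\pi\rangle\cdot\langle\nu,\bbr{N}\rangle\\
&=\langle\pi_*(\mu),\gamma\rangle\cdot\langle\nu,\bbr{N}\rangle .
\end{align*}
Since $\gamma$ was arbitrary, this yields the claimed equality of PA forms (with both sides automatically vanishing unless $\deg(\nu)=\dim(N)$, by the degree conventions on the pairing). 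The only delicate point is the bookkeeping of trivializations and compatible stratifications entering the second step; once these are chosen coherently for $\Phi_\pi$ and $\gamma$, the argument reduces cleanly to the product rule of \refP{chaincross}, which is the main (but very mild) technical obstacle.
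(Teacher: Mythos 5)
Your proof is correct and follows essentially the same route as the paper: both reduce the pairing $\langle(\pi\circ\proj_1)_*(\mu\times\nu),\gamma\rangle$ to a product of two pairings via the defining formula \refN{E:omincross} of the cross product of minimal forms. The only cosmetic difference is that the paper first reduces, by naturality (\refP{pushPB}), to the case $\gamma=\bbr{B}$ with $B$ a compact oriented manifold, whereas you work directly with an arbitrary $\gamma$ after spelling out the compatibility $\gamma\ltimes\Phi_{\pi\circ\proj_1}=(\gamma\ltimes\Phi_\pi)\times\bbr{N}$, which the paper leaves implicit.
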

\begin{proof}
By naturality it is enough to prove that the two forms we are comparing evaluate to the same thing on $\bbr{B}$ when
$B$ is an oriented compact semi-algebraic manifold.
In that case\fn{\pl the sign in equation (5) of Ptop 5.10 in RHT-SAS need to be corrected. DONE! 11july}
\begin{eqnarray*}
\langle(\pi\circ\proj_1)_*(\mu\times\nu)\,,\,\bbr{B}\rangle
&=&
\langle\mu\times\nu\,,\,\bbr{E\times N}\rangle\\
&\stackrel{\textrm{\refN{E:omincross} in \refP{omincross}}}{=}&
\langle\mu\,,\,\bbr{E}\rangle\cdot
\langle\nu\,,\,\bbr{N}\rangle\\
&=&
\langle\pi_*(\mu)\,,\,\bbr{B}\rangle\cdot
\langle\nu\,,\,\bbr{N}\rangle.
\end{eqnarray*}
\end{proof}

The following is useful in showing that certain integrals along the fiber vanish.
\begin{prop}\label{P:pushfactcodim}
Let $\pi\colon E\to B$ be an oriented SA bundle
that factors as $\pi=q\circ \rho$ with
\[E\stackrel{\rho}\longrightarrow Z\stackrel{q}{\longrightarrow} B.
\]
Suppose that for all  $b\in B$,
\[\dim(q^{-1}(b))<\dim(\pi^{-1}(b)).\]
Then for all minimal forms $\mu\in\omin(Z)$, we have $\pi_*(\rho^*(\mu))=0$.
\end{prop}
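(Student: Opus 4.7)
The plan is to unwind the definitions of integration along the fibre and of the action $\ltimes$, then reduce the vanishing to a dimension count that allows us to invoke \refP{dimC}. Let $k=\dim(\fibre(\pi))$ and let $\Phi\in\Chs_k(E\stackrel{\pi}\to B)$ be the continuous chain associated to the oriented SA bundle $\pi$ as in \refP{fibch}. Since $\ompa^*(B)\subset\Coch^*(B)=\hom(\Ch_*(B),\BR)$, it is enough to show that $\langle \pi_*(\rho^*\mu),\gamma\rangle=0$ for every $\gamma\in\Ch_l(B)$ and every $l$.

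First I would combine the definition of integration along the fibre (\refD{integrfibre}), the defining formula \refE{Lambda} for $\fint_\Phi$, and the adjunction $\langle \rho^*\nu,\eta\rangle=\langle \nu,\rho_*\eta\rangle$ (from the definition of the induced map on cochains in \refS{SACochains}) to get
\[
\langle \pi_*(\rho^*\mu),\gamma\rangle \;=\; \langle \rho^*\mu,\gamma\ltimes\Phi\rangle \;=\; \langle \mu,\rho_*(\gamma\ltimes\Phi)\rangle.
\]
Hence the proposition reduces to the claim that $\rho_*(\gamma\ltimes\Phi)=0$ in $\Ch_{l+k}(Z)$.

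Next I would control the support of this chain. From formula \refE{gammaPhi} together with the commuting triangle in \refD{scc}, any trivialization $\{(S_\alpha,F_\alpha,g_\alpha)\}$ of $\Phi$ satisfies $g_\alpha(\overline{S_\alpha}\times F_\alpha)\subset\pi^{-1}(\overline{S_\alpha})$, so $\spt(\gamma\ltimes\Phi)\subset\pi^{-1}(\spt(\gamma))$. Using $q\circ\rho=\pi$, this yields
\[
\spt\bigl(\rho_*(\gamma\ltimes\Phi)\bigr)\;\subset\;\rho\bigl(\pi^{-1}(\spt(\gamma))\bigr)\;\subset\;q^{-1}(\spt(\gamma)).
\]
Now I would apply the semi-algebraic triviality theorem (quoted after \refL{smoothstrat}) together with \refP{strat} to obtain a finite semi-algebraic stratification $\{B_\beta\}$ of $B$ such that, for each $\beta$, $q^{-1}(B_\beta)$ is a finite disjoint union of semi-algebraic subsets each of which is a trivial bundle over $B_\beta$ with some fibre $F_{\beta,j}$ of dimension $\dim F_{\beta,j}\le\max_{b\in B}\dim q^{-1}(b)<k$. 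Consequently
\[
\dim\bigl(q^{-1}(\spt(\gamma))\bigr)\;\le\;\dim(\spt(\gamma))+\max_{b\in B}\dim q^{-1}(b)\;<\;l+k.
\]
Since $q^{-1}(\spt(\gamma))$ is a semi-algebraic set of dimension strictly less than $l+k$, \refP{dimC} gives $\Ch_{l+k}\bigl(q^{-1}(\spt(\gamma))\bigr)=0$, so $\rho_*(\gamma\ltimes\Phi)=0$ and the proposition follows.

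The only step requiring real care, and therefore the main obstacle, is the dimension estimate on $q^{-1}(\spt(\gamma))$: although $q$ is not assumed to be an SA bundle, the hypothesis that every one of its fibres has dimension $<k$ must be leveraged, via the semi-algebraic triviality of an arbitrary SA map over strata of its target, to force $\dim q^{-1}(C)<\dim C+k$ for any semi-algebraic $C\subset B$. Once this is established the rest is formal.
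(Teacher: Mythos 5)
Your proof is correct and rests on the same key dimension estimate as the paper's, namely that $\dim q^{-1}(C)\le\dim C+\sup_{b}\dim q^{-1}(b)<\dim C+k$ via semi-algebraic local triviality. The paper first reduces by naturality to $\gamma=\bbr{B}$ with $B$ a compact manifold and $\pi$ trivialized, then concludes that $\rho^*\mu$ must vanish for degree reasons; you instead pass to the chain side via the adjunction $\langle\pi_*(\rho^*\mu),\gamma\rangle=\langle\mu,\rho_*(\gamma\ltimes\Phi)\rangle$ and kill $\rho_*(\gamma\ltimes\Phi)$ by \refP{dimC} — a dual formulation of the same argument, and arguably a slightly more direct one since it avoids the triangulation reduction.
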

\begin{proof}
We need to prove that for $\gamma\in\Ch_l(B)$,
\[\langle\pi_*(\rho^*(\mu))\,,\,\gamma\rangle=0.
\]
Passing to a suitable triangulation of $B$, by naturality it is enough
 to prove this when $B$ is a compact oriented semi-algebraic manifold, $\gamma=\bbr{B}$, and
$\pi$ is trivial over $B$ with $E=F\times B$ for a compact oriented manifold $F$.
From the hypothesis we get that
\[\dim(Z)\leq\dim B+\sup_{b\in B}(\dim q^{-1}(b))<\dim (B\times F).\]
For degree reasons, $\rho^*(\mu)=0$ in $\omin(F\times{B})$.
\end{proof}

Our last proposition concerns multiplicative properties of the pushforward.
\begin{prop}\label{P:prodpushPB}
Consider a pullback diagram
\[
\xymatrix{
P\ar[r]^{q_2}\ar[d]_{q_1}
\ar@{}[rd]|-{\textrm{{pullback}}}
&E_2\ar[d]^{\pi_2}\\
E_1\ar[r]_{\pi_1}&B
}
\]
of semi-algebraic sets. Assume that $\pi_1$ and $\pi_2$ are oriented SA bundles
and set $\pi=\pi_i\circ q_i$.
Let $\mu_i\in\omin(E_i)$ be a minimal form. Then $\pi$ is an oriented SA bundle and
\[
\pi_{1*}(\mu_1)\wedge\pi_{2*}(\mu_2)=\pi_*(q_1^*(\mu_1)\wedge q_2^*(\mu_2)).\]
\end{prop}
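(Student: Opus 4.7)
The plan is to verify first that $\pi$ is an oriented SA bundle, and then to reduce the multiplicativity formula to the naturality of the pushforward under base change (\refP{pushPB}) combined with the compatibility between the external and internal products on minimal/PA forms.

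For the bundle structure, I would argue locally: given $b\in B$, choose a semi-algebraic neighborhood $U$ over which both $\pi_1$ and $\pi_2$ are trivialized, so $\pi_i^{-1}(U)\cong U\times F_i$ with $F_i$ compact oriented semi-algebraic manifolds. Then $\pi^{-1}(U)=\pi_1^{-1}(U)\times_U \pi_2^{-1}(U)\cong U\times F_1\times F_2$, showing $\pi$ is SA locally trivial with oriented fibre $F_1\times F_2$ (equipped with the product orientation). In particular, if $\Phi_i\in\Chs(E_i\to B)$ is the continuous chain associated to $\pi_i$ as in \refP{fibch}, then the continuous chain $\Phi$ associated to $\pi$ satisfies $\Phi(b)=\bbr{\pi_1^{-1}(b)\times\pi_2^{-1}(b)}=\bbr{\pi_1^{-1}(b)}\times\bbr{\pi_2^{-1}(b)}$.

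For the formula, the key observation is that the pullback diagram defining $P$ fits in the commutative square
\[\xymatrix{
P\ar[r]^-{\hat\Delta}\ar[d]_-{\pi}&E_1\times E_2\ar[d]^-{\pi_1\times\pi_2}\\
B\ar[r]_-{\Delta_B}&B\times B
}\]
where $\hat\Delta=(q_1,q_2)$. This square is a pullback, and applying pullback of continuous chains (\refD{pb}) yields $\Delta_B^*(\Phi_1\times\Phi_2)=\Phi$. Now compute:
\begin{align*}
\pi_{1*}(\mu_1)\wedge\pi_{2*}(\mu_2)
&=\Delta_B^*\bigl(\pi_{1*}(\mu_1)\times\pi_{2*}(\mu_2)\bigr)\\
&=\Delta_B^*\left(\fint_{\Phi_1\times\Phi_2}\mu_1\times\mu_2\right)\\
&=\fint_{\Delta_B^*(\Phi_1\times\Phi_2)}\hat\Delta^*(\mu_1\times\mu_2)\\
&=\fint_{\Phi}\hat\Delta^*(\mu_1\times\mu_2),
\end{align*}
where the second equality uses the definition of the cross product on PA forms (see \refE{defomPAcross} in the proof of \refP{omPAcross}), and the third equality is the naturality formula $g^*\fint_\Psi\mu=\fint_{g^*\Psi}\hat g^*\mu$ stated just after \refP{ompasbgrp}, applied with $g=\Delta_B$.

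It remains to identify $\hat\Delta^*(\mu_1\times\mu_2)=q_1^*(\mu_1)\wedge q_2^*(\mu_2)$ as minimal forms on $P$. Since $\hat\Delta=(q_1,q_2)$ factors as the diagonal $P\to P\times P$ followed by $q_1\times q_2\colon P\times P\to E_1\times E_2$, we have $\hat\Delta^*(\mu_1\times\mu_2)=\Delta_P^*\bigl((q_1\times q_2)^*(\mu_1\times\mu_2)\bigr)=\Delta_P^*(q_1^*\mu_1\times q_2^*\mu_2)$, which by the definition of the wedge product on $\omin^*(P)$ equals $q_1^*\mu_1\wedge q_2^*\mu_2$. (The compatibility $(q_1\times q_2)^*(\mu_1\times\mu_2)=q_1^*\mu_1\times q_2^*\mu_2$ follows from the characterizing property \refN{E:omincross} together with the uniqueness given by \refP{strKu-min}.) Plugging this in gives $\pi_*(q_1^*\mu_1\wedge q_2^*\mu_2)$, completing the proof. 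The one spot that requires care is this last identification of the pullback of an external product as a wedge of pullbacks; everything else is bookkeeping on top of the naturality properties established earlier in the section.
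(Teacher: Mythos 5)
Your proof is correct and follows essentially the same strategy as the paper's: both reduce the multiplicativity formula to the pullback square
\[
\xymatrix{
P\ar[r]^-{\hat\Delta}\ar[d]_-{\pi}&E_1\times E_2\ar[d]^-{\pi_1\times\pi_2}\\
B\ar[r]_-{\Delta_B}&B\times B
}
\]
together with the factorization $\hat\Delta=(q_1\times q_2)\circ\Delta_P$ and the naturality of fiberwise integration under base change. The one place you genuinely diverge is in establishing that $\pi$ is an oriented SA bundle: the paper cites \refP{compositebdl} (composite of SA bundles), using that $q_1$ is a bundle by \refP{PBSAbundle} and then invoking the Husemoller-style argument behind \refP{compositebdl}; you instead trivialize $\pi$ directly over the finite common refinement of trivializing covers for $\pi_1$ and $\pi_2$, obtaining fiber $F_1\times F_2$ explicitly. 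For a fiber product over a common base, your direct local argument is simpler and avoids the machinery of \refP{compositebdl}; the paper's route is uniform with the rest of Section~\ref{S:SAbundle} but overshoots what is needed here. The remaining steps — identifying $\pi_{1*}(\mu_1)\times\pi_{2*}(\mu_2)=(\pi_1\times\pi_2)_*(\mu_1\times\mu_2)$, applying naturality (your general $g^*\fint_\Phi\mu=\fint_{g^*\Phi}\hat g^*\mu$ is the same fact as \refP{pushPB} specialized to the SA bundle case), and checking $(q_1\times q_2)^*(\mu_1\times\mu_2)=q_1^*\mu_1\times q_2^*\mu_2$ via \refN{E:omincross} and \refP{strKu-min} — match the paper, and your explicit justification of that last compatibility is a welcome bit of care that the paper leaves implicit.
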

\begin{proof}
By \refP{compositebdl}, 
$\pi$ is an oriented SA bundle.
We have a pullback
\[
\xymatrix{
P
\ar[r]^{\pi}
\ar[d]_{\hat\Delta}
&
B
\ar[d]^{\Delta}\\
E_1\times E_2\ar[r]_{\pi_1\times\pi_2}&B\times B
}
\]
with $\hat\Delta=(q_1\times q_2)\circ\Delta_P$, where $\Delta_P\colon P\to P\times P$
is the diagonal map.

Using \refP{pushPB}, we have
\begin{eqnarray*}
\pi_{1*}(\mu_1)\wedge\pi_{2*}(\mu_2)
&=&
\Delta^*(\pi_{1*}(\mu_1)\times\pi_{2*}(\mu_2))
\\
&=&
\Delta^*((\pi_{1}\times\pi_2)_*(\mu_1\times\mu_2))
\\
&=&\pi_*(\hat\Delta^*(\mu_1\times\mu_2))
\\
&=&\pi_*(\Delta_P^*((q_1\times q_2)^*(\mu_1\times\mu_2))
\\
&=&\pi_*(q_1^*(\mu_1)\wedge q_2^*(\mu_2)).
\end{eqnarray*}
\end{proof}

An interesting example of an oriented SA bundle which is not smooth is the
  projection $\pi\colon C[n+q]\to C[n]$ between
 Fulton-MacPherson compactifications of configurations spaces in $\BR^N$. See \cite{LaVo:for} for details.

\section{Differences between this paper and  \cite[Appendix A]{KoSo:def}}\label{S:diffKS}


The present paper is to a large extent a development of the ideas in \cite[Appendix 8]{KoSo:def}.
However, we have not succeeded in going as far as Kontsevich and Soibleman suggest might be possible.
We discuss this in detail in the present section.


\subsection{The scope of the equivalence $\ompa^*\simeq\Apl(-;\BR)$}


It is suggested in \cite[Appendix 8]{KoSo:def} that the weak equivalence $\ompa^*(X)\simeq\Apl(X;\BR)$ holds
for all semi-algebraic sets, and even for a larger class of spaces called \emph{PA spaces} (this is why our forms are called PA forms, following the notation in \cite[Appendix 8]{KoSo:def}).
Our proof, however, only holds for compact semi-algebraic sets. It seems reasonable though that a modification of the proof
of \refP{monCHSing} might produce the desired equivalence for noncompacts sets as well. The problem is that we require polyhedra to be collapsible in the Poincar\'e lemma, but such a notion does not seem to exist  for semi-open simplices.


\subsection{Strongly and weakly continuous families of chains}


The definition of a continuous family of chains in \cite[Definition 22]{KoSo:def} is different from ours (\refD{scc}). The definition in that paper is probably equivalent to the following definition of weakly continuous family of chains
 (we say ``probably" because the condition (d) in \cite[Definition 22]{KoSo:def} is somewhat informal and we have interpreted it as the condition (wc) below).
\begin{definition}
\label{D:wss}
Let $f\colon Y\to X$ be a semi-algebraic map.
A \emph{weakly continuous family of chains} or, shortly, a \emph{weakly continuous chain} of dimension $l$ over $X$ along $f$ is a map
\[\Phi\colon X\longrightarrow\Ch_l(Y)\]
such that there exist
\begin{enumerate}
\item a finite semi-algebraic stratification $\{S_\alpha\}_{\alpha\in I}$ of $X$, and, for each $\alpha\in I$,
\item an oriented compact  semi-algebraic manifold $F_\alpha$ of dimension $l$ and
\item a semi algebraic map $g_\alpha \colon{S_\alpha}\times F_\alpha\to Y$
satisfying
\begin{enumerate}
\item the diagram

\[\xymatrix{{S_\alpha}\times F_\alpha\ar[r]^{g_\alpha}\ar[d]_{\proj}&Y\ar[d]^f\\
{S_\alpha}\ar@{^(->}[r]&X}
\]
commutes, and
\item for each $\alpha\in I$ and $x\in{S_\alpha}$,
$\Phi(x)=g_{\alpha*}(\bbr{\{x\}\times F_\alpha})$, 

\end{enumerate}
\end{enumerate}
and such that
\[
\tag{wc} \textrm{if } x_n\to x \textrm{ in } X  \textrm{ then } \Phi(x_n)\rightharpoonup \Phi(x)\textrm{ in }C_k(Y).
\]

\end{definition}

Notice that if $\Phi$ is a strongly continuous chain then it is easy to show that for any
semi-algebraic curve $\theta\colon[0,1]\to X$ and for any sequence $\epsilon_n$ in $[0,1]$ converging to $0$,
we have $\Phi(\theta(\epsilon_n))\rightharpoonup \Phi(\theta(0))$.
So if we had required that the sequence $(x_n)$ live on a semi-algebraic curve in the (wc) condition,
then strongly continuous would imply weakly continuous.  But with the present definition, we do not know whether
such an implication is true, although this seems likely.

We now give two examples of  weakly continuous chains that are not strongly continuous.
\begin{example}
Set $X=\{(u,v)\in\BR^2:0\leq u\leq v\leq 1\}$, $Y=X\times [0,1]^2$, and let $f\colon Y\to X$ be the projection on
the first factor. Define $\Psi\colon X\to\Ch_0(Y)$ by $\Psi(0,0)=0$ and, for $(u,v)\in X$ with $v>0$,
\[\Psi(u,v)=\bbr{\{(u,v,u/v)\}}\times(\bbr{\{0\}}-\bbr{\{uv\}}).\]
This parametrized family is trivialized under the stratification $\{\{(0,0)\},X\setminus\{(0,0)\}\}$.
Using the flat norm we have that $\flatnorm(\Psi(u,v))\leq uv$ and this implies (wc).
Therefore $\Psi$ is a weakly continuous chain.  To see that it is not strongly continuous, consider the \emph{limit set} of the supports of $\Psi(u,v)$ as $(u,v)\to (0,0)$, by which we mean the set
\[\cap_{\epsilon>0}\left(\overline{\cup_{\max(|u|,|v|)<\epsilon}\spt\Psi(u,v)}\right).\]
Indeed this limit set is  $\{(0,0)\}\times[0,1]\times\{0\}$  which is one-dimensional, and this is impossible
in the case of a strongly continuous $0$-chain.
\end{example}

\begin{example}
Set $X$, $Y$, and $f\colon Y\to X$ to be the same as in the previous example. Define $\Phi\colon X\to\Ch_1(Y)$ by $\Phi(0,0)=0$ and, for $(u,v)\in X$ with $v>0$,
\[\Phi(u,v)=\bbr{\{(u,v)\}\times[0,u/v]}\times(\bbr{\{0\}}-\bbr{\{uv\}}).\]
The trivializing set $\{\{(0,0)\},X\setminus\{(0,0)\}\}$, same as before. Similarly we have
  $\flatnorm(\Phi(u,v))\leq 3uv$ which implies (wc). But $\Phi$ is not strongly continuous since its boundary  is not. Indeed, $\partial(\Phi)$ is a sum of a strongly continuous chain and the chain $\Psi$ from the previous example.
\end{example}

As will appear later in the discussion, strongly continuous chains have certain disadvantages and the reader might wonder why
we did not use weakly continuous chain instead. The problem with weakly continuous chains
was the proof of the Leibniz formula
\[\partial(\gamma\ltimes\Phi)=\partial(\gamma)\ltimes\Phi\pm\gamma\ltimes \partial(\Phi)\]
which is needed in \refL{deltaompa} (proving that $\ompa^*$ is a cochain subcomplex).
Another problem with the condition (wc) is that, as demonstrated at the beginning of \refS{conv}, weak convergence
in even the flat semi-norms is not well adapted in the study of semi-algebraic {\current}s.
In \refE{limPASA} of the proof of \refL{contcodimPA} (continuity principle for PA forms),
we used strong continuity property to deduce from $\gamma_n\toSA\gamma$ that
$\gamma_n\ltimes\Phi\toSA\gamma\ltimes\Phi$ (in the special case of a constant continuous chain $\Phi=\fib{F})$.
 If we had worked with
weakly contimuous chains, we would first have had to prove from (wc)
that  $\gamma_n\ltimes\Phi\toSA\gamma\ltimes\Phi$, which seems nontrivial.  Alternatively, we would have needed to show 
that  $\gamma_n\ltimes\Phi \rightharpoonup\gamma\ltimes\Phi$ and 
to deduce that $\langle\fint_\Phi\mu,\gamma_n\rangle\to\langle\fint_\Phi\mu,\gamma\rangle$
without SA convergence, and hence without using \refP{contSAmin}.  This also seems quite difficult
because of the example at the beginning of \refS{conv}.

Kontsevich suggested to us  another possible
definition of continuity which could perhaps be used instead of (wc).
First, it should be enough to check continuity by restricting the parametrized family
to any semi-algebraic curve in $X$ (although the proof of Leibniz formula could now be problematic), so we now restrict our attention to the case of semi-algebraic maps $f\colon Y\to [0,1]$.
A family of $k$-chains in $Y$ parametrized by $[0,1]$ gives a $k+1$-dimensional smooth
oriented submanifold $M_0\subset Y$, with multiplicities on each connected component, such that
$f|_{M_0}$ is a submersion over all but a finite number of points $[0,1]$ and such that
over a regular value $t\in[0,1]$, $\Phi(t)=\bbr{M_0\cap f^{-1}(t)}$.
There is then a $k+1$-current $T=\bbr{M_0}\in \Ch_{k+1}(Y)$.
A continuity condition could then be expressed as
\[
\forall t\in [0,1]:\,\dim(\spt(T)\cap f^{-1}(t))\leq k\textrm{ and }
\dim(\spt(\partial T)\cap f^{-1}(t))\leq k-1.
\]


\subsection{Complexity of the proof of the Poincar\'e lemma}\label{S:complexityPoincare}


The reader might be surprised by the complexity of our proof of the Poincar\'e lemma \ref{L:Poincare} and
in particular by our need for the Hauptvermutung for semi-algebraic spaces.
 Let us explain why following an obvious and more natural line of proof does not work.
 The simplest idea would be to convert a semi-algebraic homotopy $h\colon X\times I\to X$, $I=[0,1]$, into a homotopy operator
 \[\Theta_h\colon\ompa^*(X)\longrightarrow\ompa^{*-1}(X)\]
 such that $\delta\Theta_h\pm\Theta_h\delta=(h(-,1))^*-(h(-,0))^*$.
To try to construct such a $\Theta_h$, first notice that given a continuous chain $\Phi\in\Chs_l(Y\to X)$ and any
 cochain $\lambda\in\Ch^{k+l}(Y)$ we can define a cochain $\fint_\Phi\lambda\in\Ch^k(X)$ by 
 \[\langle\fint_\Phi\lambda,\gamma\rangle=\langle\lambda,\gamma\ltimes \Phi\rangle\]
  (even if $\lambda$ is not a minimal form as in \refE{Lambda}.) For a PA form $\alpha=\fint_\Phi\mu\in\ompa^k(X)$
  with $\mu\in\omin^{k+l}(X)$, we would like to define the cochain $\Theta_h(\alpha)$ through the equations
 \begin{eqnarray}
\label{E:Thetah}
 \langle\Theta_h(\fint_\Phi\mu)\,,\,\gamma\rangle&=&
 \langle\fint_\Phi\mu\,,\,h_*(\gamma\times\bbr{I})\rangle
 \\\notag&=&
  \langle h^*(\fint_\Phi\mu)\,,\,\gamma\times\bbr{I}\rangle
   \\\notag&=&
  \langle \fint_{h^*(\Phi)}h^*(\mu)\,,\,\gamma\times\bbr{I}\rangle
   \\\notag&=&
  \langle\fint_{\fib{I}}\left( {\fint_{h^*(\Phi)}}h^*(\mu)\right)\,,\,\gamma\rangle
 \end{eqnarray}
 where $\fib{I}$ is the constant chain in $\Chs_1(X\times I\stackrel{\proj}\to X)$.

 Suppose we knew that the operation $\ltimes$ defined in \refP{ltimes} could be extended to a natural operation
 \begin{equation}\label{E:ExtendProduct}
 \ltimes\colon\Chs_p(X\to T)\otimes\Chs_l(Y\to X)\longrightarrow\Chs_{p+l}(Y\to T)
 \end{equation}
by the formula
 \[(\Gamma\ltimes\Phi)(t)=(\Gamma(t))\ltimes\Phi,\]
 where $\Gamma\in   \Chs_p(X\to T)$, $\Phi\in\Chs_l(Y\to X)$, and $t\in T$.
 In this case we could set
 \[\Theta_h(\fint_\Phi\mu):=\fint_{\fib{I}\ltimes h^*\Phi}h^*(\mu)\in\ompa^{k-1}(X)\]
 and equations \refN{E:Thetah}  would show that $\Theta_h$ is an algebraic homotopy between $h(-,0))^*$ and $(h(-,1))^*$.
 This would of course imply the Poincar\'e lemma.

 The problem 
 is that the twisted product $\Gamma\ltimes\Phi$ is in general not a strongly continuous chain, as
 we can see in the following
 \begin{example}
Let $X$ be the convex hull of $\{(0,0,0),(1,0,0),(1,1,0),(0,0,2)\}$ in $\BR^3$ (a closed $3$-simplex)
and let $t_1,t_2,t_3$ be coordinates in $\BR^3$.
Consider the surface $$S=\{(t_1,t_2,t_3)\in X:t_2=t_1t_3\},$$ and the subspace below it and lying in $X$,
$$E=\{(t_1,t_2,t_3)\in X:t_2\geq t_1t_3\}.$$ Set $Y=E\times I$ and $f\colon Y\stackrel{\proj}\to E
\hookrightarrow X$. For $x\in X$ consider the distance
function\zz{typo corrected below} to $S$, $d\colon X\to I$, defined by
$d(x)=\textrm{dist}(x,S)$. Define a continuous $0$-chain $\Phi\in \Chs_0(Y\to X)$ by $\Phi(x)=\bbr{\{x\}}\times
(\bbr{\{0\}}-\bbr{\{d(x)\}})$ if
$x\in E$ and $\Phi(x)=0$ otherwise. This is a strongly continuous chain trivialized by the stratification
 $\{E,{X\setminus E}\}$.

Define $H\colon X\times I\to X$ such that $H(-,t)$ is a dilation of center $(0,0,2)$ and coefficient $t\in [0,1]$. Consider the pullback
\[
\xymatrix{
\hat Y\ar[r]\ar[d]_{\hat f}\ar@{}[rd]|{pullback}&Y\ar[d]^f\\X\times I\ar[r]_H&X.
}\]
We thus have the continuous chains $H^*(\Phi)\in \Chs_0(\hat Y\to X\times I)$
and  $\fib{I}\in\Chs_1(X\times I\stackrel{\proj_1}\to X)$.

However, $\fib{I}\ltimes H^*(\Phi)$ does not define a strongly continuous chain. To see this, 
first notice that the preimage of $(t_1,t_2,0)\in X$ with $0< t_2\leq t_1\leq 1$ under $H\circ\hat f$ is homeomorphic to
\begin{equation}
\left\{ (t_1,t_2,0,t_4,t_5)\, \left|\, 1-\frac{t_2}{2t_1}\leq t_4\leq 1\text{ and } 0\leq t_5\leq 1 \right.\right\},
\label{Eq:ex-support}
\end{equation}
where $t_4$ is the last coordinate of $X\times I$, and $t_5$ is the last coordinate of $Y=E\times I$. The chain  $\fib{I}\ltimes H^*(\Phi)$ at $(t_1,t_2,0)$ is a difference $I_0-I_1$ where $I_0$ and $I_1$ are given by
\begin{eqnarray}
I_0,I_1\colon [1-\frac{t_2}{2t_1},1]&\longrightarrow &(H\circ\hat f)^{-1}(t_1,t_2,0)\label{Eq:ex-chain}\\
\notag I_0\colon t_4& \longmapsto& (t_1,t_2,0,t_4,0)\\
\notag I_1\colon t_4& \longmapsto& (t_1,t_2,0,t_4,d(t_1t_4,t_2t_4,2-2t_4))
\end{eqnarray}
For a fixed $\lambda\in(0,1]$ it is easy to see that $\spt\left((\hat{I}\ltimes H^*(\Phi))(t_1,\lambda\cdot t_1,0)\right)$
tends to the limit set $\{(0,0,0)\}\times[1-\lambda/2,1]\times\{0\}$ as $t_1\to0_+$.

Now assume that $\fib{I}\ltimes H^*(\Phi)$ is strongly trivialized by  $\{S_\alpha,F_\alpha,g_\alpha\}_{\alpha\in J}$. We will say that a curve in $X$ passing through $(0,0,0)$ is \emph{locally in $S_\alpha$} if $\overline{S_\alpha}$ contains a neighborhood of $(0,0,0)$ in that curve. Obviously any such curve should be locally in at least one $S_\alpha$. Let us show that, for any $0<a<b<1$, the curves $\{(t,at,0)\, |\, 0\leq t\leq 1\}$ and $\{(t,bt,0)\, |\, 0\leq t\leq 1\}$ cannot be locally in the same $S_\alpha$.  This would then give a necessary contradiction (since there are infinitely many such curves, two of them must be locally in the same stratum). Applying~\eqref{Eq:ex-support} to the points from the first curve and taking the limit set as $t\to 0$, we get
$$
g_\alpha\left(\{(0,0,0)\}\times F_\alpha\right)\subset\{(0,0,0)\}\times[1-\frac a2,1]\times[0,1].
$$
But applying~\eqref{Eq:ex-chain} to the points from the second curve and again taking the limit set as $t\to 0$ we get
$$
g_\alpha\left(\{(0,0,0)\}\times F_\alpha\right)\supset\{(0,0,0)\}\times[1-\frac b2,1]\times\{0\}.
$$
Since $a<b$ these two conditions can not be satisfied simultaneously.
\fn{{\bf Victor:} The proof is given, but I am unhappy with this example, because the chain $\fib{I}\ltimes H^*(\Phi)$ is mapped to a strongly continuous chain under the natural inclusion $\hat Y\hookrightarrow X\times I^2$. (Which means that the integration a long $\fib{I}\ltimes H^*(\Phi)$  always produces a PA form.) Basically it is so because it can be globally trivialized. To see that notice that the image of $I$ under the inclusion $Y\hookrightarrow X\times I$ can be globally (with only one strata) trivialized by $\tilde\Phi(x)=\bbr{\{x\}}\times
(\bbr{\{0\}}-\bbr{\{\tilde d(x)\}})$, where $\tilde d(x)$ is the distance from $x$ to $X\setminus E$.
{\bf Pascal:} we keep this private comment for further thoughts but I do not think that we need to add anything about it in the current version.}
\end{example}

 We do not know whether the operation $\ltimes$ can be extended as in equation (\ref{E:ExtendProduct}) using weakly
 continuous chains.
 

 \subsection{Integration along the fiber of a PA form}
 
 
 At the end of \cite[Appendix 8]{KoSo:def}, Kontsevich and Soibelman claim that, given an oriented SA bundle $p\colon E\to B$
 with fiber of dimension $l$, there exists a pushforward operation\fn{pl 11 july 2008: corrected misprint $\ompa$ instead of $\omin$}
 \[p_*\colon\ompa^*(E)\longrightarrow\ompa^{*-l}(B)\]
 given by integration along the fiber. It is certainly true that there is a map
 \begin{align*}
 p_*\colon\omin^*(E) & \longrightarrow\ompa^{*-l}(B) \\
  \mu & \longmapsto\fint_\Phi\mu
  \end{align*}
 defined on minimal forms, where $\Phi\in\Chs_l(E\to B)$ is the continuous chain associated to this oriented SA bundle,
as we have seen in \refS{SAbundle}.\fn{\pl  11 july 2008 Has been rephrased}
However, we were unable to prove that there is an extension to all PA forms. A natural candidate for the pushforward
  $\int_\Psi\mu\in\ompa^k(X)$, where $\Psi\in\Chs_p(Y\to E)$ and $\mu$ a PA form in $\omin^{k+p}(Y)$,
 is
 \[p_*(\int_\Psi\mu)\,:=\,\int_\Phi\int_\Psi\mu=\int_{\Phi\ltimes\Psi}\mu\]
 but the problem as in the previous section is that it is unclear whether
 $\Phi\ltimes\Psi$ is a continuous chain and therefore whether $\int_\Phi\int_\Psi\mu$ is a PA form.

 One could try to avoid this problem by inductively defining $\ompa^*(X)$ by
 \begin{eqnarray*}
 {\ompa}^*_{(0)}(X)&:=&\omin^*(X);\\
 {\ompa}^*_{(p+1)}(X)&:=& \{\fint_\Psi\lambda:\Psi\in\Chs_*(Y\to X),\lambda\in{\ompa}^*_{(p)}(Y)\};\\
 {\ompa}^*_{(\infty)}(X)&:=&\cup_{p=0}^\infty {\ompa}^*_{(p)}(X).\\
 \end{eqnarray*}
In particular, $\ompa^*(X)= {\ompa}^*_{(1)}$. It is clear that one gets a well-defined pushforward operation on
${\ompa}^*_{(\infty)}$ this way and it can be proved exactly as for $\ompa$ that ${\ompa}^*_{(\infty)}$ is a cochain complex and satisfies the Poincar\'e lemma
and the other properties which makes it weakly equivalent to the cochain complex $\Apl(X;\BR)$.
The problem is to check that ${\ompa}^*_{(\infty)}$ is a commutative algebra. For this we would need a generalization of \refP{strKu-PA},
whose proof was based on the integral representation of \refL{repintPA}.  However, we were not able to adapt the proof of \refL{repintPA} to ${\ompa}^*_{(p)}$, even though it seems likely that elements of  ${\ompa}^*_{(p)}(X)$
are smooth on the complement of a codimension $1$ subset.


\subsection{Flat morphisms}


Also at the end of \cite{KoSo:def}, it is suggested that the pushforward operation exist for more general
maps than bundles. More precisely, 
given a proper semi-algebraic map
$f\colon Y\to X$
between oriented semi-algebraic manifolds such that, for each $x\in X$,
$$
\dim (f^{-1}\{x\})\ \le\ k-j,\ \ \text{where}\ j=\dim X\ \text{and}\ k=\dim Y\ ,
$$
one could construct a pushforward along $f$ as follows.
The notion of \emph{slicing} (\cite[4.3]{Fed:GMT},\cite[4.3]{Har:sli}, \cite[4.5]{Har:top}) then gives a continuous chain $\Phi\in \Chs_{k-j}(Y\stackrel{f}\to X)$ such that,
for each point $x$ in the open dense set of regular values, we have
$\Phi(x)=\bbr{f^{-1}\{x\}}$, i.e.~$\Phi(x)$ is the smooth fiber with the induced orientation. To
see this, let $\bbr{Y}$ denote the $k$ dimensional semi-algebraic {\current} given
by oriented integration over
$Y$. Then the correspondingly oriented graph $g_\#\bbr{Y}$, where $g(y)=(y,f(y))$,
is also a $k$ dimensional semi-algebraic {\current}.  Using the projections
$p(x,y)=y$ and $q(x,y)=x$ for $(y,x)\in Y\times X$, we have from \cite[4.3]{Har:sli}
and \cite[4.5]{Har:top} that the slice
$$
\langle\bbr{Y},f,x\rangle\ =\ p_\#\langle g_\#\bbr{Y},q,x\rangle
$$
is a $k-j$ dimensional semi-algebraic {\current}.  The map
$x\mapsto\langle\bbr{Y},f,x\rangle$ is also seen to be flat continuous, and hence weakly
continuous (here we are implicitly using the formula \cite[4.3.2 (6)]{Fed:GMT} which
shows that the slice is invariant under an oriented diffeomorphism of the range
and is thus well-defined as a map into the oriented manifold $X$).

While the slice $\langle\bbr{Y},f,x\rangle$ equals
the naturally oriented fiber $\bbr{f^{-1}\{x\}}$ for any regular value $x$ (by, for example,  \cite[4.3.8 (2)]{Fed:GMT}), one
only knows in general that $\spt\langle\bbr{Y},f,x\rangle\subset f^{-1}\{x\}$.  For
example, if $Y$ is the counter-clockwise oriented unit circle, $X=\BR$,
and $f(x,y)=x$, then
\[
\langle\bbr{Y},f,x\rangle\ =\
\begin{cases}
\bbr{(x,\sqrt{1-x^2})}-\bbr{(x,-\sqrt{1-x^2})},&\ \ \ -1<x<1;\\
0,&\ \ \ \textrm{otherwise,}
\end{cases}
\]
which is consistent with the weak continuity of the slice even at $x=\pm 1$.
Another elementary example is the formula
\[
\langle \bbr{\BC},h,0\rangle\ =\ \sum_{z\in h^{-1}\{0\} } n(h,z) \bbr{z} \ ,
\]
valid for any holomorphic $h\colon\BC\to\BC$, where $n(h,z)$ is the multiplicity as a zero of $h(z)$.

Applying the trivialization result of \cite{Har:loc} to the semi-algebraic map $f$,
one sees that $\Phi(x)=\langle\bbr{Y},f,x\rangle$ actually defines a strongly
continuous chain $\Phi\in \Chs_{k-j}(Y\stackrel{f}\to X)$.

Moreover, for any semi-algebraic chain $T\in \Ch_\ell(X)$ we have a {\it pullback
chain} $f^\#(T)\in \Ch_{\ell+k-j}(Y)$. This may be defined by noting that $T$
decomposes into a finite sum of chains of the form
$T_i=\langle\bbr{X_i},f_i,0\rangle$ for some open semi-algebraic subsets $X_i$ of
$X$ and semi-algebraic maps $f_i:X_i\to \BR^{j-\ell}$ with $\dim(X_i\cap
f_i^{-1}\{0\})\le\ell$ and $\dim(\fron(X_i)\cap f_i^{-1}\{0\})\le\ell-1$. Then the
pullback
$$
f^\#T\ =\ \sum_i\langle\bbr{Y\cap f^{-1}(X_i)},f_i\circ f,0\rangle\ \in\ \Ch_{\ell+k-j}(Y)
$$
is well-defined, independent of the decomposition.
Details and applications of this construction will be discussed in a later
paper.

\bibliographystyle{plain}

\begin{thebibliography}{10}

\bibitem{BCR:GAR}
J.~Bochnak, M.~Coste, and M.-F. Roy.
\newblock {\em G\'eom\'etrie alg\'ebrique r\'eelle}, volume~12 of {\em
  Ergebnisse der Mathematik und ihrer Grenzgebiete (3) [Results in Mathematics
  and Related Areas (3)]}.
\newblock Springer-Verlag, Berlin, 1987.

\bibitem{BoGu:RHT}
A.~K. Bousfield and V.~K. A.~M. Gugenheim.
\newblock On {${\rm PL}$} de {R}ham theory and rational homotopy type.
\newblock {\em Mem. Amer. Math. Soc.}, 8(179):ix+94, 1976.

\bibitem{CLR:nat}
G.~Comte, J.-M. Lion, and J.-P. Rolin.
\newblock Nature log-analytique du volume des sous-analytiques.
\newblock {\em Illinois J. Math.}, 44(4):884--888, 2000.


\bibitem{DGMS:rea}
Pierre Deligne, Phillip Griffiths, John Morgan, and Dennis Sullivan.
\newblock Real homotopy theory of {K}\"ahler manifolds.
\newblock {\em Invent. Math.}, 29(3):245--274, 1975.

\bibitem{EiMc:acy}
Samuel Eilenberg and Saunders MacLane.
\newblock Acyclic models.
\newblock {\em Amer. J. Math.}, 75:189--199, 1953.

\bibitem{Fed:GMT}
Herbert Federer.
\newblock {\em Geometric measure theory}.
\newblock Die Grundlehren der mathematischen Wissenschaften, Band 153.
  Springer-Verlag New York Inc., New York, 1969.

\bibitem{FHT:RHT}
Yves F{\'e}lix, Stephen Halperin, and Jean-Claude Thomas.
\newblock {\em Rational homotopy theory}, volume 205 of {\em Graduate Texts in
  Mathematics}.
\newblock Springer-Verlag, New York, 2001.

\bibitem{GNPR:mon}
F.~Guillen~Santos, V.~Navarro, P.~Pascual, and A.~Roig.
\newblock Monoidal functors, acyclic models and chain operads.
\newblock {\em Canad. J. Math.}, 60(2):348--378, 2008.

\bibitem{Har:sli}
Robert~M. Hardt.
\newblock Slicing and intersection theory for chains associated with real
  analytic varieties.
\newblock {\em Acta Math.}, 129:75--136, 1972.

\bibitem{Har:str}
Robert~M. Hardt.
\newblock Stratification of real analytic mappings and images.
\newblock {\em Invent. Math.}, 28:193--208, 1975.

\bibitem{Har:top}
Robert~M. Hardt.
\newblock Topological properties of subanalytic sets.
\newblock {\em Trans. Amer. Math. Soc.}, 211:57--70, 1975.

\bibitem{Har:loc}
Robert~M. Hardt.
\newblock Semi-algebraic local-triviality in semi-algebraic mappings.
\newblock {\em Amer. J. Math.}, 102(2):291--302, 1980.

\bibitem{Hat:AT}
Allen Hatcher.
\newblock {\em Algebraic topology}.
\newblock Cambridge University Press, Cambridge, 2002.

\bibitem{Hir:tri}
Heisuke Hironaka.
\newblock Triangulations of algebraic sets.
\newblock In {\em Algebraic geometry (Proc. Sympos. Pure Math., Vol. 29,
  Humboldt State Univ., Arcata, Calif., 1974)}, pages 165--185. Amer. Math.
  Soc., Providence, R.I., 1975.

\bibitem{Hovey}
Mark Hovey.
\newblock {\em Model categories}
\newblock Mathematical Surveys and Monographs 63, AMS, 1999.

\bibitem{Hud:PLT}
J.~F.~P. Hudson.
\newblock {\em Piecewise linear topology}.
\newblock University of Chicago Lecture Notes prepared with the assistance of
  J. L. Shaneson and J. Lees. W. A. Benjamin, Inc., New York-Amsterdam, 1969.

\bibitem{Hus:FB}
Dale Husemoller.
\newblock {\em fiber bundles}, volume~20 of {\em Graduate Texts in
  Mathematics}.
\newblock Springer-Verlag, New York, third edition, 1994.

\bibitem{Kon:OMDQ}
Maxim Kontsevich.
\newblock Operads and motives in deformation quantization.
\newblock {\em Lett. Math. Phys.}, 48(1):35--72, 1999.
\newblock Mosh\'e Flato (1937--1998).

\bibitem{KoSo:def}
Maxim Kontsevich and Yan Soibelman.
\newblock Deformations of algebras over operads and the {D}eligne conjecture.
\newblock In {\em Conf\'erence Mosh\'e Flato 1999, Vol. I (Dijon)}, volume~21
  of {\em Math. Phys. Stud.}, pages 255--307. Kluwer Acad. Publ., Dordrecht,
  2000.

\bibitem{LaVo:for}
P.~Lambrechts and I.~Voli\'c.
\newblock Formality of the little $N$-disks operad.
\newblock Available at arXiv:0808.0457

\bibitem{Lel:int}
Pierre Lelong.
\newblock Int\'egration sur un ensemble analytique complexe.
\newblock {\em Bull. Soc. Math. France}, vol. 85 (1957), pp. 239--262

\bibitem{RoSa:PLT}
C.~P. Rourke and B.~J. Sanderson.
\newblock {\em Introduction to piecewise-linear topology}.
\newblock Springer-Verlag, New York, 1972.
\newblock Ergebnisse der Mathematik und ihrer Grenzgebiete, Band 69.

\bibitem{ShYo:tri}
M.~Shiota and M.~Yokoi.
\newblock Triangulations of subanalytic sets and locally subanalytic manifolds.
\newblock {\em Trans. Amer. Math. Soc.}, 286(2):727--750, 1984.

\bibitem{Sul:inf}
Dennis Sullivan.
\newblock Infinitesimal computations in topology.
\newblock {\em Inst. Hautes \'Etudes Sci. Publ. Math.}, (47):269--331 (1978),
  1977.

\bibitem{vdD:lim}
Lou van~den Dries.
\newblock Limit sets in o-minimal structures.
\newblock In ``O-minimal structures, Lisbon 2003.'', pp. 159-214. 
  
  
\bibitem{Zeeman}
E.~C. Zeeman.
\newblock On the dunce hat.
\newblock {\em Topology}, 2:341--351, 1964.

\end{thebibliography}

\def\cprime{$'$}

\end{document}